\theoremstyle{plain}%
    \newtheorem{thm}{Theorem}[section]%
    \newtheorem{prop}[thm]{Proposition}%
    \newtheorem{lem}[thm]{Lemma}%
	\newtheorem{cor}[thm]{Corollary}%
    \newtheorem{con}[thm]{Conjecture}%
	\newtheorem{propdef}[thm]{Proposition-Definition}%
	\newtheorem{thmdef}[thm]{Theorem-Definition}%
\theoremstyle{definition}%
    \newtheorem{defn}[thm]{Definition}%
    \newtheorem{rem}[thm]{Remark}%
    \newtheorem{ex}[thm]{Example}%
\def\endpiece{xxx}%
\def\makeop[#1]{\xmakeop#1,xxx,}
\def\mkop#1{\expandafter\def\csname #1\endcsname{{\mathrm{#1}}}} %
\def\xmakeop#1,{\def\temp{#1}\ifx\temp\endpiece\else\mkop{#1}\expandafter\xmakeop\fi}%
\def\N{{\mathbb{N}}}%
\def\Z{{\mathbb{Z}}}%
\def\Q{{\mathbb{Q}}}%
\def\Zp{{\mathbb{Z}_p}}%
\def\Qp{{\mathbb{Q}_p}}%
\def\Fp{{\mathbb{F}_p}}%
\def\C{{\mathbb{C}}}
\def\id{{\mathrm{id}}}%
\def\del{{\partial}}%
\def\gal#1{{\mathrm{Gal}(#1)}}%
\def\ideal#1{{\mathfrak{#1}}}%
\def\integer#1{{\mathcal{O}_{#1}}}%
\def\iw#1{{\Lambda (#1)}}%
\def\frob#1{\varphi(#1)}%
\def\tilder#1{{\widetilde{#1}}}%
\def\comp#1{{\widehat{#1}}}%
\def\line#1{{\overline{#1}}}%
\def\x#1{\mathfrak{X}(#1)}%
\def\iw#1{\Lambda (#1)}%
\def\conj#1{{\mathrm{Conj}(#1)}}%
\def\obj#1{{\mathrm{Ob}\,#1}}%
\def\cal#1{{\mathcal{#1}}}%
\def\mod#1{{\mathrm{mod}\, #1}}%
\def\ind#1#2#3{{\mathrm{Ind}^{#1}_{#2}(#3)}}%
\begin{document}

\setcounter{tocdepth}{1} 
\numberwithin{equation}{section}


\title[Iwasawa theory for non-commutative $p$-extensions]{Iwasawa theory of totally real fields for certain non-commutative $p$-extensions}
\author{Takashi Hara}
\address{Graduate School of Mathematical Sciences, The University of
Tokyo, 8-1 Komaba 3-chome, Meguro-ku, Tokyo, 153-8914, Japan}
\email{thara@ms.u-tokyo.ac.jp}
\date{\today}

\begin{abstract}
In this paper, we prove the Iwasawa main conjecture 
(in the sense of \cite{CFKSV}) of totally real fields for certain 
specific non-commutative $p$-adic Lie extensions, using the integral logarithms 
introduced by Oliver and Taylor. Our result gives certain generalization 
of Kato's proof of the main conjecture for Galois extensions of 
Heisenberg type (\cite{Kato1}).
\end{abstract}

\maketitle
\setcounter{section}{-1}

%
%
\section{Introduction}
%
%

%
\subsection{Introduction}
%

The Iwasawa main conjecture, which describes mysterious 
relation between ``arithmetic'' characteristic elements 
and ``analytic'' $p$-adic zeta functions, has been proven
under many situations for abelian extensions. However, 
for non-abelian extensions, it took many years even to 
formulate the main conjecture. In 2005, Coates et al.\ formulated 
it (\cite{CFKSV}) by using algebraic 
$K$-theory (especially the localization exact sequence), 
and Kazuya Kato has proven it for certain 
specific $p$-adic Lie extensions of totally real fields up to the
present (\cite{Kato1}). Mahesh Kakde generalized Kato's proof and proved the main conjecture for 
other types of extensions (\cite{Kakde}). J\"urgen Ritter and Alfred Weiss also formulated the main conjecture  in a little different way (``Equivariant Iwasawa theory,'' 
see \cite{R-W1}.), also using algebraic $K$-theory.

In this paper, we prove the Iwasawa main conjecture 
(in the sense of \cite{CFKSV}) of totally real fields for certain 
non-commutative $p$-extensions, using the method 
of Kato in \cite{Kato1}. 

Let $F$ be a totally real number field, and let $p$ be a prime number. 
Let $F^{\infty}/F$ be a totally real Lie extension containing
 the cyclotomic $\Zp$-extension $F^{\mathrm{cyc}}$ of $F$. We assume that only finitely many primes of $F$ ramify in $F^{\infty}$.

The aim of this paper is to prove the following theorem.

\begin{thm}[=Theorem \ref{thm:maintheorem}] \label{thm:mt}
Assume that the following conditions are satisfied.
\begin{enumerate}[$(1)$]
\item $G =\gal{F^{\infty}/F} \cong \begin{pmatrix}
1 & \Fp & \Fp & \Fp \\ 0 & 1 & \Fp & \Fp \\
0 & 0 & 1 & \Fp \\ 0 & 0 & 0 & 1 \end{pmatrix} \times \Gamma$, \\
where $\Gamma$ is a commutative $p$-adic Lie group isomorphic to $\Zp$.
\item $p\neq 2,3$.
\item There exists a finite extension $F'\subseteq F^{\infty}$ of $F$ 
such that the $\mu$-invariant of $(F')^{\mathrm{cyc}}/F'$ equals to zero, 
where $(F')^{\mathrm{cyc}}/F'$ is the cyclotomic $\Zp$-extension of $F'$.
\end{enumerate}

Then the $p$-adic zeta function $\xi_{F^{\infty}/F}$ for $F^{\infty}/F$
 exists and the Iwasawa main conjecture for $F^{\infty}/F$ is true.
\end{thm}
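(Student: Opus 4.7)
The plan is to extend Kato's method from \cite{Kato1}, which proved the main conjecture when the finite part of $G$ is the Heisenberg group $U_3(\Fp)$, to the present case where this part is $P := U_4(\Fp)$, the group of unipotent upper-triangular $4 \times 4$ matrices over $\Fp$. The essential new feature is that $P$ is nilpotent of class three rather than two, so one must handle an additional layer of transfer congruences coming from the deeper term of the lower central series.

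By the formalism of \cite{CFKSV}, the main conjecture reduces to constructing a $p$-adic zeta element $\xi_{F^{\infty}/F} \in K_1(\Lambda(G)_S)$ which lifts the arithmetic Euler characteristic and interpolates Artin $L$-values at negative integers. Under the $\mu=0$ hypothesis (condition (3)), standard descent from $F'$ down to $F$ reduces the problem to the usual framework. I would then apply the Oliver--Taylor integral logarithm (which takes its cleanest form precisely under the assumption $p \neq 2,3$ of condition (2)) together with the Verlagerungen to abelian subquotients $U^{\mathrm{ab}}$ of $G$, so as to obtain an injective characterisation of $K_1(\Lambda(G))$ (up to controllable torsion) inside a product of $K_1$'s of abelian Iwasawa algebras, with image cut out by explicit logarithmic congruences. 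For each abelian subquotient $U$ in a suitable family $\mathcal{F}$, the Deligne--Ribet / Wiles construction provides a $p$-adic zeta function $\xi_U \in K_1(\Lambda(U^{\mathrm{ab}})_S)$; the theorem will follow once I verify that the collection $(\xi_U)_{U \in \mathcal{F}}$ satisfies the congruences cutting out the image of $K_1(\Lambda(G)_S)$. By $p$-adic interpolation and Deligne--Ribet's $q$-expansion principle, these congruences translate into congruences between partial zeta values of finite abelian subextensions of $F^{\infty}/F$ at negative integers, which can be proved by explicit character computations governed by the structure of $P$.

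The hard part will be this final congruence step. The Heisenberg case of \cite{Kato1} involves a single non-trivial layer of congruences, arising from the unique non-trivial term of the lower central series of $U_3(\Fp)$. For $P = U_4(\Fp)$ one must instead simultaneously control congruences on three nested layers, corresponding to the quotients $P/[P,P] \cong \Fp^3$, $[P,P]/[P,[P,P]] \cong \Fp^2$, and the centre $[P,[P,P]] \cong \Fp$. Choosing the family $\mathcal{F}$ carefully enough so that the resulting system of congruences both matches the image of the integral logarithm and admits a proof via manipulation of partial zeta values is the principal technical challenge; once this bookkeeping is carried out, the element $\xi_{F^{\infty}/F}$ produced by patching automatically satisfies both the characteristic-element condition and the interpolation property, yielding the theorem.
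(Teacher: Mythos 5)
Your overall architecture (the Burns--Kato strategy: a theta map built from norms to a family of abelian subquotients, its image cut out by norm relations plus congruences detected via the Oliver--Taylor integral logarithm, and verification of those congruences for the Deligne--Ribet--Wiles pseudomeasures $\xi_U$) is the same as the paper's, and your description of the three nested layers $P/[P,P]\cong\Fp^3$, $[P,P]/[P,[P,P]]\cong\Fp^2$, $[P,[P,P]]\cong\Fp$ is accurate. But the step you defer as ``the principal technical challenge'' --- proving \emph{all} the congruences characterizing $\mathrm{Image}(\theta_S)$ directly from Deligne--Ribet's $q$-expansion principle --- is precisely the step that fails, and the paper's main new idea is a workaround for this failure. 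For the deeper layers the $q$-expansion argument only controls the sum over a Galois orbit of non-constant Fourier coefficients; for the subgroups $U_2$ and $U_3$ (those seeing the center and the second derived layer) this yields only congruences of the form $\xi_2\equiv c_2 \bmod I''_{S,2}$ and $\xi_3\equiv c_3\bmod I_{S,3}$ with $c_2,c_3$ \emph{unknown} elements of $\iw{\Gamma}_{S_0}$, rather than the required $\xi_2\equiv\varphi(\xi_0)^p\bmod I_{S,2}$ and $\xi_3\equiv\varphi(\xi_0)^{p^2}\bmod I_{S,3}$ (Proposition \ref{prop:congzeta} versus Proposition \ref{prop:psis}). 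No amount of ``explicit character computation'' recovers the constant terms $c_2,c_3$ or upgrades $I''_{S,2}$ to $I_{S,2}$ from the modular-forms side alone, so one cannot conclude $(\xi_i)_i\in\Psi_S$ and Burns' technique does not apply as stated.

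The paper closes this gap by induction on the nilpotency class: it invokes Kato's theorem for the Heisenberg-type quotient $\line{G}=G/N$ to get $\line{\xi}\in K_1(\iw{\line G}_S)$ satisfying the main conjecture, chooses a characteristic element $f$ for $F^\infty/F$ mapping to $\line\xi$, and studies the units $u_i=\xi_i\theta_{S,i}(f)^{-1}\in\iw{U_i/V_i}^\times$ instead of the $\xi_i$ themselves. Because $(u_i)_i$ lies in the kernel of reduction modulo $N$ and $\varphi(u_i)=1$, the weak congruences for $u_2,u_3$ self-improve: a comparison of the $\iw{\Gamma}$-coefficients of $\log u_2\in I_2''$ and $\log u_3\in I_3$ via the trace relation forces $\log u_2\in I_2$, whence $(u_i)_i\in\Psi$ and the surjectivity of $\theta$ produces the desired $\xi_{F^\infty/F}=uf$. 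If you want to complete your proposal you must either incorporate such an inductive descent through $\line G$ (or some equivalent mechanism for pinning down the central-layer constant terms), or explain how to prove the missing congruences by other means; as written, the patching element does not exist on the strength of the congruences you can actually establish.
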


Let us summarize how to prove this theorem. We consider the following family of subgroups of $G$.

\begin{align*}
U_0 &= G, & V_0 &= \begin{pmatrix} 1 & 0 & \Fp & \Fp \\ 
0 & 1 & 0 & \Fp \\ 0 & 0 & 1 & 0 \\ 0 & 0 & 0 & 1
\end{pmatrix} \times \{1\}, \\
U_1 &= \begin{pmatrix} 1 & \Fp & \Fp & \Fp \\ 
0 & 1 & 0 & \Fp \\ 0 & 0 & 1 & \Fp \\ 0 & 0 & 0 & 1
\end{pmatrix} \times \Gamma, &
V_1 &= \begin{pmatrix} 1 & 0 & 0 & \Fp \\ 
0 & 1 & 0 & 0 \\ 0 & 0 & 1 & 0 \\ 0 & 0 & 0 & 1
\end{pmatrix} \times \{1\},  \\
\tilder{U_2} &= \begin{pmatrix} 1 & 0 & \Fp & \Fp \\ 
0 & 1 & \Fp & \Fp \\ 0 & 0 & 1 & \Fp \\ 0 & 0 & 0 & 1
\end{pmatrix} \times \Gamma, &
\tilder{V_2} &= \begin{pmatrix} 1 & 0 & 0 & \Fp \\ 
0 & 1 & 0 & \Fp \\ 0 & 0 & 1 & 0 \\ 0 & 0 & 0 & 1
\end{pmatrix} \times \{1\}, \\
U_2 &= \begin{pmatrix} 1 & 0 & 0 & \Fp \\ 
0 & 1 & \Fp & \Fp \\ 0 & 0 & 1 & \Fp \\ 0 & 0 & 0 & 1
\end{pmatrix} \times \Gamma, &
V_2 &= \begin{pmatrix} 1 & 0 & 0 & 0 \\ 
0 & 1 & 0 & \Fp \\ 0 & 0 & 1 & 0 \\ 0 & 0 & 0 & 1
\end{pmatrix} \times \{1\}, \\
U_3 &= \begin{pmatrix} 1 & 0 & 0 & \Fp \\ 
0 & 1 & 0 & \Fp \\ 0 & 0 & 1 & \Fp \\ 0 & 0 & 0 & 1
\end{pmatrix} \times \Gamma, &
V_3 &= \{ I_4 \} \times \{1\},
\end{align*} 
where $I_4$ is the unit matrix of $\GL_4(\Fp)$.

In the following, we use the notation $U_i$ (resp.\ $V_i$) for one of the subgroups $U_0, U_1, \tilder{U_2}, U_2$ and $U_3$ (resp.\ $V_0,V_1,\tilder{V_2},V_2$ and $V_3$). Note that each quotient group $U_i/V_i$ is abelian. 

Now we have a homomorphism
\begin{equation*}
\theta_i \colon K_1(\iw{G}) \xrightarrow{\Nr} K_1(\iw{U_i}) \rightarrow
 K_1(\iw{U_i/V_i})=\iw{U_i/V_i}^{\times}
\end{equation*}
where $\iw{G}$ denotes the Iwasawa algebra of $G$. Here the first map 
is the norm map of $K$-theory and the second one is the natural map 
induced by $\iw{U_i} \rightarrow \iw{U_i/V_i}$. 

On the other hand, John Coates et al.\ introduced the canonical
\O re set $S$ of $\iw{G}$ (See \S 2 and \cite{CFKSV}) and considered the
\O re localization $\iw{G}_S$  to formulate the main conjecture. For this
localized Iwasawa algebra, we also have a homomorphism
\begin{equation*}
\theta_{S,i} \colon K_1(\iw{G}_S) \xrightarrow{\Nr} K_1(\iw{U_i}_S)
 \rightarrow K_1(\iw{U_i/V_i}_S)=\iw{U_i/V_i}_S^{\times}
\end{equation*}
by the same construction as $\theta_i$. We denote
\begin{equation*}
\theta =(\theta_i)_i \colon K_1(\iw{G}) \rightarrow \prod_i \iw{U_i/V_i}^{\times} 
\end{equation*}
and
\begin{equation*}
\theta_S =(\theta_{S,i})_i \colon K_1(\iw{G}_S) \rightarrow \prod_i \iw{U_i/V_i}_S^{\times}.
\end{equation*}

Then we have the following proposition.

\begin{prop}[Proposition \ref{prop:theta},
 Proposition \ref{prop:imageaks} and Proposition \ref{prop:cap}]
There exist subgroups 
\begin{equation*}
\Psi \subseteq \prod_i \iw{U_i/V_i}^{\times}
\end{equation*}
and 
\begin{equation*}
\Psi_S \subseteq \prod_i \iw{U_i/V_i}_S^{\times}
\end{equation*}
which satisfy the following conditions$\colon$
\begin{enumerate}[$(1)$]
\item $\mathrm{Image}(\theta_S) \subseteq \Psi_S$.
\item $\mathrm{Image}(\theta)=\Psi$.
\item $\displaystyle \Psi_S \cap \prod_i \iw{U_i/V_i}^{\times}=\Psi$.
\end{enumerate}
\end{prop}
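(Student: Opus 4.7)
The strategy is the one pioneered by Kato and extended by Kakde: characterise $\Psi$ (and $\Psi_S$) explicitly by a list of compatibility conditions among the components, then verify containment one way via functoriality of $K$-theory and the other way via the Oliver--Taylor integral logarithm. Concretely, I would define $\Psi$ inside $\prod_i \iw{U_i/V_i}^{\times}$ by three types of relations: \emph{(M1)} norm/restriction compatibility whenever one pair $(U_i,V_i)$ refines another; \emph{(M2)} invariance of each component $u_i$ under the natural action of $G/U_i$ on $U_i/V_i$ by conjugation, modified by the appropriate transfer map; \emph{(M3)} additive congruence relations on the components, of the form that the integral logarithm $L(u_i)$ satisfies a trace identity relating it to $L(u_j)$ whenever $V_j \subseteq V_i$ inside the same $U$. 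The subgroup $\Psi_S$ would be defined by the literal $S$-localised analogues of (M1)--(M3).

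First I would prove (1), namely $\mathrm{Image}(\theta_S) \subseteq \Psi_S$ (and its non-localised counterpart, which gives one half of (2)). This is the purely formal direction. Relation (M1) is the statement that the composite of two norm maps agrees with a single norm, together with the commutativity of the natural projections $\iw{U_i} \to \iw{U_i/V_i}$, which is built into the definition of $\theta_i$. Relation (M2) comes from the fact that inner conjugation by $g \in G$ acts trivially on $K_1(\iw{G})$; projecting this identity through $\theta_i$ gives exactly the required equivariance. Relation (M3) is the most delicate of the ``easy'' identities: it uses standard compatibilities of the integral logarithm $L$ with norm and trace maps, together with the fact that $L$ converts the $K$-theoretic norm into an additive trace up to correction terms involving Frobenius, which on $\iw{U_i/V_i}$ are controlled because $U_i/V_i$ is abelian.

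The hard part is (2), the \emph{surjectivity} $\Psi \subseteq \mathrm{Image}(\theta)$. Here I would follow Kato's method: given $(u_i) \in \Psi$, apply the Oliver--Taylor integral logarithm componentwise to obtain an element $(L(u_i))_i$ in $\prod_i \iw{U_i/V_i}$ satisfying the additive forms of (M1)--(M3). The problem then becomes to produce an element $y \in \iw{G}^{\mathrm{ab}} \oplus (\text{conjugation-invariants})$ — more precisely in the appropriate additive ``theta image'' — whose image matches $(L(u_i))$. Using the explicit matrix description of $G$, one decomposes $\iw{G}$ into pieces indexed by conjugacy classes and shows, using additive versions of the relations, that the additive theta map is surjective onto the additive analogue of $\Psi$. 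This is where the assumptions $p \neq 2, 3$ intervene, because the integral logarithm has cokernel annihilated by small primes attached to the orders of the subquotients $U_i/V_i$, and the reconstruction of a $K_1$ preimage from its logarithm requires inverting exactly these primes. This step is the main obstacle; I expect it to require a careful case-by-case analysis of each of the five pairs $(U_i, V_i)$, plus delicate bookkeeping of Frobenius correction terms in the definition of $L$.

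Finally, statement (3) should be essentially formal once (1) and (2) are in place. Given $(u_i) \in \Psi_S$ with every $u_i$ already in $\iw{U_i/V_i}^{\times}$, one checks that the localised relations defining $\Psi_S$ restrict, under this integrality hypothesis, to the relations defining $\Psi$; conversely $\Psi \subseteq \Psi_S \cap \prod_i \iw{U_i/V_i}^{\times}$ by (2) and (1). The only subtlety is that the trace/log relations in (M3) involve denominators (powers of $p$ coming from $L$), but the integrality of each component forces these denominators to cancel, yielding exactly the integral relations. This completes the proposed proof outline.
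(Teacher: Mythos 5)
Your overall strategy --- cut out $\Psi$ by norm relations plus congruences, obtain the easy inclusion from functoriality of norms, and obtain surjectivity from the Oliver--Taylor integral logarithm combined with an additive theta isomorphism --- is the route the paper takes. But you have misplaced the main difficulty. You classify your condition (M3) among the ``easy'' identities, derivable from standard compatibility of the integral logarithm with norms and traces. In the paper the defining congruences are multiplicative: $\theta_1(\eta)\equiv\frob{\theta_0(\eta)}$, $\tilder{\theta}_2(\eta)\equiv\frob{\theta_0(\eta)}$, $\theta_2(\eta)\equiv\frob{\theta_0(\eta)}^p$, $\theta_3(\eta)\equiv\frob{\theta_0(\eta)}^{p^2}$ modulo the modules $I_i$, and establishing them (Lemma \ref{lem:congj}) is by far the longest computation in the construction: one writes each $\Nr$ as an explicit determinant over coset representatives, expands over a symmetric group, and groups terms into orbits under cyclic shifts, with a substantially harder analysis for $U_2$ because it is not normal in $G$. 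Even then one only gets the congruence modulo the larger ideals $J_i$; descending to $I_i$ --- which is merely a $\Zp$-submodule, not an ideal, so that even the multiplicativity of $1+I_i$ requires proof (Lemma \ref{lem:logi}) --- forces one to combine the identity $\theta_i^+\circ\Gamma_{G^{(n)}}=\log\bigl(\theta_i(\eta)/\frob{\theta_0(\eta)}^{p^{j_i}}\bigr)$ with the two logarithm isomorphisms $1+J_i^{(n)}\cong J_i^{(n)}$ and $1+I_i^{(n)}\cong I_i^{(n)}$. None of this is formal, and your proposal gives no mechanism for it.

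Two further gaps. For statement (1) you propose the ``literal $S$-localised analogues'' of the relations, but the integral logarithm does not converge over $\iw{\Gamma}_{S_0}$, which is not $p$-adically complete; the paper must pass to $R=\comp{\iw{\Gamma}_{S_0}}$, extend Oliver--Taylor to general $R$-orders, define $\Psi_S$ as $\comp{\Psi}\cap\prod_i\iw{U_i/V_i}_S^{\times}$, and only afterwards characterize it by congruences modulo $I_{S,i}$ (Proposition \ref{prop:psis}); statement (3) then reduces to $I_{S,i}\cap\iw{U_i/V_i}=I_i$, not to a cancellation of denominators. In the surjectivity step you also omit the kernel/cokernel analysis of $\Gamma_{G^{(n)}}$: one must verify $\omega_{G^{(n)}}(y)=1$ to lift $y$ to $K_1$, correct the lift by a torsion element of $\mu_{p-1}(\Zp)\times (G^{(n)})^{\mathrm{ab}}$ to match the zeroth component, identify the kernel of $\theta^{(n)}$ with $SK_1(\Zp[G^{(n)}])$, and pass to the projective limit via Mittag--Leffler. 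Finally, $p\neq 2,3$ is not used to kill a cokernel of the logarithm; it enters through the exponent of $G^f$ being $p$ and through the convergence estimates for $\log$ on $1+J_i^{(n)}$.
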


We can characterize both $\Psi$ and $\Psi_S$ as the subgroups consisting of
 all elements which satisfy certain norm relations and certain
 congruences (for details, see Definition \ref{def:psi} and 
Proposition \ref{prop:psis}.). In the following, we denote the induced homomorphisms by the same symbols
\begin{equation*}
\theta \colon K_1(\iw{G}) \rightarrow \Psi \quad \text{and} \quad \theta_S \colon K_1(\iw{G}_S) \rightarrow \Psi_S.
\end{equation*}

We call the surjective homomorphism $\theta$ {\it the theta map for $G$} and 
the homomorphism $\theta_S$ {\it the localized theta map for $G$}.

Then we obtain the following outstanding theorem which was first
observed by David Burns.

\begin{thm}[=Theorem \ref{thm:ak-pri}, Burns]
Let $\xi_i$ be the $p$-adic zeta function for the abelian $p$-adic Lie extension $F_{V_i}/F_{U_i}$ where $F_{U_i}$
 $($resp.\ $F_{V_i})$ is the maximal intermediate field of $F^{\infty}/F$ fixed by $U_i$ $($resp.\ $V_i)$. If $(\xi_i)_i$ is contained in $\Psi_S$, the $p$-adic zeta function $\xi$ for $F^{\infty}/F$ exists as an element of $K_1(\iw{G}_S)$ and satisfies the main conjecture.
\end{thm}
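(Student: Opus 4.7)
The strategy would be to construct $\xi$ by starting from some arbitrary characteristic element for the Iwasawa-theoretic Euler class and then correcting it, using the surjectivity of $\theta$ onto $\Psi$, so that its image under $\theta_S$ equals the prescribed family $(\xi_i)_i$.

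First I would invoke the localization exact sequence
\begin{equation*}
K_1(\iw{G}) \to K_1(\iw{G}_S) \xrightarrow{\partial_G} K_0(\iw{G}, \iw{G}_S) \to 0,
\end{equation*}
let $[C] \in K_0(\iw{G}, \iw{G}_S)$ be the Iwasawa-theoretic Euler class attached to $F^\infty/F$ (its $S$-torsion property being guaranteed by the $\mu = 0$ hypothesis in Theorem~\ref{thm:mt}), and pick any $\xi_C \in K_1(\iw{G}_S)$ satisfying $\partial_G(\xi_C) = -[C]$.

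Next I would exploit the compatibility of the boundary maps with the norm maps defining each $\theta_{S,i}$: at each abelian level one has $\partial_{U_i/V_i}(\theta_{S,i}(\xi_C)) = -[C_i]$, where $[C_i]$ is the analogous arithmetic class for the abelian extension $F_{V_i}/F_{U_i}$. By Wiles's abelian main conjecture $\partial_{U_i/V_i}(\xi_i) = -[C_i]$, so $\theta_{S,i}(\xi_C)\,\xi_i^{-1}$ lies in the kernel of $\partial_{U_i/V_i}$, hence inside $\iw{U_i/V_i}^\times = K_1(\iw{U_i/V_i})$. Assembling over $i$, the element $\theta_S(\xi_C) \cdot (\xi_i)_i^{-1}$ lies in $\Psi_S \cap \prod_i \iw{U_i/V_i}^\times$, which by property (3) of the preceding Proposition coincides with $\Psi$.

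Then, using the surjectivity $\theta : K_1(\iw{G}) \twoheadrightarrow \Psi$ provided by property (2), I would pick $u \in K_1(\iw{G})$ with $\theta(u) = \theta_S(\xi_C) \cdot (\xi_i)_i^{-1}$ and set $\xi := \xi_C\, u^{-1}$. Since $u$ is lifted from $K_1(\iw{G})$ its image under $\partial_G$ is trivial, so $\partial_G(\xi) = -[C]$, while $\theta_S(\xi) = \theta_S(\xi_C) \cdot \theta(u)^{-1} = (\xi_i)_i$. Thus $\xi$ is a characteristic element whose abelian projections recover the Deligne--Ribet zeta functions $\xi_i$; the interpolation formula for $\xi$ at arbitrary Artin representations of $G$ then follows from that of the $\xi_i$, because the family $\{(U_i, V_i)\}$ is arranged so that every irreducible Artin representation of $G$ is a virtual sum of representations induced from characters of the abelian quotients $U_i/V_i$.

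The main obstacle is less the $K$-theoretic bookkeeping above than the preparatory facts it relies on: namely identifying $[C]$ and its abelian projections $[C_i]$ with the correct arithmetic invariants in a manner compatible with the norm maps (which is where the $\mu = 0$ hypothesis and the descent properties of the total norm enter), and verifying that the chosen family $\{(U_i, V_i)\}$ really exhausts the irreducible Artin representations of $G$ via Brauer-type induction --- this latter point is where the specific matrix description of $G$ and the exclusion of $p = 2, 3$ become essential.
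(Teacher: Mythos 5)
Your proposal is correct and follows essentially the same route as the paper's proof: take an arbitrary characteristic element, observe via Wiles's abelian main conjecture and the localization sequence that the discrepancy with $(\xi_i)_i$ lands in $\Psi_S\cap\prod_i\iw{U_i/V_i}^{\times}=\Psi$, lift it through the surjection $\theta$, correct, and deduce interpolation from the induction hypothesis on the family $\{(U_i,V_i)\}$ together with the compatibility of evaluation maps with induction. The only difference is cosmetic (you correct by $u^{-1}$ where the paper multiplies by $u$).
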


Note that the $p$-adic zeta function (pseudomeasure) $\xi_i$ for
$F_{V_i}/F_{U_i}$ has been constructed by using the theory of Pierre R.\
Deligne and Kenneth A.\ Ribet (\cite{De-Ri}), and the Iwasawa main
conjecture for $F_{V_i}/F_{U_i}$ has been proven by Andrew Wiles (\cite{Wiles}). 

The condition for $(\xi_i)_i$ to be contained in
$\Psi_S$ is essentially given by the congruences among $\xi_i$'s, 
so we may reduce the non-commutative Iwasawa main conjecture to 
the congruences among the abelian $p$-adic zeta pseudomeasures via the theta map.

In order to study the congruences which abelian $p$-adic zeta pseudomeasures should satisfy,
we use the theory of Hilbert modular forms of Deligne-Ribet (\cite{De-Ri}), and 
derive the congruences of $p$-adic zeta pseudomeasures (that is, the
congruences of constant terms of certain $\Lambda$-adic Hilbert modular forms) from those of the coefficients of non-constant terms of
$\Lambda$-adic Hilbert modular forms (See \S 7). Kato and 
Ritter-Weiss first used this technique in \cite{Kato1} and \cite{R-W3},
and obtained many kinds of congruences among abelian $p$-adic zeta 
pseudomeasures.

Actually it is difficult to prove all the desired congruences by using
only this technique, therefore
we use the existence of the $p$-adic zeta function for a certain
quotient group $\line{G}$ of $G$, proven by Kato in
\cite{Kato1}. We prove our main theorem (Theorem \ref{thm:mt}) by using an inductive technique.

%
\subsection{Overview}
%

The detailed content of this paper is as follows.

In \S 1, we review basic results of (classical) algebraic 
$K$-theory. In particular, we summarize properties of 
integral logarithmic homomorphisms, which were first introduced by 
Robert Oliver and Laurence R.\ Taylor to study the structure of the $K_1$-group 
of a group ring $R[G]$ where $G$ is a finite group and $R$ is the integer ring of a finite extension of $\Qp$. 
The integral logarithmic homomorphisms are maps from  multiplicative $K_1$-groups 
to certain additive groups, which are much easier to treat 
(Proposition-Definition \ref{pd:intlog}).

In \S 2, we review the theory of Coates et al. (\cite{CFKSV}), 
especially how to formulate the main conjecture.

We state our main theorem precisely in \S 3, and introduce Burns'
technique under more general situations than Theorem \ref{thm:mt}. 

We construct the theta map for our case from \S 4 to \S 6. In 
\S 4, we construct the additive version of the theta map
(Proposition-Definition \ref{pd:adak}) by using linear representation theory of finite groups. In \S 5, we translate the results on the additive theta map
proven in \S 4 into the (multiplicative) theta map, using the integral logarithmic
 homomorphisms. In \S 6, we construct the localized version of the theta
map $\theta_S$. For this purpose, we take the $p$-adic completion
$\comp{\iw{\Gamma}_{(p)}}$ of $\iw{\Gamma}_{(p)}$,
and apply the arguments of \S 4 and \S 5 to $\comp{\iw{\Gamma}_{(p)}}[G^f]$.

The condition for abelian $p$-adic zeta pseudomeasures to be contained 
in $\Psi_S$ is essentially given as the congruences among them. 
Hence in \S 7, we study congruences which abelian $p$-adic zeta 
pseudomeasures satisfy (Proposition \ref{prop:congzeta}). We use the theory of Deligne and Ribet on Hilbert modular forms (\cite{De-Ri}). 

Since the congruences obtained in \S 7 are not sufficient to conclude that 
abelian $p$-adic zeta pseudomeasures are contained in $\Psi_S$, 
we introduce Kato's $p$-adic zeta function for a certain 
sub $p$-adic Lie extension $F_N/F$ of $F^{\infty}/F$ (Theorem \ref{thm:kato}), and 
prove Theorem \ref{thm:mt} by an inductive technique.

Our main theorem gives a new example which is not deduced from previous
results (\cite{Kato1}, \cite{R-W2}, \cite{R-W3}, and \cite{R-W4}).

\tableofcontents

%
\subsection{Notation}
%

In this paper, $p$ always denotes a positive prime number. 

We denote by $\N$ the set of natural numbers (the set of {\em
strictly} positive integers), denote by $\Z$ (resp.\ $\Zp$) the ring of integers
(resp.\ $p$-adic integers), and also denote by $\Q$ (resp.\ $\Qp$) 
the rational number field (resp.\ the $p$-adic number field). 

For an arbitrary group $G$, we denote by $\conj{G}$ the set of all
conjugacy classes of $G$.

For every pro-finite group $P$, we always denote by $\iw{P}=\Zp[[P]]$ 
its Iwasawa algebra (that is, its completed group ring over $\Zp$).

We denote by $\Gamma$ a commutative $p$-adic Lie group which is 
isomorphic to $\Zp$. Throughout this paper, we fix a topological 
generator $t$ of $\Gamma$. In other words, we fix an isomorphism
\begin{eqnarray*}
\iw{\Gamma} &\xrightarrow{\simeq} & \Zp[[T]] \\
t & \mapsto & 1+T
\end{eqnarray*}
where $\Zp[[T]]$ is the formal power series ring over $\Zp$.

We always assume that every associative ring has $1$.
We also use the following notation:
\begin{align*}
\M_n(R) &=\{ \text{the ring of $n\times n$-matrices with entries
 in $R$}  \}, \\
\GL_n(R) &=\{ M\in \M_n(R) \mid M \, \text{is an invertible matrix} \},
\end{align*}
where $R$ is an associative ring. If $R$ is a commutative domain, we
denote by $\mathrm{Frac}(R)$ its fractional field.

We always regard $K_0$-groups as additive groups, 
and $K_1$-groups as multiplicative groups.

Finally, we fix an algebraic closure $\line{\Q}$ of $\Q$ and fix embbedings 
\begin{equation*}
\line{\Q}\hookrightarrow \mathbb{C}, \qquad \line{\Q} \hookrightarrow \line{\Q}_p
\end{equation*}
where $\mathbb{C}$ denotes the complex number field and $\line{\Q}_p$ the
algebraic closure of $\Qp$.

%
\subsection{Acknowledgment}
%

The author would like to thank everyone with whom he has been concerned. 
He would like to express his sincere gratitude to Professor Shuji Saito, 
Professor Kazuya Kato and Professor Takeshi Tsuji among them; 
Professor Shuji Saito has invited the author to the mysterious and interesting 
world of number theory through his lectures and seminars; Professor Kazuya Kato has shown the author the mystic aspect of non-commutative Iwasawa theory and motivated the author to study it through his intensive lectures at the University of Tokyo in 2006; and Professor Takeshi Tsuji, whom the author is most grateful to, has given the author a lot of useful advice through his seminars and read the manuscript very carefully. Especially, the main idea of the inductive technique (used in \S8) is due to him. This paper could never have existed without their direct and indirect cooperation.

Mahesh Kakde has recently generalized the Kato's method used in
\cite{Kato1} and proven the main conjecture for other cases in the
different way from this paper (see \cite{Kakde}). The author would like
to express his sincere gratitude to Mahesh Kakde for sending the latest
version of his paper.

%
%
\section{Preliminaries on algebraic $K$-theory}
%
%

In this section, we summarize basic results of algebraic $K$-theory 
which we need to formulate the non-commutative Iwasawa main conjecture
and to construct the theta map. 

%
\subsection{Definitions and first properties}
%

First, we review the definition of $K$-groups and their 
properties. For more details, see \cite{Bass1}.

\begin{defn}[Grothendieck groups, $K_0$-groups]
Let $\mathscr{C}$ be a category with a product $\bot$ $($recall that a category $\mathscr{C}$ with a product $\bot$ is a 
category equipped with a functor $\bot \colon \mathscr{C} \times 
\mathscr{C} \rightarrow \mathscr{C})$. 
Then we define {\it the Grothendieck group of $\mathscr{C}$} 
$K_0(\mathscr{C})$ as an abelian group equipped with a map
\begin{equation*}
[\, \cdot\, ] \colon \obj{\mathscr{C}} \longrightarrow K_0(\mathscr{C})
\end{equation*}
satisfying the following universal properties.
\begin{enumerate}[($K_0$-1)]
\item For every $X,Y\in \obj{\mathscr{C}}$ satisfying $X\cong Y$, 
$[X]=[Y]$.
\item For every $X,Y \in \obj{\mathscr{C}}$, $[X\bot Y]=[X]+[Y]$.
\end{enumerate}

Namely, if a map $f\colon\obj{\mathscr{C}} \longrightarrow 
A $ ($A\colon  \text{an abelian group}$) satisfies the properties ($K_0$-1) and ($K_0$-2), 
there exists a unique group homomorphism $\psi \colon K_0(\mathscr{C}) \rightarrow A$ which makes 
the following diagram commute:
\begin{equation*}
\xymatrix{
\obj{\mathscr{C}} \ar[r]^{[\, \cdot\, ]} \ar[dr]_f & K_0(\mathscr{C}) \ar@{.>}[d]^{{}^{\exists !}\psi} \\
 & A
}
\end{equation*}

For every associative ring $R$, we define $K_0(R)$ 
as the Grothendieck group of the category of finitely generated 
projective left $R$-modules.
\end{defn}

\begin{defn}[Whitehead groups, $K_1$-groups] \label{def:k1}
Let $\mathscr{C}$ be a category with a product $\bot$. 
Let $\Aut(\mathscr{C})$ be the category of automorphisms of 
objects of $\mathscr{C}$. Namely, an object of $\Aut(\mathscr{C})$
 is a pair $(X,\sigma)$ where $X\in \obj{\mathscr{C}}$ and $\sigma \colon X\rightarrow X$ is an automorphism of $X$. A morphism $f \colon (X,\sigma)\longrightarrow (Y,\tau)$ is a morphism $f \colon X\rightarrow Y$ in $\mathscr{C}$ 
which satisfies $f\circ \sigma=\tau \circ f$. 

Then we define {\it the Whitehead group of $\mathscr{C}$} 
$K_1(\mathscr{C})$ as an abelian group equipped with a map
\begin{equation*}
[\, \cdot \,] \colon \obj{\Aut(\mathscr{C})} \longrightarrow K_1(\mathscr{C})
\end{equation*}
satisfying the following universal properties.
\begin{enumerate}[($K_{1}$-1)]
\item For every $(X,\sigma),(Y,\tau)\in \obj{\Aut(\mathscr{C})}$ satisfying $(X,\sigma)\cong (Y,\tau)$, 
$[(X,\sigma)]=[(Y,\tau)]$.
\item For every $(X,\sigma),(Y,\tau) \in \obj{\Aut(\mathscr{C})}$,
      $[(X,\sigma)\bot (Y,\tau)]=[(X,\sigma)]\cdot[(Y,\tau)]$.
\item For every $(X,\sigma),(X,\sigma')\in \obj{\Aut(\mathscr{C})}$,
$[(X,\sigma \circ \sigma')]=[(X,\sigma)]\cdot [(X,\sigma')]$.
\end{enumerate}

For every associative ring $R$, 
we define $K_1(R)$ 
as the Whitehead group of the category of finitely generated 
projective left $R$-modules.
\end{defn}

\begin{rem}
When $R$ is the group ring $\integer{K}[G]$ of a finite group $G$ 
over the integer ring of a number field $K$, the terminology ``Whitehead group'' is usually used for the
 group
\begin{equation*}
\mathrm{Wh}(\integer{K}[G])=K_1(\integer{K}[G])/(\mu(\integer{K})\times G^{\mathrm{ab}}),
\end{equation*}
where $\mu(\integer{K})$ is the multiplicative group consisting of all roots
 of $1$ contained in $\integer{K}$.
\end{rem}

We have another interpretation of the Whitehead group of an associative
ring $R$ (Whitehead's construction): let $\E_n(R)$ be a subgroup of
$GL_n(R)$ generated by all ``elementary
matrices,'' that is,
\begin{equation*}
\E_n(R) = \langle I_n+rE_{ij} \mid 1\leq i\neq j\leq n,\, r\in R \rangle
\end{equation*}
where 
\begin{equation*}
E_{ij}=
\bordermatrix{
  &   &   &  & j & \cr
  & 0 & 0 & \cdots & & 0 \cr
i & 0 & \ddots &  & 1 &  \cr
  &   &  & \ddots &  &  \cr
  & \vdots   &  &   & \ddots  & \vdots \cr
  & 0  & \cdots  &   & \cdots   &  0 }.
\end{equation*}

Here we denote the unit matrix of $\GL_n(R)$ by $I_n$. Note that $\E_n(R)$ is normal in $\GL_n(R)$.

Let 
\begin{align*}
\GL(R)&=\varinjlim_n \GL_n(R) & \text{and} & & \E(R) &=\varinjlim_n \E_n(R),
\end{align*}
then we have 
\begin{equation*}
K_1(R)= \GL(R)/\E(R).
\end{equation*}

For the equivalence of Definition \ref{def:k1} and Whitehead construction,
see \cite{Bass1}, Chapter IX.

\begin{defn}[Relative Whitehead groups]
Let $R$ be an associative ring and $\ideal{a}\subseteq R$ an arbitrary 
(two-sided) ideal. Set
\begin{align*}
\GL_n(R,\ideal{a}) &= \Ker (\GL_n(R) \longrightarrow \GL_n(R/\ideal{a})), \\
\E_n(R,\ideal{a}) &= \text{The minimal normal subgroup of $\GL_n(R,\ideal{a})$} \\
& \qquad \text{containing
 } \{ I_n+rE_{ij} \mid 0\leq i \neq j\leq n,\, r\in \ideal{a} \},
\end{align*}
and
\begin{align*}
\GL(R,\ideal{a})&= \varinjlim_n \GL_n(R,\ideal{a}), & \E(R,\ideal{a}) &= \varinjlim_n 
\E_n(R,\ideal{a}).
\end{align*}

Then we define
\begin{equation*}
K_1(R,\ideal{a})=\GL(R,\ideal{a})/\E(R,\ideal{a}).
\end{equation*}
\end{defn}

\begin{prop}[Whitehead's lemma] \label{prop:whitehead}
Let $R$ and $\ideal{a}$ be as above. Then
\begin{align*}
\E(R) &= [\GL(R), \GL(R)], \\
\E(R,\ideal{a}) &= [\E(R),\E(R,\ideal{a})] = [\GL(R),\GL(R,\ideal{a})].
\end{align*}
\end{prop}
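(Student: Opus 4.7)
The plan is to prove each equality by a chain of inclusions, relying on two classical tools: the Steinberg commutator identity
\[
[I_n + aE_{ij},\ I_n + bE_{jk}] = I_n + abE_{ik} \qquad (i,j,k \text{ distinct}),
\]
verified by direct matrix multiplication, and the ``Whitehead block trick'' which embeds $\GL_n(R)$ into $\E_{2n}(R)$ via $A \mapsto \mathrm{diag}(A, A^{-1})$.

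For the first equality, the inclusion $\E(R) \subseteq [\GL(R), \GL(R)]$ follows by expressing each generator $I + rE_{ij}$ as the Steinberg commutator $[I + rE_{ik},\ I + E_{kj}]$, using the freedom of a third index $k$ in the direct limit. For the reverse inclusion, I would first verify that $\mathrm{diag}(A, A^{-1}) \in \E_{2n}(R)$ for every $A \in \GL_n(R)$ via the explicit factorization
\[
\begin{pmatrix} A & 0 \\ 0 & A^{-1} \end{pmatrix} = \begin{pmatrix} I & A \\ 0 & I \end{pmatrix} \begin{pmatrix} I & 0 \\ -A^{-1} & I \end{pmatrix} \begin{pmatrix} I & A \\ 0 & I \end{pmatrix} \begin{pmatrix} 0 & -I \\ I & 0 \end{pmatrix},
\]
where each block shear is patently a product of elementary matrices and the Weyl block factors as such by the standard $2 \times 2$ decomposition. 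The block identity
\[
\begin{pmatrix} ABA^{-1}B^{-1} & 0 \\ 0 & I \end{pmatrix} = \mathrm{diag}(A, A^{-1}) \cdot \mathrm{diag}(B, B^{-1}) \cdot \mathrm{diag}((BA)^{-1}, BA)
\]
then places every commutator $[A, B]$ inside $\E(R)$.

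For the second equality, I would close the cycle
\[
\E(R, \ideal{a}) \subseteq [\E(R), \E(R, \ideal{a})] \subseteq [\GL(R), \GL(R, \ideal{a})] \subseteq \E(R, \ideal{a}).
\]
The middle inclusion is immediate. The first arises from a relative Steinberg computation: each generator $I + aE_{ij}$ of $\E(R, \ideal{a})$ with $a \in \ideal{a}$ equals $[I + aE_{ik},\ I + E_{kj}]$, whose factors lie in $\E(R, \ideal{a})$ and $\E(R)$ respectively, so each generator sits in $[\E(R), \E(R, \ideal{a})]$; to pass from generators to the full normal closure one needs $[\E(R), \E(R, \ideal{a})]$ to be normal in $\GL(R, \ideal{a})$, which is deduced from the absolute equality via a Hall--Witt commutator manipulation. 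The last inclusion uses the same block-matrix trick as the absolute case, refined so that the relevant diagonal blocks land in $\E_{2n}(R, \ideal{a})$ when $B \in \GL_n(R, \ideal{a})$.

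The main obstacle will be the normality of $[\E(R), \E(R, \ideal{a})]$ in $\GL(R, \ideal{a})$, and, closely related, the refinement of the block-matrix argument to the relative setting. Both boil down to controlling conjugates of commutators of elementary matrices by arbitrary invertible ones; this is routine but delicate, and is cleanest in the stable range where Whitehead's trick provides enough room for the necessary identities. Once in place, all inclusions close and both equalities follow.
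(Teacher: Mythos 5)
Your argument is correct and is exactly the classical proof that the paper delegates to its reference: the text gives no proof of its own but cites Milnor's \emph{Introduction to Algebraic $K$-theory}, Lemmas 3.1 and 4.3, where the Steinberg commutator identity, the block embedding $A \mapsto \mathrm{diag}(A,A^{-1})$ with its four-factor elementary factorization, and the relative refinement (including the normality of $[\E(R),\E(R,\ideal{a})]$ in the stable range) are carried out precisely as you outline. No further comparison is needed.
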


\begin{proof}
See \cite{Milnor}, Lemma 3.1, Lemma 4.3.
\end{proof}

The following two propositions are well known and we often use them later.

\begin{prop} \label{prop:surjonk1}
Let $R$ be an associative ring and $\ideal{a}$ be a two-sided 
ideal contained in its Jacobson radical. Then the canonical homomorphism
\begin{equation*}
K_1(R) \longrightarrow K_1(R/\ideal{a})
\end{equation*}
induced by $R \rightarrow R/\ideal{a}$ is surjective.
\end{prop}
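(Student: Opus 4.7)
The plan is to work with Whitehead's description $K_1(R)=\GL(R)/\E(R)$ recalled just before the statement, and to show that the lifting problem can already be solved at the level of general linear groups, i.e.\ that $\GL_n(R)\to \GL_n(R/\ideal{a})$ is surjective for every $n$. Once this is established, passing to the direct limit gives surjectivity of $\GL(R)\to \GL(R/\ideal{a})$, and since $\E(R)$ maps onto $\E(R/\ideal{a})$ (elementary matrices clearly lift to elementary matrices), the induced map on the quotients $K_1(R)\to K_1(R/\ideal{a})$ is surjective.

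To produce lifts in $\GL_n$, the key input I would use is the standard fact that $\M_n(\ideal{a})\subseteq J(\M_n(R))$ whenever $\ideal{a}\subseteq J(R)$; equivalently, $I_n+\M_n(\ideal{a})\subseteq \GL_n(R)$. Given $\bar{g}\in \GL_n(R/\ideal{a})$, I would first choose any matrix-theoretic lifts $g,h\in \M_n(R)$ of $\bar{g}$ and $\bar{g}^{-1}$ respectively. Reducing modulo $\ideal{a}$ gives
\begin{equation*}
gh-I_n\in \M_n(\ideal{a}),\qquad hg-I_n\in \M_n(\ideal{a}),
\end{equation*}
so both $gh$ and $hg$ lie in $I_n+\M_n(\ideal{a})\subseteq\GL_n(R)$. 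From this it follows that $g$ has both a left and a right inverse in $\M_n(R)$, hence $g\in \GL_n(R)$ is the desired lift of $\bar{g}$.

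The only step requiring any thought is the ring-theoretic lemma that $\M_n(\ideal{a})$ is contained in the Jacobson radical of $\M_n(R)$; this can be verified by characterising $J(\M_n(R))$ as the set of matrices $M$ such that $I_n-MN$ is invertible for all $N\in \M_n(R)$, then using the hypothesis $\ideal{a}\subseteq J(R)$ row by row. I do not foresee any genuine obstacle here: everything is completely formal once the Jacobson radical property is in hand, and no use of the relations ($K_1$-1)--($K_1$-3) is needed beyond what is packaged into $K_1(R)=\GL(R)/\E(R)$.
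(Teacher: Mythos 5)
Your argument is correct and is exactly the standard one: the paper itself gives no proof here, merely citing Bass (Chapter IX, Proposition (1.3)), and that reference proves the statement the same way you do, by lifting units of $\M_n(R/\ideal{a})$ using $\M_n(\ideal{a})\subseteq J(\M_n(R))$ and the fact that $1+J$ consists of units. The only ingredient you leave as a lemma, $J(\M_n(R))=\M_n(J(R))$, is standard, so there is no gap.
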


\begin{proof}
See \cite{Bass1}, Chapter IX, Proposition (1.3).
\end{proof}

\begin{prop} \label{prop:sloc}
Let $R$ be a semi-local associative ring $($recall that $R$ is semi-local if $R/J$ is semi-simple where $J$ is the Jacobson radical of $R)$. Then the following properties hold.
\begin{enumerate}[$(1)$]
\item The group homomorphism
\begin{equation*}
R^{\times} \longrightarrow K_1(R) ; u\mapsto [(R,-\cdot u)]
\end{equation*}
is surjective, where $-\cdot u$ is the automorphism of $R$ defined by multiplication by $u$ from the right $($here we regard $R$ as a left $R$-module$)$.

If $R$ is semi-local and also commutative, the homomorphism above is 
an isomorphism.

\item $($stability property$)$

For each $d\geq 2$, we have the canonical isomorphism 
\begin{equation*}
K_1(R)\cong \GL_d(R)/\E_d(R).
\end{equation*}
\end{enumerate}
\end{prop}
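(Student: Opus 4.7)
The plan is to prove both (1) and (2) simultaneously by exploiting the fact that every semi-local ring satisfies Bass's stable range condition with bound $1$. First I would establish this key lemma: if $R$ is semi-local with Jacobson radical $J$, then $R/J$ is semi-simple Artinian, and since units lift along $R \to R/J$ (by Nakayama together with Proposition \ref{prop:surjonk1}), it suffices to verify the stable range $1$ condition for $R/J$. By the Wedderburn structure theorem, $R/J$ decomposes as a finite product of matrix rings over division rings, and in each factor stable range $1$ can be verified by direct linear algebra.

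Given stable range $1$, I would deduce the following row-reduction statement: for any $n \geq 2$ and any $g \in \GL_n(R)$, there exist $e, e' \in \E_n(R)$ and $u \in R^{\times}$ with $ege' = \mathrm{diag}(u, 1, \ldots, 1)$. Indeed, the first column of $g$ is a unimodular column vector, so by stable range $1$ we may add $R$-multiples of the last entry to the preceding entries so as to make the first entry a unit; standard elementary operations then clear the remaining entries in the first row and column. Proceeding inductively on $n$ reduces $g$, modulo $\E_n(R)$, to a diagonal form, and a final commutator trick (cf.\ Whitehead's lemma, Proposition \ref{prop:whitehead}) concentrates the ``twist'' in one entry.

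For (1), the surjectivity of $R^{\times} \to K_1(R)$ is immediate from the row reduction, since every class in $K_1(R)$ is represented by some $\mathrm{diag}(u,1,\ldots,1)$, which is the image of $u$. When $R$ is additionally commutative, the determinant $\det \colon \GL_n(R) \to R^{\times}$ kills each $\E_n(R)$ and hence descends to a homomorphism $K_1(R) \to R^{\times}$ providing a left inverse to the above map, which yields injectivity. For (2), the row reduction shows that the natural map $\GL_d(R)/\E_d(R) \to \GL_{d+1}(R)/\E_{d+1}(R)$ is surjective for every $d \geq 1$; injectivity for $d \geq 2$ follows from Whitehead's lemma, which identifies $\E(R)$ with the commutator subgroup of $\GL(R)$ and allows one to realize any trivializing relation inside $\GL_d(R)$ itself once $d \geq 2$.

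The main obstacle I expect is the careful bookkeeping needed for the stability statement in (2), specifically verifying that the size can really be pushed down all the way to $d = 2$ rather than some larger bound. The clean way to handle this is to invoke Bass's general theorem (\cite{Bass1}, Chapter V) that for a ring of stable range $\leq s$ the map $\GL_d(R)/\E_d(R) \to K_1(R)$ is an isomorphism once $d \geq s + 1$; specializing to $s = 1$ gives exactly $d \geq 2$. Everything else reduces to standard elementary-matrix manipulations, so it is this stable-range input that does the real work.
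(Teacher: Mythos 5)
Your proposal is correct and follows essentially the same route as the paper, which simply cites Bass (\cite{Bass1}, Chapter IX, Proposition (1.4) for part (1) and Chapter V, Theorem (9.1) for part (2)): the content of those results is precisely the stable-range-$1$ property of semi-local rings and the resulting row reduction and stability bound $d\geq 2$ that you sketch, together with the determinant providing the inverse in the commutative case. Your reconstruction of the stable-range argument (lifting units along $R\to R/J$ and checking matrix rings over division rings) is the standard proof of the input you ultimately invoke, so there is no substantive divergence from the paper.
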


\begin{proof}
For (1), see \cite{Bass1} Chapter IX, Proposition (1.4). When $R$ is
 commutative, the determinant map gives the inverse map of $R^{\times}
 \rightarrow K_1(R)$.

For (2), see \cite{Bass1} Chapter V, Theorem (9.1).
\end{proof}

Now let us study the project limit of $K_1$-groups for semi-local rings.

\begin{prop} \label{prop:projlim}
Let $R$ be a semi-local ring and $\{ R^{(n)} \}_{n\in \mathbb{N}}$ a projective system of semi-local rings such that $R^{(n)}\rightarrow R^{(m)}$ is surjective for each $n>m\geq 1$ and $\varprojlim_n R^{(n)} \cong R$.

Then the homomorphism 
\begin{equation*}
K_1(R) \rightarrow \varprojlim_n K_1(R^{(n)})
\end{equation*}
induced by the canonical homomorphisms $K_1(R)\rightarrow K_1(R^{(n)})$ is an isomorphism.
\end{prop}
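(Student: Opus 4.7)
The plan is to use Proposition \ref{prop:sloc}(2) to replace $K_1$ by $\GL_d/\E_d$ for a fixed $d\geq 2$ and then carry out a Mittag--Leffler-type lifting through the projective system. Fix such a $d$, so that $K_1(R)=\GL_d(R)/\E_d(R)$ and $K_1(R^{(n)})=\GL_d(R^{(n)})/\E_d(R^{(n)})$ for every $n$. The identification $\M_d(R)=\varprojlim \M_d(R^{(n)})$, together with the observation that a coherent family of inverses $(g_n^{-1})$ assembles into a genuine two-sided inverse of $g=(g_n)$, yields $\GL_d(R)\cong \varprojlim \GL_d(R^{(n)})$. Two technical lifting inputs feed the rest of the proof: (i) each transition $\GL_d(R^{(n+1)})\twoheadrightarrow \GL_d(R^{(n)})$ is surjective, because with both rings semi-local the surjection $R^{(n+1)}\to R^{(n)}$ forces $\Ker(R^{(n+1)}\to R^{(n)})$ to sit inside the Jacobson radical of $R^{(n+1)}$, so units and thus invertible matrices lift (using Proposition \ref{prop:surjonk1} applied to $\M_d$); and (ii) each $\E_d(R^{(n+1)})\twoheadrightarrow \E_d(R^{(n)})$ is surjective, since every generator $I_d+rE_{ij}$ lifts elementarily via $I_d+\tilde r E_{ij}$ for any preimage $\tilde r$ of $r$.

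For surjectivity of the map $K_1(R)\to\varprojlim K_1(R^{(n)})$, given a compatible family $(x_n)$ I would inductively produce lifts $g_n\in \GL_d(R^{(n)})$ of $x_n$ satisfying $g_{n+1}\mapsto g_n$. After choosing any lift $g_1$ of $x_1$, suppose $g_n$ has been built; pick some lift $h_{n+1}\in \GL_d(R^{(n+1)})$ of $x_{n+1}$ via (i), observe that $h_{n+1}^{(n)}$ and $g_n$ both represent $x_n$ so they differ by some $e_n\in \E_d(R^{(n)})$, lift $e_n$ to $\tilde e_n\in \E_d(R^{(n+1)})$ via (ii), and set $g_{n+1}=h_{n+1}\tilde e_n^{-1}$. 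The resulting coherent system corresponds via $\GL_d(R)\cong\varprojlim\GL_d(R^{(n)})$ to an element $g\in \GL_d(R)$ whose class in $K_1(R)$ projects to $(x_n)$.

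For injectivity, I would suppose $g\in \GL_d(R)$ has trivial image in every $K_1(R^{(n)})$, so that each projection $g^{(n)}$ lies in $\E_d(R^{(n)})$. Writing $g^{(1)}$ as a finite product of elementary matrices and lifting each factor to $R$ produces $\tilde e_1\in \E_d(R)$ such that $g\tilde e_1^{-1}$ belongs to the relative subgroup $\GL_d(R,J_1)$, where $J_1=\Ker(R\to R^{(1)})$. Iterating level by level, and invoking Whitehead's lemma (Proposition \ref{prop:whitehead}) to ensure each successive correction can be taken inside $\E_d(R,J_n)\subseteq \E_d(R)$, pushes $g$ ever deeper into the filtration $\{\GL_d(R,J_n)\}$, whose intersection is trivial because $\bigcap_n J_n=0$. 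The main obstacle is precisely controlling this infinite descent: the abstractly defined $\E_d(R)$ need not coincide with $\varprojlim \E_d(R^{(n)})$ in general, so the delicate step is to show that the accumulating product of elementary corrections represents an honest element of $\E_d(R)$ and exhausts $g$; this is where the Jacobson-radical nesting of the $J_n$, inherited from the semi-local hypothesis on each $R^{(n)}$, is used most essentially.
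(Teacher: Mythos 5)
Your overall route is the same as the paper's: fix $d\geq 2$ by the stability property, identify each $K_1$ with $\GL_d/\E_d$, and pass through the tower using surjectivity of the transition maps on the elementary subgroups. The paper packages this as the projective limit of the exact sequences $1\to\E_d(R^{(n)})\to\GL_d(R^{(n)})\to K_1(R^{(n)})\to 1$ (Mittag--Leffler) followed by a five-lemma comparison with $1\to\E_d(R)\to\GL_d(R)\to K_1(R)\to 1$; you unwind the same content into an explicit inductive lifting. Your surjectivity argument is complete and correct, and in fact never uses your input (i): the induction only needs a representative of $x_{n+1}$ in $\GL_d(R^{(n+1)})$, which is Proposition \ref{prop:sloc}, plus the lifting of elementary matrices, which is your input (ii). That is just as well, because the justification you offer for (i) is false as stated: a surjection of semilocal rings need not have kernel inside the Jacobson radical (consider the projection $\Zp\times\Zp\to\Zp$). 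The conclusion of (i) is still true, since units of a semilocal ring lift along any surjection, but not for the reason you give.

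The injectivity half is where your proof is not finished, and you say so yourself. What must be shown is exactly that $\E_d(R)\to\varprojlim_n\E_d(R^{(n)})$ is surjective: if $g\in\GL_d(R)$ satisfies $g^{(n)}\in\E_d(R^{(n)})$ for every $n$, then $g\in\E_d(R)$. Your descent exhibits $g$ as a convergent infinite product of elementary corrections $\tilder{e}_n$ congruent to the identity modulo $J_{n-1}$, and a convergent infinite product of elementary matrices need not be a finite one; nor does the descent terminate after finitely many steps, since $\GL_d(R,J_n)\not\subseteq\E_d(R)$ in general (already for $R=\Zp$, where $\E_d(\Zp)=\SL_d(\Zp)$ while $I_d+p^nE_{11}+\dotsb$-type diagonal perturbations have determinant $1+p^n$). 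So a genuine additional input is needed here --- some form of completeness or closedness of $\E_d(R)$ for the topology defined by the $\GL_d(R,J_n)$, which is what arguments of Oliver and of Fukaya--Kato supply in the settings where this proposition is applied. You should note, however, that the paper's own proof is no more complete at this point: it asserts that the left vertical map $\E_d(R)\to\varprojlim_n\E_d(R^{(n)})$ is ``canonically'' an isomorphism, and surjectivity of that map is precisely the unproved step. You have correctly located the one nontrivial point of the proposition; you have not closed it.
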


\begin{proof}
By the stability property (Proposition \ref{prop:sloc} (2)), we have 
\begin{equation*}
K_1(R)\cong \GL_d(R)/\E_d(R) \quad \text{and} \quad K_1(R^{(n)}) \cong \GL_d(R^{(n)})/\E_d(R^{(n)})
\end{equation*}
for every $d\geq 2$. Fix $d\geq 2$.

Consider the exact sequence
\begin{equation} \label{eq:exgle}
\begin{CD}
1 @>>> \E_d(R^{(n)}) @>>> \GL_d(R^{(n)}) @>>> K_1(R^{(n)}) @>>> 1\end{CD}
\end{equation}
for each $n\geq 1$. Then for every $n> m\geq 1$, the natural homomorphism 
\begin{equation*}
\E_d(R^{(n)}) \rightarrow \E_d(R^{(m)})
\end{equation*}
induced by $R^{(n)} \rightarrow R^{(m)}$ is clearly surjective. Therefore $\{ \E_d(R^{(n)}) \}_{n\in \mathbb{N}}$ satisfies the Mittag-Leffler condition, and we have the following exact sequence
\begin{equation*}
\xymatrix{
1 \ar[r] & \varprojlim_n \E_d(R^{(n)}) \ar[r] & \varprojlim_n \GL_d(R^{(n)}) \ar[r] & \varprojlim_n K_1(R^{(n)}) \ar[r] & 1
}
\end{equation*}
by taking the projective limit of (\ref{eq:exgle}). 

Now consider the following diagram:
\begin{equation*}
\xymatrix{
1 \ar[r] & \E_d(R) \ar[d] \ar[r] & \GL_d(R) \ar[d] \ar[r] & K_1(R) \ar[d] \ar[r] & 1 \\1 \ar[r] & \varprojlim_n \E_d(R^{(n)}) \ar[r] & \varprojlim_n \GL_d(R^{(n)}) \ar[r] & \varprojlim_n K_1(R^{(n)}) \ar[r] & 1
}
\end{equation*}
here the vertical homomorphisms are induced by the canonical homomorphism $R \rightarrow R^{(n)}$. We can easily check that each square diagram commutes.

Then the left and middle vertical homomorphisms are canonically isomorphic. Hence the right vertical homomorphism is also an isomorphism by the snake lemma.
\end{proof}

%
\subsection{Norm maps in $K$-theory}
%

The theta map, which we will construct in the following sections, 
is essentially a family of norm maps in algebraic $K$-theory.\footnote{In 
some books and papers, norm maps are also called 
``transfer homomorphisms.''} 
So let us review the construction and properties 
of norm maps of $K$-groups.

Let $R'/R$ be an extension of a ring $R$. Suppose that $R'$ is 
a finitely generated projective module as a left $R$-module. 
Then we may regard $R'$ as left $R$- right $R'$-bimodule ${}_R R'_{R'}$. 
We define

\begin{equation*}
\Nr_{R'/R} := \left[{}_R R'_{R'} \otimes_{R'} - \right] \colon K_i(R') \longrightarrow
 K_i(R) \qquad (i=0,1).
\end{equation*}

We often use norm maps in the following situation: let $G$ be 
a group and $H$ be its subgroup of finite index. Then we have an inclusion of 
group rings $\Zp[H] \rightarrow \Zp[G]$. Hence we obtain 
a norm map 
\begin{equation*}
\Nr_{\Zp[G]/\Zp[H]} \colon K_i(\Zp[G]) \longrightarrow K_i(\Zp[H]) \qquad 
(i=0,1).
\end{equation*}

If $G$ is a pro-finite group and $H$ is its open subgroup, we also obtain $\Nr_{\iw{G}/\iw{H}}$ by the same construction.

Now we calculate
\begin{equation*}
\Nr_{\Zp[G]/\Zp[H]} \colon K_1(\Zp[G]) \longrightarrow K_1(\Zp[H])\cong \Zp[H]^{\times}
\end{equation*}
under the specific condition that $G$ is a group and $H$ is a commutative subgroup of finite index. Note that both $\Zp[G]$ and $\Zp[H]$ are local rings.
By Proposition \ref{prop:sloc} (1), we can identify
$K_1(\Zp[H])$ with $\Zp[H]^{\times}$. Take a system of representatives 
$\{ u_1, \dotsc , u_r \}$ of the coset decomposition $H\backslash
G$. Then $\Zp[G]$ is regarded as a
left free $\Zp[H]$-module with basis $\{ u_1, \dotsc, u_r \}$. 

Let $\phi$ be an element of $K_1(\Zp[G])$. By Proposition 
\ref{prop:sloc} (1) again, we obtain $x\in \Zp[G]^{\times}$ such that 
$[x]=\phi$. Let
\begin{align*}
u_jx &= \sum_{i=1}^r x_{ij}u_i & (x_{ij} \in \Zp[H]) 
\end{align*}
for each $j$. Then we can calculate $\Nr_{\Zp[G]/\Zp[H]} \phi$ as  
\[
 \Nr_{\Zp[G]/\Zp[H]}\phi=\det ((x_{ij})_{1\leq i,j\leq r})  \quad \in
 \Zp[H]^{\times}.
\]

\begin{proof}
By the calculation above, we may identify $\Nr_{\Zp[G]/\Zp[H]} \phi$ with 
the image of $(x_{ij})_{i,j}$ in $K_1(\Zp[H])=\GL(\Zp[H])/\E(\Zp[H])$. 
Since $\Zp[H]$ is commutative, the surjection $\Zp[H]^{\times} 
\rightarrow K_1(\Zp[H])$ is an isomorphism by Proposition 
\ref{prop:sloc} (1). The determinant map gives the inverse map of the isomorphism above, and it maps $\Nr_{\Zp[G]/\Zp[H]} \phi =\left[ (x_{ij})_{i,j} \right]$ to 
$\det (x_{ij})_{i,j}$.
\end{proof}

When $G$ is a pro-finite group and $H$ is its commutative open subgroup,
we may calculate $\Nr_{\iw{G}/\iw{H}}$ explicitly in the same way.

We end this subsection with a certain compatibility property of norm maps.

\begin{prop} \label{prop:commutativity}
Let $G$ be a group and $H$ be its subgroup of finite index.
Let $N$ be a subgroup of $H$ normal in $G$ and $H$.

Then for $i=0,1$, the following diagram commutes$\colon$

\begin{equation*}
\begin{CD}
K_i(\Zp[G]) @>\Nr_{\Zp[G]/\Zp[H]}>> K_i(\Zp[H]) \\
@V{\pi_G}VV   @VV{\pi_H}V \\
K_i(\Zp[G/N]) @>>\Nr_{\Zp[G/N]/\Zp[H/N]}> K_i(\Zp[H/N])
\end{CD}
\end{equation*}
where $\pi_G$ and $\pi_H$ are canonical homomorphisms 
induced by 
\begin{align*}
\pi_G &\colon \Zp[G] \longrightarrow \Zp[G/N] &\text{and} && \pi_H &\colon \Zp[H] \longrightarrow \Zp[H/N].
\end{align*}

When $G$ is a pro-finite group, $H$ is its open subgroup and $N$ is a 
closed subgroup of $H$ normal in $G$ and $H$, the same statement holds 
for $\Nr_{\iw{G}/\iw{H}}$ and $\Nr_{\iw{G/N}/\iw{H/N}}$.
\end{prop}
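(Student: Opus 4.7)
The plan is to interpret each of the two compositions in the diagram as the $K_i$-map induced by a single functor of the form $M \mapsto B \otimes_{\Zp[G]} M$, where $B$ is a $(\Zp[H/N], \Zp[G])$-bimodule, and then to exhibit a natural bimodule isomorphism that shows the two functors coincide. This is cleaner than any explicit computation with elementary matrices because it handles $K_0$ and $K_1$ uniformly, and because it does not require $H/N$ to be commutative.

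First I would unwind the definitions of the four maps in the square. By construction $\Nr_{\Zp[G]/\Zp[H]}$ is induced by the bimodule ${}_{H}\Zp[G]_{G}$, and the quotient map $\pi_{H}$ is the map on $K_i$ induced by the bimodule ${}_{H/N}\Zp[H/N]_{H}$, i.e.\ by restriction of scalars along $\Zp[H] \twoheadrightarrow \Zp[H/N]$. Thus the upper-right route corresponds to tensoring over $\Zp[G]$ with the bimodule
\begin{equation*}
B_1 \;=\; \Zp[H/N]\otimes_{\Zp[H]} \Zp[G].
\end{equation*}
Similarly, the lower-left route corresponds to tensoring over $\Zp[G]$ with
\begin{equation*}
B_2 \;=\; \Zp[G/N] \otimes_{\Zp[G/N]} \Zp[G/N] \;\cong\; \Zp[G/N].
\end{equation*}

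Next I would construct a bimodule isomorphism $\Phi \colon B_1 \xrightarrow{\sim} B_2$ by $\bar h \otimes g \mapsto \overline{hg}$. The map is well defined (it balances the right $\Zp[H]$-action on the left factor and the left $\Zp[H]$-action on the right factor) and it respects the left $\Zp[H/N]$- and right $\Zp[G]$-actions precisely because $N$ is normal in both $H$ and $G$. To check bijectivity, I would pick a system of left coset representatives $\{u_{1},\dots,u_{r}\}$ of $H$ in $G$; since $N\subseteq H$, the images $\bar u_{i}$ form a transversal of $H/N$ in $G/N$, and $\Phi$ becomes the sum of the identity maps on $\bigoplus_{i} \Zp[H/N]\cdot u_{i}$. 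Hence $\Phi$ is an isomorphism of $(\Zp[H/N],\Zp[G])$-bimodules.

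The conclusion is then immediate from the universal property of $K_i$: an isomorphism of exact functors between categories of finitely generated projective modules induces equal maps on $K_0$ and $K_1$, since isomorphic objects (respectively isomorphic pairs $(X,\sigma)$) are identified by the universal map $[\,\cdot\,]$. For the pro-finite statement I would repeat the same argument with completed tensor products, using that $\iw{G/N}$ is the quotient of $\iw{G}$ by the closed two-sided ideal topologically generated by $\{n-1 \mid n\in N\}$, so that the analogue $\Phi\colon \iw{H/N}\,\widehat{\otimes}_{\iw{H}}\,\iw{G} \xrightarrow{\sim}\iw{G/N}$ is built from the discrete-group version by passage to the limit over open normal subgroups. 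There is no real obstacle here; the only step requiring genuine care is bookkeeping of left/right actions when defining $\Phi$, since mistaking which side $N$ quotients out on is the one thing that could derail the argument.
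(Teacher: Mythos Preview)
Your proposal is correct and is essentially the same argument the paper gives: both identify each composite as $[B \otimes_{\Zp[G]} -]$ for a suitable $(\Zp[H/N],\Zp[G])$-bimodule and then verify $\Zp[H/N]\otimes_{\Zp[H]}\Zp[G]\cong\Zp[G/N]$ via a set of coset representatives of $H$ in $G$ (which also represent $H/N$ in $G/N$). The paper writes this out as a chain of equalities of functors rather than naming the bimodule map $\Phi$ explicitly, but the content is identical.
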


\begin{proof}
Let $\{ u_1, \dotsc , u_r \}$ be a system of representatives of
 $H\backslash G$. 
Then it is clear that $\{ \line{u}_1 , \dotsc , \line{u}_r \}$ is 
that of $(H/N)\backslash (G/N)$ where
 $\line{u}_i=\pi_G(u_i)$. Then we have 
\begin{align*}
& \quad \pi_H \circ \Nr_{\Zp[G]/\Zp[H]} \\
&= [\Zp[H/N] \otimes_{\Zp[H]}
 \left( {}_{\Zp[H]}\Zp[G]_{\Zp[G]} \right) \otimes_{\Zp[G]} -] \\
&= \left[ \Zp[H/N] \otimes_{\Zp[H]} \left( \bigoplus_{i=1}^r \Zp[H]u_i
 \right) \otimes_{\Zp[G]} - \right]  \\
&= \left[ \left( \bigoplus_{i=1}^r \Zp[H/N] \line{u}_i \right)
 \otimes_{\Zp[G]} - \right] \\
&= \left[ \left( {}_{\Zp[H/N]}\Zp[G/N]_{\Zp[G/N]} \right) \otimes_{\Zp[G/N]} (\Zp[G/N])
 \otimes_{\Zp[G]} - \right]  \\
&= \Nr_{\Zp[G/N]/\Zp[H/N]} \circ \pi_G.
\end{align*}

The pro-finite version can be verified in the same manner.
\end{proof}

%
\subsection{The localization exact sequences}
%

Let $R$ be an associative ring and $S$ be a multiplicatively closed
subset of $R$ containing $1$. In 1968, Hyman Bass constructed 
``the $K_1$-$K_0$ localization sequence'' (\cite{Bass1}):
\begin{equation*}
K_1(R) \rightarrow K_1(S^{-1}R) \xrightarrow{\del} K_0(R,S) \rightarrow
 K_0(R) \rightarrow K_0(S^{-1}R)
\end{equation*}
for central $S$ (recall that $S$ is {\em central} if $S$ contained in the center of $R$). 

In this subsection, we define \O re localizations (certain ``good''
localizations of non-commutative associative rings) and introduce the
$K_1$-$K_0$ localization exact sequence for a non-central \O re set $S$ which is the generalization of Bass' exact sequence for central $S$.

\begin{defn}[\O re sets] \label{def:ore}
Let $R$ be an associative ring, and $S\subseteq R$ be 
a multiplicatively closed subset containing $1$. 

$S$ is called {\it a left 
$($resp.\ right$)$ \O re set} if $S$ satisfies following two conditions.
\begin{enumerate}[(\O re-1)]
\item For every $r\in R$ and $s\in S$, $Sr \cap Rs \neq \emptyset$ 
(resp.\ $rS\cap sR \neq \emptyset$).
\item If $r\in R$ satisfies $rs=0$ (resp.\ $sr=0$) for a certain element $s\in S$, 
there exists $s'\in S$ such that $s'r=0$ (resp.\ $rs'=0$).
\end{enumerate}
\end{defn}

\begin{ex}
Let $R$ be an associative ring and $S$ be a multiplicatively closed subset containing $1$. Suppose that $S$ is contained in the center of $R$. Then $S$ is a 
left and right \O re set: for an arbitrary $r\in R$ and $s\in S$, there exist $r'\in R$ and $s' \in S$ such that $s'r=r's$ and $rs'=sr'$ since we may choose $s'=s$ and $r'=r$.
\end{ex}

\begin{prop} 
Let $R$ be an associative ring and $S$ a left $($resp.\ right$)$ \O
 re set. Let $k\in \N$. Then the following property holds$\colon$
\begin{enumerate}[$(\text{\O re}$-$1')$]
\item For arbitrary $s_1, \dotsc, s_k\in S$, there exist $r_1, \dotsc, r_k \in R$ which satisfy $r_1s_1=\cdots =r_ks_k\in S$ 
$($resp.\ $s_1r_1=\cdots =s_kr_k\in S)$.

\end{enumerate}
\end{prop}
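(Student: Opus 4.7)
The plan is to prove (\text{\O re}-$1'$) by induction on $k$, using the original left \O re condition (\text{\O re}-$1$) as the only nontrivial input; the right \O re case is completely symmetric so I will only discuss the left case.

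The base case $k=1$ is trivial: take $r_1=1$. For $k=2$, apply (\text{\O re}-$1$) to $r=s_1\in R$ and $s=s_2\in S$. This yields $r_1\in S$ and $r_2\in R$ with $r_1 s_1 = r_2 s_2$. Because $r_1\in S$ and $s_1\in S$, and $S$ is multiplicatively closed, the common value $r_1 s_1$ already lies in $S$. Note the crucial feature: the element produced on the "$S r$" side is itself in $S$, which is what lets us stay inside $S$.

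For the inductive step, suppose we have $r_1',\dotsc,r_{k-1}'\in R$ with
\[
t := r_1' s_1 = \cdots = r_{k-1}' s_{k-1} \in S.
\]
Apply (\text{\O re}-$1$) to $r=t\in R$ and $s=s_k\in S$ to obtain $s''\in S$ and $r''\in R$ with $s'' t = r'' s_k$. Setting $r_i := s''\, r_i'$ for $1\leq i\leq k-1$ and $r_k := r''$ gives
\[
r_i s_i = s''\, r_i' s_i = s'' t \qquad (1\leq i\leq k-1), \qquad r_k s_k = r'' s_k = s'' t,
\]
and the common value $s'' t$ lies in $S$ since both $s''$ and $t$ do. This closes the induction.

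There is no real obstacle here; the only subtlety is observing that the "$S$-side" of (\text{\O re}-$1$) allows the common multiple to remain in $S$, which is essential for the induction to propagate. Condition (\text{\O re}-$2$) is not used. The right \O re version is obtained by swapping the roles of left and right multiplication throughout.
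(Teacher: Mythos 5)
Your proof is correct and follows exactly the route the paper intends: the case $k=2$ comes directly from (\O re-1) together with the multiplicative closedness of $S$, and the general case follows by the induction you describe. The paper's own proof is just a one-line indication of this same argument, so your write-up is simply a fleshed-out version of it.
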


\begin{proof}
Directly by (\O re-1) for $k=2$. Then we can show (\O re-$1'$) for $k\geq 3$ by induction.
\end{proof}

\begin{propdef}[\O re localization] \label{pd:loc}
Let $R$ be an associative ring and $S$ be a left $($resp.\ right$)$ 
\O re set of $R$.
\begin{enumerate}[$(1)$]
\item $($Existence of the \O re localization$)$

There exist a ring $[S^{-1}]R$ $($resp.\ $R[S^{-1}]$$)$ and a canonical ring 
homomorphism $\iota\colon R\longrightarrow [S^{-1}]R$ $($resp.\
      $R[S^{-1}]$$)$ 
which satisfy the following conditions$\colon$
\begin{enumerate}[$(${\upshape loc-}$1)$]
\item For every $s\in S$, $\iota(s)$ is invertible.
\item Every element in $[S^{-1}]R$ $($resp.\ $R[S^{-1}]$$)$ can be described in the form $\iota(s)^{-1} \iota(r)$ $($resp.\ $\iota(r)\iota(s)^{-1}$$)$ for certain 
$r\in R$ and $s\in S$.
\item For $r\in R$, $\iota(r)=0$ if and only if 
there exists $s\in S$ such that $sr=0$ $($resp.\ $rs=0$$)$ in $R$.
\end{enumerate}
\item $($The universality property$)$

Suppose that a ring homomorphism $f\colon R\longrightarrow \tilder{R}$
      maps  all elements of $S$ to invertible elements of
      $\tilder{R}$. Then there exists a unique ring homomorphism $\psi$
      which makes the following diagram commute$\colon$

\begin{equation*}
\xymatrix{
  &  [S^{-1}]R \quad (\text{resp.\ } R[S^{-1}]) \ar@{.>}[d]^{{}^{\exists !} \psi} \\
R \ar[ur]^{\iota} \ar[r]_f & \tilder{R}
}
\end{equation*}
\end{enumerate}

{\upshape We call $[S^{-1}]R$ (resp.\ $R[S^{-1}]$)} the left $($resp.\
 right$)$ \O re localization of $R$ with respect to $S$.

{\upshape For an arbitrary left (resp.\ right) $R$-module $M$, we define }
the left $($resp.\ right$)$ \O re localization of $M$ with respect to $S$ {\upshape to be the module} 
$[S^{-1}]M=[S^{-1}]R \otimes_R M$ $($resp.\ $M[S^{-1}]=M\otimes_R R[S^{-1}])$.
\end{propdef}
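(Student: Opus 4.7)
The two statements are formally dual, so the plan is to sketch the construction of the right \O re localization $R[S^{-1}]$; the left case is obtained by symmetry. Following the classical approach, I would realize $R[S^{-1}]$ as the quotient of $R\times S$ by an equivalence relation constructed so that two pairs are identified precisely when they represent the same ``fraction,'' and then transport a ring structure to that set of classes.

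Specifically, I would declare $(r,s)\sim (r',s')$ whenever there exist $a,a'\in R$ with $ra=r'a'$ in $R$ and $sa=s'a'\in S$. Reflexivity and symmetry are immediate; transitivity is the first place where the \O re axioms enter in an essential way, since given two chains of witnesses one must use (\O re-$1'$) to produce a common denominator for them and (\O re-2) to absorb the torsion that appears when the two candidate numerators are compared. Writing $r/s$ for the class of $(r,s)$, I set $\iota(r):=r/1$.

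Next I would define the ring operations. For addition, given $r_1/s_1$ and $r_2/s_2$, apply (\O re-$1'$) to $s_1,s_2$ to obtain $a_1,a_2\in R$ with $t:=s_1a_1=s_2a_2\in S$, and declare
\begin{equation*}
\frac{r_1}{s_1}+\frac{r_2}{s_2}:=\frac{r_1a_1+r_2a_2}{t}.
\end{equation*}
For multiplication, given $r_2\in R$ and $s_1\in S$, use (\O re-1) to find $b\in R$ and $u\in S$ with $r_2u=s_1b$, and set
\begin{equation*}
\frac{r_1}{s_1}\cdot\frac{r_2}{s_2}:=\frac{r_1b}{s_2u}.
\end{equation*}
The bulk of the work, and the principal obstacle, is to show that both operations are independent of all the auxiliary choices (the representatives, the common denominator $t$, and the pair $(b,u)$ produced by (\O re-1)). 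Each such verification reduces to a diagram chase that repeatedly invokes (\O re-1) to ``swap'' elements of $R$ past elements of $S$ and (\O re-2) to kill the torsion coming from subtractions. Associativity, distributivity, and the remaining ring axioms are then routine once the basic move ``$s^{-1}r=r'u^{-1}$ where $ru=sr'$'' has been made legitimate.

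With $R[S^{-1}]$ constructed, the three conditions fall out quickly. For (loc-1), $\iota(s)=s/1$ has inverse $1/s$. For (loc-2), the definition of multiplication gives $r/s=(r/1)(1/s)=\iota(r)\iota(s)^{-1}$. For (loc-3), $\iota(r)=r/1\sim 0/1$ unwinds directly, via the equivalence relation applied to $(r',s')=(0,1)$, to the existence of $a\in S$ with $ra=0$. Finally, for universality, given $f\colon R\to\tilde{R}$ carrying $S$ into $\tilde{R}^{\times}$, I would set $\psi(r/s):=f(r)f(s)^{-1}$; the relation $\sim$ together with multiplicativity of $f$ shows $\psi$ is well defined, it is visibly a ring homomorphism making the triangle commute, and uniqueness is forced by (loc-2). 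In short, constructing the ring and verifying the well-definedness of its operations is the hard part; everything thereafter is formal.
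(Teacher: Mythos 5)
Your proposal is correct and follows essentially the same route as the paper: both realize the localization as a set of pairs modulo the standard equivalence relation, define addition via a common denominator and multiplication via the \O re condition, and then read off (loc-1)--(loc-3) and universality; you simply carry out the right-hand version $R[S^{-1}]$ and invoke duality, whereas the paper sketches the left-hand version $[S^{-1}]R$ and defers the well-definedness checks to Stenstr\"om. The extra detail you give on (loc-3) and on the universal property is consistent with what the paper leaves implicit.
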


\begin{proof}[Sketch of the proof]

We only give the construction of the left \O re localization $[S^{-1}]R$. Set 
\begin{equation*}
[S^{-1}]R = S\times R/\sim
\end{equation*}
where $\sim$ is an equivalence relation defined by $(s,r) \sim (s',r')$ if 
and only if there exist $a,b \in R$ which satisfy $ar=br'$ and $as=bs' \in S$.

Then we may define the additive law and the multiplicative law as follows:
\begin{align*}
(s,r)+(s',r') &= (t, ar+br') & \text{where } t=as=bs'\in S, \\
(s,r)\cdot (s',r') &= (ts, ar') & \text{where } tr=as', \quad t\in S.
\end{align*}

For the well-definedness of $\sim$, $+$, $\, \cdot \,$, and for the universality property, we use the \O re conditions (\O re-1) and (\O re-2). 

For details, see \cite{Sten}, Chapter II.
\end{proof}

\begin{cor} \label{cor:isom}
Let $R$ be an associative ring. If a multiplicatively closed subset $S$ satisfies both left and right \O re conditions, then the canonical 
isomorphism $[S^{-1}]R \xrightarrow{\simeq} R[S^{-1}]$ exists.
\end{cor}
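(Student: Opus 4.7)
The natural strategy is a pure universal-property argument, exploiting the symmetry between the two \O re conditions. The plan is to construct mutually inverse ring homomorphisms
\[
\psi \colon [S^{-1}]R \longrightarrow R[S^{-1}] \quad \text{and} \quad \psi' \colon R[S^{-1}] \longrightarrow [S^{-1}]R
\]
and then verify that they compose to identities using the uniqueness clauses of the universal properties supplied by Proposition-Definition \ref{pd:loc}(2).

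First I would construct $\psi$. Since $S$ satisfies the right \O re condition, the right \O re localization $R[S^{-1}]$ exists together with its canonical map $\iota_r \colon R \to R[S^{-1}]$, and by (loc-1) every element of $\iota_r(S)$ is a unit in $R[S^{-1}]$. Applying the universal property of the left \O re localization $[S^{-1}]R$ to $\iota_r$ yields a unique ring homomorphism $\psi \colon [S^{-1}]R \to R[S^{-1}]$ satisfying $\psi \circ \iota_\ell = \iota_r$, where $\iota_\ell \colon R \to [S^{-1}]R$ denotes the canonical map. Symmetrically, because $S$ also satisfies the left \O re condition, applying the universal property of $R[S^{-1}]$ to $\iota_\ell$ produces a unique ring homomorphism $\psi' \colon R[S^{-1}] \to [S^{-1}]R$ with $\psi' \circ \iota_r = \iota_\ell$.

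Next I would show $\psi' \circ \psi = \id_{[S^{-1}]R}$ and $\psi \circ \psi' = \id_{R[S^{-1}]}$. Observe that $\psi' \circ \psi \colon [S^{-1}]R \to [S^{-1}]R$ is a ring homomorphism satisfying $(\psi' \circ \psi) \circ \iota_\ell = \psi' \circ \iota_r = \iota_\ell$. The identity map $\id_{[S^{-1}]R}$ obviously also satisfies this. By the uniqueness clause in the universal property of $[S^{-1}]R$ (applied to the homomorphism $\iota_\ell \colon R \to [S^{-1}]R$, which sends $S$ to units), these two homomorphisms must coincide. The other composition is handled identically by swapping the roles of the two localizations.

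There is no real obstacle here: once both \O re localizations are available as objects enjoying the same universal property against the set $S$, the isomorphism is forced by abstract nonsense, and the canonicity built into the construction guarantees compatibility with $\iota_\ell$ and $\iota_r$. The only point that requires care is verifying that $\iota_r$ does send $S$ into the group of units of $R[S^{-1}]$ (so that the universal property of $[S^{-1}]R$ applies to it), but this is exactly condition (loc-1) from Proposition-Definition \ref{pd:loc}(1).
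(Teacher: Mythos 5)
Your argument is correct and is precisely what the paper means by its one-line proof ``Directly from the universality property'': construct the two maps $\psi$ and $\psi'$ via the universal properties of the left and right localizations and use the uniqueness clause to see they are mutually inverse. Nothing further is needed.
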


\begin{proof}
Directly from the universality property.
\end{proof}

\begin{prop} \label{prop:flat}
Let $R$ be an associative ring and $S$ a left $($resp.\ right$)$ \O re set. Then the left $($resp.\ right$)$ \O re localization $[S^{-1}]$R $($resp.\ $R[S^{-1}])$ is flat as a right $($resp. left$)$ $R$-module.

In other words, the left $($resp.\ right$)$ \O re localization defines the exact functor from the category of left $($resp.\ right$)$ $R$-modules to that of 
left $[S^{-1}]R$-modules $($resp.\ right $R[S^{-1}]$-modules$)$.
\end{prop}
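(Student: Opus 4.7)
The plan is to prove flatness by showing directly that the Ore localization functor preserves short exact sequences; I will work only with the left Ore case, the right case being symmetric. The key input will be a concrete "fraction" description of $[S^{-1}]M$ for an arbitrary left $R$-module $M$, modeled on the construction of $[S^{-1}]R$ in Proposition-Definition \ref{pd:loc}.

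First I would construct $[S^{-1}]M$ as the set of equivalence classes of pairs $(s,m) \in S\times M$, written formally as $s^{-1}m$, where $s^{-1}m = s'^{-1}m'$ iff there exist $a,b\in R$ with $am=bm'$ and $as=bs' \in S$. The left Ore conditions (\O re-1) and (\O re-2), together with (\O re-$1'$) applied to finite sets of denominators, guarantee that this relation is transitive and compatible with the left $[S^{-1}]R$-action $t^{-1}r\cdot s^{-1}m := (at)^{-1}(bm)$ where $bs = ar$ with $a\in S$. A standard verification (essentially the module-theoretic upgrade of the argument for $[S^{-1}]R$) shows $[S^{-1}]M \cong [S^{-1}]R \otimes_R M$, and that the crucial annihilator criterion holds: $s^{-1}m = 0$ in $[S^{-1}]M$ if and only if there exists $t\in S$ with $tm=0$ in $M$.

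Given this, for a short exact sequence $0\to M' \xrightarrow{f} M \xrightarrow{g} M'' \to 0$, I would check exactness of
\begin{equation*}
0 \longrightarrow [S^{-1}]M' \longrightarrow [S^{-1}]M \longrightarrow [S^{-1}]M'' \longrightarrow 0
\end{equation*}
at each of the three spots. Surjectivity on the right follows because any fraction $s^{-1}m''$ lifts via the surjectivity of $g$ to $s^{-1}m$ with $g(m)=m''$. For exactness in the middle, take $s^{-1}m$ whose image $s^{-1}g(m)$ is zero; the annihilator criterion gives $t\in S$ with $g(tm)=tg(m)=0$, so $tm=f(m')$ for some $m'\in M'$, and then the identity $s^{-1}m = (ts)^{-1}(tm)$ in $[S^{-1}]M$ (legitimate since $ts\in S$ by multiplicative closure) exhibits $s^{-1}m$ as coming from $(ts)^{-1}m' \in [S^{-1}]M'$. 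For injectivity on the left, if $s^{-1}m'$ maps to $0$ in $[S^{-1}]M$, the annihilator criterion supplies $t\in S$ with $f(tm')=tf(m')=0$; then injectivity of $f$ forces $tm'=0$ in $M'$, whence $s^{-1}m'=0$.

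The main obstacle is really the preparatory work in the first step: verifying that the naive fraction description of $[S^{-1}]M$ is well-defined as a module and canonically agrees with $[S^{-1}]R\otimes_R M$, and in particular establishing the zero-criterion characterization. Once that bookkeeping is in place, each of the three exactness verifications reduces to a one-line application of an Ore condition. The whole argument is the module-theoretic extension of the ring-level construction recalled in Proposition-Definition \ref{pd:loc}, and the choice of auxiliary elements $a,b,t\in S$ is dictated at every step by (\O re-1) and (\O re-2).
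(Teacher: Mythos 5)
Your argument is correct and is exactly the standard proof (fraction description of $[S^{-1}]M$, the annihilator criterion, then the three exactness checks) that the paper delegates to the cited reference, Stenstr\"om, Chapter II, Proposition 3.5. The paper gives no proof of its own beyond that citation, so you have simply reconstructed the reference's argument; the only labor you defer, the identification of the fraction module with $[S^{-1}]R\otimes_R M$ and its zero-criterion, is the same routine verification the reference carries out.
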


\begin{proof}
See \cite{Sten}, Chapter II, Proposition 3.5.
\end{proof}

\begin{defn}[$S$-torsion modules]
Let $R$ be an associative ring and $S$ be a left \O re set. We define 
$\ideal{M}_S(R)$ to be the category of finitely generated $S$-torsion left $R$-modules,
 that is, an object $M$ of $\ideal{M}_S(R)$ is a finitely generated left $R$-module satisfying $[S^{-1}]M=0$.
\end{defn}

We may easily show that for an arbitrary object $M$ of $\ideal{M}_S(R)$, there exists an element of $S$ such that $sM=0$ if $S$ is central. 
 
Bass constructed the following localization exact sequence for central $S$.

\begin{defn}
Let $R$ be an associative ring and $S$ its multiplicatively closed subset. Let $\ideal{H}_S(R)$ be the category of 
finitely generated $S$-torsion left $R$-modules with projective resolutions
 of finite length. Then we put $K_0(R, S) = K_0(\ideal{H}_S(R))$.
\end{defn}

\begin{rem}
Note that we may identify $K_0(R,S)$ with the relative Gro\-then\-dieck group
 associated to the canonical ring homomorphism $R \longrightarrow [S^{-1}]R$. 
For the definition of relative Grothendieck groups, see \cite{Bass1}, 
Chapter IX, \S 1. 

We can also identify this group with the Grothendieck group of the category
 of bounded complexes of finitely generated projective left $R$-modules 
whose cohomologies are of $S$-torsion.
\end{rem}

\begin{thm}[Bass, the localization exact sequence for central $S$] 
Let $R$ be an associative ring with $1$ and let $S$ be its multiplicatively closed subset contained in the center of $R$. Suppose that for every element $s$ of $S$, multiplication by $s$ in $R$ induces an injection $R \xrightarrow{s} R$. 

Then there exists an exact sequence of $K$-groups$\colon$
\begin{equation*}
K_1(R) \rightarrow K_1(S^{-1}R) \xrightarrow{\del} K_0(R,S) \rightarrow
 K_0(R) \rightarrow K_0(S^{-1}R).
\end{equation*}
\end{thm}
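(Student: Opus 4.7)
The plan is to prove exactness of
\[
K_1(R) \xrightarrow{\iota_*} K_1(S^{-1}R) \xrightarrow{\partial} K_0(R,S) \xrightarrow{\varepsilon} K_0(R) \xrightarrow{\iota_*} K_0(S^{-1}R)
\]
by defining the two new maps $\partial$ and $\varepsilon$ explicitly, then checking exactness at each of the three middle terms. The map $\varepsilon$ sends the class $[M]$ of an $S$-torsion module $M \in \ideal{H}_S(R)$ to $\sum_i (-1)^i [P_i]$ for any finite projective resolution $0 \to P_n \to \cdots \to P_0 \to M \to 0$; one checks well-definedness via the usual comparison lemma for projective resolutions. The rightmost map is $[P]\mapsto [S^{-1}P]$ by the functoriality of $K_0$ (legitimate because $S^{-1}R$ is flat over $R$ by Proposition \ref{prop:flat}).

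The key construction is the boundary $\partial$. By the stability property (Proposition \ref{prop:sloc} (2)) combined with Whitehead's lemma, any class in $K_1(S^{-1}R)$ is represented by some $\alpha \in \GL_n(S^{-1}R)$ for $n$ large. Using centrality of $S$ together with (\text{\O re}-$1'$), I clear denominators: choose $s \in S$ with $\beta := s\alpha \in \M_n(R)$. The injectivity hypothesis on multiplication-by-$s$ guarantees that $\beta \colon R^n \to R^n$ is injective with cokernel an $S$-torsion $R$-module admitting the length-one projective resolution $0 \to R^n \xrightarrow{\beta} R^n \to \mathrm{coker}(\beta) \to 0$, so $\mathrm{coker}(\beta) \in \ideal{H}_S(R)$. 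Then set
\[
\partial([\alpha]) = [\mathrm{coker}(\beta)] - [\mathrm{coker}(s\cdot I_n)] \in K_0(R,S).
\]
Independence of the choice of $s$ follows by comparing two clearings through their product $ss'$; independence of the representative $\alpha$ modulo $\E_n(S^{-1}R)$ uses that elementary matrices can be cleared to elementary matrices over $R$ after multiplying by $s$, contributing nothing in $K_0(R,S)$; stability in $n$ is handled by observing that stabilizing $\alpha$ to $\alpha \oplus 1$ adds cancelling contributions.

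Exactness at $K_1(S^{-1}R)$ is clear in one direction (a matrix already defined over $R$ may be cleared with $s=1$, giving $\mathrm{coker}(\beta)=\mathrm{coker}(I_n)=0$); the converse requires showing that if $\partial([\alpha])=0$, then $[\mathrm{coker}(\beta)] = [\mathrm{coker}(sI_n)]$ in $K_0(R,S)$ can be lifted to a matrix identity over $R$ that modifies $\beta$ into a matrix in $\GL_n(R)$. Exactness at $K_0(R,S)$ is straightforward in one direction since $[\mathrm{coker}(\beta)] - [\mathrm{coker}(sI_n)]$ already vanishes in $K_0(R)$; the converse uses a resolution argument: given $M \in \ideal{H}_S(R)$ with $\varepsilon([M])=0$, a stably free resolution $0 \to R^n \xrightarrow{\beta} R^n \to M \to 0$ (obtainable after padding) yields an automorphism $S^{-1}\beta$ of $(S^{-1}R)^n$ whose boundary is $[M]$. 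Exactness at $K_0(R)$ is the classical statement that a projective $R$-module becomes stably free over $S^{-1}R$ iff it is the difference of two modules in $\ideal{H}_S(R)$.

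The main obstacle is the verification that $\partial$ is well-defined and the exactness at $K_0(R,S)$: both require delicate manipulations of resolutions and clearing of denominators compatible with elementary-matrix relations. In practice, one would not redo this argument but instead invoke Bass (\cite{Bass1}, Chapter IX, Theorem 6.3), where all the bookkeeping is carried out in full; the hypothesis that multiplication by $s \in S$ is injective on $R$ is precisely what is needed to guarantee that $\mathrm{coker}(\beta)$ has a length-one resolution and hence lies in $\ideal{H}_S(R)$.
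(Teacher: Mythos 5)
Your proposal is correct and follows the same route as the paper, which proves this theorem simply by citing Bass (\cite{Bass1}, Chapter IX, Theorem (6.3)); your sketch of the boundary map via clearing denominators, the cokernel construction, and the role of the injectivity hypothesis is precisely the argument carried out there. One small point: representing a class in $K_1(S^{-1}R)$ by a matrix in $\GL_n(S^{-1}R)$ needs no appeal to Proposition \ref{prop:sloc} (2) (which requires semi-locality, not assumed here); it is automatic from $K_1(S^{-1}R)=\varinjlim_n \GL_n(S^{-1}R)/\varinjlim_n \E_n(S^{-1}R)$.
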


\begin{proof}
See \cite{Bass1}, Chapter IX, Theorem (6.3).
\end{proof}

\begin{prop}
If $R$ has finite global dimension $($in other words, if every finitely
 generated left $R$-module has finite projective dimension$)$, we have 
\begin{equation*}
K_i(R,S)=K_i(\ideal{M}_S(R)) \qquad (i=0,1).
\end{equation*}
\end{prop}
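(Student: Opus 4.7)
The proposition reduces immediately to showing that the two categories $\ideal{H}_S(R)$ and $\ideal{M}_S(R)$ coincide under the finite global dimension hypothesis. Granted this, the definition $K_0(R,S) = K_0(\ideal{H}_S(R))$ (and its analogue for $K_1$, interpreted via Quillen's $K$-theory of the exact category) produces the desired equality $K_i(R,S) = K_i(\ideal{M}_S(R))$ for $i=0,1$ with no further work. So the entire content is the identification of categories.

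The inclusion $\ideal{H}_S(R) \subseteq \ideal{M}_S(R)$ is tautological from the definitions, so only the reverse inclusion requires attention. Let $M \in \ideal{M}_S(R)$. By hypothesis, $M$ has projective dimension at most $d := \mathrm{gl.dim}(R) < \infty$, hence admits \emph{some} finite projective resolution. I would then upgrade this to a resolution by \emph{finitely generated} projectives as follows: choose a surjection $R^{n_0}\twoheadrightarrow M$ from a finitely generated free module, let $K_0$ be its kernel, pick a surjection $R^{n_1}\twoheadrightarrow K_0$, and iterate. The standard syzygy argument shows that after $d$ steps the $d$-th syzygy is automatically projective, yielding a finite resolution
\[
0 \to P_d \to P_{d-1} \to \cdots \to P_0 \to M \to 0
\]
by finitely generated projective $R$-modules. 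This places $M$ in $\ideal{H}_S(R)$, and combined with the tautological inclusion gives $\ideal{H}_S(R)=\ideal{M}_S(R)$.

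Applying $K_i$ ($i=0,1$) to this equality of exact subcategories of the category of finitely generated left $R$-modules then yields $K_i(R,S) = K_i(\ideal{H}_S(R)) = K_i(\ideal{M}_S(R))$, which is the claim.

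The only mild obstacle is verifying that each syzygy in the inductive construction above is again finitely generated, which requires the kernels $K_j$ to remain finitely generated. In the applications of this paper $R$ is an Iwasawa algebra, hence left Noetherian, and this is automatic; more generally one needs $R$ to be left coherent and $M$ finitely presented, which is implicit in the surrounding context. Beyond this finiteness bookkeeping the argument is essentially a one-line consequence of the definitions.
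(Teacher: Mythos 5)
Your proposal is correct and follows exactly the paper's own route: the paper's entire proof is the one-line observation that $\ideal{H}_S(R)=\ideal{M}_S(R)$ under the finite global dimension hypothesis, which is precisely the identification of categories you establish (you merely spell out the syzygy/finite-generation bookkeeping that the paper leaves implicit).
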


\begin{proof}
It is clear since $\ideal{H}_S(R)=\ideal{M}_S(R)$ in this case.
\end{proof}

A.\ J.\ Berrick and M.\ E.\ Keating constructed the localization sequence for \O re localizations by generalizing the Bass' construction of the localization sequence for central $S$:

\begin{thm}[Berrick-Keating]
Let $R$ be an associative ring with $1$ and $S$ a left \O re set. Suppose that no elements of $S$ are zero divisors in $R$.

Let $\ideal{H}_{S,1}(R)$ be the subcategory of $\ideal{M}_S(R)$
 consisting of finitely presented $S$-torsion left $R$-modules with projective dimension $1$. Then there exists an exact sequence of $K$-groups$\colon$
\begin{equation} \label{eq:seqore}
K_1(R) \rightarrow K_1([S^{-1}]R) \xrightarrow{\del} K_0(\ideal{H}_{S,1}(R)) \rightarrow
 K_0(R) \rightarrow K_0([S^{-1}]R).
\end{equation}
\end{thm}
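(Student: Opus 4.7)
The plan is to mimic Bass's classical construction of the localization sequence, replacing naive common-denominator arguments by repeated invocations of the \O re conditions (\O re-1), (\O re-$1'$), and (\O re-2). I would first define the connecting map $\del$ explicitly, then verify exactness at each of the four positions.

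To construct $\del$: given $\alpha\in K_1([S^{-1}]R)$, the stability property (Proposition~\ref{prop:sloc}~(2)) lets me represent $\alpha$ by an invertible matrix $A\in \GL_n([S^{-1}]R)$ for some $n\geq 2$. Every entry of $A$ has the form $s^{-1}r$ with $s\in S$, $r\in R$, and applying (\O re-$1'$) finitely many times yields a single common left denominator $s\in S$ and a matrix $B\in \M_n(R)$ with $A=s^{-1}B$. Because no element of $S$ is a zero divisor, $B$ induces an injective $R$-linear endomorphism of $R^n$; its cokernel $M$ is finitely presented, killed by $s$, and admits the length-one projective resolution $0\to R^n\xrightarrow{B}R^n\to M\to 0$, so $M\in \ideal{H}_{S,1}(R)$. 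I set $\del(\alpha):=[M]$. Independence of $s$, $B$, $n$, and of the representative of $\alpha$ modulo $\E_n([S^{-1}]R)$ constitutes the well-definedness check; it proceeds by a Schanuel-type comparison, with (\O re-2) invoked to identify cokernels arising from distinct common denominators.

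The exactness verifications then run as follows. At $K_0(R)$ exactness is formal from flatness of localization (Proposition~\ref{prop:flat}) together with a common-denominator descent of the stabilizing isomorphism. At $K_0(\ideal{H}_{S,1}(R))$ the composition with the forgetful map sends $[M]\mapsto [R^n]-[R^n]=0$; conversely, if $[M]$ lies in the kernel, then any resolution $0\to P_1\to P_0\to M\to 0$ satisfies $[P_0]=[P_1]$, so $P_0\oplus F\cong P_1\oplus F$ for some finitely generated free $F$, and after stabilization the resolution takes the form of a self-injection of a single projective, which localizes to an automorphism and hence lifts to a preimage in $K_1([S^{-1}]R)$. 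At $K_1([S^{-1}]R)$, elements in the image of $K_1(R)\to K_1([S^{-1}]R)$ admit the trivial choice $s=1$ with $B$ already invertible over $R$, so $M=0$; conversely, if $\del(\alpha)=0$, then after stabilization one may modify $B$ within $\E([S^{-1}]R)$ so that the enlarged matrix becomes invertible over $R$, realizing $\alpha$ as the image of an element of $K_1(R)$.

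The substantive difficulty is concentrated in the well-definedness of $\del$ and in exactness at $K_0(\ideal{H}_{S,1}(R))$. In Bass's central setting these steps are nearly automatic, since a denominator commutes freely past any element of $R$; in the one-sided \O re context one must track left versus right denominators with care, and condition (\O re-2) enters essentially to compare cokernels $R^n/B_1R^n$ and $R^n/B_2R^n$ arising from distinct lifts of the same $\alpha$. Once these identifications are in place, the remaining exactness verifications reduce to standard Schanuel-type diagram chasing on short exact sequences of $R$-modules.
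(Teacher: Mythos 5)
The paper does not actually prove this theorem: its ``proof'' consists of the citation to Berrick--Keating, and the only argument the paper supplies in this vicinity is the subsequent reduction $K_0(\ideal{H}_{S,1}(R))=K_0(\ideal{H}_S(R))$ via Grothendieck's resolution theorem. So there is no in-paper argument to compare yours against; what you are proposing is a reconstruction of the cited result. Your outline does follow the right strategy --- the direct generalization of Bass's construction for central $S$, with $\del$ defined on a common-denominator presentation $A=s^{-1}B$ and $\del(\alpha)=[\mathrm{coker}\,B]$ --- but as it stands it is a plan rather than a proof, and a few of its specific assertions are not correct as written.

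Concretely: (i) the cokernel $M$ of $B$ is \emph{not} shown to be ``killed by $s$''; for a non-central left \O re set, $B=sA$ does not give $sR^n\subseteq BR^n$, and indeed the paper itself only asserts the existence of a single annihilating $s$ when $S$ is central. What you need, and what is true, is $[S^{-1}]M=0$, which follows from flatness of the localization (Proposition \ref{prop:flat}) because $B$ becomes invertible over $[S^{-1}]R$; that is the definition of $S$-torsion used in $\ideal{M}_S(R)$. (ii) The appeal to the stability property of Proposition \ref{prop:sloc}~(2) is inapplicable, since $[S^{-1}]R$ is not assumed semi-local; fortunately you only need that every $K_1$-class is represented by \emph{some} finite invertible matrix, which is the definition of $K_1=\GL/\E$. (iii) The two steps that carry essentially all of the content --- well-definedness of $\del$ (i.e.\ that multiplying $A$ by an elementary matrix over $[S^{-1}]R$, or changing the common denominator, does not change $[\mathrm{coker}\,B]$ in $K_0(\ideal{H}_{S,1}(R))$) and exactness at $K_0(\ideal{H}_{S,1}(R))$ and at $K_1([S^{-1}]R)$ --- are asserted, not proved; ``Schanuel-type comparison with (\O re-2)'' does not by itself identify $\mathrm{coker}\,B_1$ with $\mathrm{coker}\,B_2$ in the Grothendieck group. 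Moreover, your argument for exactness at $K_0(\ideal{H}_{S,1}(R))$ treats only elements of the form $[M]$, whereas an arbitrary element of the kernel is a difference $[M]-[N]$, so the step ``$[P_0]=[P_1]$, hence stabilize'' does not apply as written. These are precisely the points where Berrick and Keating have to work, and until they are carried out the proposal does not establish the theorem.
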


\begin{proof}
See \cite{Ber-Keat}.
\end{proof}

We remark that Daniel R.\ Grayson also constructed the exact sequence
(\ref{eq:seqore}) by using Quillen's higher $K$-theory (\cite{Grayson}).

In the following, we use the modified version of Berrick-Keating's localization sequence.

\begin{thm}[The localization exact sequence for \O re localization] \label{thm:locseq}

Let \\ $R$ be an associative ring with $1$ and $S$ a left \O re set. Suppose that no elements of $S$ are zero divisors in $R$. Then there exists an exact sequence of $K$-groups$\colon$
\begin{equation*} 
K_1(R) \rightarrow K_1([S^{-1}]R) \xrightarrow{\del} K_0(R,S) \rightarrow
 K_0(R) \rightarrow K_0([S^{-1}]R).
\end{equation*}
\end{thm}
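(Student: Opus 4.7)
\begin{paragraph}
The statement is essentially the Berrick--Keating localization sequence recalled above, with the only change being that the middle term is our Grothendieck group $K_{0}(R,S) = K_{0}(\ideal{H}_{S}(R))$ rather than $K_{0}(\ideal{H}_{S,1}(R))$. My plan is to deduce the theorem from the Berrick--Keating version (or equivalently from Grayson's higher $K$-theoretic construction mentioned in the remark) by establishing the canonical isomorphism
\[
\iota_{*} \colon K_{0}(\ideal{H}_{S,1}(R)) \xrightarrow{\;\sim\;} K_{0}(R,S)
\]
induced by the inclusion of full subcategories $\ideal{H}_{S,1}(R) \hookrightarrow \ideal{H}_{S}(R)$, and then verifying that, under this identification, the Berrick--Keating connecting map $\partial$ matches ours and that the maps $K_{0}(R,S) \to K_{0}(R) \to K_{0}([S^{-1}]R)$ induced by the forgetful and localization functors agree on both sides. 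Once this compatibility is in place, exactness of our sequence transfers verbatim from exactness of Berrick--Keating's.
\end{paragraph}

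\begin{paragraph}
To prove $\iota_{*}$ is an isomorphism, I would induct on the projective dimension $d = \mathrm{pd}_{R}(M)$ of objects $M \in \ideal{H}_{S}(R)$. The base case $d \leq 1$ is immediate. For the inductive step, the heart of the matter is to produce, for every $M$ with $d \geq 2$ equipped with a projective resolution $0 \to P_{d} \to \cdots \to P_{0} \to M \to 0$, a short exact sequence
\[
0 \longrightarrow N \longrightarrow M' \longrightarrow M \longrightarrow 0
\]
inside $\ideal{H}_{S}(R)$ with $\mathrm{pd}_{R}(N), \mathrm{pd}_{R}(M') \leq d-1$. Given such a reduction, $[M] = [M'] - [N]$ in $K_{0}(R,S)$ and both summands lie in the image of $\iota_{*}$ by the inductive hypothesis, which gives surjectivity. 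Injectivity would then follow from a Schanuel-type argument, rewriting any relation among classes of length-one objects coming from an $\ideal{H}_{S}$-sequence as a sum of relations already imposed by $\ideal{H}_{S,1}$-sequences.
\end{paragraph}

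\begin{paragraph}
I expect the main obstacle to be the construction of the reducing short exact sequence in the non-central setting---this is precisely the reason Berrick and Keating originally preferred to work with the narrower category $\ideal{H}_{S,1}(R)$. In the central-$S$ case a single $s \in S$ annihilates $M$, making it straightforward to peel off a length-one layer; for a non-central \O re set one must instead invoke the strengthened \O re condition (\O re-$1'$) to clear denominators simultaneously across the boundary matrices of the chosen resolution $P_{\bullet}$, and then use the flatness of $[S^{-1}]R$ over $R$ (Proposition \ref{prop:flat}) together with the non-zero-divisor hypothesis on $S$ to verify that the candidate extension genuinely lives in $\ideal{H}_{S}(R)$ with the projective dimension dropped by one. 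With this technical step in hand, the remainder of the proof is bookkeeping: the connecting map $\partial$, the forgetful map, and the localization map are all defined by the same formulas on the $\ideal{H}_{S,1}$- and $\ideal{H}_{S}$-sides, so the stated sequence is exact.
\end{paragraph}
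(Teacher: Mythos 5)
Your overall strategy is exactly the paper's: deduce the theorem from Berrick--Keating by showing that the inclusion $\ideal{H}_{S,1}(R)\hookrightarrow\ideal{H}_S(R)$ induces an isomorphism on $K_0$, via induction on projective dimension. Your reducing short exact sequence is also the right one, and the ``main obstacle'' you flag is handled more simply than you anticipate: one does not need to clear denominators across the boundary matrices of a resolution. It suffices to pick generators $m_1,\dotsc,m_k$ of $M$, take for each $i$ an $s_i\in S$ with $s_im_i=0$, and use (\O re-$1'$) to produce a single $s\in S$ annihilating all the $m_i$; then with $\pi\colon\oplus_i Re_i\to M$ and $L=\Ker\pi$ one gets $0\to L/\oplus_i Rse_i\to \oplus_i Re_i/\oplus_i Rse_i\to M\to 0$, where the middle term lies in $\ideal{H}_{S,1}(R)$ (the non-zero-divisor hypothesis makes $\oplus_i Rse_i$ free, giving projective dimension $1$; its $S$-torsion property is again checked with (\O re-$1'$)) and the kernel has projective dimension $d-1$.

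The genuine soft spot in your write-up is the injectivity of $\iota_*$. The identity $[M]=[M']-[N]$ gives surjectivity, but the ``Schanuel-type argument'' you invoke for injectivity is precisely the nontrivial half of the statement --- one must show that the putative inverse $K_0(\ideal{H}_S(R))\to K_0(\ideal{H}_{S,1}(R))$ is well defined, i.e.\ independent of the chosen resolution and additive on short exact sequences in the larger category, and this does not reduce to a one-line Schanuel computation. The clean way to close the gap, and the route the paper takes, is to observe that iterating your reduction step produces a \emph{finite resolution} of every object of $\ideal{H}_S(R)$ by objects of $\ideal{H}_{S,1}(R)$, and then to invoke Grothendieck's resolution theorem (Theorem \ref{thm:Gro}) for the admissible subcategories $\ideal{H}_{S,1}(R)\subseteq\ideal{H}_S(R)$, which delivers the isomorphism $K_0(\ideal{H}_{S,1}(R))\cong K_0(\ideal{H}_S(R))$ in one stroke. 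With that substitution your argument is complete; the remaining compatibilities of $\del$ and of the maps to $K_0(R)$ and $K_0([S^{-1}]R)$ are, as you say, bookkeeping.
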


\begin{proof}
It suffices to show that 
\begin{equation} \label{eq:finite1}
K_0(\ideal{H}_S(R))=K_0(\ideal{H}_{S,1}(R)).
\end{equation} 

First, we show that for every $M\in \obj{\ideal{H}_S(R)}$, $M$ has a finite resolution by objects of $\ideal{H}_{S,1}(R)$. Let $\{ m_1, \dotsc , m_k \}$ be elements of $M$ which generate $M$, and $d$ the projective dimension of $M$. Then we have the canonical surjection
\begin{equation*}
\pi \colon \oplus_{i=1}^k Re_i \rightarrow M ; e_i \mapsto m_i \quad (1\leq i\leq k),
\end{equation*}
where $\{ e_i \}_{1\leq i\leq k}$ is the free basis. We set
 $L=\Ker(\pi)$. Note that $L$ is a left free module. Since $M$ is a $S$-torsion module, there exists $s_i\in S$ which annihilates $m_i$ for each $i$. Then we obtain an element $s\in S$ which annihilates all $m_i$ by (\O re-$1'$). For this $s$, we have $\pi \left( \oplus_{i=1}^k Rse_i \right)=0$, therefore $\oplus_{i=1}^k Rse_i$ is a left free $R$-submodule of $L$. Consider 
\begin{equation*}
0 \rightarrow L/\oplus_{i=1}^k Rse_i \rightarrow \oplus_{i=1}^k Re_i/\oplus_{i=1}^k Rse_i \rightarrow M \rightarrow 0,
\end{equation*}
then $\oplus_{i=1}^k Re_i/\oplus_{i=1}^k Rse_i$ is free and of $S$-torsion 
(Let $x=\sum_{i=1}^k r_ie_i$ be an arbitrary element of $\oplus_{i=1}^k
 Re_i/\oplus_{i=1}^k Rse_i$. For each $i$ there exist $s_i'\in S$ and
 $r_i'\in R$ such that $s_i'r_i=r_i's$. We obtain $r_i''\in R\quad
 (1\leq i\leq k)$ which satisfies $r_1''s_1'=\cdots =r_k''s_k' \in S$ by
 (\O re-$1'$). Then the element $\tilder{s}=r_1''s_1'=\cdots =r_k''s_k'$
 annihilates $x$), therefore $\oplus_{i=1}^k Re_i/\oplus_{i=1}^k Rse_i$
 is an object of $\ideal{H}_{S,1}(R)$. Note that the projective
 dimension of $L/\oplus_{i=1}^k Rse_i$ is $d-1$. Hence we can construct
 a resolution of $M$ by objects of $\ideal{H}_{S,1}(R)$ by induction on
 the projective dimension $d$.

Then (\ref{eq:finite1}) reduces to Grothendieck's resolution theorem
 (See Theorem \ref{thm:Gro}). Note that both $\ideal{H}_S(R)$ and
 $\ideal{H}_{S,1}(R)$ are admissible subcategories of $\ideal{M}(R)$ (that is, $\ideal{H}_S(R)$ and $\ideal{H}_{S,1}(R)$ are full additive
 subcategories of $\ideal{M}_S(R)$ which have at most sets of isomorphism classes of objects, and if $0\rightarrow M' \rightarrow M \rightarrow M'' \rightarrow 0$ is an arbitrary exact sequence in $\ideal{M}(R)$ for which $M$ and $M'$ are objects of $\ideal{H}_S(R)$ (resp.\ $\ideal{H}_{S,1}(R)$), $M''$ is also an object of $\ideal{H}_S(R)$ (resp.\ $\ideal{H}_{S,1}(R)$)). See \cite{Bass2}, Corollary (8.5). 
\end{proof}

\begin{thm}[Grothendieck's resolution theorem] \label{thm:Gro}
Let $\ideal{M}$ be an abelian category and $\ideal{P}\subseteq \ideal{P'}$ admissible subcategories of $\ideal{M}$. Assume an arbitrary object $P'$ of $\ideal{P}'$ has a finite resolution by objects of $\ideal{P}$. Then the inclusion $\ideal{P} \subseteq \ideal{P'}$ induces an isomorphism $K_0(\ideal{P}) \cong K_0(\ideal{P'})$.
\end{thm}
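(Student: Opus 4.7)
The plan is to construct an explicit inverse to the map $\iota_{*}\colon K_0(\ideal{P}) \to K_0(\ideal{P}')$ induced by inclusion. Given $P' \in \obj{\ideal{P}'}$ with a chosen finite resolution $0 \to P_n \to P_{n-1} \to \cdots \to P_0 \to P' \to 0$ by objects of $\ideal{P}$, I would define the candidate Euler-characteristic element
\begin{equation*}
\chi(P') = \sum_{i=0}^{n} (-1)^i [P_i] \quad \in K_0(\ideal{P}).
\end{equation*}
Two things must be checked: (a) $\chi(P')$ is independent of the choice of resolution, and (b) $\chi$ is additive on short exact sequences in $\ideal{P}'$. Once both are established, $\chi$ descends to a homomorphism $\varphi\colon K_0(\ideal{P}') \to K_0(\ideal{P})$, and it remains only to verify that $\varphi \circ \iota_{*} = \id$ and $\iota_{*} \circ \varphi = \id$.

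For (a), I would proceed by the standard Schanuel-type comparison. If $0 \to K \to P_0 \to P' \to 0$ and $0 \to K' \to P_0' \to P' \to 0$ are two $\ideal{P}$-projections onto $P'$, forming the pullback $P_0 \times_{P'} P_0'$ yields a short exact sequence with outer terms in $\ideal{P}$, hence (by admissibility of $\ideal{P}$ in $\ideal{M}$) the middle term — or rather the appropriate kernel — stays inside $\ideal{P}$, giving $[K] + [P_0'] = [K'] + [P_0]$ in $K_0(\ideal{P})$. Truncating both resolutions at the first step and applying this identity reduces the comparison to resolutions of the new kernels; iterating, one reaches length-zero resolutions and concludes that the two alternating sums coincide. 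A small separate argument handles resolutions of unequal length by padding with the zero object.

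For (b), given $0 \to A \to B \to C \to 0$ in $\ideal{P}'$, I would use a horseshoe-type construction: lift surjections $P_0^A \twoheadrightarrow A$ and $P_0^C \twoheadrightarrow C$ from $\ideal{P}$ to a surjection $P_0^A \oplus P_0^C \twoheadrightarrow B$ fitting into a short exact sequence of surjections, then pass to kernels (which again lie in $\ideal{P}'$ by the admissibility hypothesis) and iterate. This produces compatible resolutions assembled into a short exact sequence of complexes, and the additivity $\chi(B) = \chi(A) + \chi(C)$ then follows termwise. Combined with (a), this makes $\varphi$ a well-defined group homomorphism. The composite $\varphi \circ \iota_{*}$ is the identity because objects of $\ideal{P}$ admit the trivial length-zero resolution, and $\iota_{*} \circ \varphi$ is the identity because breaking any resolution into its constituent short exact sequences and applying additivity in $K_0(\ideal{P}')$ collapses the alternating sum to $[P']$.

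The main obstacle is (a): one must iterate the Schanuel step without ever leaving $\ideal{P}$. This is exactly where the hypothesis that $\ideal{P}$ is an admissible subcategory of $\ideal{M}$ (i.e., closed under the relevant extensions and kernels within short exact sequences whose outer terms lie in $\ideal{P}$) is indispensable; dropping it would cause intermediate syzygies to escape into $\ideal{M} \setminus \ideal{P}$ and the argument would collapse.
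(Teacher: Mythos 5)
The paper offers no proof of this statement beyond a citation of Bass's CBMS notes, Theorem (7.1), and the architecture you propose --- an Euler-characteristic inverse $\chi$, well-definedness, additivity, and the two composite identities --- is exactly the architecture of that standard proof. However, two of your steps have genuine gaps as written.

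In step (a) you assert $[K]+[P_0']=[K']+[P_0]$ ``in $K_0(\ideal{P})$.'' The syzygies $K,K'$ and the pullback $P_0\times_{P'}P_0'$ are in general \emph{not} objects of $\ideal{P}$: they merely admit shorter finite $\ideal{P}$-resolutions, and admissibility in this paper's sense (closure under cokernels of monomorphisms between objects of the subcategory) provides no closure under kernels or extensions that would place them in $\ideal{P}$. So the displayed classes do not live in $K_0(\ideal{P})$; and reading the identity in $K_0(\ideal{P}')$ instead is circular, since injectivity of $K_0(\ideal{P})\to K_0(\ideal{P}')$ is part of what is being proved. The correct organization is an induction on resolution length in which $\chi$ is already defined (and known well-defined and additive) on objects admitting resolutions of length $\le n-1$, the Schanuel comparison is stated as $\chi(K)+[P_0']=\chi(K')+[P_0]$, and well-definedness and additivity are established \emph{jointly} in the induction, each using the other at the previous stage. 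Your ``truncate and iterate'' remark gestures at this, but the identity as written is ill-formed.

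In step (b), the horseshoe construction requires lifting $P_0^C\twoheadrightarrow C$ through $B\twoheadrightarrow C$ to a map $P_0^C\to B$. For projective modules such a lift exists by definition of projectivity; for objects of an arbitrary admissible subcategory of an abelian category it need not exist, so this step would fail. The standard repair avoids lifting: choose a single surjection $P\twoheadrightarrow B$ with $P\in\ideal{P}$, set $K_B=\Ker(P\to B)$ and $K_C=\Ker(P\to B\to C)$, note the exact sequence $0\to K_B\to K_C\to A\to 0$, and deduce $\chi(B)=\chi(A)+\chi(C)$ from $\chi(B)=[P]-\chi(K_B)$, $\chi(C)=[P]-\chi(K_C)$ and the inductive additivity applied to the kernels. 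Note that even this needs the kernels to admit finite $\ideal{P}$-resolutions --- one of the hypotheses that is made precise in Bass's actual statement and is suppressed in the paper's formulation.
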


\begin{proof}
See \cite{Bass2}, Theorem (7.1).
\end{proof}

\begin{rem}
In our case, the canonical \O re set $S$ for $\iw{G}$ (See \S 2) is essentially contained in 
the center $\iw{\Gamma}$ (See \S 6, Lemma \ref{lem:locs0}), therefore 
Bass' localization sequence is sufficient for the proof of our main theorem.
\end{rem} 

%
\subsection{Theory of the integral logarithm}
%

Integral logarithmic homomorphisms were used by Robert Oliver and
Laurence R.\ Taylor
to study structure of Whitehead groups of group rings of finite groups
(\cite{Oliver}, \cite{Oli-Tay}). We use these homomorphisms 
to translate ``the additive theta map'' into ``the 
(multiplicative) theta map'' (See \S 5). Ritter and Weiss also used 
them to formulate their ``equivariant Iwasawa theory'' (\cite{R-W1}).

Let $R$ be a complete discrete valuation ring with mixed characteristics
$(0,p)$, $K$ its fractional field, $\pi$ a uniformizer of $R$ and 
$k$ the residue field of $R$. 
Let $\ideal{A}$ be an 
``$R$-order,'' that is, an $R$-algebra which is free as a left $R$-module (though Oliver and Taylor treated only $\Zp$-orders in \cite{Oliver} and \cite{Oli-Tay},  we need integral logarithms for the $\comp{\iw{\Gamma}_{(p)}}$-order 
$\comp{\iw{\Gamma}_{(p)}}[G^f]$ to construct the localized version of
the theta map (see \S\, 6), therefore we introduce here the generalized version of integral
logarithms for general $R$-orders).

\begin{lem}[\cite{Oliver}, Lemma 2.7 for $\Zp$-orders] \label{lem:log}
Let $R$, $K$, $\pi$, $k$ and $\ideal{A}$ be as above, and let $J$ be the Jacobson radical
 of $\ideal{A}$. Then for every two-sided ideal $\ideal{a}\subseteq J$, $1+\ideal{a}$ is a 
multiplicative group. Moreover, 
for an arbitrary element $x\in \ideal{a}$, the power series
\begin{equation} \label{eq:log}
\log (1+x) = \sum_{i=1}^{\infty} (-1)^{i-1}\dfrac{x^i}{i}
\end{equation}
converges $p$-adically in $\ideal{a}_{\Q}=\Q \otimes_{\Z} \ideal{a}$, and satisfies
\begin{equation} \label{eq:log+}
\log ((1+x)(1+y)) \equiv \log(1+x)+\log(1+y) \qquad
 \mod{[\ideal{A}_{\Q},\ideal{a}_{\Q}]}
\end{equation}
for every $x,y \in \ideal{a}$ where $\ideal{A}_{\Q}=\Q \otimes_{\Z} \ideal{A}$.

In particular, $\log$ induces a homomorphism of groups
\begin{equation*}
\log\colon 1+\ideal{a} \longrightarrow \ideal{a}_{\Q}/[\ideal{A}_{\Q}, \ideal{a}_{\Q}].
\end{equation*}
\end{lem}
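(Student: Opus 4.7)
My plan is to follow Oliver's proof for $\Zp$-orders (\cite{Oliver}, Lemma 2.7) and adapt it to general $R$-orders by replacing the prime $p$ with the uniformizer $\pi$ in structural arguments while retaining $p$ only in the $p$-adic valuation controlling convergence. The groupness of $1 + \ideal{a}$ is immediate from $\ideal{a} \subseteq J$: each $1+x$ is a unit of $\ideal{A}$, and writing $(1+x)^{-1} = 1+z$ gives $z = -x(1+z) \in \ideal{a}$ since $\ideal{a}$ is a two-sided ideal, while closure under multiplication is direct.

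For the $p$-adic convergence of (\ref{eq:log}) in $\ideal{a}_{\Q}$, I would exploit that $\ideal{A}/\pi\ideal{A}$ is a finite-dimensional $k$-algebra (assuming, as in the intended application to $\comp{\iw{\Gamma}_{(p)}}[G^f]$, that $\ideal{A}$ is finitely generated over $R$), so its Jacobson radical $J/\pi\ideal{A}$ is nilpotent; hence $J^N \subseteq \pi\ideal{A}$ for some $N \in \N$, and iterating gives $x^{Ni} \in \pi^i \ideal{A}$ for every $x \in \ideal{a}$. Since $v_p(\pi) > 0$ in the mixed characteristic $(0,p)$ setting, $v_p(x^i)$ grows linearly in $i$ whereas $v_p(i) = O(\log i)$, so $v_p(x^i / i) \to \infty$ and the series converges.

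The principal content is the modular additivity formula (\ref{eq:log+}), for which I would work universally in the formal free non-commutative power series algebra $\Q\langle\langle X, Y \rangle\rangle$. There the Baker--Campbell--Hausdorff formula expresses
\[
\log((1+X)(1+Y)) - \log(1+X) - \log(1+Y)
\]
as a (convergent) formal sum of iterated Lie brackets in $A = \log(1+X)$ and $B = \log(1+Y)$ in which each summand contains at least one $B$. Specializing $X \mapsto x$ and $Y \mapsto y$ for $x, y \in \ideal{a}$, one has $A, B \in \ideal{a}_{\Q}$ and every such Lie monomial becomes an iterated commutator whose innermost entry lies in $\ideal{a}_{\Q}$; since $\ideal{a}_{\Q}$ is an $\ideal{A}_{\Q}$-bimodule, every such iterated commutator lies in $[\ideal{A}_{\Q}, \ideal{a}_{\Q}]$. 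Combining with the convergence step shows the infinite sum lies in $[\ideal{A}_{\Q}, \ideal{a}_{\Q}]$, yielding (\ref{eq:log+}); this congruence is precisely the statement that $\log$ descends to a group homomorphism on $1 + \ideal{a}$, completing the proof.

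The main obstacle is this last step: ensuring that each term of the BCH expansion lies in $[\ideal{A}_{\Q}, \ideal{a}_{\Q}]$ itself and not merely in its $p$-adic closure, and that the partial sums of the infinite series converge inside this subgroup. Oliver avoids invoking BCH directly by a hands-on combinatorial manipulation of the expansion of $\log((1+X)(1+Y))$, rewriting each monomial modulo commutators by induction on its degree so that the partial sums already lie in $[\ideal{A}_{\Q}, \ideal{a}_{\Q}]$; this combinatorial argument depends only on the filtration $\{J^n\}$ and transfers to the general $R$-order setting essentially verbatim.
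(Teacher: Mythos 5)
Your proposal is correct and follows essentially the same route as the paper: convergence of the logarithm is obtained from nilpotence of the Jacobson radical of $\ideal{A}/\pi\ideal{A}$ (so $x^N\in\pi\ideal{A}$ and $v_p(x^i/i)\to\infty$), and the congruence (\ref{eq:log+}) is ultimately referred to Oliver's combinatorial computation, exactly as the paper does. Your BCH sketch is a digression whose defect (membership in $[\ideal{A}_{\Q},\ideal{a}_{\Q}]$ itself rather than its closure) you correctly identify yourself before falling back on Oliver's argument, so no gap remains.
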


Here for arbitrary two-sided ideals $\ideal{a}$ and $\ideal{b}$ of $R$, $[\ideal{a},\ideal{b}]$ denotes the two-sided ideal generated by $[a,b]=ab-ba$ where $a\in \ideal{a}$  and $b\in \ideal{b}$.

\begin{proof}
Since $\ideal{A}/\ideal{A}\pi$ is finite over $k=R/R\pi$, it is a left 
Artinian $k$-algebra. Note that $J/\ideal{A}\pi$ is the Jacobson radical of 
$\ideal{A}/\ideal{A}\pi$, and by commutative ring theory, the Jacobson radical 
of an arbitrary (left) Artinian ring is nilpotent. Hence for an arbitrary $x\in \ideal{a} \subseteq J$, there exists a certain natural number $m$ such that 
\begin{equation} \label{eq:xmpi}
x^m \in \ideal{A}\pi.
\end{equation} 

Let $e$ be the absolute ramification index of $K$. Then we have $x^n \in  \ideal{A} p^{[n/em]}$ for each $n\geq em$ where $[r]$ denotes the greatest integer not greater than $r$ for every $r\in \mathbb{R}$. $\ideal{A} p^{[n/em]}$ converges to zero as $n\rightarrow \infty$, therefore we obtain
\begin{equation*} 
\lim_{n\rightarrow \infty} \left| x^n \right|_p=0,
\end{equation*}
which implies the power series
\begin{equation*}
(1+x)^{-1} = \sum_{i=0}^{\infty} (-1)^i x^i
\end{equation*}
converges in $\ideal{A}$. Since $\ideal{a}$ is closed in $p$-adic topology, we may conclude that $(1+x)^{-1}\in 1+\ideal{a}$. It is clear that $1+\ideal{a}$ is closed under multiplication (note that $\ideal{a}$ is a two-sided ideal), therefore $1+\ideal{a}$ is a multiplicative group.

Now we also have $x^n/n \in  \ideal{A} \cdot (p^{[n/em]}/n)$ for each $n\geq em$ by (\ref{eq:xmpi}), therefore we obtain
\begin{equation}
\lim_{n\rightarrow \infty} \left| \dfrac{x^n}{n} \right|_{p}=0,
\end{equation}
which implies that the power series (\ref{eq:log}) converges in $\ideal{A}_{\Q}$. 
Then we can show that (\ref{eq:log}) converges in $\ideal{a}_{\Q}$ by the same argument as above.

We may also show the equation (\ref{eq:log+}) by direct calculation, 
but this calculation is quite complicated because of the
 non-commutativity of $\ideal{A}$. See \cite{Oliver}, Chapter 2, Lemma
 2.7.
\end{proof}

\begin{rem}
Note that though Oliver constructed $\log$ only for $\Zp$-orders in
 \cite{Oliver}, Lemma 2.7 of Chapter 2,
 his proof does not use peculiarities of $\Zp$ (especially the finiteness of the residue field of $R$). So we can apply his proof to the case of general $R$-orders.
\end{rem}

\begin{prop}[\cite{Oliver}, Theorem 2.8 for $\Zp$-orders] \label{prop:logk1}
Let $\ideal{A}$ and $\ideal{a}$ be as above. Then the logarithmic homomorphism of Lemma $\ref{lem:log}$ induces the following homomorphism of abelian groups$\colon$
\begin{equation*}
\log_\ideal{a} \colon K_1(\ideal{A},\ideal{a}) \longrightarrow \ideal{a}_{\Q}/[\ideal{A}_{\Q},\ideal{a}_{\Q}].
\end{equation*}
\end{prop}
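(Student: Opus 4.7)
The plan is to build $\log_{\ideal{a}}$ in three stages: apply Lemma~\ref{lem:log} to the matrix order $M_n(\ideal{A})$ with its ideal $M_n(\ideal{a})$, compose with a suitable trace map into $\ideal{a}_{\Q}/[\ideal{A}_{\Q},\ideal{a}_{\Q}]$ that is compatible with stabilization, and then use Whitehead's lemma to verify that $\E(\ideal{A},\ideal{a})$ lies in the kernel of the resulting homomorphism on $\GL(\ideal{A},\ideal{a})$.

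First, since $\ideal{a}$ is contained in the Jacobson radical $J$ of $\ideal{A}$, the ideal $M_n(\ideal{a})$ is contained in the Jacobson radical of $M_n(\ideal{A})$. Lemma~\ref{lem:log}, applied to the $R$-order $M_n(\ideal{A})$, therefore produces a group homomorphism
\begin{equation*}
\log \colon 1+M_n(\ideal{a}) \longrightarrow M_n(\ideal{a}_{\Q})/[M_n(\ideal{A}_{\Q}),M_n(\ideal{a}_{\Q})]
\end{equation*}
given by the convergent series (\ref{eq:log}). Next, I would introduce the trace
\begin{equation*}
\Tr \colon M_n(\ideal{a}_{\Q}) \longrightarrow \ideal{a}_{\Q}/[\ideal{A}_{\Q},\ideal{a}_{\Q}], \qquad (x_{ij}) \longmapsto \sum_{i=1}^{n} x_{ii},
\end{equation*}
and verify the mixed cyclicity $\Tr(XY)\equiv \Tr(YX) \pmod{[\ideal{A}_{\Q},\ideal{a}_{\Q}]}$ whenever one of $X,Y$ has entries in $\ideal{A}_{\Q}$ and the other in $\ideal{a}_{\Q}$; this is immediate from the fact that each discrepancy $x_{ij}y_{ji}-y_{ji}x_{ij}$ is a commutator of an element of $\ideal{A}_{\Q}$ with an element of $\ideal{a}_{\Q}$. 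Consequently $\Tr$ descends to the commutator quotient, and the composite
\begin{equation*}
\log_n \colon \GL_n(\ideal{A},\ideal{a}) \xrightarrow{\log} M_n(\ideal{a}_{\Q})/[M_n(\ideal{A}_{\Q}),M_n(\ideal{a}_{\Q})] \xrightarrow{\Tr} \ideal{a}_{\Q}/[\ideal{A}_{\Q},\ideal{a}_{\Q}]
\end{equation*}
is a group homomorphism compatible with the stabilization $M\mapsto M\oplus I_1$, because $\log(M\oplus I_1)=\log(M)\oplus 0$. Passing to the inductive limit yields a well-defined homomorphism $\log \colon \GL(\ideal{A},\ideal{a}) \to \ideal{a}_{\Q}/[\ideal{A}_{\Q},\ideal{a}_{\Q}]$.

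It remains to show this map vanishes on $\E(\ideal{A},\ideal{a})$. By Whitehead's lemma (Proposition~\ref{prop:whitehead}) one has $\E(\ideal{A},\ideal{a})=[\GL(\ideal{A}),\GL(\ideal{A},\ideal{a})]$, so it suffices to evaluate $\log_n$ on a commutator $[g,h]=ghg^{-1}h^{-1}$ with $g\in \GL_n(\ideal{A})$ and $h\in \GL_n(\ideal{A},\ideal{a})$. Two-sidedness of $\ideal{a}$ gives $ghg^{-1}\in 1+M_n(\ideal{a})$, so applying the additivity relation (\ref{eq:log+}) twice and using the formal identity $\log(gxg^{-1})=g\log(x)g^{-1}$ (valid term-by-term in the power series) yields
\begin{equation*}
\log([g,h]) \equiv g\log(h)g^{-1} - \log(h) \pmod{[M_n(\ideal{A}_{\Q}),M_n(\ideal{a}_{\Q})]}.
\end{equation*}
Applying $\Tr$ and invoking the mixed cyclicity gives $\Tr(g\log(h)g^{-1})\equiv \Tr(\log(h))$, hence $\log_n([g,h])=0$.

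The main obstacle I expect is the careful bookkeeping around the target group $\ideal{a}_{\Q}/[\ideal{A}_{\Q},\ideal{a}_{\Q}]$: one has to check that the trace of every commutator-type quantity encountered actually lands in the \emph{mixed} commutator subgroup $[\ideal{A}_{\Q},\ideal{a}_{\Q}]$ rather than merely $[\ideal{A}_{\Q},\ideal{A}_{\Q}]$, which works here precisely because at least one factor is forced to lie in $M_n(\ideal{a}_{\Q})$ through $\log(h)$. Once this calibration of ideals is handled, the proof amounts to a mechanical enhancement of Lemma~\ref{lem:log} to matrix algebras combined with Whitehead's lemma.
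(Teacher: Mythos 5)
Your proof is correct and follows essentially the same route as the paper's: apply Lemma \ref{lem:log} to the matrix order $\M_n(\ideal{A})$ with ideal $\M_n(\ideal{a})$, compose with the trace into $\ideal{a}_{\Q}/[\ideal{A}_{\Q},\ideal{a}_{\Q}]$, show commutators $[g,h]$ with $g\in\GL_n(\ideal{A})$, $h\in\GL_n(\ideal{A},\ideal{a})$ die under $\Tr\circ\log$, and conclude via Whitehead's lemma and stabilization. The only difference is that you spell out the mixed cyclicity of the trace, which the paper leaves implicit.
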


\begin{proof}[Sketch of the proof]
For every $n\geq 1$ and for an ideal $\M_n(\ideal{a})\subseteq M_n(\ideal{A})$, we have a logarithmic 
homomorphism (Lemma \ref{lem:log})
\begin{align*}
\log^{(n)} \colon \GL_n(\ideal{A},\ideal{a})=I_n+\M_n(\ideal{a}) \xrightarrow{\log}
 \M_n(\ideal{a}_{\Q})/& [\M_n(\ideal{A}_{\Q}), \M_n(\ideal{a}_{\Q})] \\ 
 &\xrightarrow{\mathrm{trace}} \ideal{a}_{\Q} / [\ideal{A}_{\Q}, \ideal{a}_{\Q}]
\end{align*}
where $I_n$ is the unit matrix of $\GL_n(\ideal{A})$.

For every $X \in \GL_n(\ideal{A})$ and $A\in \GL_n(\ideal{A},\ideal{a})=I_n+\M_n(\ideal{a})$, we have
\begin{align*}
\log^{(n)}([X,A]) &= \log^{(n)}(XAX^{-1})- \log^{(n)}(A) \qquad
 (\text{by (\ref{eq:log+})}) \\
&= \Tr (X\log(A)X^{-1})-\Tr(\log(A)) \\
&=0.
\end{align*}

Hence $\log^{(n)}$ induces 
\begin{equation*}
\log^{(n)} \colon \GL_n(\ideal{A})/[\GL_n(\ideal{A}),\GL_n(\ideal{A},\ideal{a})] 
\longrightarrow \ideal{a}_{\Q}/[\ideal{A}_{\Q},\ideal{a}_{\Q}],
\end{equation*}
and by taking the projective limit, we obtain 
\begin{equation*}
\log_\ideal{a} \colon K_1(\ideal{A},\ideal{a}) \longrightarrow \ideal{a}_{\Q}/[\ideal{A}_{\Q},\ideal{a}_{\Q}],
\end{equation*}
using Whitehead's lemma (Proposition \ref{prop:whitehead}).

(See \cite{Oliver}, Theorem 2.8.)
\end{proof}

\begin{rem} \label{rem:logext}
Let $R$ be the integer ring of a finite extension of $\Qp$, and take
 $\ideal{a}$ to be the Jacobson radical $J$ of $\ideal{A}=R[G]$ where $G$ is a
 finite group. Then we may extend the domain of the logarithmic homomorphism obtained in Proposition \ref{prop:logk1} to $K_1(R[G])$ uniquely. 

The following exact sequence of $K$-groups is well known:
\begin{equation*}
K_2(R[G]/J) \rightarrow K_1(R[G],J) \rightarrow K_1(R[G])
 \rightarrow K_1(R[G]/J) \rightarrow 1
\end{equation*}
(See \cite{Milnor} Lemma 4.1 and Theorem 6.2). Note that the homomorphism $K_1(R[G])
\rightarrow K_1(R[G]/J)$ is surjective by Proposition
\ref{prop:surjonk1}. Since $R[G]/J$ is a
finite semi-simple algebra with $p$-power order, $K_2(R[G]/J)=1$
and $p \nmid a=\sharp K_1(R[G]/J)$ (\cite{Oliver}, Theorem 1.16). 
Therefore, we may regard $K_1(R[G],J)$ as a subgroup of
$K_1(R[G])$, and may extend the domain of $\log_J$ to $K_1(R[G])$ by setting
\begin{equation*}
\log_{R[G]} \phi =\frac{1}{a} \log_J \phi^a
\end{equation*}
for every $\phi \in K_1(R[G])$ (note that $\phi^a \in K_1(R[G],J)$).
The uniqueness of the extension is trivial from this construction.
\end{rem}

Now we define {\em integral logarithmic homomorphisms}. Let $R$ be a complete discrete valuation ring which is absolutely unramified, and let $G$ be a finite $p$-group. 
For simplicity, we assume that $p\neq 2$. We consider a group ring $R[G]$. Let $J$ be the Jacobson radical of $R[G]$. Let 
\begin{align*}
R[\conj{G}]&=R[G]/[R[G],R[G]] & (\text{resp.\ }K[\conj{G}]&=K[G]/[K[G],K[G]])
\end{align*}
be the free $R$-module (resp.\ the $K$-vector space) with basis $\conj{G}$. 
In the following, we fix the Frobenius automorphism $\tilder{\varphi} \colon K \rightarrow K$ when $k=R/pR$ is not perfect. Then we have
\begin{align} \label{eq:frob}
\tilder{\varphi}(r) &\equiv r^p & \mod pR
\end{align}
for every $r\in R$.\footnote{If $R$ is the integer ring of a finite unramified extension of $\Qp$, $\tilder{\varphi}$ is the ordinary Frobenius 
endomorphism (determined uniquely up to the inertia subgroup). If $R$ is the $p$-adic completion of $\iw{\Gamma}_{(p)}$
(we use this ring in \S 6), we may choose $\tilder{\varphi}$ as the endomorphism induced by $t \mapsto t^p$.}

Let $\varphi \colon K[\conj{G}]\longrightarrow K[G]$ be the homomorphism 
defined by
\begin{equation*}
\varphi \left( \sum_g k_g[g] \right) =\sum_g \tilder{\varphi}(k_g) [g^p] 
\qquad (k_g\in K,[g]\in \conj{G}).
\end{equation*}

\begin{propdef}[The integral logarithms] \label{pd:intlog}
Set 
\begin{align*}
\Gamma_{G,J}(u) &=\log_J (u) - \frac{1}{p}\varphi \left( \log_J (u)\right) \in K[\conj{G}],  & u\in K_1(R[G],J).
\end{align*}

Then $\Gamma_{G,J}$ induces a homomorphism
\begin{equation*}
\Gamma_{G,J}\colon K_1(R[G],J) \longrightarrow R[\conj{G}],
\end{equation*}
{\upshape which we call} the integral logarithmic homomorphism for $R[G]$.

When $K$ is a finite unramified extension of $\Qp$ and $R$ is its
 integer ring, then 
\begin{align*}
\Gamma_{G}(u) &=\log_{R[G]} (u) - \frac{1}{p}\varphi \left( \log_{R[G]} (u)\right) \in K[\conj{G}],  & u\in K_1(R[G])
\end{align*}
also induces a homomorphism
\begin{equation*}
\Gamma_{G}\colon K_1(R[G]) \longrightarrow R[\conj{G}].
\end{equation*}
\end{propdef}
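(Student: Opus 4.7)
My plan is to follow Oliver's argument from \cite{Oliver}, Theorem 2.8, making only cosmetic changes to accommodate a general absolutely unramified $R$-order in place of the $\Zp$-order treated there. That $\Gamma_{G,J}$ defines a group homomorphism into $K[\conj{G}]$ is immediate from the fact that $\log_J$ is a homomorphism (Proposition \ref{prop:logk1}) and that $\varphi$ is additive by construction; the entire content of the statement is therefore the integrality claim $\Gamma_{G,J}(u)\in R[\conj{G}]$.

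For this I would first reduce to units of the form $u=1+x$ with $x\in J$, using the matrix-trace construction of Proposition \ref{prop:logk1} together with Whitehead's lemma to pass from $\GL_n(R[G],J)$ down to the $n=1$ case. Then I would verify that the series
\[
\Gamma_{G,J}(1+x) \;=\; \sum_{i\geq 1}\frac{(-1)^{i-1}}{i}\overline{x^i} \;-\; \frac{1}{p}\sum_{i\geq 1}\frac{(-1)^{i-1}}{i}\varphi\bigl(\overline{x^i}\bigr)
\]
has integral coefficients. The key ingredient is the congruence
\[
y^p \;\equiv\; \varphi(\overline{y}) \pmod{pR[G] + [R[G],R[G]]}\qquad (y\in R[G]),
\]
proved by multinomial expansion: writing $y=\sum_g a_g g$, the diagonal contributions to $y^p$ give $\sum_g a_g^p g^p \equiv \sum_g \tilder{\varphi}(a_g) g^p \pmod{pR[G]}$ by the defining property (\ref{eq:frob}) of $\tilder{\varphi}$, while non-constant $p$-tuples $(g_1,\dotsc,g_p)$ split into free cyclic orbits of length $p$ and therefore contribute elements of $pR[G]$ after projecting modulo commutators. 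An induction on $k$ then yields the strengthened form $y^{p^{k+1}}\equiv\varphi(y^{p^k}) \pmod{p^{k+1} R[\conj{G}]}$.

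Given these congruences, the integrality is established by pairing the term $i=p^{k+1}m$ (with $p\nmid m$) of the first sum with the term $i=p^km$ of the second sum; their sum is proportional to
\[
\frac{1}{p^{k+1}m}\bigl(\overline{x^{p^{k+1}m}} - \varphi(\overline{x^{p^k m}})\bigr),
\]
which lies in $R[\conj{G}]$ by the iterated congruence applied to $y=x^m$. The remaining terms of the first sum with $p\nmid i$ are already integral since then $i\in R^{\times}$. Summing over all $k$ and $m$ gives $\Gamma_{G,J}(1+x)\in R[\conj{G}]$. This combinatorial pairing is the main technical obstacle, and it is also precisely where the hypothesis $p\neq 2$ becomes essential: the implicit sign identity $(-1)^{pj-1}=(-1)^{j-1}$ fails for $p=2$, and one must further introduce a $2$-torsion correction term to repair the argument.

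For the second assertion, Remark \ref{rem:logext} provides the unique extension $\log_{R[G]}(\phi) = a^{-1}\log_J(\phi^a)$ to all of $K_1(R[G])$, where $a = \#K_1(R[G]/J)$ is prime to $p$ and hence a unit of $R$. Defining $\Gamma_G(\phi) := a^{-1}\Gamma_{G,J}(\phi^a)$ in the same way, integrality of $\Gamma_G$ is inherited directly from the first part, completing the proof.
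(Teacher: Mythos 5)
Your proposal takes essentially the same route as the paper's own (sketched) proof: both reduce the integrality of $\Gamma_{G,J}(1+x)$ to the congruence $p^n \mid x^{p^n}-\varphi(x^{p^{n-1}})$ in $R[\conj{G}]$ via the same pairing of the $i=p^{k+1}m$ terms of $\log$ with the $i=p^k m$ terms of $\frac{1}{p}\varphi(\log)$, and both ultimately rest on the orbit-counting expansion of Oliver's Theorem 6.2, which the paper cites and you sketch; the deduction of $\Gamma_G$ from $\Gamma_{G,J}$ via Remark \ref{rem:logext} is also identical.
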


\begin{proof}[Sketch of the proof]
Take an arbitrary $x\in J$. Since 
\begin{align*}
\Gamma_{G,J}(1-x) &= -\left( x+\frac{x^2}{2}+\cdots
 \right)+\frac{1}{p}\varphi \left( x+\frac{x^2}{2}\cdots \right) \\
 &\equiv -\sum_{i=1}^{\infty} \frac{1}{pi}\left( x^{pi}-\frob{x^i}
 \right) \qquad \mod{R[\conj{G}]},
\end{align*}
it is sufficient to show that $pi| (x^{pi}-\frob{x^i})$ for every $i\geq 1$, or $p^n | (x^{p^n}-\frob{x^{p^{n-1}}})$ for every $n\geq 1$ 
(note that all primes other than $p$ are invertible in $R$).

To do this, we should check each term of the expansion of $x^{p^n}$ and
 $\frob{x^{p^{n-1}}}$ carefully. See \cite{Oliver}, Theorem 6.2 for details.

The second part follows immediately by the first part and the uniqueness of the extension of $\log$ (Remark \ref{rem:logext}).
\end{proof} 

Now assume that $R$ is the $p$-adic integer ring $\Zp$ and $G$ is a
finite $p$-group. 
In this case we can derive further information about the kernel and image of the integral logarithms.

\begin{thm} \label{thm:intlogstr}
Let $G$ be a
finite $p$-group and $K_1(\Zp[G])_{\mathrm{tors}}$ the torsion part of $K_1(\Zp[G])$. Then there exists an exact sequence 
\begin{equation*}
1 \rightarrow K_1(\Zp[G])/K_1(\Zp[G])_{\mathrm{tors}}
 \xrightarrow{\Gamma_G} \Zp[\conj{G}] \xrightarrow{\omega_G}
 G^{\mathrm{ab}} \rightarrow1.
\end{equation*}

Here $\omega_G$ is defined by 
\begin{equation*}
\omega_G \left( \sum_g a_g[g] \right) = \prod_g \line{g}^{a_g} \qquad (a_g\in \Zp,
 [g]\in \conj{G})
\end{equation*}
and $G^{\mathrm{ab}}$ is the abelization of $G$. We denote by $\line{g}$ the image of $[g]$ in $G^{\mathrm{ab}}$.
\end{thm}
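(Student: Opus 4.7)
The plan is to establish the exact sequence by proving separately: (a) surjectivity of $\omega_G$, (b) triviality of $\omega_G \circ \Gamma_G$, (c) the kernel of $\Gamma_G$ is exactly $K_1(\Zp[G])_{\mathrm{tors}}$, and (d) the image of $\Gamma_G$ equals $\ker \omega_G$. Parts (a) and (b) are relatively formal; (c) and (d) encode the substance of the theorem.

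Part (a) is immediate, since $\omega_G([g])=\bar g$ hits every generator of $G^{\mathrm{ab}}$. For (b), I would exploit the functoriality of $\Gamma_G$ and $\omega_G$ under the projection $\pi\colon \Zp[G] \twoheadrightarrow \Zp[G^{\mathrm{ab}}]$, reducing the identity $\omega_G\circ\Gamma_G=1$ to the case where $G$ is an abelian $p$-group. In that case $K_1(\Zp[G])=\Zp[G]^{\times}$ by Proposition \ref{prop:sloc}(1), and $\Zp[G]^{\times}$ is generated topologically by $\mu_{p-1}(\Zp)$, by the image of $G$, and by principal units $1+x$ with $x\in J$. On the first two families $\Gamma_G$ vanishes ($\log \zeta=0$ because $\zeta^{p-1}=1$, while $\log g=0$ in $\Qp[G]$ because $p^n\log g=\log g^{p^n}=0$), whereas for a principal unit $1+x$ one expands $\log(1+x)=\sum a_g g$ in $\Qp[G]$ and observes that $\varphi(\log(1+x))=\sum a_g g^p$ contributes $\prod \bar g^{p a_g}$ to $G^{\mathrm{ab}}$; the factor $\tfrac1p$ in the definition of $\Gamma_G$ then causes the two contributions to $\omega_G(\Gamma_G(1+x))$ to cancel.

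The inclusion $K_1(\Zp[G])_{\mathrm{tors}}\subseteq\ker\Gamma_G$ required for (c) is automatic, because $\Zp[\conj G]$ is $\Zp$-torsion-free. The reverse inclusion and (d) are simultaneously handled by a rank count combined with induction. Both $K_1(\Zp[G])/K_1(\Zp[G])_{\mathrm{tors}}$ and $\ker\omega_G$ are free $\Zp$-modules of rank $|\conj G|$: for $\ker\omega_G$ this is immediate because $\Zp[\conj G]$ is free of rank $|\conj G|$ and $G^{\mathrm{ab}}$ is finite; for $K_1(\Zp[G])$ this is obtained from the Wedderburn decomposition of $\Qp[G]$ together with a reduced-norm analysis on each simple component. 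Consequently the injection $\Gamma_G\colon K_1(\Zp[G])/K_1(\Zp[G])_{\mathrm{tors}}\hookrightarrow \ker\omega_G$, once established, has finite cokernel of $p$-power order. The vanishing of this cokernel I would obtain by induction on $|G|$: the cyclic case is a direct computation using the explicit description of $\Zp[G]^{\times}$, and the inductive step applies the five lemma to the exact sequences attached to a normal subgroup $N\trianglelefteq G$ of index $p$, using the naturality of $\Gamma$ and $\omega$ under $G\twoheadrightarrow G/N$ and the inclusion $N\hookrightarrow G$.

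The hardest step is controlling the cokernel of $\Gamma_G$ inside $\ker\omega_G$: this is the point at which Oliver's original argument requires the most delicate calculation. The obstacle is that while $\Zp$-rank considerations reduce the cokernel to a finite $p$-group, its actual triviality depends on the precise match between the $p$-divisibility introduced by the Frobenius twist $p^{-1}\varphi$ and the $p$-divisibility already present in $\ker\omega_G$. Establishing this match requires explicit manipulation of the logarithm series in $\Qp[G]$ and, in the inductive step, a careful compatibility check between $\Gamma_G$ and the transfer/restriction maps entering the five-lemma diagram.
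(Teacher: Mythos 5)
Your four-part skeleton is the right one, and two of the pieces are solid: torsion is automatically killed because $\Zp[\conj{G}]$ is torsion-free, and the rank count (both $K_1(\Zp[G])/K_1(\Zp[G])_{\mathrm{tors}}$ and $\ker\omega_G$ free of rank $\sharp\conj{G}$) correctly reduces exactness at $\Zp[\conj{G}]$ to killing a finite $p$-group. Note, though, that the paper itself offers no proof of this theorem --- it is quoted verbatim from Oliver (Theorem 6.6 of \cite{Oliver}) --- so your sketch has to stand on its own, and two of its load-bearing steps do not.

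First, the verification of $\omega_G\circ\Gamma_G=1$ in step (b) is not a valid computation: you apply $\omega_G$ separately to $\log(1+x)$ and to $\frac{1}{p}\varphi(\log(1+x))$, i.e.\ you form $\prod\line{g}^{a_g}$ and $\prod\line{g}^{pa_g\cdot p^{-1}}$ with $a_g\in\Qp$. Elements of the finite group $G^{\mathrm{ab}}$ cannot be raised to exponents outside $\Zp$ (equivalently, $G^{\mathrm{ab}}\otimes_{\Z}\Qp=0$), and the coefficients of $\log(1+x)$ genuinely fail to lie in $\Zp$ in general, so the two ``contributions'' you want to cancel do not individually exist. Only the combination $\Gamma_G(u)=\frac{1}{p}\log\bigl(u^p\varphi(u)^{-1}\bigr)$ is integral, and the identity $\omega_G(\Gamma_G(u))=1$ must be extracted from that integral expression (via $u^p\equiv\varphi(u)\bmod p$ and a term-by-term analysis of $\frac{1}{p}\log(1+pw)$, or by a component-wise computation in $\Qp[G^{\mathrm{ab}}]\cong\prod_\chi\Qp(\zeta)$). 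This is a short but genuine argument, not a formal cancellation. Second, and more seriously, the inductive step that is supposed to kill the finite cokernel has no diagram to run on: for a normal subgroup $N\trianglelefteq G$ of index $p$ there is no exact sequence relating $K_1(\Zp[G])$ to $K_1(\Zp[N])$ and $K_1(\Zp[G/N])$, nor one relating $\Zp[\conj{G}]$ to $\Zp[\conj{N}]$ and $\Zp[\conj{G/N}]$, so there are no rows to which the five lemma could apply. The standard way to close this step (and the way Oliver's machinery is set up) is not transfer/restriction along an index-$p$ subgroup but rather an explicit determination of $\Gamma_G(1+\ideal{a})$ for suitable ideals $\ideal{a}$ using the isomorphisms $\log_{\ideal{a}}\colon K_1(\Zp[G],\ideal{a})\to\ideal{a}/[\Zp[G],\ideal{a}]$, together with induction along central subgroups of order $p$ and the associated Cartesian squares of group rings. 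As it stands, the hardest step of your plan is named but not actually carried out by any argument that would go through.
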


\begin{proof}
See \cite{Oliver}, Theorem 6.6.
\end{proof}

For the torsion part of $K_1(\Zp[G])$, the following properties are known.

\begin{defn}[{$SK_1(\Zp[G])$}]
Let $G$ be a finite group. Then we put
\begin{equation*}
SK_1(\Zp[G]) = \Ker \left( K_1(\Zp[G]) \longrightarrow K_1(\Qp[G]) \right).
\end{equation*}
\end{defn}

\begin{rem}
We may define the $SK_1$-group for an arbitrary associative ring by Whitehead construction, 
but we omit this since we only use $SK_1$-groups for group rings of $\Zp[G]$-type.
\end{rem}

\begin{prop} \label{prop:sk1}
Let $G$ be a finite group.
\begin{enumerate}[$(1)$]
\item $K_1(\Zp[G])_{\mathrm{tors}}\cong \mu_{p-1} \times G^{\mathrm{ab}} \times SK_1(\Zp[G])$.
\item $SK_1(\Zp[G])$ is finite.
\item If $G$ is commutative, $SK_1(\Zp[G])=1$.
\end{enumerate}
\end{prop}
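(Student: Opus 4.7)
The plan is to handle the three parts in order of increasing difficulty, relying on Theorem \ref{thm:intlogstr} and the Wedderburn decomposition of $\Qp[G]$ to organise the structure.

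For part (3), suppose $G$ is commutative. Then $\Zp[G]$ is a commutative semi-local $\Zp$-algebra, so Proposition \ref{prop:sloc}(1) gives canonical isomorphisms $K_1(\Zp[G]) \cong \Zp[G]^{\times}$ and $K_1(\Qp[G]) \cong \Qp[G]^{\times}$. The canonical map between these $K_1$-groups is induced by the inclusion of rings $\Zp[G] \hookrightarrow \Qp[G]$, which is manifestly injective on unit groups, so its kernel $SK_1(\Zp[G])$ is trivial.

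For part (1), the idea is to identify three natural subgroups of $K_1(\Zp[G])_{\mathrm{tors}}$ — a copy of $\mu_{p-1}$, a copy of $G^{\mathrm{ab}}$, and $SK_1(\Zp[G])$ — and to show that together they span the full torsion and do so as an internal direct product. The subgroup $\mu_{p-1}$ comes from the central inclusion $\Zp^{\times} \hookrightarrow \Zp[G]^{\times} \to K_1(\Zp[G])$, while the map $G \hookrightarrow \Zp[G]^{\times} \to K_1(\Zp[G])$ kills commutators and therefore factors through $G^{\mathrm{ab}}$. To separate these pieces one passes to $K_1(\Qp[G])$: by Maschke's theorem, $\Qp[G] \cong \prod_i \M_{n_i}(D_i)$, so $K_1(\Qp[G]) \cong \prod_i D_i^{\times}/[D_i^{\times}, D_i^{\times}]$, and the torsion of each factor is the group of roots of unity in the center of $D_i$. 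The trivial character detects the $\mu_{p-1}$-summand, while the collection of irreducible characters of $G$ separates the $G^{\mathrm{ab}}$-summand from the rest; what remains of the torsion after splitting off $\mu_{p-1}$ and $G^{\mathrm{ab}}$ lies in the kernel of $K_1(\Zp[G]) \to K_1(\Qp[G])$, which is by definition $SK_1(\Zp[G])$.

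For part (2) — which I expect to be the main obstacle — I would combine (1) with Theorem \ref{thm:intlogstr}. The latter shows that $K_1(\Zp[G])/K_1(\Zp[G])_{\mathrm{tors}}$ sits inside the finitely generated free $\Zp$-module $\Zp[\conj{G}]$, so to prove $SK_1(\Zp[G])$ is finite it suffices, by (1), to bound $K_1(\Zp[G])_{\mathrm{tors}}$ itself. The image of the torsion in $K_1(\Qp[G])$ lands in a finite set of roots of unity, so it remains to show that the kernel $SK_1(\Zp[G]) = \ker\bigl(K_1(\Zp[G]) \to K_1(\Qp[G])\bigr)$ is finite. The standard technique is to compare $\Zp[G]$ with a maximal $\Zp$-order $\ideal{M} \subseteq \Qp[G]$: one uses the two facts that $SK_1(\ideal{M}) = 1$ (since maximal orders decompose as products of hereditary orders in division algebras, where $SK_1$ vanishes) and that the conductor quotient $\ideal{M}/\Zp[G]$ is finite, which via a Mayer–Vietoris / exact sequence comparison forces $SK_1(\Zp[G])$ to sit between finite groups. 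This order-theoretic comparison is the technical heart of the argument and is the step where one must invoke Oliver's detailed analysis rather than the elementary manipulations used for (1) and (3).
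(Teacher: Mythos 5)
Your part (3) is exactly the paper's argument: commutativity plus Proposition \ref{prop:sloc}(1) identifies both $K_1$-groups with unit groups, and the inclusion of units is injective. For parts (1) and (2), however, the paper offers no argument at all --- it simply cites \cite{Oliver}, Theorem 7.4 and \cite{Wall}, Theorem 2.5 --- whereas you sketch the proofs behind those citations. Your sketches identify the right machinery, but be aware of where they stop short of a proof. In (1), producing the subgroups $\mu_{p-1}$, $G^{\mathrm{ab}}$ and $SK_1(\Zp[G])$ and separating them by characters is the easy half; the assertion that ``what remains of the torsion \ldots lies in $SK_1(\Zp[G])$'' is precisely the content of Oliver's theorem and does not follow from the Wedderburn decomposition alone. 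The actual proof runs through the integral logarithm: one shows its kernel on $K_1(\Zp[G])$ is exactly the torsion subgroup and that the three named pieces account for all of it. Note also that Theorem \ref{thm:intlogstr} as stated in the paper covers only finite $p$-groups, so invoking it for a general finite $G$ (as you do at the start of (2)) needs Oliver's more general version. In (2), your opening reduction is circular --- given (1), finiteness of $SK_1(\Zp[G])$ and finiteness of $K_1(\Zp[G])_{\mathrm{tors}}$ are equivalent, so ``bounding the torsion'' is not a reduction --- but the maximal-order comparison you then carry out (vanishing of $SK_1$ for a maximal order $\ideal{M}$, finiteness of $\ideal{M}/\Zp[G]$, and the relative exact sequences for the conductor) is indeed the route taken in \cite{Wall}. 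So: (3) coincides with the paper, and (1)--(2) are honest outlines of the cited results rather than proofs, with the genuinely hard step in (1) (exhaustion of the torsion) still deferred to the literature.
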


\begin{proof}
\begin{enumerate}
\item See \cite{Oliver}, Theorem 7.4.
\item See \cite{Wall}, Theorem 2.5.
\item This follows from the definition of $SK_1$-groups:
 since $\Zp[G]$ and $\Qp[G]$ are local and commutative, we have $K_1(\Zp[G]) \cong \Zp[G]^{\times}$ and $K_1(\Qp[G]) \cong \Qp[G]^{\times}$ by Proposition \ref{prop:sloc} (1), then $\Zp[G]^{\times} \rightarrow \Qp[G]^{\times}$ is obviously injective.
\end{enumerate}
\end{proof}

%
%
\section{Basic results of non-commutative Iwasawa theory}
%
%

In this section, we review basic results of \cite{CFKSV}.

Let $G$ be a compact $p$-adic Lie group containing a normal closed subgroup $H$ which satisfies $G/H \cong \Zp$, and let $\iw{G}$ be its Iwasawa algebra. In the non-commutative Iwasawa theory of \cite{CFKSV}, characteristic
elements (the ``arithmetic'' $p$-adic zeta functions) are defined
as elements of Whitehead groups of certain \O re localization of $\iw{G}$ by using the localization exact sequence. We first define {\it the canonical \O re set} $S$ for the group $G$ introduced in \cite{CFKSV} \S 2. Then for every element $[\ideal{C}]$ of the relative $K_0$-group $K_0(\iw{G}, \iw{G}_S)$ (in the following, we regard $K_0(\iw{G},\iw{G}_S)$ as the Grothendieck group of the category
 of bounded complexes of finitely generated projective left $\iw{G}$-modules 
whose cohomologies are of $S$-torsion), we define {\it 
a characteristic element of $[\ideal{C}]$} as an element of
$K_1(\iw{G}_S)$. Next, we characterize the $p$-adic zeta function (the
``analytic'' element) as an element of $K_1(\iw{G}_S)$
interpolating special values of Artin $L$-functions, and formulate the 
non-commutative Iwasawa main conjecture.

%
\subsection{The canonical \O re set and characteristic elements}
%

Let $G$ be a compact $p$-adic Lie group and 
\begin{equation*}
\iw{G}=\varprojlim_{U\colon\text{open normal}} \Zp[G/U]
\end{equation*}
be its Iwasawa algebra. 
Suppose that there exists a normal closed subgroup $H$ of $G$ which satisfies 
$G/H \cong \Gamma$, where $\Gamma$ is a commutative $p$-adic Lie group 
isomorphic to $\Zp$ (See \S 0.3). In the following, we fix such a subgroup $H$. 

\begin{propdef}[The canonical \O re set]
Let $G$ and $H$ be as above. Then 
\begin{equation*}
S= \{ f\in \iw{G} \mid \iw{G}/\iw{G}f \text{ is finitely generated as a left $\iw{H}$-module} \}
\end{equation*}
is a left and right \O re set $($See Definition $\ref{def:ore})$. 

{\upshape We call this $S$} the canonical \O re set for the group $G$.\footnote{Since the definition of $S$ depends on the subgroup $H$, we should call $S$ {\it the canonical \O re set for $(G,H)$}. But by abuse of notation, we often call $S$ the canonical \O re set for $G$ when the subgroup $H$ is fixed.}
\end{propdef}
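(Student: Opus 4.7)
The plan is to verify that $S$ satisfies both \O re conditions, focusing on the left version (\O re-1) and handling multiplicative closure and (\O re-2) separately; the right \O re conditions follow by an analogous argument using the anti-involution $g\mapsto g^{-1}$ of $\iw{G}$. First I would confirm that $S$ is multiplicatively closed: for $f,g\in S$, the short exact sequence
\[
0 \to \iw{G}f/\iw{G}gf \to \iw{G}/\iw{G}gf \to \iw{G}/\iw{G}f \to 0
\]
exhibits $\iw{G}/\iw{G}gf$ as an extension of two left $\iw{G}$-modules each finitely generated over $\iw{H}$: the right-hand term by hypothesis, and the left-hand term as a quotient of $\iw{G}/\iw{G}g$ via right multiplication by $f$. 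Condition (\O re-2) requires showing that elements of $S$ are non-zero-divisors; for $G$ without $p$-torsion this follows from the Neumann--Venjakob theorem that $\iw{G}$ is a domain, and in general from the characterization of $S$ in terms of finite generation over $\iw{H}$ together with flatness arguments.

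The heart of the argument is (\O re-1). Given $f\in S$ and $r\in\iw{G}$, I must produce $s\in S$ and $r'\in\iw{G}$ with $sr=r'f$; equivalently, writing $M=\iw{G}/\iw{G}f$ and letting $\bar r\in M$ be the class of $r$, I seek $s\in S$ annihilating $\bar r$. The cyclic submodule $N=\iw{G}\bar r\subseteq M$ is finitely generated over the Noetherian ring $\iw{H}$; choose generators $\bar r_1=\bar r,\bar r_2,\dotsc,\bar r_n$ over $\iw{H}$ and a lift $\gamma\in G$ of a topological generator of $\Gamma=G/H$. Left multiplication by $\gamma$ preserves $N$ and is semilinear with respect to the automorphism $\sigma$ of $\iw{H}$ given by conjugation by $\gamma^{-1}$, so that $\gamma\bar r_i=\sum_{j}a_{ij}\bar r_j$ for some matrix $A=(a_{ij})\in \M_n(\iw{H})$. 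A Cayley--Hamilton-type argument in the skew polynomial ring $\iw{H}[T;\sigma]$ then produces a monic polynomial
\[
P(T) = T^n + b_{n-1}T^{n-1} + \cdots + b_0, \qquad b_i\in\iw{H},
\]
such that $s:=P(\gamma)\in\iw{G}$ annihilates every $\bar r_i$; in particular $s\bar r=0$, which gives the desired $sr \in \iw{G}f$.

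It remains to verify $s\in S$. The relation $\gamma^n\equiv -(b_{n-1}\gamma^{n-1}+\cdots+b_0)\pmod{\iw{G}s}$ allows iterative reduction of higher powers $\gamma^m$ ($m\geq n$) to $\iw{H}$-linear combinations of $1,\gamma,\dotsc,\gamma^{n-1}$, conjugating coefficients by $\gamma$ at each step. Since $\iw{G}$ is a completed crossed product of $\iw{H}$ by $\Gamma$, every element of $\iw{G}$ is a convergent series in $\gamma$ with $\iw{H}$-coefficients, and passing to the topological limit shows $\iw{G}/\iw{G}s$ is generated as a left $\iw{H}$-module by the classes of $1,\gamma,\dotsc,\gamma^{n-1}$. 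The main obstacle is precisely this last step: one must control the topological convergence of the reduction process in the completed ring $\iw{G}$ to guarantee genuine (not merely topological) finite generation. This rests on the completeness and separatedness of the natural filtration on $\iw{G}$ together with the Noetherianness of $\iw{H}$; the full argument, in the generality needed here (including the presence of $p$-torsion in $G$), is carried out in \S 2 of \cite{CFKSV}.
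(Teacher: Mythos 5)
The paper never proves this statement itself: it is imported wholesale from \cite{CFKSV}, \S 2, to which the reader is referred. Your sketch correctly reconstructs the argument given there — multiplicative closure via the extension of finitely generated $\iw{H}$-modules, a monic skew polynomial in a lift $\gamma$ of a generator of $\Gamma$ annihilating $\bar r$ (obtained from the Noetherianity of $\iw{H}$ and the finite generation of $\iw{G}/\iw{G}f$ over $\iw{H}$), and division with remainder by that monic polynomial to see $s\in S$ — and you correctly isolate the convergence of the division process in the completed ring as the delicate point handled in \cite{CFKSV}, so your proposal is consistent with the paper's (cited) proof.
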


Since $S$ is a left and right \O re set, the left localization and the right 
localization of $\iw{G}$ with respect to $S$ are canonically isomorphic
to each other by Corollary \ref{cor:isom}. Therefore we may identify
these two localizations. Because of this reason, we denote by $\iw{G}_S$ the left (or right) localization of $\iw{G}$ with respect to $S$. 

The canonical \O re sets and their properties are discussed well in
\cite{CFKSV}, \S 2. Here we use the following two properties:

\begin{prop} \label{prop:props}
Let $G$ be a compact $p$-adic Lie group and $S$ the canonical \O re set
 for it. 
\begin{enumerate}[$(1)$]
\item The localized Iwasawa algebra $\iw{G}_S$ is semi-local.
\item The elements of $S$ are non-zero divisors in $\iw{G}$.
\end{enumerate}
\end{prop}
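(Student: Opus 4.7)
\emph{Proof plan.} I would follow the strategy of \cite{CFKSV}, \S 2, reducing both assertions to the case of a uniform pro-$p$ open normal subgroup of $G$.

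First, I would choose an open normal subgroup $G_0 \subseteq G$ which is uniform pro-$p$ (possible by Lazard's theorem since $G$ is a compact $p$-adic Lie group), and set $H_0 = H \cap G_0$, $\Gamma_0 = G_0 H/H$, so that $\Gamma_0$ is open in $\Gamma$. Let $S_0 \subseteq \iw{G_0}$ be the canonical \O re set for $G_0$ relative to $H_0$. Since $\iw{G}$ is a free $\iw{G_0}$-module of finite rank (with a basis given by any set of coset representatives of $G_0$ in $G$), the identification $\iw{G}/\iw{G}f \cong \iw{G} \otimes_{\iw{G_0}} \iw{G_0}/\iw{G_0}f$ for $f \in \iw{G_0}$ shows $S \cap \iw{G_0} = S_0$; more generally, an arbitrary $f \in S$ is related to an element of $S_0$ via the determinant of right multiplication by $f$ on $\iw{G}$ viewed as a free $\iw{G_0}$-module.

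For part (2), the group $G_0$ is uniform pro-$p$ and hence torsion-free, so $\iw{G_0}$ is a Noetherian integral domain by a classical result in the theory of pro-$p$ group algebras. In particular every nonzero element of $\iw{G_0}$, and hence every element of $S_0$, is a non-zero divisor. The freeness of $\iw{G}$ over $\iw{G_0}$ transfers this to the non-zero divisor property in $\iw{G}$: if $fx = 0$ for $f \in S$ and $x \in \iw{G}$, the determinant construction forces the norm of $f$ to annihilate the norm of $x$ in $\iw{G_0}$, forcing $x = 0$.

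For part (1), I would exploit the twisted power series presentation $\iw{G_0} \cong \iw{H_0}[[T]]$, where $T = t - 1$ for a topological lift $t \in G_0$ of a generator of $\Gamma_0$. A Weierstrass-type preparation theorem modulo $p$ should show that the image $\overline{f}$ of each $f \in S_0$ in $\Fp[[G_0]]$ is a unit times a distinguished polynomial in $T$ with coefficients in $\Fp[[H_0]]$. This will imply that $\Fp[[G_0]]_{\overline{S_0}}$ is finite over the fraction field of the center of $\Fp[[\Gamma_0]]$ and has finite length modulo its Jacobson radical, hence is semi-local. Lifting via the $p$-adic filtration using the standard idempotent-lifting lemma for $p$-adically complete rings gives the semi-locality of $\iw{G_0}_{S_0}$, and since $\iw{G}_S$ is a finite ring extension of $\iw{G_0}_{S_0}$, it is semi-local as well. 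The main obstacle is precisely this Weierstrass preparation step: one must verify that the defining finiteness condition for $S$ is compatible with the distinguished-polynomial factorization modulo $p$, which rests on a careful analysis of the filtration on $\iw{G_0}$ coming from the lower $p$-series of $G_0$.
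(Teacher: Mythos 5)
First, a point of reference: the paper does not prove this proposition at all --- both parts are simply quoted from \cite{CFKSV} (Proposition 4.2 and Theorem 2.4). So you are reconstructing that reference, and while your overall strategy (pass to a uniform open normal subgroup $G_0$ and work modulo $p$) is in the spirit of \cite{CFKSV}, several of your individual steps do not survive scrutiny. For part (2), the ``determinant of right multiplication by $f$'' on $\iw{G}$ as a free $\iw{G_0}$-module does not exist: $\iw{G_0}$ is non-commutative for a general compact $p$-adic Lie group, so there is no norm map $\iw{G}\to\iw{G_0}$, and with it goes your reduction of an arbitrary $f\in S$ to an element of $S_0$. Even in the commutative case your deduction is circular: from $fx=0$ you would get $N(f)N(x)=0$ and hence $N(x)=0$, but passing from $N(x)=0$ to $x=0$ is exactly the non-zero-divisor statement you are trying to prove. (The workable commutative argument uses instead that $N(f)\in\iw{G}f$, so $N(f)x=0$ with $N(f)\neq 0$ in the domain $\iw{G_0}$, together with freeness of $\iw{G}$ over $\iw{G_0}$; the actual proof in \cite{CFKSV} avoids norms entirely and rests on the mod-$p$ characterization of $S$ plus dimension theory for Iwasawa algebras.)

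For part (1), two steps fail. The claim that $\Fp[[G_0]]_{\overline{S_0}}$ is finite over the fraction field of $\Fp[[\Gamma_0]]$ is already false for $G_0=\Zp^2=H_0\times\Gamma_0$: writing $\Fp[[G_0]]=\Fp[[X,T]]$, one checks that $f\in S_0$ exactly when $f\notin (p,X)$, so $\Fp[[G_0]]_{\overline{S_0}}=\Fp[[X,T]]_{(X)}$, a discrete valuation ring containing the element $X$, which is transcendental over $\Fp((T))$; this ring is local but of infinite degree over $\Fp((T))$. More seriously, your idempotent-lifting step requires $p$-adic completeness, and $\iw{G}_S$ is \emph{not} $p$-adically complete --- this is precisely why \S 6 of the present paper has to introduce the completion $\comp{\iw{\Gamma}_{(p)}}$ before applying logarithms. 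The standard route avoids lifting idempotents altogether: membership in $S$ depends only on the reduction modulo $p$, so $s+pa\in S$ whenever $s\in S$, whence $1+p\iw{G}_S\subseteq \iw{G}_S^{\times}$ and $p$ lies in the Jacobson radical; semi-locality then follows by analyzing $\iw{G}_S/p\iw{G}_S\cong \Fp[[G]]_{\overline{S}}$ modulo its Jacobson radical, rather than by any finiteness of the localized ring itself.
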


\begin{proof}
\begin{enumerate}[(1)]
\item See \cite{CFKSV}, Proposition 4.2.
\item See \cite{CFKSV}, Theorem 2.4.
\end{enumerate}
\end{proof}

Note that we have the canonical surjection $\iw{G}_S^{\times}
\rightarrow K_1(\iw{G}_S)$ by Proposition \ref{prop:sloc} (1) and
Proposition \ref{prop:props} (1). Also note that by Proposition
\ref{prop:props} (2), we may consider the localization sequence for the
\O re localization $\iw{G} \rightarrow \iw{G}_S$ (Theorem \ref{thm:locseq}).

\begin{rem} \label{rem:S*}
Assume that $G$ has no $p$-torsion elements. In this case we often use
\begin{equation*}
S^* = \bigcup_{n\geq 0} p^n S
\end{equation*}
in place of $S$, as is remarked in \cite{CFKSV}. We may identify 
$K_0(\iw{G}, \iw{G}_{S^*})$ with $K_0(\ideal{M}_{S^*}(\iw{G}))$
where $\ideal{M}_{S^*}(\iw{G})$ is the category of finitely generated
 left $S^*$-torsion $\iw{G}$-modules.

But in our case (See \S \, 3), $G$ has many $p$-torsion elements. Therefore 
we have to treat derived categories of complexes of finitely generated modules.
\end{rem}

Now let us consider the localization exact sequence (Theorem \ref{thm:locseq}) for
$\iw{G}\rightarrow \iw{G}_S$:

\begin{equation} \label{eq:loc-seq}
K_1(\iw{G}) \rightarrow K_1(\iw{G}_S) \xrightarrow{\del} K_0(\iw{G}, \iw{G}_S)
 \rightarrow K_0(\iw{G}) \rightarrow K_0(\iw{G}_S).
\end{equation}

\begin{prop} \label{prop:delsurj}
The connecting homomorphism $\del$ in $(\ref{eq:loc-seq})$ is surjective.
\end{prop}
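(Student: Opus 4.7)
The plan is to exploit the exactness of the localization sequence of Theorem \ref{thm:locseq} and construct an explicit preimage: every class in $K_0(\iw{G}, \iw{G}_S)$ will be exhibited as the boundary of a suitable element of $K_1(\iw{G}_S)$. Equivalently, by exactness, surjectivity of $\del$ amounts to showing that the map $K_0(\iw{G}, \iw{G}_S) \to K_0(\iw{G})$ is the zero map, but I prefer the direct construction since it is more informative.

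By the identification used in the proof of Theorem \ref{thm:locseq},
\[
K_0(\iw{G}, \iw{G}_S) = K_0(\ideal{H}_S(\iw{G})) = K_0(\ideal{H}_{S,1}(\iw{G})),
\]
so every class is represented by $[M]$ for some finitely generated $S$-torsion $\iw{G}$-module $M$ fitting into a projective resolution of length one,
\[
0 \to P_1 \xrightarrow{d} P_0 \to M \to 0.
\]
Applying the exact \O re localization functor (Proposition \ref{prop:flat}) and using $[S^{-1}]M = 0$, the map $[S^{-1}]d$ becomes an isomorphism between the finitely generated projective $\iw{G}_S$-modules $[S^{-1}]P_1$ and $[S^{-1}]P_0$.

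Next, I would lift this isomorphism to an element $u \in K_1(\iw{G}_S)$. Using the semi-locality of $\iw{G}_S$ (Proposition \ref{prop:props}(1)) together with the stability property (Proposition \ref{prop:sloc}(2)), after a stabilization step — replacing the $P_i$ by $P_i \oplus Q$ for an appropriate finitely generated projective $Q$ so that both summands become free of a common rank $n$ after localization — the map $[S^{-1}]d \oplus \id_{[S^{-1}]Q}$ represents an element of $\GL_n(\iw{G}_S)$, and hence a class $u \in K_1(\iw{G}_S)$. The explicit description of $\del$ coming from the Berrick--Keating construction underlying Theorem \ref{thm:locseq} then forces $\del(u) = [M]$, since the cokernel of any lift of $[S^{-1}]d$ back to $\iw{G}$ is precisely $M$.

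The main obstacle will be the stabilization step: producing a single finitely generated projective $Q$ so that $P_0 \oplus Q$ and $P_1 \oplus Q$ both become free of equal rank over $\iw{G}_S$, and checking that the resulting $u$ depends only on $[M]$. The first point relies on the semi-local structure of $\iw{G}_S$, while the second is encoded in the definition of $\del$ together with Whitehead's lemma (Proposition \ref{prop:whitehead}), which guarantees that the $K_1$-class is independent of the chosen stabilization modulo elementary matrices.
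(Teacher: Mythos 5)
There is a genuine gap, and it sits exactly where you flag ``the main obstacle.'' By exactness of the localization sequence, $\mathrm{Image}(\del)=\Ker\bigl(K_0(\iw{G},\iw{G}_S)\to K_0(\iw{G})\bigr)$, so surjectivity of $\del$ is \emph{equivalent} to the vanishing of the map to $K_0(\iw{G})$ that you set aside at the start; no explicit construction can route around proving that vanishing. Concretely: for a resolution $0\to P_1\xrightarrow{d}P_0\to M\to 0$, the class $[M]$ maps to $[P_0]-[P_1]\in K_0(\iw{G})$, and if this is nonzero then $[M]$ is simply not in the image of $\del$. Your stabilization only arranges that $P_0\oplus Q$ and $P_1\oplus Q$ become free of equal rank \emph{after localization}, which is automatic (localize the isomorphism $[S^{-1}]d$) but insufficient: to read $[S^{-1}]d\oplus\id$ as an element of $\GL_n(\iw{G}_S)$ you must choose an identification $\psi\colon [S^{-1}](P_0\oplus Q)\cong \iw{G}_S^n$, and unless $P_0\oplus Q$ is already stably free over $\iw{G}$ itself, $\psi$ is not the localization of any $\iw{G}$-linear isomorphism. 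Its contribution to $\del$ is precisely the obstruction class $[P_0]-[P_1]$, so the claimed identity $\del(u)=[M]$ holds exactly when the statement you are trying to prove already holds. Semi-locality of $\iw{G}_S$ does not supply what is needed; you would need, for instance, injectivity of $K_0(\iw{G})\to K_0(\iw{G}_S)$, or a direct proof that $[P_0]=[P_1]$ in $K_0(\iw{G})$, and neither appears in your argument.

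The paper supplies exactly this missing input by invoking the proof of Proposition 3.4 of \cite{CFKSV}, adapted to complexes because $G$ here has $p$-torsion: one shows that $K_0(\iw{G},\iw{G}_S)\to K_0(\iw{G})$ is the zero map by exploiting that the (cohomology) modules in question are finitely generated over $\iw{H}$, so that suitable invariants of $\sum_i(-1)^i[P_i]$ (the maps $\lambda,\tau,\varepsilon$ of loc.\ cit.) are forced to vanish, combined with an analysis of $K_0(\iw{G})$. Some argument of this kind, using the specific structure of $\ideal{H}_S(\iw{G})$ and of the Iwasawa algebra, is unavoidable, and your proposal would need to be supplemented by it. (A secondary point: since $G$ has $p$-torsion, $\iw{G}$ has infinite global dimension, so one must take care that classes of $K_0(\iw{G},\iw{G}_S)$ are represented by perfect complexes rather than arbitrary $S$-torsion modules; your reduction to $\ideal{H}_{S,1}$ via Theorem \ref{thm:locseq} is consistent with the paper's conventions for objects of $\ideal{H}_S(\iw{G})$, so this is not the main problem.)
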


Therefore, for an arbitrary element $[\mathfrak{C}] \in
K_0(\iw{G},\iw{G}_S)$,
there exists an element $f\in K_1(\iw{G}_S)$ which satisfies 
$\del(f)=-[\mathfrak{C}]$.

\begin{defn}[Characteristic element]
Let $[\mathfrak{C}]\in K_0(\iw{G},\iw{G}_S)$. 
We call an element $f$ of $K_1(\iw{G}_S)$ {\it a characteristic 
element of $[\mathfrak{C}]$} if $f$ satisfies $\del(f)=-[\mathfrak{C}]$.
\end{defn}

\begin{rem} 
By the localization exact sequence, characteristic elements of
 $[\mathfrak{C}]$ are determined up to multiplication by elements of $K_1(\iw{G})$.
\end{rem}

\begin{proof}[Proof of Proposition $\ref{prop:delsurj}$]

We may prove this proposition by the almost same argument as the 
proof of Proposition 3.4 in \cite{CFKSV}.

Though we assume that $G$ has no $p$-torsion elements in the proof of Proposition
 3.4 in \cite{CFKSV}, this assumption is used only to avoid 
treating complexes directly (this point is also remarked in
 \cite{CFKSV}). For each complex 
$\mathfrak{C}=\mathfrak{C}^{\cdot}$, its canonical image in $K_0(\iw{G})$ is
 $\displaystyle \sum_{i\in \Z} (-1)^i[\mathfrak{C}^i]$. By using this fact and translating $\lambda, \tau, \varepsilon$ in
 \cite{CFKSV} appropriately, we may apply their proof of Proposition 3.4
 in \cite{CFKSV} to the case where $G$
 has $p$-torsion elements.  
\end{proof}

\begin{ex} \label{ex:iwasawa}
Let us consider the classical Iwasawa $\Zp$-extension case:
 $G=\Gamma, H=\{ 1\}$ and $\iw{\Gamma} \cong \Zp[[T]]; t \mapsto 1+T$.

In this case, by $p$-adic Weierstrass' preparation theorem, we have 
\begin{equation*}
f(T)=u(T)p^n g(T)
\end{equation*}
for an arbitrary non-zero element $f(T) \in \Zp[[T]]$, where $u(T) \in
 \Zp[[T]]^{\times}$, $n\in \Z_{\geq 0}$, and $g\in \Zp[T]$ is a
 distinguished polynomial. Hence $\iw{\Gamma}/\iw{\Gamma}f$ is finitely
 generated over $\Zp$ if and only if $p\nmid f$ (or equivalently $n=0$). Therefore the canonical \O re set $S$ is $\iw{\Gamma}\setminus p\iw{\Gamma}$.

Since $\Gamma$ has no $p$-torsion elements, we may consider the \O re set $S^*$
as in Remark \ref{rem:S*}. It is easy to see that in this case 
 $S^*$ is the subset of $\iw{\Gamma}$ consisting of all non-zero
 elements. Therefore $\iw{\Gamma}_{S^*}$ coincides with the fractional field $\mathrm{Frac}(\iw{\Gamma})$. 

Furthermore, since $\iw{\Gamma}$ is local and commutative, we have 
\begin{equation*}
K_1(\iw{\Gamma})=\iw{\Gamma}^{\times} \quad \text{and} \quad K_1(\iw{\Gamma}_{S^*})=\mathrm{Frac}(\iw{\Gamma})^{\times}.
\end{equation*}

Hence the localization exact sequence is described as follows:
\begin{equation*}
\iw{\Gamma}^{\times} \rightarrow \mathrm{Frac}(\iw{\Gamma})^{\times} \xrightarrow{\del} 
K_0(\ideal{M}_{\mathrm{tor}}(\iw{\Gamma})) \rightarrow 0,
\end{equation*}
where $\ideal{M}_{\mathrm{tor}}(\iw{\Gamma})$ is the category of all
 finitely generated torsion $\iw{\Gamma}$-modules. The connecting
 homomorphism $\del$ is characterized by
 $\del(f)=[\iw{\Gamma}/\iw{\Gamma}f]$ for $f\in \iw{\Gamma} \setminus \{
 0\}$ in this (abelian) case.

Let $M$ be an arbitrary finitely generated torsion
 $\iw{\Gamma}$-module. Then by the famous structure theorem of finitely
 generated torsion $\iw{\Gamma}$-modules, there exist $f_i\in \iw{\Gamma} \setminus \{ 0\} \,
 (1\leq i \leq N)$ and satisfy
\begin{equation*}
M \sim \bigoplus_{i=1}^N \iw{\Gamma}/\iw{\Gamma}f_i \qquad (\text{pseudo-isomorphic})
\end{equation*}
where the image of each $f_i$ in $\Zp[[T]]$ is a non-invertible
 polynomial. Since the image of every pseudo-null $\iw{\Gamma}$-module in
 $K_0(\ideal{M}_{\mathrm{tor}}(\iw{\Gamma}))$ vanishes (See
 \cite{Sch-Ven}), we have
\begin{equation*}
[M]=\sum_{i=1}^N [\iw{\Gamma}/\iw{\Gamma}f_i].
\end{equation*}

Hence, by the explicit description of $\del$, we have 
\begin{equation*}
\del (f_M)=[M]
\end{equation*}
where $f_M=\prod_{i=1}^N f_i$ ({\it the characteristic polynomial} of
 $M$). $f_M$ is determined up to multiplication by an element of $\iw{\Gamma}^{\times}$.

The calculation above shows that the characteristic elements in \cite{CFKSV} are generalized notion of the classical characteristic polynomials. 
\end{ex}

%
\subsection{The non-commutative Iwasawa main conjecture for totally real
  fields}
%

Now let us review the formulation of the non-commutative Iwasawa main conjecture in the sense of \cite{CFKSV}.

Fix a prime number $p\neq 2$. Let F be a totally real number field and
$F^{\infty}/F$ a Galois extension of infinite degree satisfying 
the following conditions:\footnote{Since the Galois extension we 
will consider has $p$-torsion elements, we need to weaken the conditions in \cite{CFKSV}. See \cite{Kato1} \S 2 and \cite{Fu-Ka} \S 4.3.}

\begin{enumerate}
\item The Galois group $G=\gal{F^{\infty}/F}$ is a compact $p$-adic Lie group.
\item Only finitely many primes of $F$ ramify in $F^{\infty}$.
\item $F^{\infty}$ is totally real and contains the cyclotomic
      $\Zp$-extension $F^{\mathrm{cyc}}$ of $F$.
\end{enumerate}

Fix a finite set $\Sigma$ of primes of $F$ containing all primes which ramify in $F^{\infty}$.

\begin{defn} \label{def:selmer}
Under the conditions above, we define the complex $C$ by
\begin{align*}
C &=C_{F^{\infty}/F} \\
&= R\mathrm{Hom}(R\Gamma_{\mathrm{\acute{e}t}}(\mathrm{Spec}\, \integer{F^{\infty}}[1/\Sigma],
 \Qp/\Zp),\Qp/\Zp).
\end{align*}
\end{defn}

Here $\Gamma_{\mathrm{\acute{e}t}}$ is the global section functor for
\'etale topology.

Note that $H^0(C)=\Zp$, $H^{-1}(C)=\gal{M_{\Sigma}/F^{\infty}}$ where $M_{\Sigma}$ is the maximal abelian pro-$p$ extension of $F^{\infty}$ unramified outside $\Sigma$, and $H^n(C)=0$ for $n\neq 0,-1$. We denote $\gal{M_{\Sigma}/F^{\infty}}$ by $X_{\Sigma}(F^{\infty}/F)$.

Now set $H=\gal{F^{\infty}/F^{\mathrm{cyc}}}$ and $\Gamma
=\gal{F^{\mathrm{cyc}}/F}\cong \Zp$. Then if $[C]$ is an element of 
$K_0(\iw{G},\iw{G}_S)$, we can apply the results of \S 2.1 to $[C]$ and
obtain {\em a characteristic element for $F^{\infty}/F$} as a
characteristic element of $[C]$.

\begin{con} \label{con:selmer}
$[C]$ is always an element of $K_0(\iw{G},\iw{G}_S)$. In other words, 
$X_{\Sigma}(F^{\infty}/F)$ is of $S$-torsion.
\end{con}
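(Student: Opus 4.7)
The plan is to establish the conjecture under the hypotheses of Theorem \ref{thm:mt}; in full generality the statement remains open, but hypothesis (3) on the vanishing of the $\mu$-invariant makes it accessible via classical Iwasawa theory together with a finite descent. The strategy is to invoke the basic characterization of \cite{CFKSV}: a finitely generated $\iw{G}$-module is $S$-torsion if and only if it is finitely generated as an $\iw{H}$-module, and via this criterion to reduce the conjecture to showing that $X_\Sigma(F^\infty/F)$ is finitely generated over $\iw{H}$. The other cohomology module of $C$, namely $H^0(C) = \Zp$, is visibly $S$-torsion, since $\gamma - 1$ annihilates it for any topological generator $\gamma$ of $\Gamma$, and $\gamma - 1$ lies in $S$ because $\iw{G}/\iw{G}(\gamma - 1) \cong \iw{H}$ is certainly finitely generated over $\iw{H}$.

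In the setting of Theorem \ref{thm:mt}, the subgroup $H$ is the finite unipotent matrix group of order $p^6$, so $\iw{H} = \Zp[H]$ is a free $\Zp$-module of finite rank, and finite generation over $\iw{H}$ is equivalent to finite generation over $\Zp$. Hypothesis (3) supplies a finite intermediate field $F \subseteq F' \subseteq F^\infty$ with $\mu((F')^{\mathrm{cyc}}/F') = 0$; by Iwasawa's structure theorem for finitely generated $\iw{\Gamma}$-modules this implies that $X_\Sigma((F')^{\mathrm{cyc}}/F')$ is finitely generated over $\Zp$. Since $F^\infty/(F')^{\mathrm{cyc}}$ is a finite Galois extension with group a subquotient of the finite group $H$, a standard Hochschild--Serre spectral sequence (equivalently, a restriction--corestriction argument on the maximal abelian pro-$p$ extensions unramified outside $\Sigma$) then propagates finite generation over $\Zp$ from $X_\Sigma((F')^{\mathrm{cyc}}/F')$ to $X_\Sigma(F^\infty/F)$, giving the desired $S$-torsion property.

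The main obstacle I expect is twofold. First, one must represent $C$ by a genuinely perfect complex of $\iw{G}$-modules, so that the class $[C]$ is defined in $K_0(\iw{G})$ at all; this is nontrivial because $G$ contains the $p$-torsion subgroup $H$, hence $\iw{G}$ fails to have finite global dimension. I would address this by first verifying perfectness over the subalgebra $\iw{G_0}$ corresponding to the open torsion-free normal subgroup $G_0 = \{1\} \times \Gamma \cong \Zp$ (where $\iw{G_0}$ is regular of global dimension $2$) using the bounded \'etale cohomological dimension of $\mathrm{Spec}\, \integer{F^\infty}[1/\Sigma]$, and then transferring perfectness along the finite free extension $\iw{G_0} \hookrightarrow \iw{G}$. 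Second, the descent step needs careful control of the kernel and cokernel of the natural comparison map between the two Iwasawa modules, since the finite group $\gal{F^\infty/(F')^{\mathrm{cyc}}}$ has order divisible by $p$, so its cohomology with values in $X_\Sigma(F^\infty/F)$ must be checked to be finite in order to conclude that finite generation over $\Zp$ is genuinely preserved under the transfer.
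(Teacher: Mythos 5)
The statement you are addressing is stated in the paper as a \emph{conjecture} and is not proven there in any generality; the paper only establishes the case it needs via Proposition \ref{prop:selmer}, whose proof is a one-line citation to \cite{Ha-Sh}, Lemma 3.4, and thereafter \emph{assumes} condition $(\ast)$. Your proposal correctly restricts to that case (condition (3) of the main theorem coincides with $(\ast)$ here since $G$ is pro-$p$), and its main line is exactly the argument behind that citation: reduce $S$-torsion to finite generation over $\iw{H}$ by the criterion of \cite{CFKSV}; observe that $\iw{H}=\Zp[G^f]$ is module-finite over $\Zp$, so this is finite generation over $\Zp$; deduce finite generation of $X_{\Sigma}((F')^{\mathrm{cyc}}/F')$ over $\Zp$ from Iwasawa's torsion theorem for totally real fields together with $\mu=0$; and climb the finite $p$-extension $F^{\infty}/(F')^{\mathrm{cyc}}$. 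For that last step you do not actually need finiteness of the cohomology of $P=\gal{F^{\infty}/(F')^{\mathrm{cyc}}}$ acting on $X_{\Sigma}(F^{\infty}/F)$: topological Nakayama over the local ring $\Zp[P]$ reduces everything to the $P$-coinvariants, which differ from $X_{\Sigma}((F')^{\mathrm{cyc}}/F')$ only by $\Zp$-finitely generated error terms coming from the finitely many primes in $\Sigma$. So the core of your argument is sound and agrees with what the paper relies on.

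The one genuinely wrong step is your treatment of perfectness. You propose to verify that $C$ is perfect over $\iw{G_0}$ for $G_0=\{1\}\times\Gamma$ and then ``transfer perfectness along the finite free extension $\iw{G_0}\hookrightarrow\iw{G}$.'' That implication goes the wrong way: since $\iw{G}$ is finite and free over $\iw{G_0}$, a perfect complex of $\iw{G}$-modules restricts to a perfect complex of $\iw{G_0}$-modules, but not conversely. The trivial module $\Zp$ is already a counterexample: it is perfect over $\iw{G_0}\cong\Zp[[T]]$ (projective dimension one), yet it has infinite projective dimension over $\Zp[G^f]$, hence over $\iw{G}$, precisely because $G^f$ has $p$-torsion. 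Perfectness of $C$ over $\iw{G}$ must instead be taken from the general perfectness of \'etale cohomology with coefficients in the induced module $\iw{G}^{\sharp}$ (as in Fukaya--Kato \cite{Fu-Ka}), not from ascent from a subring. Since the conjecture's stated content is only the $S$-torsion assertion and the paper takes perfectness for granted, this slip is peripheral to the statement, but the descent you describe would not prove what you claim it proves.
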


\begin{prop} \label{prop:selmer}
Let $G' \subseteq G$ be a pro-$p$ subgroup of $G$ and let $F'$ be the maximal intermediate field of $F^{\infty}/F$ fixed by $G'$, then the followings are equivalent$\colon$
\begin{enumerate}[$(1)$]
\item $X_{\Sigma}(F^{\infty}/F)$ is of $S$-torsion.
\item $\mu({F'}^{\mathrm{cyc}}/F')=0$ where $\mu$ is the $\mu$-invariant.
\end{enumerate}

In particular, if the following condition $(\ast)$ is satisfied, $X_{\Sigma}(F^{\infty}/F)$ is of $S$-torsion$\colon$
\begin{quote}
$(\ast)$ \quad There exists a finite subextension $F'$ of $F^{\infty}$ such that $\gal{F^{\infty}/F'}$ is pro-$p$ and $\mu({F'}^{\mathrm{cyc}}/F')=0$.
\end{quote}
\end{prop}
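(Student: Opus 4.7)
The plan is a two-step reduction: first use the characterization of $S$-torsion $\iw{G}$-modules from non-commutative Iwasawa theory to rewrite condition (1) as finite generation over $\iw{H}$, and then descend along the pro-$p$ subgroup $G'$ to reduce to the classical cyclotomic Iwasawa module $X_\Sigma((F')^{\mathrm{cyc}}/F')$, whose $\mu$-invariant appears in (2).

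For the first step, by \cite{CFKSV}, Proposition 2.3 (extended to cover the case where $G$ has $p$-torsion in \cite{Fu-Ka}, \S 4.3), a finitely generated $\iw{G}$-module $M$ is $S$-torsion if and only if $M$ is finitely generated as a left $\iw{H}$-module. Applied to $M = X_\Sigma(F^\infty/F)$, condition (1) becomes the statement that $X_\Sigma(F^\infty/F)$ is finitely generated over $\iw{H}$.

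For the second step, set $H' = G' \cap H = \gal{F^\infty/(F')^{\mathrm{cyc}}}$; this is an open pro-$p$ subgroup of $H$, and since $[H:H'] < \infty$, finite generation over $\iw{H}$ is equivalent to finite generation over $\iw{H'}$. The ring $\iw{H'}$ is local with maximal ideal $\ideal{m} = (p, I_{H'})$ (where $I_{H'}$ is the augmentation ideal), so by the topological Nakayama lemma this is in turn equivalent to the finiteness of $X_\Sigma(F^\infty/F)/\ideal{m}X_\Sigma(F^\infty/F) = X_\Sigma(F^\infty/F)_{H'}/p$. A Hochschild--Serre spectral sequence argument applied to the tower $F^\infty \supseteq (F')^{\mathrm{cyc}}$ then identifies $X_\Sigma(F^\infty/F)_{H'}$ with $X_\Sigma((F')^{\mathrm{cyc}}/F')$ up to a finite group, so that (1) is equivalent to the finiteness of $X_\Sigma((F')^{\mathrm{cyc}}/F')/p$.

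Finally, by Iwasawa's structure theorem for finitely generated torsion $\iw{\Gamma'}$-modules (where $\Gamma' = \gal{(F')^{\mathrm{cyc}}/F'}$) combined with $p$-adic Weierstrass preparation, $M/pM$ is finite if and only if $\mu(M) = 0$. This yields the equivalence $(1) \Longleftrightarrow (2)$, and the sufficiency of $(\ast)$ is then immediate since $(\ast)$ supplies a specific $F'$ for which (2) holds. The technically hardest point is the Hochschild--Serre comparison in the second step: one must control the finite kernel and cokernel of $X_\Sigma(F^\infty/F)_{H'} \to X_\Sigma((F')^{\mathrm{cyc}}/F')$, which relies on the vanishing of certain higher Galois cohomology groups for $\Sigma$-ramified pro-$p$ towers over a totally real field.
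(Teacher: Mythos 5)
Your argument is correct, and it is essentially the standard descent argument that the paper itself does not write out: the published proof consists of the single line ``this is a variant of Hachimori--Sharifi, Lemma 3.4,'' and your chain (S-torsion $\Leftrightarrow$ finitely generated over $\iw{H}$ $\Leftrightarrow$ finitely generated over $\iw{H'}$ $\Leftrightarrow$ $X_{\Sigma}(F^{\infty}/F)_{H'}/p$ finite $\Leftrightarrow$ $X_{\Sigma}((F')^{\mathrm{cyc}}/F')/p$ finite $\Leftrightarrow$ $\mu=0$) is exactly what that lemma encapsulates. Two small points of precision. First, $G'$ must be taken open (equivalently $F'/F$ finite) for $F'$ to be a number field and for $[H:H']<\infty$; this is implicit in the statement but you use it without saying so. Second, in the inflation--restriction comparison the cokernel of $X_{\Sigma}(F^{\infty}/F)_{H'}\rightarrow X_{\Sigma}((F')^{\mathrm{cyc}}/F')$ is $(H')^{\mathrm{ab}}$ and the kernel is a quotient of $H^2(H',\Qp/\Zp)^{\vee}$; these are literally finite in the paper's main case ($H=G^f$ finite) but in the general setting of \S 2.2 they are only finitely generated $\Zp$-modules. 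That is still enough, since only finiteness modulo $p$ enters your Nakayama criterion, but the phrase ``up to a finite group'' should be weakened accordingly. You should also record that $X_{\Sigma}((F')^{\mathrm{cyc}}/F')$ is $\Lambda(\Gamma')$-torsion (Iwasawa's theorem for totally real fields) before invoking the structure theorem to equate finiteness of $M/pM$ with $\mu(M)=0$.
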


\begin{proof}
This proposition is a variant of \cite{Ha-Sh}, Lemma 3.4.
\end{proof}

For the $\mu$-invariants of cyclotomic $\Zp$-extensions, Kenkichi Iwasawa conjectured:

\begin{con}[Iwasawa's $\mu=0$ conjecture]
For every number field $K$, $\mu(K^{\mathrm{cyc}}/K)=0$.
\end{con}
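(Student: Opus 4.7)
The plan depends crucially on the class of fields considered, because Iwasawa's $\mu=0$ conjecture is open in full generality; I would not expect to settle it here, only to sketch which pieces can be assembled and where the wall lies. For $K/\Q$ abelian I would follow the argument of Ferrero and Washington. The idea is to leverage the Iwasawa main conjecture (over $\Q$) to identify the $\mu$-invariant of the minus part of the cyclotomic class-tower module with the $\mu$-invariant of the Kubota--Leopoldt $p$-adic $L$-function $f_{\chi}(T)\in\Zp[[T]]$ attached to an even character $\chi$. It then suffices to prove that $p\nmid f_{\chi}(T)$; one writes $f_{\chi}$ in Stickelberger form as a $p$-adic interpolation of generalized Bernoulli numbers, reduces modulo $p$, and reinterprets the resulting power series as a generating function for the digits of certain rationals in base $p$. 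A normality/equidistribution statement (proved via Weyl's criterion) then forces at least one mod-$p$ coefficient to be nonzero, yielding $\mu=0$.

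For general $K$, the first two steps still go through: Deligne--Ribet construct $p$-adic $L$-functions for totally real $K$ as pseudomeasures on $\gal{K^{\mathrm{cyc}}/K}$ (indeed, this is the very construction used in \S 7 of the present paper), and Wiles's proof of the main conjecture identifies the analytic $\mu$-invariant with the arithmetic one. So I would again be reduced to showing that the Deligne--Ribet measure is not divisible by $p$, or equivalently, following Sinnott, that a certain rational function on a formal torus does not vanish identically modulo $p$. At this point the Ferrero--Washington normality argument breaks down because the relevant ``digit sequences'' now live in the ring of integers $\integer{K_{\ideal{p}}}$ of a $p$-adic completion rather than in $\Zp$, and the combinatorial input that drives the equidistribution is no longer available.

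The main obstacle is therefore step (iii)/(iv) in the general case: one lacks a replacement for the rational-number equidistribution lemma when $K \neq \Q$. Partial progress exists (Sinnott's reformulation, results of Friesen and others in restricted settings, and the $CM$-field reduction to the totally real case), but none of these provide a proof for an arbitrary totally real $F$. For the purposes of the present paper this is precisely why hypothesis (3) of Theorem~\ref{thm:mt} is \emph{assumed} rather than derived: via Proposition~\ref{prop:selmer} it guarantees that $X_{\Sigma}(F^{\infty}/F)$ is $S$-torsion, so that the Selmer complex $C$ of Definition~\ref{def:selmer} represents a class in $K_0(\iw{G},\iw{G}_S)$ and admits a characteristic element. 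Thus the honest proposal is: prove the Ferrero--Washington case as above; for the general case, isolate the mod-$p$ non-vanishing of the Deligne--Ribet pseudomeasure as the essential obstruction and flag it as unresolved.
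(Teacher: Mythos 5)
The statement you were asked about is stated in the paper as a \emph{conjecture} (Iwasawa's $\mu=0$ conjecture), and the paper offers no proof of it: it only cites Ferrero--Washington to dispose of the case $K/\Q$ abelian (in the corollary immediately following), and otherwise \emph{assumes} the relevant instance of $\mu=0$ as hypothesis $(\ast)$ of Proposition \ref{prop:selmer} (equivalently, condition (3) of Theorem \ref{thm:mt}). Your proposal correctly recognizes this, and your overall assessment --- that the abelian case over $\Q$ is a theorem of Ferrero--Washington, that the general totally real case reduces via Wiles and Deligne--Ribet to the mod-$p$ non-vanishing of the Deligne--Ribet pseudomeasure, and that the equidistribution/normality input is exactly what is missing there --- is accurate and matches how the paper uses the statement. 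So there is nothing to compare against: you have not proved the conjecture, but neither does the paper, and declining to claim a proof is the correct outcome here.

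One small historical correction to your sketch of the abelian case: Ferrero--Washington (1979) do not invoke the Iwasawa main conjecture (which was only proved by Mazur--Wiles in 1984). They work directly with the Iwasawa power series built from Stickelberger elements, whose $\mu$-invariant controls $\mu^-$ by Iwasawa's earlier structural results; the digit/normality argument then kills $\mu^-$, and $\mu^+=0$ follows from Iwasawa's inequality $\mu^+\le\mu^-$ rather than from a separate analysis of the even-character $p$-adic $L$-functions. This does not affect your conclusion, but the logical dependence is the reverse of what you wrote.
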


\begin{cor}
Assume that Iwasawa's $\mu=0$ conjecture is true, then $X_{\Sigma}(F^{\infty}/F)$ is always of $S$-torsion.
\end{cor}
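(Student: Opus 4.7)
The plan is to deduce this corollary directly from Proposition \ref{prop:selmer} by verifying the sufficient condition $(\ast)$. Since the hypothesis is that Iwasawa's $\mu=0$ conjecture holds for every number field, the only real content is to produce a finite subextension $F'$ of $F^{\infty}/F$ such that $\gal{F^{\infty}/F'}$ is pro-$p$; once we have this, the $\mu$-vanishing $\mu((F')^{\mathrm{cyc}}/F')=0$ is free from the assumption, and $(\ast)$ applies verbatim.

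First, I would invoke the standard structure theory of compact $p$-adic Lie groups: any such group contains an open (hence of finite index) uniform pro-$p$ subgroup. Applied to $G=\gal{F^{\infty}/F}$, this yields an open normal pro-$p$ subgroup $G'\subseteq G$ of finite index. Let $F'$ be the fixed field of $G'$ inside $F^{\infty}$; then $F'/F$ is a finite Galois extension (so $F'$ is again a number field, and in particular totally real since $F^{\infty}$ is totally real), and $\gal{F^{\infty}/F'}=G'$ is pro-$p$ by construction.

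Next, by Iwasawa's $\mu=0$ conjecture applied to the number field $F'$, we have $\mu((F')^{\mathrm{cyc}}/F')=0$. This verifies both halves of the condition $(\ast)$ of Proposition \ref{prop:selmer}, so that proposition immediately gives that $X_{\Sigma}(F^{\infty}/F)$ is of $S$-torsion, which is exactly the assertion of the corollary.

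The only non-trivial step is the existence of an open pro-$p$ subgroup of $G$; this is not an obstacle at all but is the ingredient that bridges the generality of the ambient $p$-adic Lie group to the more restrictive hypothesis of Proposition \ref{prop:selmer}. Everything else is a bookkeeping application of the previously stated results, so I would expect the whole argument to occupy only a few lines.
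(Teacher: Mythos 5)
Your proof is correct and follows the route the paper intends: the corollary is stated as an immediate consequence of Proposition \ref{prop:selmer}, verified by choosing an open pro-$p$ subgroup $G'\subseteq G$ (which exists by the standard structure theory of compact $p$-adic Lie groups), taking its fixed field $F'$, and applying the $\mu=0$ hypothesis to the number field $F'$ so that condition $(\ast)$ holds. The only point you make explicit that the paper leaves implicit is the existence of the open pro-$p$ subgroup, which is exactly the right detail to supply.
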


\begin{cor}
Let $K/\mathbb{Q}$ be a finite abelian extension. Then $X_{\Sigma}(K^{\infty}/K)$ is of $S$-torsion.
\end{cor}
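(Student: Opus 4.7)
The plan is to combine Proposition \ref{prop:selmer} with the classical theorem of Ferrero and Washington, which asserts that $\mu(L^{\mathrm{cyc}}/L) = 0$ for every finite abelian extension $L/\mathbb{Q}$. Since the corollary is a direct parallel of the previous one (which invoked Iwasawa's $\mu = 0$ conjecture to verify the hypothesis of Proposition \ref{prop:selmer}), the strategy is to replace the conjectural vanishing by the unconditional Ferrero--Washington result in the special case at hand.

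The first step is to produce a suitable intermediate field for condition $(\ast)$ of Proposition \ref{prop:selmer}. Since $G = \gal{K^\infty/K}$ is a compact $p$-adic Lie group, it contains an open (normal) pro-$p$ subgroup $G'$, for instance a uniform pro-$p$ subgroup guaranteed by Lazard's theorem. Let $K'$ be the fixed field of $G'$; then $[K':K] < \infty$ and $\gal{K^\infty/K'} = G'$ is pro-$p$, so $K'$ has exactly the shape required by $(\ast)$. Because $K^\infty/\mathbb{Q}$ is abelian (the setting in which the corollary is stated, since one wants every finite subextension to be abelian over $\mathbb{Q}$), the intermediate field $K'$ is itself a finite abelian extension of $\mathbb{Q}$.

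The second step is to apply Ferrero--Washington to $K'$ to obtain $\mu({K'}^{\mathrm{cyc}}/K') = 0$. Combined with the fact that $\gal{K^\infty/K'}$ is pro-$p$, this is precisely condition $(\ast)$ of Proposition \ref{prop:selmer}, from which we conclude that $X_\Sigma(K^\infty/K)$ is of $S$-torsion.

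The only nontrivial ingredient is the Ferrero--Washington theorem itself; the remaining steps (existence of an open pro-$p$ subgroup in a $p$-adic Lie group, and the direct invocation of Proposition \ref{prop:selmer}) are formal. Consequently I do not anticipate any real obstacle. The one point to be careful about is ensuring that the chosen intermediate field $K'$ inherits the abelian property over $\mathbb{Q}$, which is automatic from $K^\infty/\mathbb{Q}$ being abelian.
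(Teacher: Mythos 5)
Your argument breaks at exactly the point you flag as ``the one point to be careful about.'' You justify applying Ferrero--Washington to the intermediate field $K'$ (the fixed field of an open pro-$p$ subgroup) by asserting that $K^{\infty}/\mathbb{Q}$ is abelian, so that $K'/\mathbb{Q}$ is abelian. But the corollary does not assume this: only the \emph{base field} $K$ is assumed to be a finite abelian extension of $\mathbb{Q}$, while $K^{\infty}/K$ is an arbitrary extension as in \S 2.2 --- typically non-abelian, which is the whole point of the paper (the main theorem concerns $\gal{F^{\infty}/F}\cong G^f\times \Gamma$ with $G^f$ non-abelian). For such an extension the field $K'$ you produce will in general not be abelian over $\mathbb{Q}$, so Ferrero--Washington says nothing about $\mu({K'}^{\mathrm{cyc}}/K')$; the vanishing of that $\mu$-invariant for a general totally real $K'$ is precisely the open Iwasawa $\mu=0$ conjecture invoked in the \emph{preceding} corollary. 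In effect you have proved the statement only under the additional hypothesis that $K^{\infty}/\mathbb{Q}$ is abelian, which trivializes it.

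The paper's own proof applies Ferrero--Washington only to $K$ itself, obtaining $\mu(K^{\mathrm{cyc}}/K)=0$, and concludes via Proposition \ref{prop:selmer} --- using the stated equivalence between the $S$-torsion property and the vanishing of the $\mu$-invariant of the bottom field, rather than routing through the sufficient condition $(\ast)$ applied to a larger intermediate field. If you want to argue via $(\ast)$, you must either take $K'=K$ (which requires $\gal{K^{\infty}/K}$ to be pro-$p$) or supply a separate argument that $\mu({K'}^{\mathrm{cyc}}/K')=0$ follows from $\mu(K^{\mathrm{cyc}}/K)=0$; neither is provided by your appeal to Ferrero--Washington for $K'$.
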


\begin{proof}
$\mu(K^{\mathrm{cyc}}/K)=0$ by Ferrero-Washington's theorem (\cite{FW}).
\end{proof}  

In the following, we always assume the condition $(\ast)$ in Proposition
\ref{prop:selmer}.

\medskip
Now we define the ``$p$-adic zeta function'' as an element of $K_1(\iw{G}_S)$. Let 
\begin{equation*}
\rho \colon G \longrightarrow \GL_d(\line{\Q}) \rightarrow \GL_d(\line{\Q}_p)
\end{equation*}
be an arbitrary Artin representation (that is, $\rho(G)\subseteq
\GL_d(\line{\Q})$ is a finite subgroup). Then their exists a finite
extension $E$ of $\Qp$ such that $GL_d(E)$ contains the image of $\rho$,
and $\rho$ induces a ring homomorphism
\begin{equation*}
\rho \colon \iw{G} \longrightarrow \M_d(E) 
\end{equation*}

This also induces a homomorphism of $K$-groups
\begin{equation*}
\mathrm{ev}_{\rho} \colon K_1(\iw{G}) \longrightarrow K_1(\M_d(E))
 \xrightarrow{\simeq} K_1(E) \cong E^{\times}
\end{equation*}
where the isomorphism $K_1(M_d(E)) \xrightarrow{\simeq} K_1(E)$ is given
by the Morita equivalence between $\M_d(E)$ and $E$. Composing this with
the natural inclusion $E^{\times} \rightarrow \line{\Q}_p^{\times}$, we
obtain the map 
\begin{equation*}
\mathrm{ev}_{\rho} \colon K_1(\iw{G}) \longrightarrow
 \line{\Q}_p^{\times}.
\end{equation*}
As is discussed in \cite{CFKSV} \S 2, this map can be extended to
\begin{equation*}
\mathrm{ev}_{\rho} \colon K_1(\iw{G}_S) \longrightarrow \line{\Q}_p \cup \{ \infty \}.
\end{equation*}

We call $\mathrm{ev}_{\rho}$ {\it the evaluation map at $\rho$}. We denote $\mathrm{ev}_{\rho}(f)$ by $f(\rho)$ for every $f\in K_1(\iw{G}_S)$.

Let 
\begin{equation*}
\kappa\colon \gal{F(\mu_{p^{\infty}})/F} \longrightarrow \Zp^{\times}
\end{equation*}
be the $p$-adic cyclotomic character. Then for every positive integer
$r$ divisible by $p-1$, $\kappa^r$ factors $\Gamma=\gal{F^{\mathrm{cyc}}/F}$.
Therefore we may extend the domain of $\kappa^r$ to $G$ by 
\begin{equation*}
G=\gal{F^{\infty}/F} \rightarrow \gal{F^{\mathrm{cyc}}/F} \xrightarrow{\kappa^r} \Zp^{\times}
\end{equation*}
where the first map is the canonical surjection.

\begin{defn}[$p$-adic zeta function] \label{def:zeta}
If $\xi_{F^{\infty}/F} \in K_1(\iw{G}_S)$ satisfies 
\begin{equation}
\xi_{F^{\infty}/F}(\rho \otimes \kappa^r)=L_{\Sigma}(1-r;F^{\infty}/F,\rho) \label{eq:interp}
\end{equation}
for every positive integer $r$ divisible by $p-1$ and for an arbitrary Artin representation $\rho$ of $G$, we call $\xi_{F^{\infty}/F}$ {\it the $p$-adic zeta function for $F^{\infty}/F$}.
\end{defn}

Here $L_{\Sigma}(s;F^{\infty}/F,\rho)$ is the complex Artin $L$-function
of $\rho$ in which the Euler factors at $\Sigma$ are removed.

\begin{con} \label{conj:iwasawa} 
Let $F^{\infty}/F$ be as above.
\begin{enumerate}[$(1)$]
\item $($The existence and the uniqueness of the $p$-adic zeta function$)$

The $p$-adic zeta function $\xi_{F^{\infty}/F}$ for $F^{\infty}/F$
      exists uniquely.

\item $($The non-commutative Iwasawa main conjecture$)$ 

The $p$-adic zeta function $\xi_{F^{\infty}/F}$ satisfies $\del (\xi_{F^{\infty}/F})=-[C_{F^{\infty}/F}]$.
\end{enumerate}
\end{con}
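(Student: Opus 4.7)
The plan is to prove both parts of Conjecture \ref{conj:iwasawa} simultaneously by reducing the problem, via a Burns-type theta map argument, to the abelian $p$-adic zeta pseudomeasures of Deligne--Ribet \cite{De-Ri} and the commutative main conjecture of Wiles \cite{Wiles}. Under the hypothesis $(\ast)$ of Proposition \ref{prop:selmer} we already know $[C_{F^{\infty}/F}] \in K_0(\iw{G}, \iw{G}_S)$, and by the localization exact sequence (\ref{eq:loc-seq}) together with Proposition \ref{prop:delsurj} there exists at least one characteristic element $\xi \in K_1(\iw{G}_S)$ with $\del(\xi) = -[C_{F^{\infty}/F}]$. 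The remaining task is to pin down $\xi$ by demanding the interpolation formula \eqref{eq:interp}.

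To do so, I would construct a finite collection of closed subgroups $U_i \supseteq V_i$ of $G$ with each quotient $U_i/V_i$ abelian, together with the associated theta map
\begin{equation*}
\theta_S \colon K_1(\iw{G}_S) \longrightarrow \prod_i \iw{U_i/V_i}_S^{\times}
\end{equation*}
assembled from the norm maps $\Nr_{\iw{G}_S/\iw{U_i}_S}$ followed by the natural projections to $\iw{U_i/V_i}_S^{\times}$. For each extension $F_{V_i}/F_{U_i}$ the work of Wiles and Deligne--Ribet furnishes a genuine abelian zeta pseudomeasure $\xi_i \in \iw{U_i/V_i}_S^{\times}$ with $\del(\xi_i) = -[C_{F_{V_i}/F_{U_i}}]$. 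By Burns' observation (Theorem \ref{thm:ak-pri}), if the tuple $(\xi_i)_i$ lies in the subgroup $\Psi_S$ cut out by the norm relations and congruences described in Proposition \ref{prop:psis}, then \emph{any} preimage $\xi \in K_1(\iw{G}_S)$ automatically satisfies both the interpolation formula (via Brauer induction, which expresses every Artin character of $G$ as an integral combination of characters induced from the $U_i$'s) and the relation $\del(\xi) = -[C_{F^{\infty}/F}]$.

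The central technical step is therefore to verify that $(\xi_i)_i$ lies in $\Psi_S$. By Proposition \ref{prop:psis} this amounts to checking a finite list of norm relations, which follow formally from the functoriality of Deligne--Ribet pseudomeasures, together with certain explicit congruences modulo $p$-powers relating the $\xi_i$. To prove the latter I would follow the method of Kato \cite{Kato1} and Ritter--Weiss \cite{R-W3}: realize each $\xi_i$ as the constant term of an appropriate $\iw{\Gamma}$-adic Hilbert modular form, and use the $q$-expansion principle to reduce the desired congruence among constant terms to a more tractable congruence among non-constant Fourier coefficients, which one can then dispatch by direct combinatorial manipulation of the $q$-expansions.

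The main obstacle is that the Hilbert modular form technique alone does \emph{not} deliver all the required congruences for our particular group $G$: a handful of congruences involve subquotients for which the modular-form lifts are insufficiently flexible. To overcome this I would invoke Kato's theorem (Theorem \ref{thm:kato}) on the existence of the $p$-adic zeta function for a proper quotient $\line{G}$ of $G$ already handled in \cite{Kato1}, and use the image of $\xi_{\line{G}}$ under an auxiliary theta map for $\line{G}$ to bootstrap the remaining congruences by an induction on the complexity of the subquotients. Uniqueness of $\xi_{F^{\infty}/F}$ then follows because the ratio of two candidates lies in the kernel of every evaluation map $\ev_{\rho \otimes \kappa^r}$; an analysis using the integral logarithm of Proposition-Definition \ref{pd:intlog} together with Theorem \ref{thm:intlogstr} and Proposition \ref{prop:sk1} confines this ratio to torsion, and a further $K$-theoretic rigidity argument forces it to be trivial.
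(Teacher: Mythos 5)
You are attempting to prove a statement that the paper itself records as a \emph{conjecture} for an arbitrary admissible extension $F^{\infty}/F$; the paper never claims it at this level of generality, and proves only the existence half, only for the single group $G=G^f\times\Gamma$ of Theorem \ref{thm:maintheorem}. Your outline is essentially a summary of that special-case argument, but every one of its ingredients is group-specific: the family $\ideal{F}$ satisfying $(\flat)$, the explicit description of $\Psi$ and $\Psi_S$ by norm relations and congruences, the surjectivity of $\theta$ onto $\Psi$ (which rests on the integral-logarithm computations of \S 4--\S 6 carried out for this particular $G$), the Deligne--Ribet congruences of \S 7, and the availability of Kato's theorem for the Heisenberg quotient $\line{G}$ to supply the missing congruences. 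None of these is available for a general compact $p$-adic Lie group, so the proposal cannot establish the conjecture as stated; at best it reproves Theorem \ref{thm:maintheorem}.

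The second, independent gap is the uniqueness claim. You assert that the ratio of two candidates is confined to torsion by the integral logarithm and then eliminated by ``a further $K$-theoretic rigidity argument.'' No such argument is given or available: the kernel of the theta map is exactly $\varprojlim_n SK_1(\Zp[G^{(n)}])$ (see \S 5.4), and the vanishing of $SK_1(\iw{G})$ is itself an open conjecture, only announced for this particular $G$ by Schneider and Venjakob (Remark \ref{rem:venj}); the paper explicitly declines to prove uniqueness even in its special case. A smaller inaccuracy: in Burns' argument it is not true that \emph{any} preimage of $(\xi_i)_i$ under $\theta_S$ is automatically a characteristic element. One must start from a characteristic element $f$, show that the correction factors $u_i=\xi_i f_i^{-1}$ lie in $\Psi$ (in the paper this itself requires the inductive detour through $\line{G}$, since $(\xi_i)_i\in\Psi_S$ is not directly provable), lift $(u_i)_i$ through the integral theta map to $u\in K_1(\iw{G})$, and set $\xi=uf$; only then does $\del(\xi)=-[C_{F^{\infty}/F}]$ follow.
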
 

\begin{rem}[The abelian case] \label{rem:abel}
Let $G=\gal{F^{\infty}/F}$ be an {\em abelian} $p$-adic Lie group. In this
 case, Coates observed that if certain congruences among the special
 values of the partial zeta functions were proven, we could construct the $p$-adic
 $L$-function for $F^{\infty}/F$ (See \cite{Coates}, Hypotheses $(H_n)$
 and $(C_0)$). These congruences were proven by Deligne and Ribet using
 the deep result about Hilbert-Blumenthal modular varieties
 (\cite{De-Ri}).

Using the Deligne-Ribet's congruences, Serre constructed the element 
$\xi_{F^{\infty}/F}$ of $\mathrm{Frac}(\iw{G})$ which satisfied the
 following two properties (Serre's $p$-adic zeta pseudomeasure
 \cite{Serre2} for $F^{\infty}/F$,
 also see \S \ref{sect:cong}).

\begin{enumerate}[(1)]
\item For an arbitrary element $g$ of $G$, $(1-g)\xi_{F^{\infty}/F}$ is
 contained in $\iw{G}$.
\item $\xi_{F^{\infty}/F}$ satisfies the interpolation property
      (\ref{eq:interp}).
\end{enumerate}
\end{rem}

\begin{rem}
The non-commutative Iwasawa main conjecture which we introduced here was established first by Coates, Fukaya, Kato, Sujatha and Venjakob for elliptic curves without complex multiplication (the $\GL_2$-conjecture) in \cite{CFKSV}. For more precise statements,
 see \cite{CFKSV}. 

In \cite{Fu-Ka}, Fukaya and Kato established the main conjecture for
 general cases and showed the compatibility of the main conjecture with
 the equivariant Tamagawa number conjecture. 
\end{rem}

%
%
\section{The main theorem and Burns' technique}
%
%

%
\subsection{The main theorem}
%

Fix a prime number $p$. We consider the one dimensional $p$-adic Lie 
group $G=G^f \times \Gamma$ where
\begin{equation*}
G^f =\begin{pmatrix} 1 & \Fp & \Fp & \Fp \\ 0 & 1 & \Fp & \Fp \\
0 & 0 & 1 & \Fp \\ 0 & 0 & 0 & 1 \end{pmatrix} \quad (\text{finite part
of $G$})
\end{equation*}
and $\Gamma$ is a commutative $p$-adic Lie group isomorphic to
 $\Zp$ (See \S 0.3). 

In the following, we fix generators of $G^f$ and denote them by
\begin{align*}
\alpha &= \begin{pmatrix}1 & 1 & 0 & 0 \\ 0 & 1 & 0 & 0 \\
0 & 0 & 1 & 0 \\  0 & 0 & 0 & 1 \end{pmatrix}, &
\beta &= \begin{pmatrix}1 & 0 & 0 & 0 \\ 0 & 1 & 1 & 0 \\
0 & 0 & 1 & 0 \\  0 & 0 & 0 & 1 \end{pmatrix}, \\
\gamma &= \begin{pmatrix}1 & 0 & 0 & 0 \\ 0 & 1 & 0 & 0 \\
0 & 0 & 1 & 1 \\  0 & 0 & 0 & 1 \end{pmatrix}, &
\delta &= \begin{pmatrix}1 & 0 & 1 & 0 \\ 0 & 1 & 0 & 0 \\
0 & 0 & 1 & 0 \\  0 & 0 & 0 & 1 \end{pmatrix}, \\
\varepsilon &= \begin{pmatrix}1 & 0 & 0 & 0 \\ 0 & 1 & 0 & 1 \\
0 & 0 & 1 & 0 \\  0 & 0 & 0 & 1 \end{pmatrix}, &
\zeta &= \begin{pmatrix}1 & 0 & 0 & 1 \\ 0 & 1 & 0 & 0 \\
0 & 0 & 1 & 0 \\  0 & 0 & 0 & 1 \end{pmatrix}.
\end{align*}

Then the center of $G^f$ is $\langle \zeta \rangle$ and there are four non-trivial fundamental relations of $G^f$:
\begin{align*}
[\alpha,\beta]&=\delta, &[\beta,\gamma]&= \varepsilon, \\
[\alpha,\varepsilon] &=\zeta, & [\delta,\gamma] &= \zeta,
\end{align*}
where $[x,y]=xyx^{-1}y^{-1}$ is the commutator of $x$ and $y$. Other commutators among $\alpha,\beta,\dotsc ,\zeta$ are $1$. We always denote the indices of $\alpha, \beta, \dotsc , \zeta$ 
by $a,b, \dotsc, f$. Note that $G$ satisfies the conditions in \S 2 for $H=G^f$. We also assume that $p \neq 2,3$. Under this assumption, the exponent of 
the group $G^f$ is $p$.

Let $F$ be a totally real number field and $F^{\infty}$ a Galois extension 
of $F$ satisfying $\gal{F^{\infty}/F}\cong G$ and the
condition $(\ast)$ in Proposition \ref{prop:selmer}, that is, there exists an intermediate field
$F^{\infty}/F'/F$ such that $F'/F$ is finite and the $\mu$-invariant $\mu \left(
(F')^{\mathrm{cyc}}/F' \right)$ equals to zero.

\begin{thm} \label{thm:maintheorem}
Under the notation and the assumptions as above, the $p$-adic zeta function $\xi_{F^{\infty}/F}$ for $F^{\infty}/F$ exists and the main conjecture $($Conjecture $\ref{conj:iwasawa}$ $(2))$ is true.
\end{thm}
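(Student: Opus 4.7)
The plan is to follow the Burns--Kato strategy sketched in the introduction. By Burns' theorem (Theorem~\ref{thm:ak-pri}), it suffices to verify that the tuple $(\xi_i)_i$ of abelian $p$-adic zeta pseudomeasures for the sub-extensions $F_{V_i}/F_{U_i}$ lies in the subgroup $\Psi_S \subseteq \prod_i \iw{U_i/V_i}_S^{\times}$. Each $\xi_i$ exists by Deligne--Ribet and satisfies the abelian main conjecture by Wiles, so the whole non-commutative main conjecture for $F^{\infty}/F$ reduces to a finite list of norm compatibilities and congruences among the $\xi_i$.

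First I would finish the construction of $\theta$ and $\theta_S$ along the lines of Sections~4--6: build an additive theta map by describing the images of the norm maps $\Nr \colon K_1(\iw{U_i}) \to K_1(\iw{U_i/V_i})$ via the character theory of the finite $p$-group $G^f$, transfer to the multiplicative side through the Oliver--Taylor integral logarithm $\Gamma_G$ of Proposition-Definition~\ref{pd:intlog}, and localize by passing to the $p$-adic completion of $\iw{\Gamma}_{(p)}$. This yields explicit descriptions of $\Psi$ and $\Psi_S$ as the subgroups cut out by certain \emph{norm relations} (relating $\theta_i(x)$ and $\theta_j(x)$ whenever $U_j \subseteq U_i$) together with \emph{congruences} modulo powers of appropriate trace or augmentation ideals; membership of $(\xi_i)_i$ in $\Psi_S$ thereby becomes a concrete assertion about the abelian pseudomeasures.

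The norm compatibilities among the $\xi_i$ follow formally from the functoriality of the Deligne--Ribet construction and the uniqueness of the abelian pseudomeasures. The substantive step is the congruences. To produce them I would use the technique of \cite{Kato1}: realise each $\xi_i$ as the constant term of a $\Lambda$-adic Hilbert modular form in the sense of Deligne--Ribet, establish the required congruences on the much more tractable non-constant $q$-expansion coefficients (which reduce to elementary identities about totally positive elements of $F$), and then invoke the $q$-expansion principle to descend these congruences to the constant terms.

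The hard part, and what distinguishes this case from Kato's Heisenberg-type setting, is that the Hilbert-modular-form method does not deliver \emph{all} of the congruences required by $\Psi_S$. The recalcitrant ones are those attached to the deepest commutator relations in $G^f$, namely $[\alpha,\varepsilon]=\zeta=[\delta,\gamma]$, which lie two steps deep in the lower central series. To circumvent this I would invoke Kato's existence theorem from \cite{Kato1} for $\xi_{F_N/F}$, where $F_N/F$ is a sub-$p$-adic Lie extension whose Galois group is a suitable Heisenberg-type quotient $\line{G}$ of $G$. Using the compatibility of the theta maps with the projection $G \twoheadrightarrow \line{G}$, the existence of the non-commutative object $\xi_{F_N/F}$ supplies precisely the congruences missed by the direct Hilbert-modular-form approach, via an inductive comparison. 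Combining both inputs shows $(\xi_i)_i \in \Psi_S$, and Burns' theorem then produces $\xi_{F^{\infty}/F} \in K_1(\iw{G}_S)$ with $\del(\xi_{F^{\infty}/F})=-[C_{F^{\infty}/F}]$, completing the proof of the main conjecture.
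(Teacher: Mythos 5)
Your proposal follows essentially the same route as the paper: construct $\theta$ and $\theta_S$ via the additive theta map and the Oliver--Taylor integral logarithm, derive the accessible congruences among the abelian pseudomeasures from Deligne--Ribet's $q$-expansion principle, and supply the congruences that this method misses (those for $\xi_2$ and $\xi_3$) by comparing with Kato's $p$-adic zeta function for the Heisenberg-type quotient $\line{G}=G/N$ through a characteristic element $f$ chosen to lift $\line{\xi}$. The only difference is presentational: the paper does not verify $(\xi_i)_i\in\Psi_S$ up front but instead shows the quotient units $u_i=\xi_i\theta_{S,i}(f)^{-1}$ lie in $\Psi$ (using that they die under the projection mod $N$, plus the trace relation between $u_2$ and $u_3$ to upgrade the congruence mod $I_2''$ to one mod $I_2$), which is exactly the "inductive comparison" you describe.
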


\begin{rem}
In this paper we do not discuss the uniqueness of
 $\xi_{F^{\infty}/F}$. Also see Remark \ref{rem:venj}.
\end{rem}

%
\subsection{Burns' technique}
%

In this subsection, let $F^{\infty}/F$ be a general $p$-adic Lie extension of a totally real field $F$ satisfying the conditions in \S 2.2. Put $G=\gal{F^{\infty}/F}$.

Let $\ideal{F}$ be a family of pairs $(U,V)$ where $U$ is an open
subgroup of $G$ and $V$ is an open subgroup of $H$ such that $V$ is
normal in $U$ and $U/V$ is commutative. 

For $\ideal{F}$, we assume the following hypothesis:
\begin{quotation}
$(\flat)$ For an arbitrary Artin representation $\rho$ of $G$, $\rho$ is
 a $\Z$-linear combination of induced representations
 $\ind{G}{U_j}{\chi_j}$, as a virtual representation, where $(U_j, V_j)$
 is an element of $\ideal{F}$ and $\chi_j$ is a character of $U_j/V_j$
 of finite order.
\end{quotation}

In the following, we fix a family $\ideal{F}$ satisfying the hypothesis
$(\flat)$. For every $(U,V)\in \ideal{F}$, we have a homomorphism
\begin{equation*}
\theta_{U,V} \colon K_1(\iw{G}) \longrightarrow \iw{U/V}^{\times}
\end{equation*}
which is the composite of the norm map of $K$-groups (see \S \, 1.2)
\begin{equation*}
\Nr_{\iw{G}/\iw{U}} \colon K_1(\iw{G}) \longrightarrow K_1(\iw{U})
\end{equation*} 
and the canonical homomorphism
\begin{equation*}
K_1( \iw{U}) \longrightarrow K_1(\iw{U/V}) = \iw{U/V}^{\times}.
\end{equation*}

Let $S$ be the canonical \O re set for $G$. Then similarly we have a homomorphism
\begin{equation*}
\theta_{S,U,V} \colon K_1(\iw{G}_S) \longrightarrow \iw{U/V}_S^{\times}.
\end{equation*} 

Here we also denote the canonical \O re set for $U/V$ by the same symbol $S$ by abuse of notation.

Set 
\begin{equation*}
\theta=(\theta_{U,V})_{(U,V)\in \ideal{F}} \colon K_1(\iw{G}) \rightarrow \prod_{(U,V) \in \ideal{F}} \iw{U/V}^{\times}
\end{equation*}
and
\begin{equation*}
\theta_S=(\theta_{S,U,V})_{(U,V)\in \ideal{F}} \colon K_1(\iw{G}_S) \rightarrow \prod_{(U,V) \in \ideal{F}} \iw{U/V}_S^{\times}.
\end{equation*}

Let $\Psi_S$ be a subgroup
of $\prod_{(U,V)\in \ideal{F}} \iw{U/V}_S^{\times}$ and let
\begin{equation*}
\Psi=\Psi_S \cap \prod_{(U,V)\in \ideal{F}}
\iw{U/V}^{\times}.
\end{equation*}

\begin{defn}[The theta map, \cite{Kato1} \S 2.4] \label{def:theta}
Let $G$, $\ideal{F}$, $\theta_S$, $\theta$, $\Psi_S$ and $\Psi$ be as above. 

If $\theta$ and $\theta_S$ satisfy 
\begin{enumerate}[($\theta$-1)]
\item $\mathrm{Image}(\theta_S) \subseteq \Psi_S$,
\item $\mathrm{Image}(\theta)=\Psi$,
\end{enumerate}
we call the induced surjective homomorphism
\begin{equation*}
\theta \colon K_1(\iw{G}) \longrightarrow \Psi
\end{equation*}
{\em the theta map for the group $G$}, and call the induced homomorphism
\begin{equation*}
\theta_S\colon K_1(\iw{G}_S) \longrightarrow \Psi_S
\end{equation*}
{\em the localized theta map for the group $G$}.
\end{defn}

For $(U,V)\in \ideal{F}$, let $F_U$ (resp. $F_V$) be the maximal
subgroup of $F^{\infty}$ fixed by $U$ (resp. $V$). Since
$\gal{F_V/F_U}\cong U/V$ is abelian, the $p$-adic zeta function
(pseudomeasure) $\xi_{U,V}\in \mathrm{Frac}(\iw{U/V})$ for $F_V/F_U$
(see Remark \ref{rem:abel}) exists. 

\begin{thm}[Burns, \cite{Kato1} Proposition 2.5] \label{thm:ak-pri}

Let $G$ be a compact $p$-adic Lie group. Assume that the theta map
 $\theta$ and
 its localized version $\theta_S$ for $G$ exist, and also assume that
 $(\xi_{U,V})_{(U,V)\in \ideal{F}}$  is contained in $\Psi_S$. 

Then the $p$-adic zeta function $\xi_{F^{\infty}/F}$ for $F^{\infty}/F$
 exists and satisfies the main conjecture.

Moreover, if $\theta$ is injective, $\xi_{F^{\infty}/F}$ is determined uniquely.
\end{thm}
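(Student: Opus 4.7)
The plan is to produce a provisional characteristic element of $[C_{F^\infty/F}]$, compare it componentwise with the given tuple of abelian zeta pseudomeasures via $\theta_S$, and then correct the discrepancy by an element of $K_1(\iw{G})$ supplied by the surjectivity of $\theta$. First, by Proposition \ref{prop:delsurj} I choose any $\eta\in K_1(\iw{G}_S)$ with $\del(\eta)=-[C_{F^\infty/F}]$ and fix $(U,V)\in\ideal{F}$. Functoriality of the complex of Definition \ref{def:selmer} under restriction of the base field to $F_U$ and passage to the quotient $U/V$, combined with the compatibility (Proposition \ref{prop:commutativity}) of $\Nr$ and the quotient map with the connecting homomorphism $\del$, guarantees that $\theta_{S,U,V}(\eta)$ is a characteristic element for $[C_{F_V/F_U}]\in K_0(\iw{U/V},\iw{U/V}_S)$. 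By Wiles's abelian main conjecture \cite{Wiles}, $\xi_{U,V}$ is another such characteristic element; hence $u_{U,V}:=\xi_{U,V}\cdot\theta_{S,U,V}(\eta)^{-1}\in\ker\del$, which in the abelian localization sequence is the image of $K_1(\iw{U/V})=\iw{U/V}^{\times}$, so $u_{U,V}\in\iw{U/V}^{\times}$.

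Next, by $(\theta$-$1)$ and the hypothesis $(\xi_{U,V})\in\Psi_S$, the tuple $(u_{U,V})_{(U,V)\in\ideal{F}}$ lies in $\Psi_S\cap\prod_{(U,V)}\iw{U/V}^{\times}$. Burns's set-up identifies this intersection with $\Psi$ (in the specific application this is Proposition \ref{prop:cap}), so $(u_{U,V})\in\Psi=\mathrm{Image}(\theta)$ by $(\theta$-$2)$, and I lift to $u\in K_1(\iw{G})$ with $\theta(u)=(u_{U,V})$. Setting $\xi_{F^\infty/F}:=u\cdot\eta$ gives $\del(\xi_{F^\infty/F})=-[C_{F^\infty/F}]$ (since $u$ is killed by $\del$), which is Conjecture \ref{conj:iwasawa}\,(2); moreover $\theta_{S,U,V}(\xi_{F^\infty/F})=\xi_{U,V}$ by construction.

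To verify the interpolation property of Definition \ref{def:zeta}, I take an Artin representation $\rho$ of $G$ and, by $(\flat)$, write $\rho=\sum_j n_j\ind{G}{U_j}{\chi_j}$ as a virtual representation with $(U_j,V_j)\in\ideal{F}$ and $\chi_j$ a character of $U_j/V_j$ of finite order. Compatibility of the evaluation map with induction and norm gives
\[
\xi_{F^\infty/F}(\ind{G}{U_j}{\chi_j}\otimes\kappa^r)=\theta_{S,U_j,V_j}(\xi_{F^\infty/F})(\chi_j\otimes\kappa^r)=\xi_{U_j,V_j}(\chi_j\otimes\kappa^r),
\]
and the Deligne--Ribet--Serre interpolation of $\xi_{U_j,V_j}$ equates the right-hand side with $L_\Sigma(1-r;F_{V_j}/F_{U_j},\chi_j)$; Artin inductivity of $L$-functions then assembles these into $L_\Sigma(1-r;F^\infty/F,\rho)$. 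For uniqueness when $\theta$ is injective, two candidates $\xi,\xi'$ differ by some $w\in K_1(\iw{G})$ (both lift $-[C_{F^\infty/F}]$), and each $\theta_{U,V}(w)\in\iw{U/V}^{\times}$ evaluates to $1$ on every twist $\chi\otimes\kappa^r$ of a finite-order character of $U/V$; density of such evaluations on the commutative Iwasawa algebras $\iw{U/V}$ forces $\theta_{U,V}(w)=1$, and injectivity of $\theta$ yields $w=1$.

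The main obstacle, I expect, lies in the first step: rigorously tracking how the \'etale cohomology complex $C$ behaves under change of base field from $F$ to $F_U$ and under passage to the abelian quotient $U/V$, and confirming that the diagram bridging the two localization sequences through $\Nr_{\iw{G}/\iw{U}}$ and the projection $\iw{U}\to\iw{U/V}$ genuinely commutes. Once the identification of $\theta_{S,U,V}(\eta)$ as a characteristic element of $[C_{F_V/F_U}]$ is in hand, the rest of the argument is a diagram chase atop Wiles's abelian main conjecture.
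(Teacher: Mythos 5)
Your proposal is correct and follows essentially the same route as the paper: take a characteristic element $f$ of $-[C_{F^\infty/F}]$, use Wiles's abelian main conjecture to show each $u_{U,V}=\xi_{U,V}\theta_{S,U,V}(f)^{-1}$ lies in $\iw{U/V}^{\times}$, place the tuple in $\Psi=\Psi_S\cap\prod\iw{U/V}^{\times}$, lift via $(\theta$-$2)$, and check interpolation through $(\flat)$ and Lemma \ref{lem:akind}. The only stylistic difference is that you explicitly flag the functorial identification of $\theta_{S,U,V}(f)$ as a characteristic element of $[C_{F_V/F_U}]$ (which the paper compresses into the notation $[C_{U,V}]=\theta_{U,V}([C])$) and give a slightly more detailed density argument for uniqueness; neither changes the substance.
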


\begin{proof}
Let $f \in K_1(\iw{G}_S)$ be an arbitrary characteristic element for
 $F^{\infty}/F$. Significantly, $f$ is an element of $K_1(\iw{G}_S)$ which satisfies $\del(f)=-[C_{F^{\infty}/F}]$. 

Put $f_{U,V}=\theta_{S,U,V}(f)$ and $[C_{U,V}]= \theta_{U,V}([C])$ for each $(U,V)\in \ideal{F}$. 
Set $u_{U,V}=\xi_{U,V}f_{U,V}^{-1}$. Since the Iwasawa main conjecture
 is true for abelian extensions of totally real
 fields (\cite{Wiles}), we have $\del (\xi_{U,V})=-[C_{U,V}]$. Hence we
 obtain $\del (u_{U,V})=0$. By the localization exact sequence (Theorem \ref{thm:locseq}), we have $u_{U,V} \in \iw{U/V}^{\times}$.

On the other hand, since $(f_{U,V})_{(U,V)\in \ideal{F}} \in
 \mathrm{Image}(\theta_S) \subseteq \Psi_S$ by the condition
 ($\theta$-1) and $(\xi_{U,V})_{(U,V)\in \ideal{F}} \in \Psi_S$ by
 assumption, $(u_{U,V})_{(U,V)\in \ideal{F}}$ is an element of $\Psi_S$. Therefore by the definition of $\Psi$, we have 
\begin{equation*}
(u_{U,V})_{(U,V)\in \ideal{F}} \in \Psi=\Psi_S \cap \prod_{(U,V)\in
 \ideal{F}} \iw{U/V}^{\times}.
\end{equation*}

By the condition ($\theta$-2), there exists an element $u$ of $K_1(\iw{G})$ which satisfies $\theta_{U,V}(u)=u_{U,V}$. We denote the image of $u$ in $K_1(\iw{G}_S)$ by the same symbol $u$. Put $\xi_{F^{\infty}/F}=uf$. Since $u$ is an element of $K_1(\iw{G})$, we have $\del(u)=0$. Therefore
\begin{equation*}
\del(\xi_{F^{\infty}/F})=\del(uf)=\del(u)+\del(f)=\del(f)=-[C].
\end{equation*}
This implies that $\xi_{F^{\infty}/F}$ is also a characteristic element of $[C]$.

By the construction of $\xi_{F^{\infty}/F}$, it is clear that
\begin{equation} \label{eq:akzeta}
\theta_{S,U,V}(\xi_{F^{\infty}/F})=\xi_{U,V}.
\end{equation}

Finally, we prove that $\xi_{F^{\infty}/F}$ satisfies the interpolation
 property (\ref{eq:interp}). Since $\xi_{U,V}$ is the $p$-adic zeta function for $F_V/F_U$, it satisfies the following interpolating property:
\begin{equation} \label{eq:interpolate}
\xi_{U,V}(\chi \otimes \kappa^r)=L_{\Sigma} (1-r;F_V/F_U,\chi) \quad
 \text{for $r\in \N, \, (p-1) \mid r$}
\end{equation}
where $\chi$ is an arbitrary character of finite index of $U/V$.

By the hypothesis $(\flat)$, an arbitrary Artin representation $\rho$ is written in the form 
\begin{equation*}
\rho=\sum_{(U,V)\in \ideal{F}} \sum_{i\in I_{U,V}} a_{U,V}^{(i)} \ind{G}{U}{\chi_{U,V}^{(i)}}
\end{equation*}
where $I_{U,V}$ is a index set, $a_{U,V}^{(i)}$ is an integer (assume all but
 finitely many $a_{U,V}^{(i)}$ are zero), and $\chi_{U,V}^{(i)}$ is a character of finite index of $U/V$. Then we have

\begin{align*}
\xi_{F^{\infty}/F} (\rho \otimes \kappa^r) &= \prod_{(U,V)\in
 \ideal{F}} \prod_{i\in I_{U,V}} \xi(\ind{G}{U}{\chi_{U,V}^{(i)}} \otimes \kappa^r)^{a_{U,V}^{(i)}} && \\
&= \prod_{(U,V)\in \ideal{F}} \prod_{i\in I_{U,V}} \theta_{S,U,V}(\xi) (\chi_{U,V}^{(i)} \otimes \kappa^r)^{a_{U,V}^{(i)}}  && \\
&= \prod_{(U,V) \in \ideal{F}} \prod_{i\in I_{U,V}} \xi_{U,V} (\chi_{U,V}^{(i)} \otimes \kappa^r)^{a_{U,V}^{(i)}} && (\text{by (\ref{eq:akzeta})}) \\
&= \prod_{(U,V) \in \ideal{F}} \prod_{i\in I_{U,V}} L_{\Sigma}(1-r; F_V/F_U, \chi_{U,V}^{(i)})^{a_{U,V}^{(i)}} && (\text{by (\ref{eq:interpolate})}) \\
&= L_{\Sigma}\left (1-r; F^{\infty}/F, \sum_{(U,V)\in \ideal{F}} \sum_i a_{U,V}^{(i)}\ind{G}{U}{\chi_{U,V}^{(i)}} \right) &&  \\
&= L_{\Sigma} (1-r; F^{\infty}/F,\rho), &&
\end{align*}
here the first equality follows from the definition of the evaluation
 maps, and the fifth equality follows from the compatibility of Artin
 $L$-functions with direct sum of representations.

The second equality follows from the next lemma (Lemma \ref{lem:akind}).

Note that if $\theta$ is injective, the element $u$ above is determined
 uniquely, so is $\xi_{F^{\infty}/F}$.
\end{proof}

\begin{lem} \label{lem:akind}
Let $(U,V)$ is an element of $\ideal{F}$. 
Then for an arbitrary character $\chi$ of $U/V$, 
the following diagram commutes.
\begin{equation*}
\xymatrix{
K_1(\iw{G}_S) \ar[d]_{\ev_{\ind{G}{U}{\chi}}} \ar[r]^{\hspace{-1cm}\theta_{S,U,V}} & K_1(\iw{U/V}_S)=\iw{U/V}_S^{\times} \ar[dl]^{\ev_{\chi}} \\
\line{\Q}_p\cup \{ \infty \} & 
}
\end{equation*}
\end{lem}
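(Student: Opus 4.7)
The plan is to unravel both sides and reduce the commutativity of the triangle to the classical compatibility between $K$-theoretic norm maps and induction of representations. By definition, $\theta_{S,U,V}$ factors as
\begin{equation*}
K_1(\iw{G}_S) \xrightarrow{\Nr_{\iw{G}/\iw{U}}} K_1(\iw{U}_S) \twoheadrightarrow K_1(\iw{U/V}_S) = \iw{U/V}_S^{\times}.
\end{equation*}
Since $\chi$ inflates via $U \twoheadrightarrow U/V$ to a one-dimensional character of $U$ that factors through $U/V$, the map $\ev_{\chi}$ on $K_1(\iw{U/V}_S)$ agrees with evaluation of the inflated character on $K_1(\iw{U}_S)$. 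Hence the lemma reduces to the identity
\begin{equation*}
\ev_{\ind{G}{U}{\chi}}(f) \;=\; \ev_{\chi}\bigl(\Nr_{\iw{G}/\iw{U}}(f)\bigr), \qquad f \in K_1(\iw{G}_S).
\end{equation*}

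To establish this identity I would exploit the isomorphism $\ind{G}{U}{\chi} \cong \iw{G} \otimes_{\iw{U}} E_{\chi}$ of $\iw{G}$-$E$-bimodules, which yields the functorial identity $M \otimes_{\iw{G}} \ind{G}{U}{\chi} \cong M \otimes_{\iw{U}} E_{\chi}$ on left $\iw{G}$-modules. On $K_1$, the left-hand functor induces $\ev_{\ind{G}{U}{\chi}}$, while the right-hand one is the composition of restriction (inducing $\Nr_{\iw{G}/\iw{U}}$) with tensoring by $E_{\chi}$ over $\iw{U}$ (inducing $\ev_{\chi}$). The coincidence of the two functors therefore forces the coincidence of the induced $K_1$-maps. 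As a concrete sanity check, for $f \in \iw{G}^{\times}$ the explicit matrix formula of \S 1.2 (applied to a transversal $\{u_i\}$ for $U \backslash G$) gives $\Nr_{\iw{G}/\iw{U}}(f) = \det(x_{ij})$ where $u_j f = \sum_i x_{ij} u_i$; in the basis $\{u_i^{-1} \otimes 1\}$ of $\ind{G}{U}{\chi}$, the matrix of $\rho(f)$ for $\rho = \ind{G}{U}{\chi}$ works out to be the transpose of $(\chi(x_{ij}))_{i,j}$, so the identity follows from the invariance of $\det$ under transposition.

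The passage from $K_1(\iw{G})$ to $K_1(\iw{G}_S)$ is routine: the norm map extends to the localized $K_1$ by flatness of the Ore localization (Proposition \ref{prop:flat}), and $\ev_{\rho}$ is extended to $K_1(\iw{G}_S)$ (with possible value $\infty$) in a manner compatible with norms, as reviewed in \cite{CFKSV}, \S\S 2--3. The main obstacle is purely bookkeeping: aligning the left- versus right-coset conventions used in the explicit norm formula of \S 1.2 with the standard matrix realization of an induced representation. Once this alignment is made, the transposition $A \leftrightarrow A^t$ is invisible to $\det$ and the functorial identification above makes the result conceptually clean.
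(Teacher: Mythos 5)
Your proof is correct and follows essentially the same route as the paper's: both reduce the triangle to the identification of the induced representation with $\chi \otimes_{\iw{U/V}} \iw{G}$ (equivalently $\iw{G}\otimes_{\iw{U}}E_\chi$ in your convention) and then invoke the fact that the norm and evaluation maps are induced by tensor functors, so the compatibility is just associativity of the tensor product. The explicit matrix/transposition check you add is a harmless supplement the paper omits.
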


\begin{proof}
We will show the integral version of this lemma. In other words, we will
 prove the commutativity of the following diagram:
 \begin{equation*}
\xymatrix{
K_1(\iw{G}) \ar[d]_{\ev_{\ind{G}{U}{\chi}}} \ar[r]^{\hspace{-1cm}\theta_{U,V}} & K_1(\iw{U/V})=\iw{U/V}^{\times} \ar[dl]^{\ev_{\chi}} \\
\line{\Q}_p^{\times} & 
}
\end{equation*}

Let $W$ be the representation space of $\ind{G}{U}{\chi}$ over $\line{\Q}$ and let $W'$ be that of $\chi$. 
Suppose that $\End_{\line{\Q}} (W)$ (resp.\ $\End_{\line{\Q}} (W')$) acts on $W$ (resp.\ $W'$) from the right. 

Then by the definition of evaluation map, we obtain 
\begin{equation*}
\ev_{\ind{G}{U}{\chi}} = [W \otimes_{\iw{G}} -].
\end{equation*}

On the other hand, we have
\begin{equation*}
W\cong W' \otimes_{\iw{U/V}} \iw{G}
\end{equation*}
by the definition of induced representations, therefore we have 
\begin{equation*}
\ev_{\ind{G}{U}{\chi}} = \left[ W' \otimes_{\iw{U/V}} \left(
						       {}_{\iw{U/V}} \iw{G}_{\iw{G}} \right) \otimes_{\iw{G}} - \right],
\end{equation*}
and the right hand side is nothing but the definition of $\ev_{\chi}
 \circ \theta_{U,V}$.

It is not difficult to generalize this result to the localized version.
\end{proof}

Kazuya Kato has constructed the theta maps for $p$-adic Lie groups of
Heisenberg type (\cite{Kato1}, see also \S8.1) and for certain open subgroups
of $\Zp^{\times} \ltimes \Zp$ (\cite{Kato2}). Kakde also constructed the theta map for the $p$-adic Lie group $H\rtimes \Gamma$ of ``special type'' where $H$ is a compact pro-$p$ abelian $p$-adic Lie group. He used the method of Kato in \cite{Kato1} (See \cite{Kakde}).

In our case $G=G^f \times \Gamma$, it is difficult to show that
$(\xi_{U,V})_{(U,V)\in \ideal{F}} \in \Psi_S$ (See \S 7 and \S 8), so we
should modify the proof of this proposition to show our main theorem (Theorem
\ref{thm:maintheorem}). But this technique gives us the ideas
how to reduce the difficulties come from non-commutativity to the conditions of commutative cases.

%
%
\section{The additive theta map}
%
%

In the following three sections, we construct the theta map (and its
localized version) for the group $G=G^f \times \Gamma$. First, we construct a family $\ideal{F}=\{(U_i, V_i)\}$ which satisfies the hypothesis $(\flat)$ of \S 3.2. Then we define a $\Zp$-module homomorphism
\begin{equation*}
\theta^{+} \colon \Zp [[\conj{G}]] \longrightarrow \prod_i \Zp [[U_i/V_i]]
\end{equation*}
and characterize its image $\Omega$. We show that $\theta^{+}$ 
induces an isomorphism from $\Zp[[\conj{G}]]$ to $\Omega$, which we call {\it the additive
theta map for the group $G$}.

%
\subsection{Construction of the family $\ideal{F}$}
%

First, we define subgroups $H$ and $N$ of $G^f$ as follows:
\begin{align*}
H&= \begin{pmatrix} 1 & \Fp & \Fp & 0 \\ 0 & 1 & \Fp & 0 \\
0 & 0 & 1 & 0 \\ 0 & 0 & 0 &  1
\end{pmatrix}, &
N &= \begin{pmatrix} 1 & 0 & 0 & \Fp \\ 0 & 1 & 0 & \Fp \\
0 & 0 & 1 & \Fp \\ 0 & 0 & 0 & 1
\end{pmatrix}.
\end{align*}
Note that $N$ is abelian and normal in $G$. There are canonical isomorphisms
\begin{align*}
H\xrightarrow{\simeq} \begin{pmatrix} 1 &\Fp & \Fp \\
0 & 1 & \Fp \\ 0 & 0 & 1 \end{pmatrix} & ; 
\begin{pmatrix} 1 & a & d & 0 \\
0 & 1 & b & 0 \\ 0 & 0 & 1 & 0 \\ 0 & 0 & 0 & 1\end{pmatrix} \mapsto
\begin{pmatrix} 1 & a & d \\
0 & 1 & b \\ 0 & 0 & 1 \end{pmatrix}, \\
N\xrightarrow{\simeq} \quad \begin{pmatrix} \Fp \\ \Fp \\ \Fp
			    \end{pmatrix} \quad & ; 
\begin{pmatrix} 1 & 0 & 0 & f \\
0 & 1 & 0 & e \\ 0 & 0 & 1 & c \\ 0 & 0 & 0 & 1\end{pmatrix} \mapsto
\begin{pmatrix} f \\ e  \\ c \end{pmatrix}.
\end{align*}

By the isomorphisms above, we identify $H$ with $\begin{pmatrix} 1 &\Fp & \Fp \\
0 & 1 & \Fp \\ 0 & 0 & 1 \end{pmatrix}$ and $N$ with $\begin{pmatrix} \Fp \\ \Fp \\ \Fp \end{pmatrix}$.

Then $G^f$ is represented as a semi-direct product 
\begin{equation*}
G^f \cong H\ltimes N=\begin{pmatrix} 1 &\Fp & \Fp \\
0 & 1 & \Fp \\ 0 & 0 & 1 \end{pmatrix} \ltimes \begin{pmatrix} \Fp \\ \Fp \\ \Fp    \end{pmatrix},
\end{equation*}
where $H$ acts on $N$ from the left as ordinary product of matrices. 

From representation theory of semi-direct products of finite groups
(See \cite{Serre1}, Chapitre 8.2), all irreducible representations of $G^f$ are obtained from representations of $H$ and those of $N$. Let us review this construction.

First, let $\x{N}$ be the character group of the abelian group $N$:
\begin{equation*}
\x{N} =  \biggl\{   \chi_{ijk} \biggl|  0\leq i,j,k \leq p-1
 \biggr. \biggr \}
\end{equation*}
where $\chi_{ijk}\left( \! \! \begin{pmatrix} f \\ e \\ c \end{pmatrix} 
\! \! \right) = \exp \left( \frac{2\pi \sqrt{-1}}{p}(fi+ej+ck) \right) \quad (c,e,f\in \Fp )$.

The group $H$ acts on $\x{N}$ from the right by 
\begin{equation*}
(\chi*h)(n)=\chi(hn)\qquad \text{for all $n\in N$}
\end{equation*}
where $h\in H$ and $\chi \in \x{N}$. Specifically,
\begin{equation*}
\chi_{ijk} * \begin{pmatrix} 1 & a & d \\ 0 & 1 & b \\ 0 & 0 & 1 \end{pmatrix}
=\chi_{i,ai+j,di+bj+k} \quad \text{for every} \quad \begin{pmatrix} 1 & a & d \\ 0 & 1 & b \\ 0 & 0 & 1 \end{pmatrix} \in H
\end{equation*}
where $\chi_{ijk}\in \x{N}$. Then we have 
\begin{align*}
\chi_{00k} & \quad (0\leq k\leq p-1), & \chi_{0j0} & \quad(1\leq j\leq p-1), & 
\chi_{i00} & \quad (1\leq i\leq p-1)
\end{align*}
as a system of representatives for the orbital decomposition $\x{N}/H$.

Next, let $H_{ijk}$ be the isotropic subgroup of $H$ at each $\chi_{ijk}$. Then for each representative above, we have
\begin{align*}
H_{00k} & =H_0= H  \quad (0\leq k\leq p-1), \\
H_{0j0} & = H_1=\begin{pmatrix} 1 & \Fp & \Fp \\ 0 & 1 & 0 \\ 0 & 0 & 1 
\end{pmatrix} \quad (1\leq j\leq p-1), \\
H_{i00} & =H_2= \begin{pmatrix} 1 & 0 & 0 \\ 0 & 1 & \Fp \\ 0 & 0 & 1 
\end{pmatrix} \quad (1\leq i\leq p-1).
\end{align*}

Let $G^f_{\ell}= H_{\ell}\ltimes N$ for $\ell=0,1,2$. Then we may extend
the domain of the character $\chi_{00k}$ (resp.\ $\chi_{0j0}$, $\chi_{i00}$) to $G^f_0$ (resp.\ $G^f_1$, $G^f_2$) 
by setting
\begin{equation*}
\chi_{00k}(h_0n)=\chi_{00k}(n), \quad \chi_{0j0}(h_1n)=\chi_{0j0}(n), 
\quad \chi_{i00}(h_3n)= \chi_{i00}(n)
\end{equation*}
where $h_{\ell}\in H_{\ell} (\ell=0,1,2)$ and $n\in N$. Note that each $\chi_{00k}$
(resp.\ $\chi_{0j0}$, $\chi_{i00}$) is a character of degree 1 of the
group $G^f_0$ (resp.\ $G^f_1$, $G^f_2$).

Now let $\rho_{\ell}\, (\ell=0,1,2)$ be an arbitrary irreducible representation of
$H_{\ell}$. Then we obtain an irreducible representation of $G^f_{\ell}$ by
composing $\rho_{\ell}$ with the canonical projection $G^f_{\ell} \rightarrow
H_{\ell}$, which we also denote by $\rho_{\ell}$. Consider the tensor
products of representations $\chi_{00k} \otimes \rho_0$ (resp.\
$\chi_{0j0} \otimes \rho_1$, $\chi_{i00} \otimes \rho_2$), and  let
\begin{align*}
\theta_{k,\rho_0} &= \ind{G^f}{G^f_0}{\chi_{00k} \otimes \rho_0} 
= \chi_{00k} \otimes \rho_0, \\
\theta_{j,\rho_1} &= \ind{G^f}{G^f_1}{\chi_{0j0} \otimes \rho_1}, \\
\theta_{i,\rho_2} &= \ind{G^f}{G^f_2}{\chi_{i00} \otimes \rho_2}.
\end{align*}

\begin{prop} \label{prop:irr}
Suppose $0\leq k\leq p-1$, $1\leq j\leq p-1$ and $1\leq i\leq p-1$. Let
 $\rho_{\ell}$ be an irreducible representation of $H_{\ell}$. Then each 
$\theta_{k,\rho_0},\, \theta_{j,\rho_1},\, \theta_{i,\rho_2}$ is an 
irreducible representation of $G^f$. Moreover, an arbitrary irreducible representation of 
$G^f$ is isomorphic to one of the $\theta_{k,\rho_0},\, \theta_{j,\rho_1},\, \theta_{i,\rho_2}$.
\end{prop}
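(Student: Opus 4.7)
The plan is to apply the classical Mackey-Wigner construction of irreducible representations of a semi-direct product $A \rtimes H$ with abelian kernel $A$, exactly as developed in \cite{Serre1}, Chapitre 8.2. Since $N$ is abelian and normal in $G^f$ with $G^f \cong H \ltimes N$, this general theorem applies directly; all that remains is to check that the data (orbit representatives, stabilizers, tensor products with irreducibles of the stabilizers) in the statement match those produced by the general construction.

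First I would verify that $\{\chi_{00k}\}_{0\le k\le p-1}\cup\{\chi_{0j0}\}_{1\le j\le p-1}\cup\{\chi_{i00}\}_{1\le i\le p-1}$ is indeed a complete set of representatives for $\widehat{N}/H$. This follows by writing an arbitrary $\chi_{ijk}$ and computing its orbit via the explicit action $\chi_{ijk}*\begin{pmatrix}1&a&d\\0&1&b\\0&0&1\end{pmatrix}=\chi_{i,ai+j,di+bj+k}$: if $i\neq 0$, one can solve for $a,b,d$ to move $j,k$ to $0$, giving the orbit through $\chi_{i00}$; if $i=0,\ j\neq 0$, one can kill $k$ using $b$, giving $\chi_{0j0}$; if $i=j=0$, the character is $H$-fixed. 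A parallel computation yields the stabilizers $H_{00k}=H$, $H_{0j0}=H_1$, and $H_{i00}=H_2$, matching the table in the text. Counting $p+(p-1)+(p-1)=3p-2$ orbits, with stabilizer orders $p^3, p^2, p^2$ respectively, confirms $|\widehat{N}|=p+(p-1)\cdot p+(p-1)\cdot p = p^3$, a consistency check.

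Next I would invoke the general result: extending each orbit representative $\chi$ trivially across the stabilizer $H_\ell$ to $G^f_\ell=H_\ell\ltimes N$ gives a one-dimensional representation of $G^f_\ell$; for any irreducible representation $\rho_\ell$ of $H_\ell$ pulled back to $G^f_\ell$ via $G^f_\ell\twoheadrightarrow H_\ell$, the tensor product $\chi\otimes\rho_\ell$ is irreducible, and $\theta=\operatorname{Ind}_{G^f_\ell}^{G^f}(\chi\otimes\rho_\ell)$ is an irreducible representation of $G^f$. Conversely, every irreducible representation of $G^f$ arises in this way, and distinct choices of $(\chi,\rho_\ell)$ (up to the $H$-action on orbits and isomorphism of $\rho_\ell$) produce non-isomorphic representations. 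Applied to our three families of orbit representatives this is exactly the statement of Proposition \ref{prop:irr}, with the observation that for the orbit $\chi_{00k}$ one has $H_0=H$ so $G^f_0=G^f$ and induction is trivial, explaining the equality $\theta_{k,\rho_0}=\chi_{00k}\otimes\rho_0$.

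Since this is a direct application of a standard theorem, there is essentially no obstacle; the only thing worth being careful about is the verification that the action of $H$ on $\widehat{N}$ and the isotropy subgroups are computed correctly, and one should perhaps also double-check the count of irreducibles via $\sum(\dim\theta)^2=|G^f|=p^6$ as a sanity check (each $\theta_{k,\rho_0}$ has dimension $\dim\rho_0$ with $H_0=H$ of order $p^3$; each $\theta_{j,\rho_1}$ has dimension $p\cdot\dim\rho_1$; each $\theta_{i,\rho_2}$ has dimension $p\cdot\dim\rho_2$), which follows from Burnside's formula $\sum_{\rho\in\widehat{H_\ell}}(\dim\rho)^2=|H_\ell|$. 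Thus the proof reduces to bookkeeping plus citation of Serre.
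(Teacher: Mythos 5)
Your approach is exactly the paper's: the paper's entire proof of this proposition is a citation of Serre, Chapitre 8.2 (the Mackey--Wigner ``little groups'' construction for $H\ltimes N$ with $N$ abelian normal), with Mackey's irreducibility criterion supplying the irreducibility of the induced representations, and your reduction of the problem to verifying the orbit representatives and isotropy subgroups is the right bookkeeping.

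That said, your numerical consistency checks contain concrete errors and would fail if actually carried out. The stabilizer $H_{i00}=H_2$ of $\chi_{i00}$ ($i\neq 0$) consists of the matrices with $a=d=0$, so it has order $p$, not $p^2$; hence the orbit of $\chi_{i00}$ has size $p^2$, and the correct orbit count is $p\cdot 1+(p-1)\cdot p+(p-1)\cdot p^2=p^3$, whereas the identity you wrote, $p+(p-1)p+(p-1)p=p^3$, is false. Likewise $\dim\theta_{i,\rho_2}=[H:H_2]\cdot\dim\rho_2=p^2$ (each $\rho_2$ is one-dimensional since $H_2$ is cyclic of order $p$), not $p\cdot\dim\rho_2$; with the corrected dimensions the sum-of-squares check does work out: $p\cdot p^3+(p-1)\cdot p^2\cdot p^2+(p-1)\cdot p\cdot p^4=p^6$. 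These slips do not undermine the citation-based argument, since the subgroups $H_1$, $H_2$ themselves are identified correctly, but the sanity checks as written are wrong and should be repaired.
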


\begin{proof}
See \cite{Serre1}, Chapitre 8.2 (for irreducibility of each $\theta$, we use Mackey's irreducibility criterion).
\end{proof}

Note that $H_1$ and $H_2$ are abelian groups, so if we set 
\begin{align*}
U_1^f &= H_1\ltimes N &
V_1' &= \{ I_3 \} \ltimes \begin{pmatrix} \Fp \\ 0 \\ \Fp \end{pmatrix} \\
& =\begin{pmatrix} 1 & \Fp & \Fp & \Fp \\ 0 & 1 & 0 & \Fp \\
0 & 0 & 1 & \Fp \\ 0 & 0 & 0 & 1 \end{pmatrix}, &
& = 
\begin{pmatrix} 1 & 0 & 0 & \Fp \\ 0 & 1 & 0 & 0 \\ 0 & 0 & 1 & \Fp \\
0 & 0 & 0 & 1 \end{pmatrix} ,\\
U_2^f &= H_2\ltimes N &
V_2' &= \{ I_3 \} \ltimes \begin{pmatrix} 0 \\ \Fp \\ \Fp \end{pmatrix}  \\
& =\begin{pmatrix} 1 & 0 & 0 & \Fp \\ 0 & 1 & \Fp & \Fp \\
0 & 0 & 1 & \Fp \\ 0 & 0 & 0 & 1 \end{pmatrix}, &
& = 
\begin{pmatrix} 1 & 0 & 0 & 0 \\ 0 & 1 & 0 & \Fp \\ 0 & 0 & 1 & \Fp \\
0 & 0 & 0 & 1 \end{pmatrix},
\end{align*}
each irreducible representation $\theta_{j,\rho_1}$
(resp.\ $\theta_{i,\rho_2}$) is obtained as the induced representation
$\ind{G^f}{U_1^f}{\chi_1'}$ (resp.\ $\ind{G^f}{U_2^f}{\chi_2'}$) where
$\chi_1'$ (resp.\ $\chi_2'$) is a certain character of 
$U_1^f/V_1'$ (resp.\ $U_2^f/V_2'$) of finite order.

On the other hand, we may regard $H_0(=H)$ as a semi-direct product
\begin{align*}
H\cong \begin{pmatrix} 1 & \Fp \\ 0 & 1 \end{pmatrix} \ltimes
\begin{pmatrix} \Fp & \Fp \end{pmatrix} &; 
\begin{pmatrix} 1 & a & d \\ 0 & 1 & b \\ 0 & 0 & 1
\end{pmatrix}
\mapsto
\begin{pmatrix} 1 & b \\ 0 & 1 \end{pmatrix}
\ltimes \begin{pmatrix} a & d \end{pmatrix}
\end{align*}
where $\begin{pmatrix} 1 & b \\ 0 & 1 \end{pmatrix} \in 
\begin{pmatrix} 1 &\Fp \\ 0 & 1 \end{pmatrix}$ acts on 
$\begin{pmatrix} a & d \end{pmatrix} \in \begin{pmatrix} \Fp & \Fp \end{pmatrix}$ from the left by
\begin{equation*}
\begin{pmatrix} 1 & b \\ 0 & 1 \end{pmatrix} 
* \begin{pmatrix} a & d \end{pmatrix} = \begin{pmatrix} a & d \end{pmatrix} 
\begin{pmatrix} 1 & b \\ 0 & 1 \end{pmatrix}^{-1} \qquad 
(\mbox{ordinary product of matrices}).
\end{equation*}

Let
\begin{align*}
U_0^H &=H & V_0^H &= \{ I_2 \} \ltimes \begin{pmatrix} 0 & \Fp \end{pmatrix} 
\\
& =\begin{pmatrix} 1 & \Fp & \Fp \\ 0 & 1 & \Fp \\ 0 & 0 & 1
   \end{pmatrix}, &
& = \begin{pmatrix} 1 & 0 & \Fp \\ 0 & 1& 0 \\ 0 & 0 & 1\end{pmatrix}, \\
U_1^H &=\{ I_2 \} \ltimes \begin{pmatrix} \Fp & \Fp \end{pmatrix} & V_1^H &= \{ I_2 \} 
\ltimes \begin{pmatrix} 0 & 0 \end{pmatrix}  \\
& =\begin{pmatrix} 1 & \Fp & \Fp \\ 0 & 1 & 0 \\ 0 & 0 & 1
   \end{pmatrix}, &
& = \{ I_3 \},
\end{align*}
then by an argument similar to above, every irreducible representation of $H$ is obtained as the induced 
representation $\ind{H}{U^H_{\ell}}{\chi_{\ell}}(\ell=0 \, \mbox{or} \, 1)$ where $\chi_{\ell}$ 
is a certain character of $U^H_{\ell}/V^H_{\ell}$. Therefore if we set
\begin{align*}
U_0^f & = U^H_0 \ltimes N  & V_0^f &= V^H_0 \ltimes 
\begin{pmatrix} \Fp \\ \Fp \\ 0 \end{pmatrix} \\
 &=G, & &= 
\begin{pmatrix} 1 & 0 & \Fp & \Fp \\ 0 & 1 & 0 & \Fp \\ 0 & 0 & 1 & 0 \\
0 & 0 & 0 & 1 \end{pmatrix}, \\
U_1^f & = U^H_1 \ltimes N 
   & V_1'' &= V^H_1 \ltimes \begin{pmatrix} \Fp \\ \Fp \\ 0 \end{pmatrix} \\
& = \begin{pmatrix} 1 & \Fp & \Fp & \Fp \\
0 & 1 & 0 & \Fp \\ 0 & 0 & 1 & \Fp \\ 0 & 0 & 0 & 1 \end{pmatrix}, 
& & = 
\begin{pmatrix} 1 & 0 & 0 & \Fp \\ 0 & 1 & 0 & \Fp \\ 0 & 0 & 1 & 0 \\
0 & 0 & 0 & 1 \end{pmatrix}, 
\end{align*}
each irreducible representation $\theta_{k,\rho_0}$ is obtained as the
induced representation $\ind{G^f}{U_0^f}{\chi_0}$
(resp.\ $\ind{G^f}{U_1^f}{\chi_1''}$) where $\chi_0$ (resp.\ $\chi_1''$) is a certain character of 
$U_0^f/V_0^f$ (resp.\ $U_1^f/V_1''$) of finite order.

Let $V_1^f=V_1'\cap V_1''=\langle \zeta\rangle$. Then by the argument above, every irreducible representation of $G^f$ is obtained as the induced 
representation of a certain character $\chi_i \in \x{U_i^f/V_i^f}$ where $\x{U_i^f/V_i^f}$ is the character group of the abelian group $U_i^f/V_i^f$. Hence 
$\ideal{F}^f=\{ (U_0^f, V_0^f), (U_1^f, V_1^f), (U_2^f, V_2')\}$
satisfies the hypothesis $(\flat)$ for the group $G^f$.

For certain technical reasons, we replace $V_2'$ by 
\begin{equation*}
V_2^f=\{ I_3\} \ltimes \begin{pmatrix} 0 \\ \Fp \\ \mathbf{0} \end{pmatrix}
=\begin{pmatrix} 1 & 0 & 0 & 0 \\ 0 & 1 & 0 & \Fp \\ 0 & 0 & 1 & \mathbf{0} \\
0 & 0 & 0 & 1 \end{pmatrix}
\end{equation*}
and add following subgroups to our family $\ideal{F}^f$ 
\begin{align*}
\tilder{U_2}^f &= \begin{pmatrix} 1 & 0 & \Fp \\ 0 & 1 & \Fp \\ 0 & 0 & 1 \end{pmatrix} \ltimes N  &
\tilder{V_2}^f &= \{I_3 \} \ltimes \begin{pmatrix} \Fp \\ \Fp \\ 0 
\end{pmatrix} \\
& = \begin{pmatrix} 1 & 0 & \Fp & \Fp \\
0 & 1 & \Fp & \Fp \\ 0 & 0 & 1 & \Fp \\ 0 & 0 & 0 & 1 \end{pmatrix},&
& = \begin{pmatrix} 1 & 0 & 0 & \Fp \\ 
0 & 1 & 0 & \Fp \\ 0 & 0 & 1 & 0 \\ 0 & 0 & 0 & 1\end{pmatrix}, \\
U_3^f &= \{ I_3\} \ltimes N &
V_3^f &= \{I_3 \} \ltimes \{ 0 \}  \\
& = \begin{pmatrix} 1 & 0 & 0 & \Fp \\
0 & 1 & 0 & \Fp \\ 0 & 0 & 1 & \Fp \\ 0 & 0 & 0 & 1 \end{pmatrix}, &
& = \{ I_4 \}.
\end{align*}

Note that $V_2^f$ (resp.\ $\tilder{V_2}^f$, $V_3^f$) is normal in
$U_2^f$ (resp.\ $\tilder{U_2}^f$, $U_3^f$) and $U_2^f/V_2^f$ (resp.\
$\tilder{U_2}^f/\tilder{V_2}^f$, $U_3^f/V_3^f$) is abelian.

For each $i$ (including $\tilder{U_2}$ and $\tilder{V_2}$), let
\begin{equation*}
U_i = U_i^f\times \Gamma,\qquad V_i =V_i^f \times \{ 1\}
\end{equation*}
and let $\ideal{F}=\{ (U_i, V_i)\}_i$. It is clear that $\ideal{F}$
satisfies the hypothesis $(\flat)$ for the group $G$.

%
\subsection{Construction of the isomorphism $\theta^+$}
%

In the following, we denote
\begin{equation*}
\begin{split}
\prod_i \Zp[[U_i/V_i]]=\Zp[[U_0/V_0]] &\times \Zp[[U_1/V_1]]  \\
&\times \Zp[[\tilder{U_2}/\tilder{V_2}]] \times \Zp[[U_2/V_2]] \times
\Zp[[U_3/V_3]]
\end{split}
\end{equation*}
and we use the notation $U_i$ (resp.\ $V_i$) for one of the subgroups $U_0, U_1, \tilder{U_2}, U_2$ and $U_3$ (resp.\ $V_0,V_1,\tilder{V_2},V_2$ and $V_3$). 

For an arbitrary finite group $F$, we denote by $\Zp[\conj{F}]$ the free
$\Zp$-module of finite rank with free bases $\conj{F}$. For an arbitrary pro-finite group $P$, let $\Zp [[\conj{P}]]$ be the projective limit of the free $\Zp$-modules
$\Zp [\conj{P_{\lambda}}]$ over finite quotient groups $P_{\lambda}$ of $P$.

Let $\{ u_1,u_2,\dotsc , u_{r_i} \} \subseteq G$ be
a system of representatives of the coset decomposition $G/U_i$ for each $i$. Then for an arbitrary conjugacy class
$[g]\in \conj{G}$, set
\begin{equation*}
\Tr_i([g]) = \sum_{j=1}^{r_i} \tau_j([g])
\end{equation*}
where $\tau_j([g])=[u_j^{-1} g u_j]$ if $u_j^{-1} g u_j$ is contained
in $U_i$, and $\tau_j([g])=0$ otherwise. It is easy to see that $\Tr_i([g])$ is independent of the choice of the representatives $\{ u_j \}_{j=1}^{r_i}$,
therefore $\Tr_i$
induces a
well-defined $\Zp$-module homomorphism
\begin{equation*}
\Tr_i \colon \Zp [[\conj{G}]] \longrightarrow \Zp [[\conj{U_i}]], 
\end{equation*}   
which we call {\it the trace homomorphism from
$\Zp[[\conj{G}]]$ to $\Zp [[\conj{U_i}]]$}.

We define the homomorphism $\theta^{+}_i$ as the composition of
the trace map $\Tr_i$ and the natural surjection 
\begin{equation*}
\Zp [[\conj{U_i}]] \longrightarrow \Zp [[U_i/V_i]] .
\end{equation*}

\medskip
Now let us calculate $\theta^+_i([g_0])$ for $[g_0]\in
\conj{G^f}$. Here we regard $[g_0]$ as an element $([g_0],0)$ contained
in $\conj{G}=\conj{G^f} \times \conj{\Gamma}$. 

Here we only compute $\theta^+_2([g_0])$. We may take $\{ \alpha^j\delta^k \mid 0\leq j,k \leq p-1 \}$ as a system of 
representatives of $G/U_2$. If $g_0=\begin{pmatrix} 1 & a & d & f \\
0 & 1 & b & e \\ 0 & 0 & 1 & c \\ 0 & 0 & 0 & 1 \end{pmatrix}$, we have 
\begin{equation*}
(\alpha^j \delta^k)^{-1} g_0(\alpha^j \delta^k)=
\begin{pmatrix}  1 & a & d-bj & f-ej-ck \\ 0 & 1 & b & e \\
0 & 0 & 1 & c \\ 0 & 0 & 0 & 1 \end{pmatrix}.
\end{equation*}

This implies that $(\alpha^j \delta^k)^{-1}g_0(\alpha^j\delta^k)$ is
 contained in $U_2$ if and only if $a=0$ and $d-bj=0$.

First, suppose that $a=0$ and $b\neq 0$. Then there exists a unique $j$ satisfying $d-bj=0$. We denote this $j$ by $\dfrac{d}{b}$. Then we have
\begin{eqnarray*}
\theta^+_2([g_0]) &=& \sum_{k=0}^{p-1} 
\begin{pmatrix} 1 & 0 & 0 & f-e\cdot \frac{d}{b}-ck \\
0 & 1 & b & e \\ 0 & 0 & 1 & c \\ 0 & 0 & 0 & 1 \end{pmatrix} \qquad 
\mod{\langle \varepsilon \rangle} \\
 &=& 
\begin{cases} 
\beta^b \gamma^c (1+\zeta+\cdots +\zeta^{p-1}) & \text{if $c\neq 0$,} \\
p\beta^b \zeta^{f-e\cdot \frac{d}{b}} & \text{if $c=0$.}
\end{cases}
\end{eqnarray*}

If $a=b=0$ and $d\neq 0$, $(\alpha^j \delta^k)^{-1} g_0 (\alpha^j
\delta^k)$ is not contained in $U_2$ for each $0\leq j,k\leq p-1$, hence $\theta^+_2([g_0])=0$. 

If $a=b=d=0$, we have
\begin{eqnarray*}
\theta^+_2([g_0]) &=& \sum_{j,k=0}^{p-1} 
\begin{pmatrix} 1 & 0 & 0 & f-ej-ck \\
0 & 1 & 0 & e \\ 0 & 0 & 1 & c \\ 0 & 0 & 0 & 1 \end{pmatrix} \qquad 
\mod{\langle \varepsilon \rangle} \\
 &=& 
\begin{cases} 
p^2 \zeta^f & \text{if $c=e=0$,} \\
p\gamma^c(1+\zeta+\dotsc +\zeta^{p-1}) & \text{if $c\neq 0$,} \\
p(1+\zeta+\dotsc +\zeta^{p-1}) & \text{if $c=0, e\neq 0$.}
\end{cases}
\end{eqnarray*}

In this way, we have
\begin{equation*}
\theta^+_2([g_0])=
\begin{cases}
0 & \text{if $a\neq 0$ or $a=b=0,d\neq 0$,} \\
\beta^b \gamma^c (1+\zeta+\dotsc +\zeta^{p-1}) & \text{if $a=0,b\neq 0,c\neq 0$,} \\
p\beta^b \zeta^{f-e\cdot \frac{d}{b}} & \text{if $a=c=0,b\neq 0$,} \\
p\gamma^c(1+\zeta+\dotsc +\zeta^{p-1}) & \text{if $a=b=d=0, c\neq 0$,} \\
p(1+\zeta+\dotsc +\zeta^{p-1}) & \text{if $a=b=c=d=0, e\neq 0$,} \\
p^2\zeta^f & \text{if $a=b=c=d=e=0$.}
\end{cases}
\end{equation*}

Similarly, we may take $\{ \beta^j \mid 0\leq j\leq p-1 \}$ for a system of representatives of $G/U_1$, $\{ \alpha^j \mid 0\leq j\leq p-1 \}$ for that of $G/\tilder{U_2}$, and $\{
\alpha^j \beta^k \delta^{\ell}  \mid 0\leq j, k ,\ell \leq p-1 \}$ for that of
$G/U_3$. Using these representatives, we may calculate $\theta^+([g_0])$ as follows:

\begin{align*}
 \theta^+ \colon  \Zp [[\conj{G}]] & \longrightarrow  \prod_i \Zp [[ U_i/V_i ]]
\end{align*}

\begin{align*}
\ideal{c}_{\mathrm{I}}(a,b,c) &=  \{  \alpha^a \beta^b \gamma^c \delta^d \varepsilon^e \zeta^f \mid 0\leq d,e,f\leq p-1 \}  && (a\neq 0,b\neq 0) \\
  & \mapsto  (\alpha^a \beta^b \gamma^c , 0, 0 , 0 ,0), &\\ 
\ideal{c}_{\mathrm{II}}(a,c,d,e)  &= \{  \alpha^a  \gamma^c \delta^d \varepsilon^e \zeta^f  \mid 0\leq f\leq p-1 \}  && (a\neq 0, c\neq 0) \\
  & \mapsto  (\alpha^a \gamma^c,\, \alpha^a \gamma^c \delta^d \varepsilon^e h_{\varepsilon^c \delta^{-a}}, 0, 0 ,0), &\\
\ideal{c}_{\mathrm{III}}(a,e)  &= \{  \alpha^a \varepsilon^e \delta^d \zeta^f \mid 0\leq d,f\leq p-1 \}  && (a\neq 0) \\
  & \mapsto  (\alpha^a , \, \alpha^a \varepsilon^e h_{\delta}, 0, 0 ,0), \\
\ideal{c}_{\mathrm{IV}}(b,c) &= \{ \beta^b  \gamma^c \delta^d \varepsilon^e \zeta^f  \mid 0\leq d,e,f\leq p-1 \}  && (b\neq 0, c\neq 0) \\
 & \mapsto  (\beta^b \gamma^c ,\, 0,\, \beta^b \gamma^c h_{\delta},\, \beta^b \gamma^c h_{\zeta} ,0), &\\
\ideal{c}_{\mathrm{V}}(b,f) &= \{ \beta^b \delta^d \varepsilon^e \zeta^{f'} \mid f'-\frac{d}{b}e \equiv f \quad \mod{p} \}  && (b\neq 0) \\
 & \mapsto  ( \beta^b,\, 0,\, \beta^b h_{\delta},\, p\beta^b \zeta^f ,0), &\\
\ideal{c}_{\mathrm{VI}}(c,d) &= \{ \gamma^c  \delta^d \varepsilon^e \zeta^f \mid 0\leq e,f\leq p-1 \}  && (c\neq 0 , d \neq 0) \\
 & \mapsto  (\gamma^c ,\, \gamma^c \delta^d h_{\varepsilon} ,\, p\gamma^c \delta^d , 0 ,0), &\\
\ideal{c}_{\mathrm{VII}}(c) &= \{ \gamma^c  \varepsilon^e \zeta^f \mid 0\leq e,f\leq p-1 \}  && (c\neq 0) \\
 & \mapsto  (\gamma^c ,\, \gamma^c h_{\varepsilon} ,\, p\gamma^c ,\, p\gamma^c h_{\zeta} ,p\gamma^ch_{\varepsilon} h_{\zeta} ), & \\
\ideal{c}_{\mathrm{VIII}}(d,e) &= \{ \delta^d  \varepsilon^e \zeta^f \mid 0\leq f\leq p-1 \} && (d\neq 0) \\
 & \mapsto  (1,\, p\delta^d \varepsilon^e ,\, p\delta^d , 0 ,0) &\\
\ideal{c}_{\mathrm{IX}}(e) &= \{ \varepsilon^e  \zeta^f \mid 0\leq f\leq p-1 \}  && (e\neq 0)  \\
 & \mapsto  (1,\, p\varepsilon^e ,\, p ,\, ph_{\zeta} , p^2 \varepsilon^e h_{\zeta} ),  &\\ 
\ideal{c}_{\mathrm{X}}(f) &=\{ \zeta^f \} && \\ 
& \mapsto (1, p, p, p^2 \zeta^f ,p^3\zeta^f), & 
\end{align*}
where for an element $g_0\in G^f$, $h_{g_0}$ denotes an element $1+g_0+g_0^2+ \dotsc
+g_0^{p-1}$ of $\Zp [[\conj{G}]]$. 

\medskip
Now let us consider the finite quotients $G^{(n)}= G^f \times
\Gamma/\Gamma^{p^n}$ and $U^{(n)}_i = U_n^f \times
\Gamma/\Gamma^{p^n}$. Then we have
\begin{align*}
\Zp [ G^{(n)} ] &\cong \Zp[G^f] \otimes_{\Zp} \Zp[\Gamma/\Gamma^{p^n}], \\
\Zp [ U^{(n)}_i/V_i ] &\cong \Zp[U_i^f/V_i^f] \otimes_{\Zp}
 \Zp[\Gamma/\Gamma^{p^n}].
\end{align*}

$\theta^+_i$ induces a homomorphism 
\begin{equation*}
\theta^{+,(n)}_i \colon \Zp[G^{(n)}] \rightarrow \Zp[U_i^{(n)}/V_i]
\end{equation*}
which sends $[g]=([g_0], t^z)\in
\conj{G^{(n)}}=\conj{G^f}\times \Gamma/\Gamma^{p^n}$ to
$(\theta^+_i([g_0]), t^z)$. For example, we have  
\begin{equation*}
\theta^{+,(n)}((\ideal{c}_I(a,b,c),t^z))=((\alpha^a \beta^b \gamma^c, t^z),(0,t^z),(0,t^z),(0,t^z),(0,t^z)),
\end{equation*}
and so on.

\medskip
We would like to characterize the image of $\theta^{+,(n)}$. 

Let $I_i^f$ be the submodule of $\Zp [U^f_i/V_i^f]$
defined as follows:
\begin{eqnarray*}
I_1^f &=& [p\delta^d \varepsilon^e, \alpha^a \varepsilon^e h_{\delta} \, (a\neq 0), \gamma^c \delta^d h_{\varepsilon} \, (c\neq 0), \alpha^a \gamma^c \delta^d \varepsilon^e h_{\varepsilon^c \delta^{-a}} \, (a\neq 0, c\neq 0) ]_{\Zp}, \\ 
\tilder{I_2}^f &=& [\beta^b \gamma^c h_{\delta} \, (b\neq 0) , p\gamma^c \delta^d ]_{\Zp}, \\
I_2^f &=& [p^2\zeta^f, p\gamma^c h_{\zeta} , \beta^b \gamma^c h_{\zeta} \, (b\neq 0, c\neq 0) ,\, p\beta^b \zeta^f (b\neq 0)]_{\Zp}, \\
I_3^f &=& [p^3\zeta^f , p^2\varepsilon^e h_{\zeta} \, (e\neq 0), p\gamma^c h_{\varepsilon}h_{\zeta} (c\neq 0)]_{\Zp}.
\end{eqnarray*}

Here we denote by $[S]_R$ the $R$-submodule of $M$ generated by $S$ over
$R$ where $R$ is a commutative ring, $M$ is an $R$-module and $S$ is a
finite subset of $M$. Each $I_i^f$ is a $\Zp$-submodule of $\Zp[U^f_i/V_i^f]$ generated by
$\{ \theta^+_i([g_0]) \mid g_0 \in G^f \}$.

Let 
\begin{equation*}
I_i^{(n)}= I_i^f \otimes_{\Zp} \Zp [\Gamma/\Gamma^{p^n} ] \quad \left(
								\subseteq
								\Zp
								[
								 U_i^{(n)}/V_i ] \right)
\end{equation*}
and
\begin{equation*}
I_i = \varprojlim_n I^{(n)}_i \quad \left( \subseteq \Zp[[U_i/V_i]]\right).
\end{equation*}

For each $i$, $I_i$ is the image of the homomorphism $\theta^{+}_i$.

Note that generators of $I^{(n)}_1$ (resp.\ $\tilder{I_2}^{(n)}$,
$I^{(n)}_3$) above are $\Zp[\Gamma/\Gamma^{p^n}]$-linearly independent, but those of $I^{(n)}_2$ are not $\Zp[\Gamma/\Gamma^{p^n}]$-linearly
independent, which have a relation 
\begin{equation}
\sum_{f=0}^{p-1} p^2\zeta^f = p\cdot ph_{\zeta}. \label{eq:dep}
\end{equation}

\begin{defn} \label{def:omega}
 We define $\Omega$ to be the $\Zp$-submodule of $\displaystyle \prod_i \Zp [[U_i/V_i]]$ consisting of all elements $(y_0, y_1, \tilder{y_2}, y_2, y_3 )$ satisfying the following two conditions:

\begin{enumerate}
  \item (trace relations)
\begin{enumerate}[(rel-1)]
  \item $\Tr_{\Zp[[U_0/V_0]]/\Zp[[U_1/V_0]]}y_0 \equiv y_1$,
  \item $\Tr_{\Zp[[U_0/V_0]]/\Zp[[\tilder{U_2}/V_0]]}y_0 \equiv \tilder{y_2}$,
  \item $\Tr_{\Zp[[\tilder{U_2}/\tilder{V_2}]]/\Zp[[U_2/\tilder{V_2}]]}\tilder{y_2} \equiv y_2$,

  \item $\Tr_{\Zp[[U_1/\tilder{V_2}]]/\Zp[[U_1\cap \tilder{U_2}/\tilder{V_2}]]}y_1 \equiv \Tr_{\Zp[[\tilder{U_2}/\tilder{V_2}]]/\Zp[[U_1\cap \tilder{U_2}/\tilder{V_2}]]}\tilder{y_2}$,
  \item $\Tr_{\Zp[[U_1/V_1]]/\Zp[[U_3/V_1]]}y_1 \equiv y_3$,
  \item $\Tr_{\Zp[[U_2/V_2]]/\Zp[[U_3/V_2]]}y_2 \equiv y_3$.
\end{enumerate}
   (These trace relations are described in Figure \ref{fg:relations}.)
 \item $y_i \in I_i$ for each $i$.
\end{enumerate}
\end{defn}

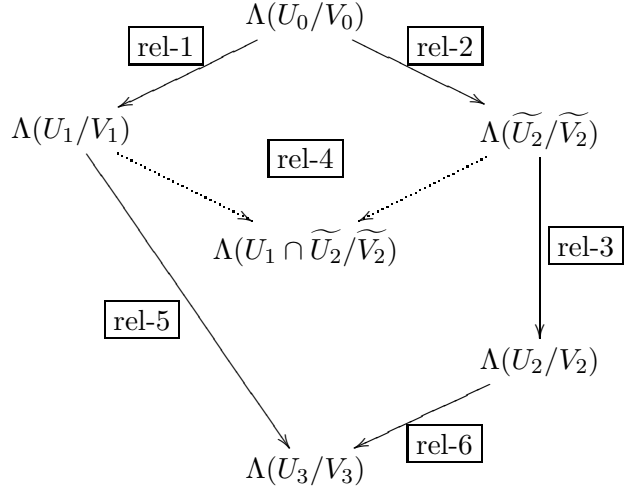
\begin{figure*}
\begin{equation*}
\xymatrix{
   & \iw{U_0/V_0} \ar[dl]_{\fbox{rel-1}} \ar[dr]^{\fbox{rel-2}} &  \\
\iw{U_1/V_1} \ar[dddr]_{\fbox{rel-5}} \ar@{.>}[dr]
 \ar@{}[rr]_{\fbox{rel-4}} &  &
 \iw{\tilder{U_2}/\tilder{V_2}} \ar[dd]^{\fbox{rel-3}} \ar@{.>}[dl]
 \\
 & \iw{U_1\cap \tilder{U_2}/\tilder{V_2}} & \\
& & \iw{U_2/V_2} \ar[dl]^{\fbox{rel-6}} \\
& \iw{U_3/V_3} &}
\end{equation*}
\caption{Trace and norm relations.}
\label{fg:relations}
\end{figure*}

\begin{propdef} \label{pd:adak}
 The homomorphism $\theta^+=(\theta^+_i)$ induces an isomorphism
\begin{equation*}
 \theta^+\colon \Zp[[\conj{G}]] \xrightarrow{\simeq} \Omega.
\end{equation*}

{\upshape We call this induced isomorphism $\theta^+$} the additive theta map for $G$.
\end{propdef}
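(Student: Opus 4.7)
The plan is to reduce the statement to the level of the finite group $G^f$, where the map is finitely described by the tables of images of conjugacy classes given just above the proposition, and then to carry out an explicit bookkeeping argument. The homomorphism $\theta^+$ is $\iw{\Gamma}$-linear, and at each finite level $n$ one has compatible decompositions
\[
 \Zp[\conj{G^{(n)}}] \;\cong\; \Zp[\conj{G^f}] \otimes_{\Zp} \Zp[\Gamma/\Gamma^{p^n}]
\]
and $\Zp[U_i^{(n)}/V_i] \cong \Zp[U_i^f/V_i^f] \otimes_{\Zp} \Zp[\Gamma/\Gamma^{p^n}]$, and the defining conditions of $\Omega$ are obtained by base change from those of the $G^f$-analogue $\Omega^f \subseteq \prod_i \Zp[U_i^f/V_i^f]$. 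Hence it suffices to prove that the $G^f$-level map $\theta^{+,f}\colon \Zp[\conj{G^f}] \to \Omega^f$ is an isomorphism, and then pass to $\varprojlim_n$.

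To see that the image lies in $\Omega^f$, the containment $\theta^+_i([g_0]) \in I_i^f$ is tautological by the definition of $I_i^f$ as the $\Zp$-span of the $\theta^+_i([g_0])$. The trace relations (rel-1)--(rel-6) encode the commutativity of the diagram of trace maps in Figure \ref{fg:relations}; five of them ((rel-1), (rel-2), (rel-3), (rel-5), (rel-6)) correspond to chains $U_j \subset U_i$ of subgroups with compatible $V$-data and follow from transitivity of the trace together with functoriality of the quotient projections, while (rel-4) is the sole Mackey-type compatibility at the common quotient $\Zp[[U_1 \cap \tilder{U_2}/\tilder{V_2}]]$ and can be checked directly on each conjugacy class using the explicit formulas.

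Injectivity can be read off from the table. Each of the ten families $\mathfrak{c}_\ast$ carries a distinctive monomial in $\alpha$, $\beta$, $\gamma$ appearing in one of the five components: the classes with $a \neq 0$, $b \neq 0$ or $c \neq 0$ (families I--VII) are separated in the first component, while $\mathfrak{c}_{\mathrm{VIII}}$, $\mathfrak{c}_{\mathrm{IX}}$ and $\mathfrak{c}_{\mathrm{X}}$, whose first coordinate is a trivial scalar, are picked out by the second, third and fourth/fifth components respectively. The inner parameters $(d, e, f)$ occurring in $\mathfrak{c}_{\mathrm{II}}, \mathfrak{c}_{\mathrm{III}}, \mathfrak{c}_{\mathrm{V}}, \mathfrak{c}_{\mathrm{VI}}, \mathfrak{c}_{\mathrm{VIII}}, \mathfrak{c}_{\mathrm{IX}}, \mathfrak{c}_{\mathrm{X}}$ are then recovered from the same subsequent components.

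The main obstacle is surjectivity. Given $(y_0, y_1, \tilder{y_2}, y_2, y_3) \in \Omega^f$, I would construct a preimage in stages: the coefficients of $\mathfrak{c}_{\mathrm{I}}, \ldots, \mathfrak{c}_{\mathrm{VII}}$ in their primary parameters are determined by expanding $y_0 \in I_0^f$ in its generators, and then the inner parameters and the remaining classes $\mathfrak{c}_{\mathrm{VIII}}, \mathfrak{c}_{\mathrm{IX}}, \mathfrak{c}_{\mathrm{X}}$ are pinned down successively from $y_1$, $\tilder{y_2}$, $y_2$ and $y_3$. The single genuine subtlety is the $\Zp$-linear dependence
\[
 \sum_{f=0}^{p-1} p^2 \zeta^f \;=\; p\cdot p h_\zeta
\]
of equation (\ref{eq:dep}) among the generators of $I_2^f$, so that the expansion of $y_2$ in its generators is only determined modulo this one relation. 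The key technical step will be to verify that the trace relation (rel-6) coupling $y_2$ and $y_3$ exactly absorbs this one-dimensional indeterminacy, making the reconstructed preimage both well defined and unique. Since the data are purely combinatorial, this reduces to a finite case check.
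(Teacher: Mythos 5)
Your reduction to the finite level and your treatment of surjectivity coincide with the paper's: the proof there also expands each $y_i$ in the generators of $I_i^{(n)}$, writes the trace relations (rel-1)--(rel-6) as explicit linear relations among the coefficients, and uses precisely the freedom $\nu_f^{(1)}\mapsto\nu_f^{(1)}+z$, $\nu_0^{(2)}\mapsto\nu_0^{(2)}-pz$ coming from the relation (\ref{eq:dep}) together with (rel-6) to normalize the expansion of $y_2$ before writing down the preimage class by class -- so you have correctly isolated the one genuine subtlety. Where you diverge is injectivity: the paper does not argue combinatorially from the table but character-theoretically, observing that for any induced character $\tilder{\chi}_i=\ind{G^{(n)}}{U_i^{(n)}}{\chi_i}$ one has $\tilder{\chi}_i(y)=\chi_i\circ\theta_i^+(y)$, so that $\theta^+(y)=0$ forces every irreducible character of $G^{(n)}$ to vanish on $y$ (by hypothesis $(\flat)$), whence $y=0$ since the irreducible characters form a dual basis of $\line{\Q}_p\otimes_{\Zp}\Zp[\conj{G^{(n)}}]$. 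Your direct linear-independence check is workable but hides a trap you should flag explicitly: the labels in the table are redundant, e.g.\ for $\ideal{c}_{\mathrm{II}}(a,c,d,e)$ the pairs $(d,e)$ and $(d-a,e+c)$ name the same conjugacy class, exactly matching the collapse $\delta^d\varepsilon^e h_{\varepsilon^c\delta^{-a}}=\delta^{d-a}\varepsilon^{e+c}h_{\varepsilon^c\delta^{-a}}$ in the $U_1$-component; one must verify that every coincidence among the listed images arises from such an identification of classes, which is precisely the bookkeeping the character argument lets you avoid. If you prefer the combinatorial route, that verification (and the analogous one for $\ideal{c}_{\mathrm{V}}$, where the class is cut out by the condition $f'-\frac{d}{b}e\equiv f$) is the step you cannot wave away as a ``finite case check'' without actually performing it.
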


\begin{proof}
It is clear from the calculation above that $\Omega$ contains the image of $\theta^+$. Hence it is sufficient to prove its injectivity and surjectivity. 

We now show that $\theta^+$ induces an isomorphism on the finite quotients
\begin{equation*}
\theta^{+,(n)} \colon \Zp[\conj{G^{(n)}}] \xrightarrow{\simeq} \Omega^{(n)},
\end{equation*}
for every $n\geq 1$, where $\Omega^{(n)}$ is defined to be the subgroup of $\displaystyle \prod_i \Zp [U^{(n)}_i/V_i]$ by the same conditions in Definition \ref{def:omega}. Then by taking the projective limit, we obtain the desired isomorphism.

To simplify the notation, we denote the induced homomorphism $\theta^{+,(n)}$ by $\theta^+$ in the following.

\begin{itemize}
 \item Injectivity.

Let $y \in \Zp[\conj{G^{(n)}}]$ be an element satisfying
       $\theta^+(y)=0$. Then from the hypothesis $(\flat)$ for the
       family $\{(U_i^{(n)}, V_i)\}_i$,\footnote{Recall that an arbitrary
       irreducible representation $\tau$ of $G^{(n)}=G^f \times
       \Gamma/\Gamma^{p^n}$ is isomorphic to $\rho \otimes \omega_n$
       where $\rho$ is an irreducible representation of $G^f$ and
       $\omega_n$ is a character of $\Gamma/\Gamma^{p^n}$. By the
       construction of $U_i$ and $V_i$, an arbitrary irreducible representation
       $\rho$ of $G^f$ is described as $\sum_{i,j} a_i^{(j)}
       \ind{G^f}{U^f_i}{\chi_i^{(j)}}$ where $a_i^{(j)}$ is an integer
       and $\chi_i^{(j)}$ is a character of $U^f_i/V_i^f$ of finite
       index. Then we have $\tau=\rho \otimes \omega_n = \left(
       \sum_{i,j} a_i^{(j)} \ind{G^f}{U^f_i}{\chi_i^{(j)}} \right)
       \otimes \omega_n=\sum_{i,j} a_i^{(j)}
       \ind{G^{(n)}}{U^{(n)}_i}{\chi_i^{(j)} \otimes
       \omega_n}$. $\chi_{i,j} \otimes \omega_n$ is a character of
       $U^{(n)}_i/V_i$ of finite index, hence $\{ U^{(n)}_i, V_i\}_i$
       also satisfies the hypothesis $(\flat)$ for the group $G^{(n)}$.} an arbitrary irreducible representation of
       $G^{(n)}$ is isomorphic to a $\Z$-linear combination (as a
       virtual representation) of $\ind{G^{(n)}}{U^{(n)}_i}{\chi_i}$'s
       where  $\chi_i$ is a character of $U^{(n)}_i/V_i$.  We denote by
       $\tilder{\chi}_i$ the character of the induced representation
       $\ind{G^{(n)}}{U^{(n)}_i}{\chi_i}$. Then we have
\begin{equation*}
\tilder{\chi}_i(y)=\sum_{[g]\in \conj{G^{(n)}}} m_{[g]}\sum_{j, u_j^{-1}g u_j\in U^{(n)}_i} \chi_i(u_j^{-1} g u_j)=\chi_i \circ \theta_i^+(y)
\end{equation*}
by the property of induced characters, where $\{ u_1,\dotsc ,u_{r_i} \}$
       is a system of representatives of $G^{(n)}/U^{(n)}_i$ and 
\begin{align*}
y&=\displaystyle \sum_{[g] \in \conj{G^{(n)}}} m_{[g]} [g] &
 (m_g\in \Zp).
\end{align*}

On the other hand, $\theta^+_i(y)$ vanishes by assumption, hence
       $\tilder{\chi}_i(y)=0$ This implies that $\chi(y)=0$ for an arbitrary irreducible character $\chi$ of $G^{(n)}$. 
Since all irreducible characters of $G^{(n)}$ form a dual basis of the $\line{\Q}_p$-vector space $\line{\Q}_p \otimes_{\Zp} \Zp[\conj{G^{(n)}}]$, we may conclude that $y=0$.

 \item Surjectivity.

Let $(y_0,y_1,\tilder{y_2},y_2,y_3)$ be an arbitrary element of $\Omega^{(n)}$, then each $y_i$ is a $\Zp[\Gamma/\Gamma^{p^n}]$-linear combination of generators of $I_i^{(n)}$:

\begin{align*}
y_0 =& \sum_{a,b,c} \kappa_{abc} \alpha^a \beta^b \gamma^c, \\
y_1 =& \sum_{d,e} \lambda^{(1)}_{de} \delta^d \varepsilon^e+\sum_{a\neq 0,e} \lambda^{(2)}_{ae} \alpha^a \varepsilon^e h_{\delta} \\
 &+ \sum_{c\neq 0,d} \lambda^{(3)}_{cd} \gamma^c \delta^d h_{\varepsilon}+
\sum_{a\neq 0, c\neq 0, d,e} \lambda^{(4)}_{acde} \alpha^a \gamma^c \delta^d \varepsilon^e h_{\varepsilon^c \delta^{-a}},   \\
\tilder{y_2} =&\sum_{b\neq 0, c} \mu^{(1)}_{bc} \beta^b \gamma^c h_{\delta} +\sum_{c,d} p \mu^{(2)}_{cd} \gamma^c \delta^d, \\
y_2 =&  \sum_f p^2 \nu^{(1)}_f \zeta^f + \sum_c p\nu^{(2)}_c \gamma^c h_{\zeta} \\
      &+\sum_{b\neq 0, c\neq 0} \nu^{(3)}_{bc} \beta^b \gamma^c h_{\zeta} +\sum_{b\neq 0,f} p\nu^{(4)}_{bf} \beta^b \zeta^f, \\
y_3 =& \sum_f p^3 \sigma_f^{(1)} \zeta^f +\sum_{e\neq 0} p^2 \sigma_e^{(2)} \varepsilon^e h_{\zeta}+\sum_{c\neq 0} p\sigma_c^{(3)} \gamma^c h_{\varepsilon} h_{\zeta}.
\end{align*}

Note that by the relation (\ref{eq:dep}), $\nu_f^{(1)}(0\leq f\leq p-1)$ and $\nu_0^{(2)}$ are not determined uniquely.

The trace relations in Definition \ref{def:omega} (1) give linear relations among these coefficients. For example, let us describe the (rel-6) explicitly.
Take $\{ \beta^j \mid 0\leq j \leq p-1\}$ as a system of representatives
       of $U_2^{(n)}/U_3^{(n)}$. Then for $u_2 \in U_2^f/V_2^f$,
       $\beta^{-j} u_2 \beta^j$ is contained in $U_3^{(n)}/V_2$ if and
       only if $b(u_2)=0$ where 
\[
 u_2 \equiv \beta^{b(u_2)} \gamma^{c(u_2)} \zeta^{f(u_2)} \qquad \mod
       \langle \varepsilon \rangle.
\]

It is obvious that $\beta^{-j}u_2\beta^j=u_2$ when $b(u_2)=0$. Therefore,
\begin{align*}
\Tr_{\Zp[U_2^{(n)}/V_2]/\Zp[U_3^{(n)}/V_2]} y_2 =& \sum_f p^2( p\nu_f^{(1)}+\nu_0^{(2)}) \zeta^f \\
&+ \sum_{c\neq 0} p^2 \nu_c^{(2)} \gamma^c h_{\zeta}.
\end{align*}
On the other hand,
\begin{equation*}
y_3 \equiv p^2 \sum_{e\neq 0, f} (p\sigma_f^{(1)}+\sigma_e^{(2)}) \zeta^f + \sum_{c\neq 0} p^2 \sigma_c^{(3)} \gamma^c h_{\zeta} \qquad \mod{V_2}.
\end{equation*}
By comparing these coefficients, we have
\begin{eqnarray}
p\nu_f^{(1)} +\nu_0^{(2)} &=& p\sigma_f^{(1)}+\sum_{e\neq 0} \sigma_e^{(2)} \qquad (0\leq f\leq p-1)  \label{eq:rel6}, \\ 
\nu_c^{(2)} &=& \sigma_c^{(3)} \qquad (1\leq c\leq p-1).
\end{eqnarray}

However, because of the relation (\ref{eq:dep}), we may change $\nu_f^{(1)} \rightarrow \nu_f^{(1)}+z$ and $\nu_0^{(2)} \rightarrow \nu_0^{(2)}-pz$ for every $z\in \Zp[\Gamma/\Gamma^{p^n}]$. Therefore in the equation (\ref{eq:rel6}), we may choose $\nu_f^{(1)}$ and $\nu_0^{(2)}$ as they satisfy
\begin{eqnarray*}
\nu_f^{(1)} &=& \sigma_f^{(1)} \qquad (0\leq f\leq p-1), \\
\nu_0^{(2)} &=& \sum_{e\neq 0} \sigma_e^{(2)}.
\end{eqnarray*}

Similarly, we can describe the other trace relations explicitly as follows:
\begin{enumerate}[(rel-1)]
\item $\displaystyle \kappa_{000}=\sum_{d,e} \lambda^{(1)}_{de}$, $\displaystyle \kappa_{a00}=\sum_e \lambda^{(2)}_{ae} \quad (a\neq 0)$, \\
$\displaystyle \kappa_{00c}=\sum_d \lambda^{(3)}_{cd} \quad (c\neq 0)$, \\ 
$\displaystyle \kappa_{a0c}=\sum_{d,e} \lambda^{(4)}_{acde} \quad (a\neq 0,c\neq 0)$,

\item $\displaystyle \kappa_{00c}=\sum_d \mu_{cd}^{(2)}$, $\displaystyle \kappa_{0bc}=\mu^{(1)}_{bc} \quad (b\neq 0)$,

\item $\mu_{bc}^{(1)}=\nu_{bc}^{(3)} \quad (b\neq 0,c\neq 0)$, \\
$\displaystyle \mu_{b0}^{(1)}=\sum_f \nu_{bf}^{(4)} \quad (b\neq 0)$, \\
$\mu_{c0}^{(2)}=\nu_c^{(2)} \quad (c\neq 0)$, $\displaystyle \mu_{00}^{(2)}=\sum_f \nu_f^{(1)}+\nu_0^{(2)}$,

\item $\displaystyle \sum_e \lambda_{de}^{(1)}=\mu_{0d}^{(2)}$, $\lambda^{(3)}_{cd}=\mu_{cd}^{(2)} \quad (c\neq 0)$

\item $\displaystyle \lambda^{(1)}_{00}=\sum_f \sigma^{(1)}_f$, $\lambda^{(1)}_{0e}=\sigma^{(2)}_e \quad (e\neq 0)$, \\
$\lambda^{(3)}_{c0}=\sigma^{(3)}_c \quad (c\neq 0)$,

\item $\nu^{(1)}_f=\sigma^{(1)}_f$, $\displaystyle \nu_0^{(2)}=\sum_{e\neq 0} \sigma_e^{(2)}$, $\nu_c^{(2)}=\sigma^{(3)}_c \quad (c\neq 0)$.
\end{enumerate}

Then we may show by direct calculation, using the explicit trace relations above, that an element of $\Zp[\conj{G^{(n)}}]$
\begin{equation*}
\begin{split}
y =& \sum_{a\neq 0,b\neq 0,c} \kappa_{abc} \ideal{c}_{\mathrm{I}}(a,b,c) + \sum_{a\neq 0,c\neq 0,d,e} \lambda_{acde}^{(4)} \ideal{c}_{\mathrm{II}}(a,c,d,e) \\
   & +\sum_{a\neq 0,e} \lambda^{(2)}_{ae} \ideal{c}_{\mathrm{III}}(a,e) +\sum_{b\neq 0,c\neq 0} \mu^{(1)}_{bc} \ideal{c}_{\mathrm{IV}} (b,c) \\
   & +\sum_{b\neq 0,f} \nu^{(4)}_{bf} \ideal{c}_{\mathrm{V}}(b,f) +\sum_{c\neq 0, d\neq 0} \mu^{(2)}_{cd} \ideal{c}_{\mathrm{VI}}(c,d) + \sum_{c\neq 0} \nu_c^{(2)} \ideal{c}_{\mathrm{VII}}(c) \\
   & +\sum_{d\neq 0,e} \lambda^{(1)}_{de} \ideal{c}_{\mathrm{VIII}}(d,e) + \sum_{e\neq 0} \sigma^{(2)}_e \ideal{c}_{\mathrm{IX}} (e) +\sum_f \nu^{(1)}_f \ideal{c}_{X} (f)
\end{split}
\end{equation*}
satisfies $\theta^+(y)=(y_0,y_1,\tilder{y_2},y_2,y_3)$. 
 \end{itemize} 

Therefore $\theta^+$ induces an isomorphism between $\Zp[\conj{G^{(n)}}]$ and $\Omega^{(n)}$.
\end{proof}

%
%
\section{Translation into the multiplicative theta map}
%
%

In the previous section, we construct the additive theta map $\theta^+$. Now we 
shall translate it into {\it the multiplicative theta map} $\theta$. The main tool for this translation is the integral logarithmic homomorphism introduced in \S 1.3.

Let $\theta_i \colon K_1(\iw{G}) \longrightarrow \iw{U_i/V_i}^{\times}$ be the composition of the norm homomorphism of $K$-groups
\begin{equation*}
\Nr_i\colon K_1(\iw{G}) \longrightarrow K_1(\iw{U_i})
\end{equation*}
and the natural homomorphism $K_1(\iw{U_i}) \longrightarrow K_1(\iw{U_i/V_i})=\iw{U_i/V_i}^{\times}$ induced by $\iw{U_i} \rightarrow \iw{U_i/V_i}$. Set
\begin{equation*}
\theta=(\theta_i)_i \colon K_1(\iw{G}) \longrightarrow \prod_i \iw{U_i/V_i}^{\times}.
\end{equation*}

\begin{propdef}[The Frobenius homomorphism]
For an arbitrary element $g\in G$, set 
\begin{equation} \label{eq:p-power}
\frob{g}=g^p.
\end{equation}

Then $\varphi$ induces a group homomorphism $\varphi \colon G \rightarrow \Gamma$. {\upshape We call this homomorphism} the Frobenius homomorphism.

{\upshape We denote the induced ring homomorphism $\iw{G} \rightarrow \iw{\Gamma}$ by the same symbol $\varphi$, and also call it} the Frobenius homomorphism. 
\end{propdef}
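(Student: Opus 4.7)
The plan is to exploit the direct product structure $G = G^f \times \Gamma$ together with the observation, already recorded in \S 3.1 under the standing hypothesis $p \neq 2, 3$, that $G^f$ has exponent $p$. Given these, the verification is essentially a one-line calculation; my main task is to organize it cleanly and then bootstrap to the ring-level statement.

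First I would write a general $g \in G$ as $(g_0, \gamma)$ with $g_0 \in G^f$ and $\gamma \in \Gamma$, and compute
\[
g^p = (g_0^p,\, \gamma^p) = (1,\, \gamma^p)
\]
using the exponent-$p$ property of $G^f$. This identifies $\varphi(g)$ with an element of $\{1\} \times \Gamma$, which I identify canonically with $\Gamma$; so the assertion that $\varphi$ lands in $\Gamma$ is settled. Homomorphy then follows at once: for $g = (g_0, \gamma)$ and $h = (h_0, \delta)$,
\[
(gh)^p = \bigl((g_0 h_0)^p,\, (\gamma \delta)^p\bigr) = (1,\, \gamma^p \delta^p) = g^p h^p,
\]
where $(g_0 h_0)^p = 1$ by the exponent-$p$ property, and $\gamma^p \delta^p = (\gamma \delta)^p$ because $\Gamma$ is abelian. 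Hence $\varphi \colon G \to \Gamma$ is a group homomorphism.

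For the second assertion I would invoke the universal property of the completed group algebra: the map $\varphi$ is continuous (the projection $G \twoheadrightarrow \Gamma$ and the $p$-th power map on $\Gamma \cong \Zp$ are both continuous), so it extends uniquely to a continuous $\Zp$-algebra homomorphism $\iw{G} \to \iw{\Gamma}$, which is the ``ring-level'' Frobenius claimed.

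The only substantive ingredient, and the sole reason the hypothesis $p \neq 2, 3$ enters, is the exponent-$p$ property of $G^f$; this in turn can be checked directly by expanding $u^p = I_4 + pN + \binom{p}{2} N^2 + \binom{p}{3} N^3$ for $u = I_4 + N \in G^f \subseteq \GL_4(\Fp)$ with $N$ strictly upper triangular (so that $N^4 = 0$), and noting that $\binom{p}{k} \equiv 0 \pmod{p}$ for $1 \le k \le 3$ as soon as $p \geq 5$. Beyond invoking this input the statement carries no real obstacle — it is a bookkeeping step isolating the Frobenius-type operator that will feed into the definition of the integral logarithm for $\iw{G}$ in the next section.
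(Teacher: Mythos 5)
Your proof is correct and follows essentially the same route as the paper, which also reduces everything to the exponent-$p$ property of $G^f$ by observing that $\varphi$ factors as the projection $G \twoheadrightarrow \Gamma$ followed by $t \mapsto t^p$ on $\Gamma$. Your explicit verification of the exponent-$p$ claim and of the extension to $\iw{G}$ are fine additions that the paper leaves implicit (the only nitpick being that for small $p$ the obstruction is the surviving $\binom{p}{p}N^p$ term rather than the divisibility of $\binom{p}{k}$ for $k\leq 3$, which holds for every prime).
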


\begin{proof}
 Since the exponent of $G^f$ is exactly equal to $p$, we may decompose $\varphi$ as follows:

\begin{equation*}
\xymatrix{ G \ar[rr]^{\varphi} \ar[dr]_{\pi_G} & & \Gamma \\
  & \Gamma \ar[ur]_{\tilder{\varphi}} & }
\end{equation*}
where $\tilder{\varphi} \colon \Gamma \rightarrow \Gamma$ is the Frobenius endomorphism of $\Gamma$ defined by $t \mapsto t^p$, and $\pi_G$ is the canonical surjection. Hence $\varphi$ is clearly a group homomorphism.
\end{proof}

\begin{rem}
In general, the correspondence (\ref{eq:p-power}) does not induce a group homomorphism $\varphi \colon G \longrightarrow G$.
\end{rem}

In the following, we denote by the same symbol $\varphi$ the induced ring homomorphism $\iw{U_i/V_i} \rightarrow \iw{\Gamma}$.

\begin{defn} \label{def:psi}
 We define $\Psi$ to be the subgroup of $\displaystyle \prod_i \iw{U_i/V_i}^{\times}$ consisting of all elements $(\eta_0, \eta_1, \tilder{\eta_2}, \eta_2, \eta_3 )$ satisfying the following two conditions:

\begin{enumerate}
 \item (norm relations)
\begin{enumerate}[(rel-1)]
  \item $\Nr_{\iw{U_0/V_0}/\iw{U_1/V_0}}\eta_0 \equiv \eta_1$,
  \item $\Nr_{\iw{U_0/V_0}/\iw{\tilder{U_2}/V_0}}\eta_0 \equiv \tilder{\eta_2}$,
  \item $\Nr_{\iw{\tilder{U_2}/\tilder{V_2}}/\iw{U_2/\tilder{V_2}}}\tilder{\eta_2} \equiv \eta_2$,

  \item $\Nr_{\iw{U_1/\tilder{V_2}}/\iw{U_1\cap \tilder{U_2}/\tilder{V_2}}}\eta_1 \equiv \Nr_{\iw{\tilder{U_2}/\tilder{V_2}}/\iw{U_1\cap \tilder{U_2}/\tilder{V_2}}}\tilder{\eta_2}$,
  \item $\Nr_{\iw{U_1/V_1}/\iw{U_3/V_1}}\eta_1 \equiv \eta_3$,
  \item $\Nr_{\iw{U_2/V_2}/\iw{U_3/V_2}}\eta_2 \equiv \eta_3$,
\end{enumerate}
(See Figure \ref{fg:relations}).

 \item (congruences)
  \begin{align*}
	\eta_1 &\equiv \frob{\eta_0} & & \mod{I_1}, &
	\tilder{\eta_2} &\equiv \frob{\eta_0} & & \mod{\tilder{I_2}}, \\
	\eta_2 & \equiv \frob{\eta_0}^p && \mod{I_2}, &
	\eta_3 &\equiv \frob{\eta_0}^{p^2} && \mod{I_3}.
  \end{align*}  
\end{enumerate}
\end{defn}

Now let us recall the definition of norm maps of commutative rings: let
$R$ be a commutative ring and $R'$ a commutative $R$-algebra. Assume
that $R'$ is free and finitely generated as an $R$-module. Then we
define $\Nr_{R'/R} (y)$ to be the determinant of the multiplication-$y$ homomorphism.

\begin{prop} \label{prop:theta}
 The homomorphism $\theta$ induces a surjection
\begin{equation} \label{eq:aksurj}
 \theta\colon K_1(\iw{G}) \rightarrow \Psi.
\end{equation}

{\upshape In other words, $\theta$ induces} the $($multiplicative$)$
 theta map for $G$ $($in the sense of Definition $\ref{def:theta})$.
\end{prop}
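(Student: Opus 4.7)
The plan is to prove the two halves of the statement separately: first that $\mathrm{Image}(\theta) \subseteq \Psi$, and then that every element of $\Psi$ arises from some element of $K_1(\iw{G})$. The main technical tool throughout will be the integral logarithm of \S 1.3, which allows us to transfer the problem to the additive setting where Proposition-Definition \ref{pd:adak} has already established the corresponding isomorphism $\theta^+$.

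For containment, the norm relations (rel-1)--(rel-6) are formal consequences of Proposition \ref{prop:commutativity} (compatibility of norm maps with quotients): applied to each relevant pair $(U_i, U_j)$ of our lattice with $V_i$ normal in both groups, the commutative square of Proposition \ref{prop:commutativity} yields precisely the desired equality after composing with the prior $\Nr_{\iw{G}/\iw{U_i}}$. The congruences modulo $I_i$ demand more work. Representing an arbitrary $\xi \in K_1(\iw{G})$ as $[x]$ with $x \in \iw{G}^{\times}$ (possible since $\iw{G}$ is semi-local by Proposition \ref{prop:sloc}), one computes $\theta_i(\xi)$ as the determinant of left multiplication by $x$ on $\iw{G}$ regarded as a free left $\iw{U_i}$-module (as reviewed after Proposition \ref{prop:sloc}), then reduces modulo $V_i$. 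A direct case analysis — completely parallel to the explicit determination of $\theta^+_i$ in \S 4.2 — shows the off-diagonal contributions all lie in $I_i$, while the diagonal part is precisely $\frob{\eta_0}^{[U_0 : U_i]/p}$ modulo $I_i$, which gives the required congruences.

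For surjectivity, the strategy is to work modulo torsion using the integral logarithm and then correct by an explicit torsion element. Applying $\log$ componentwise transforms $\Psi$ into an additive subgroup of $\prod_i \iw{U_i/V_i}_{\mathbb{Q}}$ whose defining conditions — traces coming from the multiplicative norm relations, plus additive congruences modulo $I_i$ coming from the multiplicative congruences — match the definition of $\Omega$. The additive isomorphism $\theta^+$ then produces a lift in $\Zp[[\conj{G}]]\otimes \mathbb{Q}$, which in turn lifts through $\Gamma_G$ of Proposition-Definition \ref{pd:intlog} to an element of $K_1(\iw{G})$ modulo the kernel of $\Gamma_G$. The torsion adjustment is then made by explicit mapping of representatives of $K_1(\iw{G})_{\mathrm{tors}}$ (described by Theorem \ref{thm:intlogstr} and Proposition \ref{prop:sk1}) onto the corresponding torsion classes in $\Psi$, which are detected componentwise by the roots-of-unity and abelianization parts of each $\iw{U_i/V_i}^{\times}$.

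The hard part will be verifying that the multiplicative $I_i$-congruences defining $\Psi$ really do translate into the additive $I_i$-congruences defining $\Omega$ under $\log$ — i.e., confirming that the Frobenius $\frob{\eta_0}^{p^{j_i}}$ appearing in Definition \ref{def:psi} corresponds correctly to the Frobenius operator $\varphi$ arising in the defining expression of $\Gamma_{G, J}$ in Proposition-Definition \ref{pd:intlog}. The factor $\tfrac{1}{p}\varphi(\log u)$ in the integral logarithm is exactly what produces the Frobenius-twisted congruences on the $\Psi$ side, so one must carefully track this correspondence component by component. A secondary subtlety is torsion: one must verify that the part of $K_1(\iw{G})$ killed by $\Gamma_G$ (in particular $SK_1$-classes) does not produce phantom obstructions in $\Psi$, which amounts to checking that $\mu_{p-1} \times G^{\mathrm{ab}}$ surjects onto the torsion of $\Psi$ via $\theta$.
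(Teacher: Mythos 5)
Your overall strategy (pass to the additive side via the integral logarithm, invoke the isomorphism $\theta^+\colon \Zp[[\conj{G}]]\xrightarrow{\simeq}\Omega$, and correct by torsion using Theorem \ref{thm:intlogstr} and Proposition \ref{prop:sk1}) is the same as the paper's. But there is a concrete gap in your containment argument. You claim that the determinant computation of $\theta_i(\xi)$, ``completely parallel to \S 4.2,'' shows directly that the off-diagonal contributions lie in $I_i$ and hence that the congruences hold modulo $I_i$. This is false for $i=2,3$: the permutation-expansion of the determinant only places the error terms in the \emph{ideals} $J_i$ generated by the relevant trace elements (e.g.\ $J_2=(p,h_{\zeta})$), not in the $\Zp$-submodules $I_i$. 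Concretely, the grouped off-diagonal terms for $\theta_2$ land in $\sum_{\mu}\nu_{0,\mu}\bigl(\Zp[U_2^{(n)}/V_2]\bigr)$ and in $p\Zp[\Gamma/\Gamma^{p^n}]$, and $p\zeta^f$ is \emph{not} an element of $I_2$ (only $p^2\zeta^f$ and $ph_{\zeta}$ are among its generators). Upgrading from $J_i$ to $I_i$ is exactly where the integral logarithm enters the containment direction as well: one shows $\theta_i(\eta)/\varphi(\theta_0(\eta))^{p^{j_i}}\in 1+J_i$, observes that its logarithm equals $\theta_i^{+}\circ\Gamma_{G^{(n)}}(\eta)$ (so it lies in the image of $\theta_i^{+}$, i.e.\ in $I_i$), and then uses the nontrivial fact that $\log$ restricts to a bijection $1+I_i\xrightarrow{\simeq}I_i$ even though $I_i$ is only a $\Zp$-submodule and not an ideal. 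You have reserved the logarithm entirely for the surjectivity half, so your containment argument as written does not close.

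Two further points need attention. First, the integral logarithm is defined only for group rings of finite groups, so the whole argument must be run on the quotients $G^{(n)}=G^f\times\Gamma/\Gamma^{p^n}$ and then assembled by a projective limit; this requires identifying $K_1(\iw{G})$ with $\varprojlim_n K_1(\Zp[G^{(n)}])$ and killing $\varprojlim^1$ of the finite groups $SK_1(\Zp[G^{(n)}])$ --- your proposal works with $K_1(\iw{G})$ directly, which is not available. Second, in the surjectivity step, $\Gamma_{G^{(n)}}$ is not surjective onto $\Zp[\conj{G^{(n)}}]$: its image is $\ker\omega_{G^{(n)}}$, so before ``lifting through $\Gamma_G$'' you must verify $\omega_{G^{(n)}}(y)=1$ for the additive lift $y$ (this follows because the $0$-component of $y$ is forced to be $\Gamma_{U_0^{(n)}/V_0}(\eta_0)$, whose $\omega$-image vanishes); your sketch omits this check, and also implicitly uses $\log\eta_0$ for the $0$-component, where only the integral logarithm makes sense.
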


In the rest of this section, we prove Proposition \ref{prop:theta} by 
using the additive theta map $\theta^+$ and the integral logarithmic
homomorphisms. Since the integral logarithmic homomorphisms are defined
only for group rings of finite groups,  we fix $n\geq 1$ and construct an isomorphism for finite quotients
\begin{equation*}
\theta^{(n)} \colon K_1(\Zp[G^{(n)}]) / SK_1 ( \Zp[ G^{(n)}]) \xrightarrow{\simeq} \Psi^{(n)} \left( \subseteq \prod_i \Zp[U^{(n)}_i/V_i] \right)
\end{equation*}
where $G^{(n)}=G_f \times \Gamma/\Gamma^{p^n}, U^{(n)}_i=U_i^f \times
\Gamma/\Gamma^{p^n}$, and $\Psi^{(n)}$ is the subgroup of $\prod_i \Zp[U_i^{(n)}/V_i]$ defined by the same conditions of
Definition \ref{def:psi}. Then we obtain the surjection (\ref{eq:aksurj}) by taking the projective limit.

To simplify the notation, we denote the theta map for $G^{(n)}$ by
$\theta$ in \S 5.1, \S 5.2, and \S 5.3. In \S 5.4, where we take the
projective limit of theta maps, we denote the theta map for of
$G^{(n)}$ by $\theta^{(n)}$ again.

%
\subsection{Logarithmic isomorphisms}
%

In the following three subsections, we fix $n\geq 1$.

For each $i\geq 1$, we define the ideal $J^{(n)}_i$ of $\Zp[U^{(n)}_i/V_i]$ as follows:
\begin{align*}
J^{(n)}_1 &= (h_{\varepsilon^j \delta^k}\quad (0\leq j,k \leq p-1))  , & 
\tilder{J_2}^{(n)} &= (p, h_{\delta}), \\
J^{(n)}_2 &=J^{(n)}_3= (p, h_{\zeta}). &
\end{align*}

Note that $J^{(n)}_i$ contains $I^{(n)}_i$. Since $h_{\varepsilon^0 \delta^0}=p$, $J^{(n)}_1$ also contains $p$.

\begin{lem} \label{lem:logj}
For each $i\geq 1$, $1+J^{(n)}_i$ is a multiplicative group and the $p$-adic logarithmic homomorphism induces an isomorphism $1+J^{(n)}_i \xrightarrow{\simeq} J^{(n)}_i$.
\end{lem}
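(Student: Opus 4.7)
The plan is to prove the lemma by establishing, for each index $i$, a $J$-adic filtration on the commutative ring $R_i := \Zp[U_i^{(n)}/V_i]$ strong enough to force both the logarithmic and exponential series to converge inside $J_i^{(n)}$. First, each $R_i$ is commutative (since $U_i^f/V_i^f$ is abelian by construction) and local with residue field $\Fp$ (since $U_i^{(n)}/V_i$ is a finite $p$-group), and every generator $h_g = 1 + g + \cdots + g^{p-1}$ of $J_i^{(n)}$ augments to $p$; hence $J_i^{(n)}$ is contained in the maximal ideal of $R_i$. Applying Lemma \ref{lem:log} to $R_i$ and $J_i^{(n)}$ shows that $1 + J_i^{(n)}$ is already a multiplicative group and that $\log$ defines a group homomorphism $1+J_i^{(n)} \to J_i^{(n)} \otimes_{\Z} \Q$; what remains is to check that the image lies in $J_i^{(n)}$ itself and that the map is bijective.

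The essential step is a computation of the powers of each $J_i^{(n)}$. For the three ideals $\tilder{J_2}^{(n)}, J_2^{(n)}, J_3^{(n)}$, each is of the form $(p, h)$ with $h \in \{h_\delta, h_\zeta\}$ the norm element of a cyclic subgroup of order $p$ in the corresponding abelian quotient; the identity $h^2 = p h$ immediately gives $(J_i^{(n)})^2 = p J_i^{(n)}$ and inductively $(J_i^{(n)})^k = p^{k-1} J_i^{(n)}$ for all $k \geq 1$. For $J_1^{(n)}$, setting $A = \langle \varepsilon, \delta\rangle \cong \Fp^2$ inside $U_1^f/V_1^f$, I would apply the classical product formula $N_H N_K = |H \cap K| \, N_{HK}$ for norm elements of commuting subgroups: if $g, g' \in A$ generate distinct cyclic subgroups of order $p$ then $h_g h_{g'} = N_A$, so $(J_1^{(n)})^2 = p J_1^{(n)} + (N_A)$; the same formula yields $h_g \cdot N_A = p N_A$, which forces $(J_1^{(n)})^3 = p (J_1^{(n)})^2$ and hence $(J_1^{(n)})^k = p^{k-2} (J_1^{(n)})^2$ for all $k \geq 2$.

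With these filtrations in hand, the termwise analysis is routine. For $x \in J_i^{(n)}$ and $k \geq 2$, the power $x^k$ lies in $p^{k-1} J_i^{(n)}$ when $i \neq 1$ and in $p^{k-2} J_i^{(n)}$ when $i = 1$; an elementary estimate using Legendre's formula $v_p(k!) = (k - s_p(k))/(p-1)$ together with the hypothesis $p \geq 5$ shows both $v_p(k) \leq k - 2$ and $v_p(k!) \leq k - 2$ for all $k \geq 2$. Consequently the terms $x^k/k$ and $x^k/k!$ both lie in $J_i^{(n)}$ and tend $p$-adically to $0$. Since $J_i^{(n)}$ is closed in the $p$-adic topology of $R_i$, both $\log(1+x) = \sum_{k \geq 1} (-1)^{k+1} x^k/k$ and $\exp(y) - 1 = \sum_{k \geq 1} y^k/k!$ converge inside $J_i^{(n)}$. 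The two series are mutual inverses as formal power series, so they induce the claimed group isomorphism $\log : 1+J_i^{(n)} \xrightarrow{\simeq} J_i^{(n)}$ with inverse $\exp$. The only genuinely subtle case is $i = 1$, where $(J_1^{(n)})^2 \not\subseteq p J_1^{(n)}$ because of the cross term $N_A$; the identity $h_g \cdot N_A = p N_A$ is the key non-trivial algebraic input that brings the filtration back under control at the next power.
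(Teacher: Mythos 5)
Your proposal is correct, and the heart of it --- the computation of the powers of each $J_i^{(n)}$ --- coincides with the paper's: the identities $h_\delta^2=ph_\delta$, $h_\zeta^2=ph_\zeta$ give $(J_i^{(n)})^2=pJ_i^{(n)}$ for $i\neq 1$, and your norm-element product formula $N_HN_K=|H\cap K|\,N_{HK}$ reproduces exactly the paper's case analysis $h_{\varepsilon^j\delta^k}h_{\varepsilon^{j'}\delta^{k'}}=h_{\varepsilon,\delta}$ or $ph_{\varepsilon^j\delta^k}$, together with $h_{\varepsilon^j\delta^k}\cdot h_{\varepsilon,\delta}=ph_{\varepsilon,\delta}$, which tames the cross term and yields $(J_1^{(n)})^3=p(J_1^{(n)})^2$. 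Where you genuinely diverge is the bijectivity step. The paper does not use the exponential at all: it observes that the $p$-adic and $J_i^{(n)}$-adic topologies agree on $J_i^{(n)}$ (because $p^{m+1}J_i^{(n)}\subseteq (J_i^{(n)})^{m+N-1}\subseteq p^mJ_i^{(n)}$), reduces to the graded quotients, and checks that $\log$ induces the identity map $(1+(J_i^{(n)})^m)/(1+(J_i^{(n)})^{m+1})\to (J_i^{(n)})^m/(J_i^{(n)})^{m+1}$, the only input being $y^{p^k}/p^k\in (J_i^{(n)})^{m+1}$. You instead exhibit $\exp$ as an explicit two-sided inverse, which requires the extra estimate $v_p(k!)\le k-2$ (valid for $p\ge 5$) and an appeal to the formal identity $\exp\circ\log=\mathrm{id}$; the latter rearrangement is justified here by your decay estimates, but you should say so. Your route is shorter and more self-contained for the statement as given; the paper's filtration argument has the advantage of also establishing $\log\bigl(1+(J_i^{(n)})^m\bigr)\subseteq (J_i^{(n)})^m$ for every $m$, a refinement that is reused in the proof of Lemma \ref{lem:logi}, where the relevant subsets $I_i^{(n)}$ are only $\Zp$-submodules and no exponential inverse is directly available.
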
 

\begin{proof}
Since $J^{(n)}_i$ is an ideal, $(J^{(n)}_i)^2 \subseteq J^{(n)}_i$. Therefore $1+J^{(n)}_i$ is closed under multiplication.

It is easy to calculate that $h_{\delta}^2=ph_{\delta}$ and $h_{\zeta}^2=ph_{\zeta}$. Then for $\tilder{J_2}^{(n)}, J_2^{(n)}$ and $J_3^{(n)}$, we have $(J_i^{(n)})^2 = pJ^{(n)}_i$. 
For $J_1^{(n)}$, we have 
\begin{equation*}
h_{\varepsilon^j \delta^k} h_{\varepsilon^{j'} \delta^{k'}} =
\begin{cases}
h_{\varepsilon, \delta} & \text{if $(j,k)$ and $(j',k')$ are $\mathbb{Z}$-linearly independent}, \\
ph_{\varepsilon^j\delta^k} & \text{otherwise},
\end{cases}
\end{equation*}
where 
\begin{equation*} 
h_{\varepsilon, \delta}= \sum_{0\leq \ell,\ell'\leq p-1} \varepsilon^{\ell} \delta^{\ell'}.
\end{equation*}

For each $0\leq j,k \leq p-1$, we have $h_{\varepsilon,
 \delta}h_{\varepsilon^j \delta^k}=ph_{\varepsilon, \delta}$ and
 $h_{\varepsilon, \delta}^2=p^2h_{\varepsilon, \delta}$ (note that the
 exponent of $U_1^f/V_1^f$ is $p$). Thus we have 
\[
 (J_1^{(n)})^3 =p(J_1^{(n)})^2 \subseteq pJ^{(n)}_1. 
\]

Set $N=3$ for $i=1$ and $N=2$ for other $i$.

\begin{itemize}
\item The existence of inverse elements.

By the argument above, we have $y^m \in p^{m-N+1}J_i^{(n)}$ for an arbitrary element $y\in J^{(n)}_i$ and every $m\geq N$. Therefore,
\begin{equation} 
(1+y)^{-1}=\sum_{m\geq0} (-1)^m y^m
\end{equation}
converges in $J_i^{(n)}$ $p$-adically.

\item Convergence of logarithms.

It is obvious that $y^m/m ^in J_i^{(n)}$ for $1\leq m<N$, and we have 
\begin{equation*}
\dfrac{y^m}{m} \in \dfrac{p^{m-N+11}}{m}\cdot J^{(n)}_i\subseteq J^{(n)}_i 
\end{equation*}
for an arbitrary $y\in J_i^{(n)}$ and every $m\geq N$ (here we use the
      fact that $p\neq 0$). Thus the logarithm $\log (1+y)=\sum_{m\geq 1} (-1)^{m-1} (y^m/m)$ converges $p$-adically in $J^{(n)}_i$. 

Since $\left\{ ( J_i^{(n)} )^m \right\}^N=p^m(J_i^{(n)})^m(N-1) \subseteq p^m
      ( J_i^{(n)})^m$, we may also show that
      $1+\left(J_i^{(n)}\right)^m$ is a subgroup of $1+J_i^{(n)}$ and
\[ \log \left( 1+\left( J_i^{(n)} \right)^m \right) \subseteq \left(
      J_i^{(n)} \right)^m \qquad \text{for every }m\geq 1,
\]
by the same argument as above.

\item Logarithmic isomorphisms.

First note that $p$-adic topology and $J_i^{(n)}$-adic topology are the same on $J_i^{(n)}$: this is because $p^{m+1}J_i^{(n)} \subseteq (J^{(n)}_i)^{m+N-1} \subseteq p^m J^{(n)}_i$ for every $m\geq 1$. Since $J^{(n)}_i$ is $p$-adically complete, it is also $J^{(n)}_i$-adically complete. Therefore to show that the logarithm induces an isomorphism $1+J^{(n)}_i \xrightarrow{\simeq} J^{(n)}_i$, it is sufficient to show that it induces an isomorphism 
\begin{equation*}
\begin{split}
\log\colon (1+(J^{(n)}_i)^m)/(1+(J^{(n)}_i)^{m+1}) \xrightarrow{\simeq} (J^{(n)}_i)^m/&(J^{(n)}_i)^{m+1} \\ & ; 1+y \mapsto y
\end{split}
\end{equation*}
for each $m\geq 1$, and to prove this, it suffices to show that
\begin{align} \label{eq:yp}
y^{p^k}/{p^k} \in \left( J^{(n)}_i \right)^{m+1} & \text{for every $1+y\in 1+\left(J^{(n)}_i \right)^m$ and $k\geq 1$}.
\end{align}

However, since $y^{N+1} \in p\left( J_i^{(n)} \right)^{m+1}$, we obtain
      $y^p \in p\left( J_i^{(n)} \right)^{m+1}$ (recall $p\neq 2,3$ by assumption). This implies (\ref{eq:yp}).
\end{itemize}
\end{proof}

\begin{lem} \label{lem:logi}
For each $i\geq 1$, $1+I^{(n)}_i$ is a multiplicative group and the $p$-adic logarithmic homomorphism induces an isomorphism $1+I^{(n)}_i \xrightarrow{\simeq} I^{(n)}_i$.
\end{lem}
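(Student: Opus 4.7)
The plan is to leverage Lemma \ref{lem:logj} and reduce the statement to direct verifications on the finite lists of generators of each $I_i^{(n)}$. Since $I_i^{(n)} \subseteq J_i^{(n)}$ by construction, the $p$-adic log and exp series already converge on $1+I_i^{(n)}$ and $I_i^{(n)}$ respectively, and give mutually inverse bijections between $1+J_i^{(n)}$ and $J_i^{(n)}$. So it suffices to establish three closure properties: first, that $y+z+yz \in I_i^{(n)}$ for all $y,z \in I_i^{(n)}$, making $1+I_i^{(n)}$ a subgroup of $1+J_i^{(n)}$; second, that $\log(1+y) \in I_i^{(n)}$ for every $y \in I_i^{(n)}$; and third, that $\exp(y) \in 1+I_i^{(n)}$ for every $y \in I_i^{(n)}$.

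First I would verify the closure under the group law by computing products of pairs of the listed generators of each $I_i^{(n)}$. The main tools are the identities $h_\delta^2 = p\, h_\delta$, $h_\zeta^2 = p\, h_\zeta$, and $h_{\varepsilon^j\delta^k}\, h_{\varepsilon^{j'}\delta^{k'}} = p\, h_{\varepsilon^j\delta^k}$ when $(j,k)$ and $(j',k')$ are $\Z$-linearly dependent (and $= h_{\varepsilon,\delta}$ otherwise), together with the fact that each quotient $U_i^f/V_i^f$ is abelian so that exponents in $\alpha,\beta,\gamma,\delta,\varepsilon$ add freely. In every combination the product of two generators remains in $I_i^{(n)}$; in most combinations it even picks up an extra factor of $p$.

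Using this closure I would then establish the image conditions for $\log$ and $\exp$ by showing that $y^m/m$ and $y^m/m!$ belong to $I_i^{(n)}$ for every $y \in I_i^{(n)}$ and every $m \geq 1$. After a short induction the $p$-factors accumulated through iterated multiplication inside $I_i^{(n)}$ dominate the $p$-adic valuations of the denominators $m$ and $m!$ (bounded respectively by $\log_p m$ and $(m-1)/(p-1)$, using $p\neq 2,3$). Both series therefore converge in the $p$-adically complete submodule $I_i^{(n)}$, and the inverse relations $\exp \circ \log = \mathrm{id}$ and $\log \circ \exp = \mathrm{id}$ on $I_i^{(n)}$ are inherited from the corresponding relations on $J_i^{(n)}$ provided by Lemma \ref{lem:logj}.

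The main obstacle is that $I_i^{(n)}$ is only a $\Zp[\Gamma/\Gamma^{p^n}]$-submodule, not a two-sided ideal of $\Zp[U_i^{(n)}/V_i]$, so the closure properties above are not automatic and must be checked generator by generator. The verification is particularly delicate for $I_2^{(n)}$, where the linear relation $\sum_f p^2\zeta^f = p\cdot p\, h_\zeta$ among the generators must be tracked to confirm that borderline products such as $(\beta^{b_1}\gamma^{c_1}h_\zeta)(\beta^{-b_1}\gamma^{-c_1}h_\zeta) = p\, h_\zeta$ do land inside $I_2^{(n)}$, and for $I_1^{(n)}$, where the two distinct multiplicative patterns among the elements $h_{\varepsilon^j\delta^k}$ must be separated case by case.
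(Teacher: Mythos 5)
Your proposal is correct in substance and shares the paper's core reductions: the heart of the matter is the explicit verification that $(I_i^{(n)})^2\subseteq I_i^{(n)}$ (using $h_\delta^2=ph_\delta$, $h_\zeta^2=ph_\zeta$, the two multiplication patterns among the $h_{\varepsilon^j\delta^k}$, and the relation $\sum_f p^2\zeta^f=p\cdot ph_\zeta$), followed by a $p$-divisibility estimate on powers that beats the denominators. You correctly flag the genuinely delicate points, namely that $I_i^{(n)}$ is only a $\Zp[\Gamma/\Gamma^{p^n}]$-submodule and that products such as $p\,h_\zeta$ must be recognized as the $c=0$ instance of the generator $p\gamma^c h_\zeta$; for $I_1^{(n)}$ the paper additionally exploits stability of $I_1^{(n)}$ under multiplication by $\delta$ and $\varepsilon$ to absorb the terms involving $h_{\varepsilon,\delta}$, a rewriting you would need to make explicit. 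The one real methodological divergence is how the inverse of $\log$ is produced. You construct it via the exponential series, which forces you to control $v_p(m!)\le (m-1)/(p-1)$ and to supply a convergence statement for $\exp$ that Lemma \ref{lem:logj} does not actually provide (that lemma proves $\log$ is bijective without ever invoking $\exp$). The paper instead interposes auxiliary modules $I_i'$ with $(I_i')^2\subseteq I_i^{(n)}\subseteq I_i'$, runs the filtration argument of Lemma \ref{lem:logj} verbatim on $I_i'$ (showing $\log$ is the identity on graded quotients and concluding by completeness), and then restricts to $I_i^{(n)}$ using the isomorphisms for $1+(I_i')^k$, $k=1,2$. Your route is more explicit and self-contained once the extra convergence estimate is checked (harmless for $p\ge 5$); the paper's route avoids $\exp$ altogether and keeps the bookkeeping of $p$-powers cleaner via the sandwich $(I_i^{(n)})^2=(I_i')^2\subseteq pI_i'$. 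Either way the proof goes through.
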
 

\begin{rem}
Since each $I^{(n)}_i$ is \textbf{not} an ideal but only a $\mathbb{Z}_p$-submodule of $\Zp[U^{(n)}_i/V_i]$, it is not trivial even that $1+I^{(n)}_i$ is closed under multiplication.
\end{rem}

\begin{proof}
By direct calculation, we have
\begin{align*}
(I^{(n)}_1)^2 &= [p\alpha^a \gamma^c \delta^d \varepsilon^e h_{\varepsilon^c \delta^{-a}}, p \delta^d \varepsilon^e h_{\varepsilon^c \delta^{-a}}, \alpha^a \gamma^c h_{\varepsilon, \delta} ((a,c)\neq (0,0))]_{\Zp[\Gamma/\Gamma^{p^n}]}, \\
(\tilder{I_2}^{(n)})^2 &=  [p\beta^b \gamma^c h_{\delta}, p^2\gamma^c \delta^d]_{\Zp[\Gamma/\Gamma^{p^n}]}, \\
(I^{(n)}_2)^2 &= [p \beta^b \gamma^c h_{\zeta}, p^2 \beta^b \zeta^f]_{\Zp[\Gamma/\Gamma^{p^n}]}, \\
(I^{(n)}_3)^2 &= [p^6\zeta^f, p^5 \varepsilon^e h_{\zeta}, p^4 \gamma^c h_{\varepsilon}
 h_{\zeta}]_{\Zp[\Gamma/\Gamma^{p^n}]},
\end{align*}

These results imply that $(I^{(n)}_i)^2 \subseteq I^{(n)}_i$.\footnote{For
 $I^{(n)}_1$, recall that $I^{(n)}_1$ is also stable under the
 multiplication of $\delta$ and $\varepsilon$. Therefore we have
 $\displaystyle p\delta^d \varepsilon^e h_{\varepsilon^c
 \delta^{-a}}=\left( \sum_{\ell=0}^{p-1} \delta^{d-\ell a}
 \varepsilon^{e+\ell c} \right) p \in I^{(n)}_1$ and $\displaystyle \alpha^a \gamma^c h_{\varepsilon, \delta}=\left( \sum_{\ell=0}^{p-1} (\varepsilon^{c'} \delta^{-a'})^{\ell} \right) \alpha^a \gamma^c h_{\varepsilon^c \delta^{-a}} \in I^{(n)}_1$ where $(a',c')\in \Z^2$ is an arbitrary element $\Z$-linearly independent of $(a,c)$. Therefore we have $(I_1^{(n)})^2 \subseteq I_1^{(n)}$.} Therefore, each $1+I^{(n)}_i$ is stable under multiplication. 

Now let 
\begin{align*}
I_1' & = I^{(n)}_1, \\
\tilder{I_2}' & = [\beta^b \gamma^c h_{\delta}, p\gamma^c \delta^d]_{\Zp[\Gamma/\Gamma^{p^n}]}, \\
I_2' & = [p\beta^b \zeta^f, \beta^b \gamma^c h_{\zeta}]_{\Zp[\Gamma/\Gamma^{p^n}]}, \\
I_3' &= [p^3 \zeta^f, p^2 \varepsilon^e h_{\zeta}, p\gamma^c h_{\varepsilon} h_{\zeta}]_{\Zp[\Gamma/\Gamma^{p^n}]}.
\end{align*}

Note that $I_i' \supseteq I^{(n)}_i$, and by simple calculation we have
 $(I^{(n)}_1)^3 =(I_1')^3 \subseteq p(I'_1)^2$ for $i=1$ and
 $(I^{(n)}_i)^2=(I_i')^2\subseteq pI_i'$ for other $i$. 

Then we may show the existence of inverse elements of $1+I'_i$ and the
 logarithmic isomorphism $\log \colon 1+I'_i \xrightarrow{\simeq} I_i'$
 for each $i$ by the same argument as the proof of Lemma \ref{lem:logj}
 by replacing $J_i$ by $I_i'$. Since $(I_i')^2 \subseteq I_i^{(n)}
 \subseteq I_i'$ and $I^{(n)}_i$ is a closed subset of $I_i'$, the power
 series $(1+x)^{-1}$ and the $p$-adic logarithm $\log (1+x)$ on $1+I^{(n)}_i$ converges into $I^{(n)}_i$. Moreover since $\log\colon 1+(I_i')^k \rightarrow (I_i')^k$ is an isomorphism for $k=1,2$ and 
\begin{equation*}
 \log \colon 1+I^{(n)}_i/1+(I_i')^2 \longrightarrow I^{(n)}_i/(I_i')^2 ; 1+y_i \mapsto y_i \quad \mod{(I_i')^2}
\end{equation*}
is also an isomorphism, we may conclude that $\log\colon 1+I^{(n)}_i \rightarrow I^{(n)}_i$ is an isomorphism.
\end{proof}   

%
\subsection{$\Psi^{(n)}$ contains the image of $\theta$}
%

\begin{lem} \label{lem:lognorm}
The following diagram commutes for each $i$ and  $n\geq 1 \colon$
\begin{equation*}
\begin{CD}
K_1(\Zp[G^{(n)}]) @>\log>> \Zp[\conj{G^{(n)}}] \\
@V\Nr_{\Zp[G^{(n)}]/\Zp[U_i^{(n)}]}VV  @VV\Tr_{\Zp[G^{(n)}]/\Zp[U_i^{(n)}]}V \\
K_1(\Zp[U_i^{(n)}]) @>>\log> \Zp[\conj{U_i^{(n)}}]
\end{CD}
\end{equation*}
\end{lem}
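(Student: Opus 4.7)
The plan is to reduce the lemma to the classical identity $\log \det M = \Tr \log M$ applied to the matrix of right-multiplication by a unit $x$ representing $\phi \in K_1(\Zp[G^{(n)}])$, once $\Zp[G^{(n)}]$ is regarded as a free left $\Zp[U_i^{(n)}]$-module.

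First I would reduce to working with $\phi \in K_1(\Zp[G^{(n)}], J) \subseteq K_1(\Zp[G^{(n)}])$, where $J$ is the Jacobson radical of $\Zp[G^{(n)}]$. The extension of $\log$ from $K_1(R[G],J)$ to $K_1(R[G])$ given in Remark \ref{rem:logext} is by the formula $\log_{R[G]}\phi = \tfrac{1}{a}\log_J \phi^a$ with $a = \sharp K_1(R[G]/J)$; since the norm map is a group homomorphism and commutes with taking $a$th powers, it suffices to prove the commutativity for $\phi \in K_1(\Zp[G^{(n)}], J)$ (after possibly replacing $a$ by a common multiple for the two groups). By Proposition \ref{prop:sloc}(1) such a $\phi$ is represented by $x \in 1+J \subseteq \Zp[G^{(n)}]^\times$.

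Next I choose a system of representatives $\{u_1,\dots,u_r\}$ of $U_i^{(n)}\backslash G^{(n)}$, making $\Zp[G^{(n)}]$ a free left $\Zp[U_i^{(n)}]$-module with that basis, and let $M(x) = (x_{ij}) \in \GL_r(\Zp[U_i^{(n)}])$ be the matrix of right-multiplication by $x$, defined by $u_j x = \sum_i x_{ij} u_i$. By the description of $\Nr$ recalled in \S 1.2 we have $\Nr_{\Zp[G^{(n)}]/\Zp[U_i^{(n)}]}\phi = [M(x)] \in K_1(\Zp[U_i^{(n)}])$. The assignment $y \mapsto M(y)$ is a ring homomorphism $\Zp[G^{(n)}] \to \M_r(\Zp[U_i^{(n)}])$, so termwise it commutes with any convergent power series; in particular $\log M(x) = M(\log x)$ inside $\M_r(\Qp[U_i^{(n)}])$. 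Applying the description of $\log$ on $K_1$ given in the proof of Proposition \ref{prop:logk1} (matrix trace of matrix logarithm, modulo commutators) yields
\begin{equation*}
\log_{\Zp[U_i^{(n)}]}\Nr_{\Zp[G^{(n)}]/\Zp[U_i^{(n)}]}\phi \;=\; \mathrm{Tr}\bigl(M(\log x)\bigr) \quad \text{in } \Qp[\conj{U_i^{(n)}}].
\end{equation*}

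It then remains to identify $\mathrm{Tr}(M(y))$ with the trace map $\Tr_{\Zp[G^{(n)}]/\Zp[U_i^{(n)}]}(y)$ for arbitrary $y \in \Qp[G^{(n)}]$, after projection to $\Qp[\conj{U_i^{(n)}}]$. By $\Qp$-linearity it is enough to check $y = g \in G^{(n)}$, and by the defining relation $u_j g = x_{jj} u_j$ precisely when $u_j g u_j^{-1} \in U_i^{(n)}$, we obtain
\begin{equation*}
\mathrm{Tr}(M(g)) \;=\; \sum_{j \,:\, u_j g u_j^{-1}\in U_i^{(n)}} u_j g u_j^{-1},
\end{equation*}
which, viewed in $\Zp[\conj{U_i^{(n)}}]$, is exactly $\Tr_{\Zp[G^{(n)}]/\Zp[U_i^{(n)}]}([g])$ (after relabelling $u_j \leftrightarrow u_j^{-1}$ to match the left/right coset convention of \S 4.2). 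Combining the two displayed equalities gives $\log\circ\Nr = \Tr\circ\log$ on $\phi$, proving the lemma.

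The main obstacle I anticipate is a purely bookkeeping one: carefully matching the two conventions (matrix trace of a multiplication operator versus the trace map defined via conjugation by coset representatives), and ensuring that the projection to conjugacy classes actually lands in the integral lattice $\Zp[\conj{U_i^{(n)}}]$ even though intermediate computations live in $\Qp[\conj{U_i^{(n)}}]$; the latter follows because $\log\circ\Nr(\phi)$ and $\Tr\circ\log(\phi)$ are both a priori in $\Zp[\conj{U_i^{(n)}}]$ after extending by Remark \ref{rem:logext} (one uses that $\Tr_i$ carries $\Zp[\conj{G^{(n)}}]$ into $\Zp[\conj{U_i^{(n)}}]$ by construction).
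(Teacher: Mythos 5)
Your route is genuinely different from the paper's. The paper computes nothing with matrices: it quotes Theorem 1.4 of \cite{Oli-Tay}, which gives the commutativity of the square with a double-coset operator $R'$ in place of $\Tr_{\Zp[G^{(n)}]/\Zp[U_i^{(n)}]}$, and its entire proof consists of checking combinatorially that $R'=\Tr_{\Zp[G^{(n)}]/\Zp[U_i^{(n)}]}$ by analysing the double cosets $\langle x\rangle u_j'U_i^{(n)}$. Your argument is instead a direct, self-contained computation: $\Nr\phi=[M(x)]$ for the matrix $M(x)$ of right multiplication, $\log M(x)=M(\log x)$ because $M(j)^k=M(j^k)$, and the matrix trace of $M(g)$ is exactly the conjugation sum defining $\Tr_i$. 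In effect you reprove the Oliver--Taylor statement in the case at hand, which buys independence from the reference at the cost of redoing its content. Two convention points: with $u_jx=\sum_i x_{ij}u_i$ the map $y\mapsto M(y)$ satisfies $M(xy)=M(y)M(x)$, i.e.\ it is an \emph{anti}-homomorphism (harmless here, since you only take powers of a single element), and the $u_j\leftrightarrow u_j^{-1}$ relabelling is covered by the stated independence of $\Tr_i$ from the choice of representatives.

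There is one step that does not go through as written: the equality $\log_{\Zp[U_i^{(n)}]}(\Nr\phi)=\mathrm{trace}(\log M(x))$. You reduced the \emph{source} to $K_1(\Zp[G^{(n)}],J)$ and took $x\in 1+J$, but you did not examine where $M(x)$ lands on the target side. Modulo the Jacobson radical $J_{U_i}$ of $\Zp[U_i^{(n)}]$ the matrix $M(x)$ reduces to $\sum_g\bar a_g P_g$, a linear combination of permutation matrices; already for $x=g\notin U_i^{(n)}$ this is a nontrivial permutation matrix, so $M(x)\notin I_r+\M_r(J_{U_i})$ in general. Hence the description of $\log$ as ``trace of the matrix logarithm'' from the proof of Proposition \ref{prop:logk1}, which is defined only on $\GL_r(\ideal{A},\ideal{a})=I_r+\M_r(\ideal{a})$, does not apply directly to the representative $M(x)$; you must pass through the extension of Remark \ref{rem:logext}. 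The gap is fillable: $\sum_g\bar a_g P_g$ is unipotent (it has augmentation $1$ and lies in a local subring of $\M_r(\Fp)$), so $M(x)^{p^m}\in I_r+\M_r(J_{U_i})$ for some $m$; the series $\log M(x)$ still converges because $M(j)^k=M(j^k)\to 0$; and $\log\bigl(M(x)^{p^m}\bigr)=p^m\log M(x)$, so the uniqueness clause of Remark \ref{rem:logext} yields $\log_{\Zp[U_i^{(n)}]}[M(x)]=\frac{1}{p^m}\,\mathrm{trace}\bigl(\log M(x)^{p^m}\bigr)=\mathrm{trace}\bigl(M(\log x)\bigr)$. With that inserted your proof is complete; note also that the identity only needs to hold in $\Qp[\conj{U_i^{(n)}}]$ (plain $\log$, unlike the integral logarithm $\Gamma_{G^{(n)}}$, need not be integral), which is all that Proposition \ref{prop:image} uses.
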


\begin{proof}
By the proof of \cite{Oli-Tay}, Theorem 1.4., the following diagram 
commutes.
\begin{equation*}
\begin{CD}
K_1(\Zp[G^{(n)}]) @>\log>> \Zp[\conj{G^{(n)}}] \\
@V\Nr_{\Zp[G^{(n)}]/\Zp[U_i^{(n)}]}VV  @VVR'V \\
K_1(\Zp[U_i^{(n)}]) @>>\log> \Zp[\conj{U_i^{(n)}}]
\end{CD}
\end{equation*}
where $R'\colon \Zp[\conj{G^{(n)}}]\rightarrow \Zp[\conj{U_i^{(n)}}]$ is 
defined as follows: for an arbitrary $x\in G^{(n)}$, let $\{ u_1', \dotsc , u_{s_i}' \}$ be a set of representatives of the double coset decomposition 
$\langle x \rangle \backslash G^{(n)}/U_i^{(n)} $, and let 
\begin{equation*}
J=\{ j \mid 1\leq j\leq s_i, \, {u_j'}^{-1} x u_j' \in U_i^{(n)} \}.
\end{equation*}

Then we define $\displaystyle R'(x)=\sum_{j\in J} {u_j'}^{-1}xu_j'$.

It suffices to show that 
\begin{equation} \label{eq:r'tr}
R'=\Tr_{\Zp[G^{(n)}]/\Zp[U_i^{(n)}]}, 
\end{equation}
but this is not difficult at all.

Let $\{ u_1', \dotsc , u_{s_i}' \}$ be as above. Then we can write 
\begin{align*}
G^{(n)} &= \bigsqcup_j \langle x \rangle u'_j U_i^{(n)} = \bigcup_{k=0}^{p-1} \bigsqcup_j x^k u'_j U_i^{(n)},
\end{align*}
therefore it is clear that $\{x^k u'_j\}_{0\leq k\leq p-1, j}$ contains
 a set of representatives of the right coset decomposition
 $G^{(n)}/U_i^{(n)}$. Moreover, since each $u'_jU_i^{(n)}$ is disjoint,
 $\{u'_j\}_j$ is a subset of representatives of
 $G^{(n)}/U_i^{(n)}$.

\begin{enumerate}[({Case}-1)]
\item $\displaystyle G^{(n)}=\bigsqcup_j u'_j U_i^{(n)}$.

In this case, the desired equation $(\ref{eq:r'tr})$ is obvious by definition of $R'$ and
      $\Tr$.

\item $\displaystyle G^{(n)}=\bigsqcup_{k=0}^{p-1} \bigsqcup_j x^k u'_j U_i^{(n)}$.

Then by simple calculation
\begin{align*}
R'(x) &= \sum_{j\in J} {u'_j}^{-1} x u'_j = \dfrac{1}{p}\sum_{k=0}^{p-1} \sum_{j\in J} (x^ku'_j)^{-1} x
 (x^ku'_j) \\
 &= \dfrac{1}{p} \Tr_{\Zp[G^{(n)}]/\Zp[U_i^{(n)}]} x.
\end{align*}

On the other hand, recall that $j \in J$ implies that $xu'_j$ is an
      element of $u'_jU_i^{(n)}$. However, this is impossible since $\{x^k
      u'_j\}_{0\leq k\leq p-1,j}$ is a set of representatives of
      $G^{(n)}/U_i^{(n)}$ in this case. Therefore $J=\emptyset$ and $R'(x)=\Tr_{\Zp[G^{(n)}]/\Zp[U_i^{(n)}]}(x)=0$.
\end{enumerate}
\end{proof}

\begin{prop} \label{prop:image}
For an arbitrary element $\eta \in K_1(\Zp[G^{(n)}])$, the following
 equations hold$\colon$
\begin{align*}
\theta^+_1\circ \Gamma_{G^{(n)}}(g) &= \log \dfrac{\theta_1(\eta)}{\frob{\theta_0(\eta)}}, \\
\tilder{\theta}^+_2 \circ \Gamma_{G^{(n)}}(g) &= \log \dfrac{\tilder{\theta}_2(\eta)}{\frob{\theta_0(\eta)}}, \\
\theta^+_2\circ \Gamma_{G^{(n)}}(g) &= \log \dfrac{\theta_2(\eta)}{\frob{\theta_0(\eta)}^p}, \\
\theta^+_3\circ \Gamma_{G^{(n)}}(g) &= \log \dfrac{\theta_3(\eta)}{\frob{\theta_0(\eta)}^{p^2}},
\end{align*}
where $\Gamma_{G^{(n)}} \colon K_1(\Zp[G^{(n)}])\rightarrow
 \Zp[\conj{G^{(n)}}]$ is the integral logarithm for $G^{(n)}$
 $($Proposition-Definition $\ref{pd:intlog})$.
\end{prop}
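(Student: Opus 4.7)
The plan is to unwind the definition $\Gamma_{G^{(n)}}(\eta)=\log_{\Zp[G^{(n)}]}(\eta)-\tfrac{1}{p}\varphi(\log_{\Zp[G^{(n)}]}(\eta))$ and apply $\theta^+_i$ to each summand separately, using Lemma~\ref{lem:lognorm} for the ``norm term'' and exploiting the centrality of $\Gamma$ in $G^{(n)}$ for the ``Frobenius term''.

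By construction $\theta^+_i$ is the composition $\pi_i\circ\Tr_{\Zp[G^{(n)}]/\Zp[U^{(n)}_i]}$, where $\pi_i\colon\Zp[\conj{U^{(n)}_i}]\to\Zp[U^{(n)}_i/V_i]$ is the canonical surjection. Lemma~\ref{lem:lognorm} supplies the identity $\Tr\circ\log=\log\circ\Nr$, while functoriality of the logarithm (Proposition~\ref{prop:logk1} together with the extension in Remark~\ref{rem:logext}) guarantees that $\pi_i$ intertwines $\log$ on $K_1(\Zp[U^{(n)}_i])$ with $\log$ on the abelian $K_1(\Zp[U^{(n)}_i/V_i])=\Zp[U^{(n)}_i/V_i]^{\times}$. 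Combining these two facts gives the ``norm identity''
\[
\theta^+_i(\log\eta)=\log\theta_i(\eta).
\]

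For the Frobenius term, the assumption that $G^f$ has exponent $p$ forces $\varphi$ to factor as $G^{(n)}\twoheadrightarrow\Gamma/\Gamma^{p^n}\xrightarrow{t\mapsto t^p}\Gamma/\Gamma^{p^n}$, so $\varphi(\log\eta)$ is supported on $\Gamma/\Gamma^{p^n}$, which lies in the centre of $G^{(n)}$ and inside every $U^{(n)}_i$. For such central elements the trace formula collapses to $\Tr_{\Zp[G^{(n)}]/\Zp[U^{(n)}_i]}(\gamma)=[G:U_i]\cdot\gamma$, so
\[
\theta^+_i\bigl(\varphi(\log\eta)\bigr)=[G:U_i]\cdot\varphi(\log\eta).
\]
On the other hand, $\varphi$ factors through the projection $\iw{G^{(n)}}\twoheadrightarrow\iw{U^{(n)}_0/V_0}$ (because $V_0\subseteq G^f$ is annihilated by $p$-th powers), and $\varphi$ commutes with $\log$ as $\varphi$ is a ring homomorphism; hence $\varphi(\log\eta)=\varphi(\log\theta_0(\eta))=\log\varphi(\theta_0(\eta))$.

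Putting everything together,
\[
\theta^+_i\circ\Gamma_{G^{(n)}}(\eta)=\log\theta_i(\eta)-\frac{[G:U_i]}{p}\log\varphi(\theta_0(\eta))=\log\frac{\theta_i(\eta)}{\varphi(\theta_0(\eta))^{[G:U_i]/p}}.
\]
The four claimed equalities then follow from the elementary index calculations $[G:U_1]=[G:\tilder{U_2}]=p$, $[G:U_2]=p^2$, and $[G:U_3]=p^3$, which can be read off the systems of coset representatives fixed in \S 4.2. The only bookkeeping obstacle is that $\log_{\Zp[G^{(n)}]}(\eta)$ taken alone need not be integral, so the intermediate computations live in $\Qp\otimes_{\Zp}\Zp[\conj{-}]$; the integrality of the final identity is then automatic, since the left hand side is integral by Proposition-Definition~\ref{pd:intlog} and the right hand side is the logarithm of a unit in $\iw{U_i/V_i}$.
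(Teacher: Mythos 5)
Your proposal is correct and follows essentially the same route as the paper: the identity $\theta^+_i(\log\eta)=\log\theta_i(\eta)$ via Lemma \ref{lem:lognorm}, plus the relation $\theta^+_i\circ\tfrac{1}{p}\varphi=\tfrac{[G:U_i]}{p}\,\varphi\circ\theta^+_0$, which the paper records as four commutative diagrams and which you verify directly from the centrality of $\mathrm{Image}(\varphi)$ in $G^{(n)}$ together with the index counts $[G:U_1]=[G:\tilder{U_2}]=p$, $[G:U_2]=p^2$, $[G:U_3]=p^3$. The exponents $1,1,p,p^2$ you obtain agree with the statement, so nothing is missing.
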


\begin{proof}
This proposition follows from Lemma \ref{lem:lognorm} and by following commutative diagrams:

\[ 
\begin{CD}
\Zp [\conj{G^{(n)}}] @>{\theta^+_0}>> \Zp[U^{(n)}_0/V_0] @. \qquad @. \Zp[\conj{G^{(n)}}] @>{\theta^+_0}>> \Zp[U^{(n)}_0/V_0] \\
@V{\frac{1}{p}\varphi}VV  @VV{\varphi}V @.  @V{\frac{1}{p}\varphi}VV @VV{\varphi}V \\
\Zp [\conj{G^{(n)}}] @>>\theta^+_1> \Zp[U_1^{(n)}/V_1] @. \qquad @. \Zp[\conj{G^{(n)}}] @>>\tilder{\theta}^+_2> \Zp[\tilder{U_2}^{(n)}/\tilder{V_2}]
\end{CD}
\]

\[ 
\begin{CD}
\Zp [\conj{G^{(n)}}] @>{\theta^+_0}>> \Zp[U_0^{(n)}/V_0] @. \qquad @. \Zp[\conj{G^{(n)}}] @>{\theta^+_0}>> \Zp[U_0^{(n)}/V_0] \\
@V{\frac{1}{p}\varphi}VV  @VV{p\varphi}V @.  @V{\frac{1}{p}\varphi}VV @VV{p^2\varphi}V \\
\Zp [\conj{G^{(n)}}] @>>\theta^+_2> \Zp[U_2^{(n)}/V_2] @. \qquad @. \Zp[\conj{G^{(n)}}] @>>\theta^+_3> \Zp[U_3^{(n)}/V_3]
\end{CD}
\]

These diagrams are easily checked for an arbitrary element of $G^{(n)}$. Then we may calculate as
\begin{align*}
\theta^+_2 \circ \Gamma_{G^{(n)}} (\eta) &= \theta^+_2 \left(\log \eta -\dfrac{1}{p} \frob{\log (\eta)} \right) \\
 &= \theta^+_2 (\log (\eta))-p\frob{\theta^+_0 (\log (\eta)}  \\
 &=\log(\theta_2(\eta))-p\frob{\log (\theta_0(\eta))} \qquad  \\
 &= \log \dfrac{\theta_2(\eta)}{\frob{\theta_0(\eta)}^p},
\end{align*}
here we use the diagram above for the second equality, and use Lemma
 \ref{lem:lognorm} for the third equality. The other equations may be
 derived similarly.

Note that for an arbitrary $y \in \Zp[U_i^{(n)}/V_i]$, we have
\begin{equation*}
\frob{\log (y)} = \log \frob{y} 
\end{equation*}
by the definition of the $p$-adic logarithms and the fact that the 
Frobenius homomorphism $\varphi$ is a ring homomorphism on each $\Zp[U_i^{(n)}/V_i]$.
\end{proof}

\begin{lem} \label{lem:congj}
For an arbitrary element $\eta \in K_1(\Zp[G^{(n)}])$, the following
 congruences hold$\colon$
\begin{align*}
\theta_1(\eta) & \equiv \frob{\theta_0(\eta)} & \mod{J^{(n)}_1}, \\
\tilder{\theta}_2 (\eta) & \equiv \frob{\theta_0(\eta)} & \mod{\tilder{J_2}^{(n)}}, \\
\theta_2 (\eta) & \equiv \frob{\theta_0(\eta)}^p & \mod{J^{(n)}_2}, \\
\theta_3 (\eta) & \equiv \frob{\theta_0(\eta)}^{p^2} & \mod{J^{(n)}_3}.
\end{align*}
\end{lem}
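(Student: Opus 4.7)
The plan is to derive Lemma~\ref{lem:congj} by combining Proposition~\ref{prop:image} with the logarithmic isomorphism of Lemma~\ref{lem:logj}. Write $r_i$ for the ratio $\theta_i(\eta)/\frob{\theta_0(\eta)}^{e_i}$ with $e_1=e_{\tilder{2}}=1$, $e_2=p$, and $e_3=p^{2}$. By Proposition~\ref{prop:image}, $\log r_i = \theta_i^{+}(\Gamma_{G^{(n)}}(\eta))$, and since the image of $\theta_i^{+}$ is precisely $I_i^{(n)}\subseteq J_i^{(n)}$ by the construction of the additive theta map, we conclude $\log r_i \in J_i^{(n)}$.

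By Lemma~\ref{lem:logj}, the logarithm restricts to an isomorphism $\log\colon 1+J_i^{(n)}\xrightarrow{\sim} J_i^{(n)}$, so there is a unique $w_i\in 1+J_i^{(n)}$ with $\log w_i=\log r_i$; the lemma reduces to proving $r_i=w_i$. The ratio $r_i/w_i$ has vanishing logarithm and so lies in the kernel of $\log$ on $K_1(\Zp[U_i^{(n)}/V_i])=\Zp[U_i^{(n)}/V_i]^{\times}$, which by Proposition~\ref{prop:sk1}(3) is the torsion subgroup $\mu_{p-1}\times(U_i^{(n)}/V_i)$. Writing $r_i/w_i = \omega\cdot h$ accordingly, the $\mu_{p-1}$-component is read off by reducing modulo the full Jacobson radical $\ideal{m}$ of $\Zp[U_i^{(n)}/V_i]$: the image of $r_i$ in $\Fp^{\times}$ equals $\mathrm{aug}(\eta)^{[G^{(n)}:U_i^{(n)}]-e_i}\bmod p$, and since $[G^{(n)}:U_i^{(n)}]-e_i = (p-1)e_i$ in each of the four cases, Fermat's little theorem forces $\omega=1$.

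The principal obstacle will be showing the residual $h\in U_i^{(n)}/V_i$ is trivial, since $\log$ cannot distinguish $p$-power torsion. My plan is to exploit the multiplicativity of $\theta_i$ and $\frob{\theta_0}^{e_i}$ in $\eta$ to reduce to the case $\eta=g\in G^{(n)}$, using that $\Zp[G^{(n)}]^{\times}$ generates $K_1(\Zp[G^{(n)}])$ by Proposition~\ref{prop:sloc}(1). For such $g$ the explicit norm formula $\theta_i(g)=\sgn(\sigma_g)\prod_j \overline{u_j g u_{\sigma(j)}^{-1}}$ (with $\{u_j\}$ representatives of $G^{(n)}/U_i^{(n)}$) applies; $\sgn(\sigma_g)=+1$ because $G^{(n)}$ is pro-$p$ with $p$ odd, so every cycle of $\sigma_g$ has odd $p$-power length, and the product collapses in the abelian quotient $U_i^{(n)}/V_i$ after applying the commutator relations $[\alpha,\beta]=\delta$, $[\beta,\gamma]=\varepsilon$, $[\alpha,\varepsilon]=[\delta,\gamma]=\zeta$ together with $\sum_{j=0}^{p-1} j\equiv 0\pmod p$ for odd $p$, yielding $\theta_i(g)=\frob{\theta_0(g)}^{e_i}$ exactly, whence $r_i(g)=1$.
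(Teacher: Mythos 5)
Your overall strategy — deduce the congruence from Proposition \ref{prop:image} together with the logarithmic isomorphism of Lemma \ref{lem:logj} — is not circular (the paper proves Proposition \ref{prop:image} independently of this lemma), and you correctly identify the obstruction: $\log r_i\in I_i^{(n)}\subseteq J_i^{(n)}$ only determines $r_i$ up to an element of $\ker(\log)$, i.e.\ up to torsion in $\Zp[U_i^{(n)}/V_i]^{\times}$. Your Teichm\"uller computation via the residue field is fine. But the step you propose for killing the $p$-torsion component $h\in U_i^{(n)}/V_i$ contains the fatal gap: you cannot ``reduce to the case $\eta=g\in G^{(n)}$.'' Proposition \ref{prop:sloc}(1) says $K_1(\Zp[G^{(n)}])$ is generated by $\Zp[G^{(n)}]^{\times}$, i.e.\ by \emph{arbitrary units} $\eta=\sum_g a_g g$ of the group ring, not by the group elements themselves; the image of $G^{(n)}$ in $K_1$ is a tiny subgroup, and verifying that the homomorphism $\eta\mapsto h(\eta)$ vanishes there says nothing about its vanishing on all of $K_1$. (Nor does any continuity/torsion argument rescue this: the target $U_i^{(n)}/V_i$ is a finite $p$-group, and the torsion-free part of $K_1(\Zp[G^{(n)}])$ admits plenty of nonzero continuous homomorphisms to finite $p$-groups.) For a genuine unit $\sum_g a_g g$ the monomial-matrix formula you invoke does not apply — the norm is a full determinant — and controlling its expansion modulo $J_i^{(n)}$ is precisely the entire content of the lemma.

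For comparison, the paper's proof is exactly that computation: it writes $\eta=\sum_i\eta_i\alpha^i$ (resp.\ the analogous decompositions for the other $U_i$), expands $\theta_i(\eta)=\det(\nu_j(\eta_{i-j}))=\sum_{\sigma}P_{\sigma}$ over permutations, groups the $P_{\sigma}$ into orbits under the cyclic shifts $\nu_k$, shows each full orbit sums into $\sum_k\nu_k\bigl(\Zp[U_i^{(n)}/V_i]\bigr)\subseteq J_i^{(n)}$, and identifies the contribution of the fixed permutations with $\frob{\theta_0(\eta)}^{e_i}$ modulo $J_i^{(n)}$. If you want to complete your approach you would have to carry out this determinant analysis for general units anyway, at which point the logarithmic detour buys you nothing; alternatively you would need an independent argument that the homomorphism $K_1(\Zp[G^{(n)}])\to U_i^{(n)}/V_i$, $\eta\mapsto h(\eta)$, is trivial, and no such argument is supplied.
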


\begin{proof}
We calculate the image of the theta map as in \S 1.2.

In the following proof, we use the same notation $\eta$ for a lift of
 $\eta$ to $\Zp[G^{(n)}]^{\times}$ (Proposition \ref{prop:sloc}).

\begin{itemize}
\item The congruences for $\theta_1, \tilder{\theta}_2$.

We only show the congruence for $\tilder{\theta}_2$. We may prove the
      congruence for $\theta_1$ in the same manner.

Since $\{ \alpha^i \mid 0\leq i\leq p-1 \}$ is a $\Zp[\tilder{U_2}^{(n)}]$-basis of $\Zp[G^{(n)}]$, $\eta$ is described as a $\Zp[\tilder{U_2}^{(n)}]$-linear combination as follows:
\begin{equation*}
\eta = \sum_{i=0}^{p-1} \eta_i \alpha^i, \qquad \eta_i \in \Zp[\tilder{U_2}^{(n)}].
\end{equation*}

Then we have
\begin{align*}
\alpha^j \eta &= \sum_{i=0}^{p-1} (\alpha^j \eta_i \alpha^{-j}) \cdot \alpha^{i+j} \\
&= \sum_{i=0}^{p-1} \nu_j (\eta_{i-j}) \alpha^i
\end{align*}
where $\nu_j \colon \Zp[\tilder{U_2}^{(n)}] \rightarrow \Zp[\tilder{U_2}^{(n)}]; x \mapsto \alpha^j x \alpha^{-j}$. Here we consider the sub-index of $\eta_i$ as an element of $\Z/p\Z$. By
      abuse of notation, we denote the image of $\eta_i$ in
      $\Zp[\tilder{U_2}^{(n)}/\tilder{V_2}]$ by the same symbol
      $\eta_i$. Then we may compute $\tilder{\theta}_2(\eta)$ as follows: 
\begin{align*}
\tilder{\theta}_2(\eta) &= \det (\nu_j (\eta_{i-j}))_{i,j} \\
&=\sum_{\sigma \in \ideal{S}_{p}} \sgn (\sigma) \prod_{j=0}^{p-1} \nu_j (\eta_{\sigma(j)-j}) \\
&= \sum_{\sigma \in \ideal{S}_{p}} P_{\sigma}
\end{align*}
where $\ideal{S}_p$ is the permutation group of $\{ 0,1,\dotsc ,p-1\}$
      and $P_{\sigma}= \sgn(\sigma) \prod_{j=0}^{p-1} \nu_j(\eta_{\sigma(j)-j})$.

Now we have
\begin{align*}
\nu_k (P_{\sigma}) &= \sgn(\sigma) \prod_j \nu_{j+k}( \eta_{\sigma(j)-j} ) \\ 
&= \sgn(\sigma) \prod_j \nu_j (\eta_{(\sigma(j-k)+k)-j}).
\end{align*}

First Suppose that $\sigma$ does not satisfy 
\begin{equation} \label{eq:sigma}
\sigma(j-k)+k=\sigma(j) \qquad \text{for each } k\in \Z/p\Z,
\end{equation}
then
\begin{equation*}
\tau_k(j)=\sigma(j-k)+k, \qquad k\in \Z/p\Z
\end{equation*}
are distinct elements of $\ideal{S}_p$ and we have
\begin{equation} \label{eq:tauksigma}
\nu_k(P_{\sigma})=P_{\tau_k}.
\end{equation}

Here we use the fact $\sgn(\sigma)=\sgn(\tau_k)$. This follows from the equation $\tau_k=c_k \circ \sigma \circ c_k^{-1}$ where $c_k(i)=i+k$. 

The equation (\ref{eq:tauksigma}) implies 
\begin{equation*}
\sum_{k=0}^{p-1} P_{\tau_k} \in \sum_{k=0}^{p-1} \nu_k \left( \Zp[\tilder{U_2}^{(n)}/\tilder{V_2}] \right).
\end{equation*}

Note that each $\nu_k$ is a $\Zp[\tilder{U_2'}^{(n)}/\tilder{V_2}]$-linear map where 
\begin{equation*}
\tilder{U_2'}^{(n)}=\langle \gamma, \delta, \varepsilon, \zeta \rangle \times 
\Gamma/\Gamma^{p^n}
\end{equation*}
and $\Zp[\tilder{U_2}^{(n)}/\tilder{V_2}]$ is generated by $\{ \beta^b \mid b\in \Z/p\Z \}$ over $\Zp[\tilder{U_2'}^{(n)}/\tilder{V_2}]$. Since
\begin{equation*}
\sum_{k=0}^{p-1} \nu_k(\beta^b)=
\begin{cases}
p & \text{if } b=0, \\
\beta^b (1+\delta+\cdots \delta^{p-1}) & \text{otherwise},
\end{cases}
\end{equation*}
we may conclude that
\begin{equation} \label{eq:sumJ_2tilder}
\sum_{k=0}^{p-1} \nu_k \left( \Zp[\tilder{U_2}^{(n)}/\tilder{V_2}] \right)\subseteq \tilder{J_2}^{(n)}.
\end{equation}

This implies $\displaystyle \sum_{k=0}^{p-1} P_{\tau_k} \in \tilder{J_2}^{(n)}$.

Next assume $\sigma$ satisfies (\ref{eq:sigma}). It is easy to see that all permutations satisfying (\ref{eq:sigma}) are 
\begin{equation*}
c_h(j)=j+h \qquad (0\leq h\leq p-1).
\end{equation*}

For $\sigma =c_h$, we have
\begin{align*}
P_{c_h} = \sgn(c_h) \prod_{i=0}^{p-1} \nu_j(\eta_{c_h(j)-j})= \prod_{i=0}^{p-1} \nu_j(\eta_h).
\end{align*}

Note that since $c_h\, (h\neq 0)$  is a cyclic permutation of degree $p$, $c_h$ is an even permutation, therefore $\sgn(c_h)=1$ for $0\leq h\leq p-1$.

\begin{quotation}
{\bfseries Claim.} For an arbitrary element $x \in \Zp[\tilder{U_2}^{(n)}/\tilder{V_2}]$, 
\begin{equation*}
\prod_{j=0}^{p-1} \nu_j(x) \equiv \frob{x} \qquad \mod{\tilder{J_2}^{(n)}}.
\end{equation*}
\end{quotation}

If this claim is true, we have
\begin{align*}
\tilder{\theta_2}(\eta) &\equiv \sum_{h=0}^{p-1} P_{c_h} &
 \mod{\tilder{J_2}^{(n)}} \\
&\equiv \sum_{h=0}^{p-1} \prod_{j=0}^{p-1} \nu_j (\eta_h) & \mod{\tilder{J_2}^{(n)}} \\
&\equiv \sum_{h=0}^{p-1} \frob{\eta_h} & \mod{\tilder{J_2}^{(n)}} \\
&\equiv \sum_{h=0}^{p-1} \frob{\eta_h \beta^h} & \mod{\tilder{J_2}^{(n)}} \\
&\equiv \frob{\eta} & \mod{\tilder{J_2}^{(n)}},
\end{align*}
and the congruence for $\tilder{\theta_2}$ holds.

Now let
\begin{equation*}
x=\sum_{\ell=1}^N x_{\ell}
\end{equation*}
where each $x_{\ell}$ is a monomial of
      $\Zp[\tilder{U_2}^{(n)}/\tilder{V_2}]$. Then 
\begin{equation} \label{eq:expansion}
\begin{aligned}
\prod_{j=0}^{p-1} \nu_j (x) &= \prod_{j=0}^{p-1} \sum_{\ell=1}^N \nu_j(x_{\ell}) \\
&= \sum_{1\leq {\ell}_0,\dotsc ,{\ell}_{p-1} \leq N} \nu_0(x_{\ell_0}) \nu_1(x_{\ell_1}) \cdot \cdots \cdot \nu_{p-1} (x_{\ell_{p-1}}) \\
&= \sum_{1\leq \ell_0, \dotsc , \ell_{p-1}\leq N} Q_{\ell_0, \dotsc ,\ell_{p-1}}.
\end{aligned}
\end{equation}

Here we set $Q_{\ell_0, \dotsc , \ell_{p-1}}=\nu_0(x_{\ell_0}) \nu_1(x_{\ell_1}) \cdot \cdots \cdot \nu_{p-1} (x_{\ell_{p-1}})$.

If $\ell_0=\ell_1=\dotsc =\ell_{p-1}=\lambda \, (\text{constant})$, we obtain $Q_{\lambda,\dotsc,\lambda}=\frob{x_{\lambda}}$ by easy calculation (here we use that the exponent of $G^f$ is $p$). Otherwise, each
\begin{equation*}
Q_{\ell_k,\ell_{k+1},\dotsc, \ell_{p-1}, \ell_0, \ell_1, \dotsc ,\ell_{k-1}}=\nu_{p-k} (Q_{\ell_0,\dotsc ,\ell_{p-1}})
\end{equation*}
is a distinct term in the expansion (\ref{eq:expansion}), so we have
\begin{equation*}
\sum_{k=0}^{p-1} Q_{\ell_k,\ell_{k+1},\dotsc, \ell_{p-1}, \ell_0, \ell_1, \dotsc ,\ell_{k-1}}  \in \sum_{k=0}^{p-1} \nu_k \left( \Zp[\tilder{U_2}^{(n)}/\tilder{V_2}] \right),
\end{equation*}
which is contained in $\tilder{J_2}^{(n)}$ by (\ref{eq:sumJ_2tilder}). Hence,
\begin{align*}
\prod_{j=0}^{p-1} \nu_j(x) &\equiv \sum_{\lambda=1}^N \frob{x_{\lambda}} & \mod{\tilder{J_2}^{(n)}} \\
& \equiv \frob{x} & \mod{\tilder{J_2}^{(n)}},
\end{align*}
and Claim is proven.

\item The congruence for $\theta_2$.

The basic strategy is essentially the same as the congruence for $\tilder{\theta_2}$, but since $U_2$ is
      not a normal subgroup of $G$, the calculation is much more
      complicated.

Since $\{ \alpha^i \delta^j \mid 0\leq i,j \leq p-1 \}$ is a system of
      representatives of $U^{(n)}_2\backslash G^{(n)}$ and $\{ \beta^k
      \mid 0\leq k\leq p-1 \}$ is that of $U^{(n)}_3\backslash U^{(n)}_2$, we can describe $\eta$ in the following form:
\begin{equation*}
\eta = \sum_{0\leq i,j,k \leq p-1} (\eta_{i,j}^{(k)} \beta^k) \alpha^i
 \delta^j 
\end{equation*}
where $\eta_{i,j}^{(k)}$ is an element of $\Zp[U_3^{(n)}]$. Then we have
\begin{align*}
\alpha^{\ell} \delta^m \eta &= \sum_{0\leq i,j,k\leq p-1} \nu_{\ell,m}
 (\eta_{i,j}^{(k)} \beta^k) \alpha^{i+\ell} \delta^{j+m} \\
&= \sum_{0\leq i,j,k \leq p-1} \{ \nu_{\ell,m} (\eta_{i,j}^{(k)})
 \delta^{\ell k}\beta^k \} \alpha^{i+\ell} \delta^{j+m} \\
&= \sum_{0\leq i,j \leq p-1} \left( \sum_{k=0}^{p-1} \nu_{\ell,m}
 (\eta_{i-\ell,j-m-\ell k}^{(k)})
 \beta^k \right) \alpha^i \delta^j
\end{align*}
where $\nu_{\ell,m}(x)= (\alpha^{\ell} \delta^m)x(\alpha^{\ell} \delta^m)^{-1}$. We use the fundamental relations $[\alpha, \delta]=1$ and $[\alpha, \beta]=\delta$ for the second equality. 

Here we regard the sub-index $i,j$ of $\eta_{i,j}^{(k)}$ as an element
      of $(\Z/p\Z)^{\oplus2}$, and the upper-index $(k)$ of it as an element of $\Z/p\Z$. We denote the image of $\eta_{i,j}^{(k)}$ in $\Zp[U_3^{(n)}/V_2^{(n)}]$ by the same symbol $\eta_{i,j}^{(k)}$. Then we obtain
\begin{align*}
& \qquad \theta_2(\eta) = \det \left( \sum_{k=0}^{p-1} \nu_{\ell, m}
 (\eta^{(k)}_{(i,j)-(\ell,m)-(0,\ell k)}) \beta^k \right)_{(i,j), (\ell,m)}  \\
&= \sum_{\sigma \in \ideal{S}_{\square}} \sgn(\sigma)
 \prod_{(\ell,m)\in (\Z/p\Z)^{\oplus2}} \left( \sum_{k=0}^{p-1}
 \nu_{\ell,m}(\eta_{\sigma(\ell,m)-(\ell,m)-(0,\ell k)}^{(k)}) \beta^k \right) \\
&= \sum_{\sigma \in \ideal{S}_{\square}} P_{\sigma}
\end{align*}
where $\ideal{S}_{\square}$ is the permutation group of $\{ (\ell,m)
      \mid 0\leq \ell, m \leq p-1 \}$.

Now by using $[\beta, \delta]=1$, we have 
\begin{align*}
& \qquad \nu_{0,\mu}(P_{\sigma}) \\
&= \sgn(\sigma) \prod_{0\leq \ell,m\leq p-1}
 \left( \sum_{k=0}^{p-1} \nu_{\ell,m+\mu}
 (\eta^{(k)}_{\sigma(\ell,m)-(\ell,m)-(0,\ell k)})
 \beta^k \right) \\
&= \sgn(\sigma) \prod_{0\leq \ell,m \leq p-1} \left( \sum_{k=0}^{p-1}
 \nu_{\ell,m}(\eta^{(k)}_{(\sigma(\ell,m-\mu)+(0,\mu))-(\ell,m)-(0,\ell k)}) \beta^k
 \right).
\end{align*}

Therefore if $\sigma$ does not satisfy
\begin{equation} \label{eq:sigma2}
\sigma(\ell,m) = \sigma(\ell,m-\mu)+(0,\mu) \qquad \text{for each } \mu \in \Z/p\Z,
\end{equation}
then
\begin{equation*}
\tau_{\mu} (\ell,m) = \sigma(\ell,m-\mu)+(0,\mu), \qquad \mu \in \Z/p\Z
\end{equation*}
are distinct elements of $\ideal{S}_{\square}$, and we obtain
\begin{equation*}
\sum_{\mu=0}^{p-1} P_{\tau_{\mu}} \in \sum_{\mu}^{p-1} \nu_{0,\mu} \left(
						   \Zp[U_2^{(n)}/V_2]
								   \right),
\end{equation*}
here we use the fact $\sgn(\sigma)=\sgn(\tau_{\mu})$ which can be proven by the same argument as the case of $\tilder{\theta_2}$.

Note that each $\nu_{0,\mu}$ is a $\Zp[U_2'^{(n)}/V_2]$-linear map where 
\begin{equation*}
{U_2'}^{(n)}=\langle \beta,  \varepsilon, \zeta \rangle \times 
\Gamma/\Gamma^{p^n}
\end{equation*}
and $\Zp[U_2^{(n)}/V_2]$ is generated by $\{ \gamma^c \mid c\in \Z/p\Z \}$ over $\Zp[{U_2'}^{(n)}/V_2]$. Since
\begin{equation*}
\sum_{\mu=0}^{p-1} \nu_{0,\mu}(\gamma^c)=
\begin{cases}
p & \text{if } c=0, \\
\gamma^c (1+\zeta+\cdots +\zeta^{p-1}) & \text{otherwise},
\end{cases}
\end{equation*}
we may conclude that
\begin{equation} \label{eq:sumJ_2}
\sum_{\mu=0}^{p-1} \nu_{0,\mu} \left( \Zp[U_2^{(n)}/V_2] \right)\subseteq J_2^{(n)}
\end{equation}
and this implies $\displaystyle \sum_{k=0}^{p-1} P_{\tau_k} \in
      \tilder{J_2}^{(n)}$ (note that the argument up to here is almost the same as the case of $\tilder{\theta_2}$).

Now suppose that $\sigma \in \ideal{S}_{\square}$ satisfies
      (\ref{eq:sigma2}). If we set $\sigma(\ell,0)=(a_{\ell},b_{\ell})$, then by 
(\ref{eq:sigma2}) we have
\begin{equation*}
\sigma(\ell,-\mu)=(a_{\ell},b_{\ell}-\mu) \qquad \text{for each }\mu \in \Z/p\Z.
\end{equation*}

This calculation implies that all permutations satisfying
      (\ref{eq:sigma2}) are described in the following forms:
\begin{equation}
c_{s,h}(\ell,m)=(s(\ell),h_{\ell} +m)
\end{equation}
where $s$ is a permutation of $\{ 0,1,\dotsc , p-1 \}$ and $h=(h_{\ell})_{\ell}$
      is an element of $(\Z/p\Z)^{\oplus p}$. Then we have
\begin{align*}
P_{c_{s,h}} &= \sgn(c_{s,h}) \prod_{0\leq \ell,m\leq p-1} \left(
 \sum_{k=0}^{p-1} \nu_{\ell,m} (\eta_{c_{s,h}(\ell,m)-(\ell,m)-(0,\ell k)}^{(k)})
 \beta^k \right) \\
&= \sgn(s) \prod_{0\leq \ell,m\leq p-1} \left( \sum_{k=0}^{p-1} \nu_{\ell,m}
 (\eta_{(s(\ell)-\ell, h_{\ell}-\ell k)}^{(k)}) \beta^k \right) \\
&= \sgn(s) \prod_{\ell=0}^{p-1} \nu_{\ell,0} \left( \prod_{m=0}^{p-1} \nu_{0,m} (Q_{s,h;\ell})\right),
\end{align*}
where $\displaystyle Q_{s,h;\ell}=\sum_{k=0}^{p-1}
      \eta_{(s(\ell)-\ell,h_{\ell}-\ell k)}^{(k)}\beta^k$. Note that $\sgn(c_{s,h})=\sgn(s)^p=\sgn(s)$ since $p$ is odd.

Since $Q_{s,h;\ell}$ is independent of $m$, we may prove
\begin{equation*}
\prod_{m=0}^{p-1}\nu_{0,m}(Q_{s,h;\ell}) \equiv \frob{Q_{s,h;\ell}} \qquad \mod{J_2^{(n)}}
\end{equation*}
in the same way as Claim above.

Therefore,
\begin{align}
& \qquad \sum_{s,h} P_{c_{s,h}} \nonumber \\
 &\equiv \sum_{s,h} \sgn(s) \left( \prod_{\ell=0}^{p-1}
 \nu_{\ell,0} (\frob{Q_{s,h;\ell}}) \right) &
 \mod{J_2^{(n)}}  \nonumber \\
&\equiv \sum_{s,h} \sgn(s) \left( \sum_{0\leq k_0,\dotsc ,k_{p-1}\leq p-1} \prod_{\ell=0}^{p-1} 
\frob{\eta_{(s(\ell)-\ell,h_{\ell}-\ell k_{\ell})}^{(k_{\ell})}} \right)
 &\mod{J_2^{(n)}} \nonumber \\
& = \sum_{s,h,k_{\ell}} R_{s,h;k_0,k_1,\dotsc, k_{p-1}} & \label{eq:expansion2}
\end{align}
where $\displaystyle R_{s,h;k_0,\dotsc,k_{p-1}}= \sgn(s) \prod_{\ell=0}^{p-1} 
\frob{\eta_{(s(\ell)-\ell,h_{\ell}-\ell k_{\ell})}^{(k_{\ell})}}$. We use the fact that $\nu_{\ell,0} \circ \varphi=\varphi$ (recall $\frob{\Zp[U_2^{(n)}/V_2]} \subseteq \Zp[\Gamma/\Gamma^{(n)}]$) for the second congruence.

If $s(\ell)-\ell=\lambda \, (\text{constant})$, $k_0=k_1=\dotsc =k_{p-1}$
      and $h_{\ell}-\ell k_{\ell}=\mu \, (\text{constant})$, that is, if 
$s(\ell)=s_{\lambda}(\ell)=\ell+\lambda$, $k_{\ell}=\kappa \quad \text{for each }\ell$, and 
$h=h_{\mu,\kappa}=(\ell \kappa+\mu)_{\ell}$ for $0\leq \kappa, \lambda,
      \mu\leq p-1$, we have
      $R_{s_{\lambda},h_{\mu,\kappa};\kappa,\dotsc,
      \kappa}=\frob{\eta_{\lambda,\mu}^{(\kappa)}}^p$. Note that
      $\sgn(s_{\lambda})=1$ because $s_{\lambda}$ is a cyclic
      permutation of degree $p$.  

Otherwise, set 
\begin{align*}
k^{(w)}_{\ell} &= k_{\ell+w}, \\
s^{(w)}(\ell) &= s(\ell+w)-w, \\
h^{(w)}_{\ell} &= h_{\ell+w}-wk_{\ell+w},
\end{align*}
for each $0\leq w \leq p-1$. Then we have
\begin{equation*}
R_{s^{(0)},h^{(0)};k^{(0)}_0,\dotsc, k^{(0)}_{p-1}}= \dotsc 
=R_{s^{(p-1)},h^{(p-1)};k^{(p-1)}_0,\dotsc, k^{(p-1)}_{p-1}}
\end{equation*}
and $R_{s^{(w)},h^{(w)};k^{(w)}_0,\dotsc ,k^{(w)}_{p-1}} \quad (0\leq w\leq p-1)$ are distinct terms in the expansion (\ref{eq:expansion2}). Hence we have
\begin{equation*}
\sum_{w=0}^{p-1} R_{s^{(w)},h^{(w)};k^{(w)}_0,\dotsc ,k^{(w)}_{p-1}}
 \in p\Zp[\Gamma^{(n)}] \subseteq J_2^{(n)}.
\end{equation*}

Therefore we have 
\begin{align*}
\theta_2(\eta) &\equiv \sum_{s,h,k_w} R_{s,h;k_0,\dotsc ,k_{p-1}} &
 \mod{J_2^{(n)}} \\
&\equiv \sum_{0\leq \lambda,\mu,\kappa \leq p-1}
 R_{s_{\lambda},h_{\mu,\kappa}; \kappa,\kappa,\dotsc, \kappa} & 
\mod{J_2^{(n)}} \\
&\equiv \sum_{0\leq \lambda,\mu,\kappa \leq p-1}
 \frob{\eta_{\lambda,\mu}^{(\kappa)}}^p &\mod{J_2^{(n)}} \\
&\equiv \left( \sum_{0\leq \lambda,\mu,\kappa \leq p-1}
 \frob{\eta_{\lambda,\mu}^{(\kappa)} \beta^{\kappa} \alpha^{\lambda}
 \delta^{\mu}} \right)^p &\mod{J_2^{(n)}} \\
&\equiv \varphi \left( \sum_{0\leq \lambda,\mu,\kappa \leq p-1}
 \eta_{\lambda,\mu}^{(\kappa)} \beta^{\kappa} \alpha^{\lambda}
 \delta^{\mu} \right)^p &\mod{J_2^{(n)}} \\
&\equiv \frob{\eta}^p &\mod{J_2^{(n)}}.
\end{align*}

Note that since $J_2^{(n)} \supseteq p\Zp[U_2^{(n)}/V_2]$, we have
\begin{equation*}
(x+y)^p \equiv x^p+y^p\quad \mod{J_2^{(n)}}.
\end{equation*} 

\item The congruence for $\theta_3$.

We may show the congruence for $\theta_3$ by an argument similar to that for
      $\theta_2$. Moreover, the calculation for $\theta_3$ is much
      simpler than the case for $\theta_2$ since $U_3$ is normal in $G$. Therefore we omit the proof.
\end{itemize}
\end{proof}

By Lemma \ref{lem:congj} and the fact that $\frob{\theta_0(\eta)}$ is invertible, we have
\begin{align*}
\dfrac{\theta_1(\eta)}{\frob{\theta_0(\eta)}} &\in 1+J^{(n)}_1, 
 & \dfrac{\tilder{\theta_2}(\eta)}{\frob{\theta_0(\eta)}} & \in 1+\tilder{J_2}^{(n)}, \\
\dfrac{\theta_2(\eta)}{\frob{\theta_0(\eta)}^p} & \in 1+J^{(n)}_2, 
 & \dfrac{\theta_3(\eta)}{\frob{\theta_0(\eta)}^{p^2}} & \in 1+J^{(n)}_3.
\end{align*}

Hence by Lemma \ref{lem:logj}, we have 
\begin{align*}
\log \dfrac{\theta_1(\eta)}{\frob{\theta_0(\eta)}} &\in J^{(n)}_1, 
 & \log \dfrac{\tilder{\theta_2}(\eta)}{\frob{\theta_0(\eta)}} & \in \tilder{J_2}^{(n)}, \\
\log \dfrac{\theta_2(\eta)}{\frob{\theta_0(\eta)}^p} & \in J^{(n)}_2, 
 & \log \dfrac{\theta_3(\eta)}{\frob{\theta_0(\eta)}^{p^2}} & \in J^{(n)}_3.
\end{align*}

But by Proposition \ref{prop:image}, these elements are contained in the
image of $\theta_i^+$, and therefore contained in $I^{(n)}_i$ (recall
the definition of $I_i^{(n)}$). Moreover, since the $p$-adic logarithmic
homomorphisms induce isomorphisms $1+I^{(n)}_i \xrightarrow{\simeq}
I^{(n)}_i$ (by Lemma \ref{lem:logi}), we may conclude that 
\begin{equation} \label{eq:i}
\begin{aligned}
\dfrac{\theta_1(\eta)}{\frob{\theta_0(\eta)}} &\in 1+I^{(n)}_1, 
 & \dfrac{\tilder{\theta_2}(\eta)}{\frob{\theta_0(\eta)}} & \in 1+\tilder{I_2}^{(n)} \\
\dfrac{\theta_2(\eta)}{\frob{\theta_0(\eta)}^p} & \in 1+I^{(n)}_2, 
 & \dfrac{\theta_3(\eta)}{\frob{\theta_0(\eta)}^{p^2}} & \in 1+I^{(n)}_3.
\end{aligned}
\end{equation}

Since each $\theta_i$ is a norm map in $K$-theory, it is clear that $(\theta_i(\eta))_i$ satisfies norm relations. This and (\ref{eq:i}) imply that $\theta(\eta)\in \Psi^{(n)}$.

%
\subsection{Surjectivity for finite quotients}
%

Let $(\eta_i)_i$ be an arbitrary element of $\Psi^{(n)}$. By the congruences in Definition \ref{def:psi} and by the fact that $\frob{\eta_0}$ is invertible, we have
\begin{align*}
\dfrac{\eta_1}{\frob{\eta_0}} &\in 1+I^{(n)}_1, 
 & \dfrac{\tilder{\eta_2}}{\frob{\eta_0}} & \in 1+\tilder{I_2}^{(n)}, \\
\dfrac{\eta_2}{\frob{\eta_0}^p} & \in 1+I^{(n)}_2, 
 & \dfrac{\eta_3}{\frob{\eta_0}^{p^2}} & \in 1+I^{(n)}_3.
\end{align*}

Then we obtain
\begin{align*}
\log \dfrac{\eta_1}{\frob{\eta_0}} &\in I^{(n)}_1, 
 & \log \dfrac{\tilder{\eta_2}}{\frob{\eta_0}} & \in \tilder{I_2}^{(n)}, \\
\log \dfrac{\eta_2}{\frob{\eta_0}^p} & \in I^{(n)}_2, 
 & \log \dfrac{\eta_3}{\frob{\eta_0}^{p^2}} & \in I^{(n)}_3
\end{align*}
via the $p$-adic logarithms (Lemma \ref{lem:logi}).

Note that since $\frob{\eta_0}$ is contained in the center of
$\Zp[U_i^{(n)}/V_i]^{\times}$, it is easy to see that 
\begin{align*}
\Nr_{\Zp[\tilder{U^{(n)}_2}/\tilder{V_2}]/\Zp[U^{(n)}_2/\tilder{V_2}]} \frob{\eta_0} &=
\frob{\eta_0}^p, \\
\Nr_{\Zp[U^{(n)}_1/\tilder{V_2}]/\Zp[U^{(n)}_1\cap
 \tilder{U_2}^{(n)}/\tilder{V_2}]} \frob{\eta_0}
 &=\Nr_{\Zp[\tilder{U_2}^{(n)}/\tilder{V_2}]/\Zp[U^{(n)}_1\cap
 \tilder{U_2}^{(n)}/\tilder{V_2}]} \frob{\eta_0} \\
 &= \frob{\eta_0}^p, \\
\Nr_{\Zp[U^{(n)}_1/V_1]/\Zp[U^{(n)}_3/V_1]} \frob{\eta_0} &= \frob{\eta_0}^{p^2}, \\
\Nr_{\Zp[U^{(n)}_2/V_2]/\Zp[U^{(n)}_3/V_2]} \frob{\eta_0}^p &=
 \frob{\eta_0}^{p^2}.
\end{align*}

By using Lemma \ref{lem:lognorm}, 
\begin{align*}
(y_1,\tilder{y_2}, y_2, y_3) = &\left( \log \dfrac{\eta_1}{\frob{\eta_0}}, \log \dfrac{\tilder{\eta_2}}{\frob{\eta_0}}, \log \dfrac{\eta_2}{\frob{\eta_0}^p}, \log \dfrac{\eta_3}{\frob{\eta_0}^{p^2}} \right) \\
 &\in I^{(n)}_1 \times \tilder{I_2}^{(n)} \times I^{(n)}_2 \times I^{(n)}_3
\end{align*}
satisfies the trace relations in Definition \ref{def:omega} (except for $\Zp[U^{(n)}_0/V_0]$-part). 

Now set $y_0=\Gamma_{U^{(n)}_0/V_0} (\eta_0)=(1/p)\log (\eta_0^p/\frob{\eta_0}) \in \Zp[U^{(n)}_0/V_0]$. Then we may easily check that $(y_0, y_1, \tilder{y_2}, y_2, y_3)$ satisfies the trace relations in Definition \ref{def:omega}. Hence we have $(y_0,y_1,\tilder{y_2},y_2,y_3)\in \Omega^{(n)}$.

By the additive theta isomorphism (Proposition-Definition \ref{pd:adak}), there exists a unique element $y\in \Zp[\conj{G^{(n)}}]$ satisfying 
\begin{equation} \label{eq:y}
\begin{aligned}
\theta^+(y) &=(y_0,y_1,\tilder{y_2},y_2,y_3) \\
&= \left( \Gamma_{U_0^{(n)}/V_0}(\eta_0), \log \dfrac{\eta_1}{\frob{\eta_0}}, \log \dfrac{\tilder{\eta_2}}{\frob{\eta_0}}, 
\log \dfrac{\eta_2}{\frob{\eta_0}^p}, \log \dfrac{\eta_3}{\frob{\eta_0}^{p^2}} \right).\end{aligned}
\end{equation}

\begin{prop}
$\omega_{G^{(n)}}(y)=1$, that is, $y$ is in the image of the integral
 logarithmic homomorphism $\Gamma_{G^{(n)}}$ where $\omega_{G^{(n)}}$ is
 defined as in Theorem $\ref{thm:intlogstr}$.
\end{prop}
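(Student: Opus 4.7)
The plan is to detect $\omega_{G^{(n)}}(y)$ entirely through its $U_0^{(n)}/V_0$-component, where the explicit description of $y_0$ in equation (\ref{eq:y}) is directly amenable to Theorem \ref{thm:intlogstr}.

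First I would verify that $V_0^f$ coincides with the commutator subgroup $[G^f, G^f]$. The four fundamental commutators $[\alpha,\beta]=\delta$, $[\beta,\gamma]=\varepsilon$, $[\alpha,\varepsilon]=\zeta$, $[\delta,\gamma]=\zeta$ show $\delta,\varepsilon,\zeta \in [G^f,G^f]$, while the quotient $G^f/\langle\delta,\varepsilon,\zeta\rangle$ is abelian on $\alpha,\beta,\gamma$; hence $V_0^f = \langle \delta,\varepsilon,\zeta \rangle = [G^f,G^f]$, and the canonical projection induces an isomorphism $(G^{(n)})^{\mathrm{ab}} \xrightarrow{\simeq} U_0^{(n)}/V_0$.

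Next, since $U_0^{(n)} = G^{(n)}$, the trace map $\Tr_0$ consists of a single summand, so $\theta_0^+$ is nothing but the $\Zp$-linear extension of the projection $G^{(n)} \twoheadrightarrow U_0^{(n)}/V_0$ (which factors through conjugacy classes precisely because the target is abelian). Computing on a generator $[g]$ gives $\omega_{U_0^{(n)}/V_0}(\theta_0^+([g])) = \bar g \in U_0^{(n)}/V_0$, which matches the image of $\omega_{G^{(n)}}([g]) = \bar g \in (G^{(n)})^{\mathrm{ab}}$ under the isomorphism of the previous paragraph. Consequently, via the identification of the two abelianizations, $\omega_{G^{(n)}}(y)$ coincides with $\omega_{U_0^{(n)}/V_0}(\theta_0^+(y)) = \omega_{U_0^{(n)}/V_0}(y_0)$.

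Finally, equation (\ref{eq:y}) gives $y_0 = \Gamma_{U_0^{(n)}/V_0}(\eta_0)$. Applying Theorem \ref{thm:intlogstr} to the finite abelian $p$-group $U_0^{(n)}/V_0$, the exact sequence stated there yields $\omega_{U_0^{(n)}/V_0} \circ \Gamma_{U_0^{(n)}/V_0} = 1$; hence $\omega_{U_0^{(n)}/V_0}(y_0) = 1$, and therefore $\omega_{G^{(n)}}(y) = 1$ as required. The argument is essentially formal, so there is no substantive obstacle: the only nonautomatic input is the identification $V_0^f = [G^f, G^f]$, which is a short direct computation with the fundamental commutators.
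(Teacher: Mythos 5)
Your proposal is correct and follows essentially the same route as the paper: factor $\omega_{G^{(n)}}$ as $\omega_{U_0^{(n)}/V_0}\circ\theta_0^+$ (using that $\theta_0^+$ is the canonical projection onto $\Zp[U_0^{(n)}/V_0]$ with $U_0^{(n)}/V_0=(G^{(n)})^{\mathrm{ab}}$), then use $y_0=\Gamma_{U_0^{(n)}/V_0}(\eta_0)$ from (\ref{eq:y}) and the exact sequence of Theorem \ref{thm:intlogstr}. The only addition is your explicit check that $V_0^f=[G^f,G^f]$, which the paper leaves implicit.
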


\begin{proof}
By the definition, $\omega_{G^{(n)}}$ is decomposed as follows:
\begin{equation*}
\xymatrix{
 \Zp[\conj{G^{(n)}}] \ar[r]^{\omega_{G^{(n)}}} \ar[d]_{\pi^{(n)}} & G^{\mathrm{ab}}=U_0^{(n)}/V_0 \\
 \Zp[U_0^{(n)}/V_0] \ar[ur]_{\omega_{U_0^{(n)}/V_0}}}
\end{equation*}
where $\pi^{(n)}$ is the canonical surjection. Note that $\pi^{(n)}$ is
 the same map as
 $\theta^+_0$ by the definition of $\theta^+$. Therefore, we have 
\begin{align*}
\omega_{G^{(n)}} (y) &= \omega_{U^{(n)}_0/V_0} (\theta^+_0 (y)) \\
&= \omega_{U^{(n)}_0/V_0} (\Gamma_{U^{(n)}_0/V_0} (\eta_0)) \qquad 
\text{by} \, (\ref{eq:y}) \\
&= 1 \, \qquad \text{by Theorem \ref{thm:intlogstr} }.
\end{align*}
\end{proof}

Let $\eta$ be an element of $K_1(\Zp[G^{(n)}])$ satisfying $\Gamma_{G^{(n)}}(\eta)=y$. Then $\eta$ is determined up to multiplication by a torsion
element of
$K_1(\Zp[G^{(n)}])$ (See Theorem \ref{thm:intlogstr}). By the definition of $\eta$ and the equation
(\ref{eq:y}), 
\begin{equation*}
\theta^+ \circ \Gamma_{G^{(n)}} (\eta) =(y_0,y_1,\tilder{y_2},y_2,y_3).
\end{equation*}

Combining with Proposition \ref{prop:image}, we have 
\begin{equation*} 
y_0 =  \dfrac{1}{p} \log \dfrac{\eta_0^p}{\frob{\eta_0}} = \dfrac{1}{p}
 \log \dfrac{\theta_0(\eta)^p}{\frob{\theta_0(\eta)}},
\end{equation*}
that is,
\begin{equation} \label{eq:eta0}
\Gamma_{U^{(n)}_0/V_0} (\eta_0) =\Gamma_{U^{(n)}_0/V_0} (\theta_0(\eta)).
\end{equation}

We also have
\begin{equation} \label{eq:logeqs}
\begin{aligned}
\log \dfrac{\eta_1}{\frob{\eta_0}} &= \log \dfrac{\theta_1(\eta)}{\frob{\theta_0(\eta)}}  , & \log \dfrac{\tilder{\eta_2}}{\frob{\eta_0}} &= \log \dfrac{\tilder{\theta_2}(\eta)}{\frob{\theta_0(\eta)}}, \\
\log \dfrac{\eta_2}{\frob{\eta_0}^p} &= \log \dfrac{\theta_2(\eta)}{\frob{\theta_0(\eta)}^p}  , & \log \dfrac{\eta_3}{\frob{\eta_0}^{p^2}} &= \log \dfrac{\theta_3(\eta)}{\frob{\theta_0(\eta)}^{p^2}}.
\end{aligned}
\end{equation}

\begin{prop} \label{prop:surj}
There exists an element $\tau \in \mu_{p-1}(\Zp) \times
 (G^{(n)})^{\mathrm{ab}}$ such that
\begin{equation*}
\theta(\eta \tau)=(\eta_0, \eta_1, \tilder{\eta_2},  \eta_2, \eta_3)
\end{equation*}
where $\mu_{p-1}(\Zp)$ is the multiplicative group generated by all $(p-1)$-th roots of $1$ in $\Zp$.

In particular, $\theta$ surjects on $\Psi^{(n)}$.
\end{prop}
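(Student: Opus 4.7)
The plan is to exploit the fact that the element $\eta$ with $\Gamma_{G^{(n)}}(\eta)=y$ is only defined modulo the kernel of $\Gamma_{G^{(n)}}$, and use this ambiguity to force the $0$-th component $\theta_0(\eta\tau)$ to coincide with $\eta_0$; the remaining four components will then automatically match by the injectivity of the $p$-adic logarithm on $1+I_i^{(n)}$.

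First I would identify the abelianization explicitly. Using the fundamental commutator relations $[\alpha,\beta]=\delta,\ [\beta,\gamma]=\varepsilon,\ [\alpha,\varepsilon]=\zeta,\ [\delta,\gamma]=\zeta$, one checks $[G^f,G^f]=\langle \delta,\varepsilon,\zeta\rangle = V_0^f$, so $(G^{(n)})^{\mathrm{ab}}\cong U_0^{(n)}/V_0$; and since $U_0=G$, the map $\theta_0$ is simply the map induced by abelianization. By equation $(\ref{eq:eta0})$ we have $\Gamma_{U_0^{(n)}/V_0}(\eta_0/\theta_0(\eta))=0$, and Theorem $\ref{thm:intlogstr}$ combined with Proposition $\ref{prop:sk1}$ (for the commutative group $U_0^{(n)}/V_0$) identifies this kernel with $\mu_{p-1}(\Zp)\times U_0^{(n)}/V_0$. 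Under the identification above, $\theta_0$ restricts to an isomorphism $\mu_{p-1}(\Zp)\times (G^{(n)})^{\mathrm{ab}} \xrightarrow{\simeq} \mu_{p-1}(\Zp)\times U_0^{(n)}/V_0$, so I define $\tau$ to be the unique preimage of $\eta_0/\theta_0(\eta)$; then $\theta_0(\eta\tau)=\eta_0$ by construction.

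Next, since $\tau$ is torsion, Theorem $\ref{thm:intlogstr}$ gives $\Gamma_{G^{(n)}}(\tau)=0$, hence $\Gamma_{G^{(n)}}(\eta\tau)=y$ still. Applying Proposition $\ref{prop:image}$ to $\eta\tau$ yields, for each $i\in\{1,\tilder{2},2,3\}$,
\[
\log \frac{\theta_i(\eta\tau)}{\varphi(\theta_0(\eta\tau))^{p^{a_i}}} = \theta_i^+(y) = y_i,
\]
where $(a_1,a_{\tilder 2},a_2,a_3)=(0,0,1,2)$. Substituting $\theta_0(\eta\tau)=\eta_0$ on the left and using the defining equation $(\ref{eq:y})$ of the $y_i$ on the right, I obtain
\[
\log \frac{\theta_i(\eta\tau)}{\varphi(\eta_0)^{p^{a_i}}} = \log \frac{\eta_i}{\varphi(\eta_0)^{p^{a_i}}}.
\]
Both arguments of $\log$ lie in $1+I_i^{(n)}$ (the right-hand one by the defining congruences of $\Psi^{(n)}$, the left-hand one by Lemma $\ref{lem:congj}$ together with the previously established inclusion into $1+I_i^{(n)}$), and Lemma $\ref{lem:logi}$ says $\log$ is an isomorphism there. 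Therefore $\theta_i(\eta\tau)=\eta_i$ for each such $i$, which combined with $\theta_0(\eta\tau)=\eta_0$ gives $\theta(\eta\tau)=(\eta_0,\eta_1,\tilder{\eta_2},\eta_2,\eta_3)$. Surjectivity of $\theta$ onto $\Psi^{(n)}$ is then immediate since $(\eta_i)_i$ was an arbitrary element of $\Psi^{(n)}$.

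The only genuine obstacle is the identification $(G^{(n)})^{\mathrm{ab}}=U_0^{(n)}/V_0$ and the verification that $\theta_0$ induces an isomorphism of the respective torsion subgroups; once this is in place the rest is a formal consequence of the integral logarithm formalism from \S 1 and the logarithmic isomorphism $1+I_i^{(n)}\cong I_i^{(n)}$ established in Lemma $\ref{lem:logi}$.
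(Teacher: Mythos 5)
Your proposal is correct and follows essentially the same route as the paper: extract $\tau$ from the kernel of the integral logarithm on the commutative quotient $U_0^{(n)}/V_0$ (using Theorem \ref{thm:intlogstr} and the vanishing of $SK_1$ for abelian groups), note that $\Gamma_{G^{(n)}}(\tau)=0$ so the logarithmic identities of Proposition \ref{prop:image} persist for $\eta\tau$, and conclude via the isomorphism $1+I_i^{(n)}\cong I_i^{(n)}$ of Lemma \ref{lem:logi}. The only cosmetic difference is that you make the identification $(G^{(n)})^{\mathrm{ab}}=U_0^{(n)}/V_0$ and the action of $\theta_0$ on torsion explicit, which the paper leaves implicit.
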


\begin{proof}
From the equation (\ref{eq:eta0}) and the fact that the integral logarithm 
homomorphism is injective modulo torsion elements, there exists an
 element $\tau\in K_1(\Zp[U^{(n)}_0/V_0])_{\mathrm{tors}}$ satisfying $\eta_0 = \tau \theta_0(\eta)$.

Then by Lemma \ref{prop:sk1} (3), $\tau$ is contained in 
\begin{equation*}
K_1(\Zp[U^{(n)}_0/V_0])_{\mathrm{tors}}=\mu_{p-1}(\Zp) \times (U^{(n)}_0/V_0) = \mu_{p-1}(\Zp) \times (G^{(n)})^{\mathrm{ab}}.
\end{equation*}

Hence we have
\begin{equation*}
\theta_0(\eta \tau)=\theta_0(\eta)\theta_0(\tau)=\theta_0(\eta)\tau=\eta_0.
\end{equation*}

Since $\tau \in \Ker \left(\Gamma_{G^{(n)}}\right)$, we have $\theta^+ \circ \Gamma_{G^{(n)}} (\eta \tau)=\theta^+ \circ \Gamma_{G^{(n)}}(\eta)$. Therefore we may replace $\eta$ by $\eta \tau$ in the right hand sides of the equations (\ref{eq:logeqs}).

From this fact and Lemma \ref{lem:logi}, we have $\theta_i(\eta \tau)=\eta_i$ for each $i$, using $\theta_0(\eta \tau)=\eta_0$.
\end{proof}

%
\subsection{Taking the projective limit}
%

In this subsection, we take the projective limit of the surjections 
\begin{equation*}
\theta^{(n)} \colon K_1(\Zp[G^{(n)}]) \longrightarrow \Psi^{(n)}
\end{equation*}
obtained by the argument in the previous subsections, and construct the surjection
(\ref{eq:aksurj}).

First, we study the kernel of the surjection $\theta^{(n)}$
precisely.

Suppose that $\tau \in \Ker(\theta^{(n)})$. Then we have $\theta^{+,(n)} \circ \Gamma_{G^{(n)}} (\tau) =0$ by Proposition 
\ref{prop:image}. Since 
$\theta^{+,(n)}$ is an isomorphism by Proposition-Definition \ref{pd:adak}, we obtain 
$\Gamma_{G^{(n)}} (\tau)=0$. In other words,
\begin{align*}
\Ker(\theta^{(n)}) \subseteq \Ker (\Gamma_{G^{(n)}})
 &=K_1(\Zp[G^{(n)}])_{\mathrm{tors}} \\
 &=\mu_{p-1}(\Zp) \times (G^{(n)})^{\mathrm{ab}} \times SK_1(\Zp[G^{(n)}]).
\end{align*}

By the definition of $SK_1$ groups, norm maps of $K_1$-groups induce homomorphisms on $SK_1$, so we have 
\begin{equation*}
\theta^{(n)}(SK_1(\Zp[ G^{(n)}])) \subseteq \prod_i SK_1 (\Zp[U_i^{(n)}/V_i]).
\end{equation*}

However, $\displaystyle SK_1 (\Zp[U_i^{(n)}/V_i]) = 1$ since each
$U_i^{(n)}/V_i$ is abelian (Proposition \ref{prop:sk1} (3)). Hence we
obtain  $SK_1(\Zp[G^{(n)}]) \subseteq \Ker(\theta^{(n)})$.

On the other hand, for every element $\tau \in \mu_{p-1}(\Zp) \times
(G^{(n)})^{\mathrm{ab}}$ not equal to $1$, we have $\theta^{(n)}_0(\tau)=\tau \neq 1$. This implies
$\left( \mu_{p-1}(\Zp)\times (G^{(n)})^{\mathrm{ab}}\right) \cap \Ker (\theta^{(n)}) =\{1 \}$. 

Therefore we may conclude that $\Ker(\theta^{(n)})=SK_1(\Zp[G^{(n)}])$, and we obtain an exact sequence of projective systems of abelian groups
\begin{equation} \label{cd:akn}
\begin{CD}
 1 @>>> SK_1(\Zp[G^{(n)}]) @>>> K_1(\Zp[G^{(n)}]) @>\theta^{(n)}>> 
\Psi^{(n)} @>>> 1.
\end{CD}
\end{equation}

By Proposition \ref{prop:sk1} (2), $SK_1(\Zp[G^{(n)}])$ is a finite abelian group for every $n\geq 1$. Hence the projective system $\{SK_1(\Zp[G^{(n)}]) \}_{n\in \mathbb{N}}$
satisfies the Mittag-Leffler condition, which implies $\varprojlim_n^1 SK_1(\Zp[G^{n}])=0$. By taking the projective limit of (\ref{cd:akn}),
we obtain the following exact sequence
\begin{equation}
1 \rightarrow \displaystyle \varprojlim_n SK_1(\Zp[G^{(n)}]) \rightarrow \displaystyle \varprojlim_n K_1(\Zp[G^{(n)}]) \xrightarrow{(\theta^{(n)})_n} \Psi \rightarrow 1.
\end{equation}

Therefore, the surjectivity of (\ref{eq:aksurj}) is reduced to the following 
proposition.

\begin{prop}
The homomorphism 
\begin{equation*}
K_1(\iw{G}) \rightarrow \varprojlim_n K_1(\Zp[G^{(n)}])
\end{equation*}
induced by canonical homomorphisms $K_1(\iw{G}) \rightarrow
 K_1(\Zp[G^{(n)}])$ is an isomorphism.
\end{prop}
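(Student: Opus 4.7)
The plan is to deduce this proposition directly from Proposition \ref{prop:projlim}, which was proved earlier for arbitrary surjective projective systems of semi-local rings. So the task reduces to verifying the four hypotheses of that proposition for $R = \iw{G}$ and $R^{(n)} = \Zp[G^{(n)}]$.

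First, I would check that both $\iw{G}$ and each $\Zp[G^{(n)}]$ are semi-local. Since $G^f$ is a finite $p$-group and $p \neq 2, 3$, the group ring $\Zp[G^f]$ is local (its Jacobson radical is generated by $p$ together with the augmentation ideal of $G^f$, since modulo this ideal the ring is $\Fp$). Because $\iw{G} \cong \Zp[G^f][[T]]$ via the chosen topological generator $t$ of $\Gamma$, and $\Zp[G^{(n)}] \cong \Zp[G^f][T]/((1+T)^{p^n}-1)$, both rings are obtained from a local ring by adjoining a central variable that lies (after suitable substitution) inside the Jacobson radical of the resulting ring. Hence both $\iw{G}$ and each $\Zp[G^{(n)}]$ are local, and in particular semi-local.

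Next, I would check surjectivity of the transition maps $\Zp[G^{(n)}] \twoheadrightarrow \Zp[G^{(m)}]$ for $n > m \geq 1$. This is clear since these maps are induced by the surjections $\Gamma/\Gamma^{p^n} \twoheadrightarrow \Gamma/\Gamma^{p^m}$, which give surjections on group rings after tensoring with $\Zp[G^f]$ over $\Zp$. For the identification $\varprojlim_n \Zp[G^{(n)}] \cong \iw{G}$, I would use the general fact that $\iw{G} = \varprojlim_U \Zp[G/U]$ over open normal subgroups $U$ of $G$, together with the observation that the family $\{G^f\} \times \Gamma^{p^n}$ is cofinal in the system of open normal subgroups of $G^f \times \Gamma$ (since $G^f$ is finite, any open normal subgroup of $G$ contains $\{1\} \times \Gamma^{p^n}$ for some sufficiently large $n$).

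With all four hypotheses verified, Proposition \ref{prop:projlim} immediately yields the desired isomorphism $K_1(\iw{G}) \xrightarrow{\simeq} \varprojlim_n K_1(\Zp[G^{(n)}])$. I expect no real obstacle here: the only step requiring a slight argument is the cofinality claim used to identify the projective limit with $\iw{G}$, but this is routine since $G^f$ is finite.
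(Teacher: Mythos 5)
Your proposal is correct and takes exactly the route of the paper, whose entire proof is the single line ``Apply Proposition \ref{prop:projlim} for $\iw{G}$ and $\{\Zp[G^{(n)}]\}_{n\in\mathbb{N}}$''; you simply spell out the verification of its hypotheses (locality of $\Zp[G^f]$ and hence of $\iw{G}$ and each $\Zp[G^{(n)}]$, surjectivity of the transition maps, and the cofinality argument identifying the limit), all of which is accurate.
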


\begin{proof}
Apply Proposition \ref{prop:projlim} for $\iw{G}$ and $\{ \Zp[G^{(n)}] \}_{n\in \mathbb{N}}$.
\end{proof}

\begin{rem} \label{rem:kato}
In the case of Kato's Heisenberg type (\cite{Kato1}), that is, in the case
 that $\gal{F^{\infty}/F}\cong \overline{G}$ where
\begin{equation*}
\line{G}=\begin{pmatrix} 1 & \Fp & \Fp \\ 0 & 1 & \Fp \\ 0 & 0 & 1 
\end{pmatrix} \times \Gamma = \line{G}^f \times \Gamma ,
\end{equation*}
it is known (\cite{Oliver}) that
\begin{equation*}
SK_1(\Zp[\line{G}^{(n)}])=SK_1(\Zp[\line{G}^f]) \oplus
 SK_1(\Zp[\Gamma/\Gamma^{p^n}])=1,
\end{equation*}
therefore we may show that the theta map
\begin{equation*}
K_1(\iw{\overline{G}}) \xrightarrow{\simeq} \overline{\Psi}
 \left(  \subseteq\prod_i \iw{U_i/V_i} \right)
\end{equation*}
is an isomorphism, and $\xi_{F^{\infty}/F}$ is determined uniquely.
\end{rem}

\begin{rem} \label{rem:venj}
By the calculation above, we know that the kernel of the theta map tends
 to be $SK_1(\iw{G})$. It is conjectured that $SK_1(\iw{G})$ vanishes
 for general compact $p$-adic Lie groups.

For our special case $G=G^f \times \Gamma$, Otmar Venjakob announced to
 the author that he and Peter Schneider
 have recently proven the vanishing of $SK_1(\iw{G})$. If we admit their result, we
 may prove the uniqueness of the $p$-adic zeta function
 $\xi_{F^{\infty}/F}$ which we would construct in \S 8.
\end{rem}

%
%
\section{Localized theta map}
%
%

In this section, we construct {\it the localized theta map}
\begin{equation*}
\theta_S : K_1(\iw{G}_S) \longrightarrow \Psi_S.
\end{equation*}

Let $\iw{\Gamma}=\Zp[[\Gamma]]$ be the Iwasawa algebra for $\Gamma$.
Then since $G=G^f \times \Gamma$, we have
\begin{align*}
\iw{G} &= \Zp[[G^f \times \Gamma]] \\
& \cong \Zp[G^f] \otimes_{\Zp} \Zp[[\Gamma]] \\
& \cong \iw{\Gamma} [G^f],
\end{align*}
and we also have $\iw{U_i/V_i} \cong \iw{\Gamma} [U_i^f/V_i^f]$

Let $S$ be the canonical \O re set (see \S 2.1) for the group $G$ 
and let $S_0$ be that for $\Gamma$. Note that $S_0 = \iw{\Gamma} \setminus 
p\iw{\Gamma}$ (see Example \ref{ex:iwasawa}). 

\begin{lem} \label{lem:locs0}
$\iw{G}_S \cong \iw{\Gamma}_{S_0} [G^f]$ and $\iw{U_i/V_i} \cong \iw{\Gamma}_{S_0} 
[U_i^f/V_i^f]$.
\end{lem}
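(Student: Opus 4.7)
The plan is to exploit the product decomposition $G=G^f\times\Gamma$, which gives a canonical identification
\[
\iw{G}\;=\;\Zp[[G^f\times\Gamma]]\;\cong\;\iw{\Gamma}[G^f],
\]
presenting $\iw{G}$ as a free left module of finite rank $n:=|G^f|$ over its centre $\iw{\Gamma}$. Since the \O re set $S_0=\iw{\Gamma}\setminus p\iw{\Gamma}$ is central in $\iw{G}$, localising at $S_0$ coincides with the base change, so
\[
\iw{\Gamma}_{S_0}[G^f]\;=\;\iw{\Gamma}_{S_0}\otimes_{\iw{\Gamma}}\iw{G}
\]
is canonically a ring carrying a natural map $\iw{G}\to\iw{\Gamma}_{S_0}[G^f]$. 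By the universal property of \O re localisation (Proposition-Definition \ref{pd:loc}), the asserted isomorphism $\iw{G}_S\cong\iw{\Gamma}_{S_0}[G^f]$ reduces to two claims: first, $S_0\subseteq S$; second, every element of $S$ is already a unit in $\iw{\Gamma}_{S_0}[G^f]$.

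The first claim is immediate: for $s\in S_0$ one has $\iw{G}/\iw{G}s\cong(\iw{\Gamma}/s\iw{\Gamma})[G^f]$, and $p$-adic Weierstrass preparation makes $\iw{\Gamma}/s\iw{\Gamma}$ finitely generated over $\Zp$, hence $\iw{G}/\iw{G}s$ is finitely generated over $\iw{G^f}=\Zp[G^f]$. For the second, fix $f\in S$ and let $M_f\in\M_n(\iw{\Gamma})$ be the matrix of right-multiplication by $f$ on $\iw{G}\cong\iw{\Gamma}^n$; this is $\iw{\Gamma}$-linear precisely because $\iw{\Gamma}$ is central, and its cokernel equals $\iw{G}/\iw{G}f$. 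The hypothesis $f\in S$ forces this cokernel to be finitely generated over $\Zp$; reducing modulo $p$, the cokernel of $\overline{M_f}\colon\Fp[[T]]^n\to\Fp[[T]]^n$ must then be finite, which, since $\Fp[[T]]$ is a discrete valuation ring, happens precisely when $\det(\overline{M_f})\neq0$, i.e.\ $\det(M_f)\notin p\iw{\Gamma}$. Hence $\det(M_f)\in S_0$ becomes a unit in $\iw{\Gamma}_{S_0}$, so $M_f\in\GL_n(\iw{\Gamma}_{S_0})$ and right-multiplication by $f$ is an $\iw{\Gamma}_{S_0}$-linear automorphism of $\iw{\Gamma}_{S_0}[G^f]$. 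In particular there exists $g$ with $gf=1$, and the injectivity of this same automorphism upgrades $g$ to a two-sided inverse via $(fg-1)f=f(gf)-f=0$. Universality of \O re localisation now yields $\iw{G}_S\cong\iw{\Gamma}_{S_0}[G^f]$.

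The identical argument, with $G^f$ replaced by the finite abelian group $U_i^f/V_i^f$, yields $\iw{U_i/V_i}_S\cong\iw{\Gamma}_{S_0}[U_i^f/V_i^f]$; in fact, since $\iw{U_i/V_i}$ is commutative, the final upgrading-to-two-sided-inverse step becomes transparent. The only nontrivial input, and the main (though minor) obstacle, is the equivalence ``cokernel of $M_f$ finitely generated over $\Zp$ $\Longleftrightarrow$ $\det(M_f)\notin p\iw{\Gamma}$'', which rests on reduction modulo $p$ together with the standard fact that a square matrix over a discrete valuation ring has finite cokernel iff its determinant is nonzero. This is also the only place where the finiteness of $G^f$ (and hence the freeness of $\iw{G}$ of finite rank over $\iw{\Gamma}$) is decisive.
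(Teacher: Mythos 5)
Your proof is correct, and its skeleton---establish $S_0\subseteq S$ via Weierstrass preparation, show that every $f\in S$ becomes invertible in $\iw{\Gamma}_{S_0}[G^f]$, then conclude by playing the two universal properties of \O re localization against each other---is the same as the paper's. Where you genuinely diverge is the invertibility step. The paper considers the ideal $\ideal{z}=\Ker\bigl(\iw{\Gamma}\to\iw{G}/\iw{G}f\bigr)$ and argues by contradiction: if $\ideal{z}\subseteq p\iw{\Gamma}$, then $\iw{\Gamma}/\ideal{z}$ would surject onto $\Fp[[\Gamma]]$, contradicting finite generation over $\Zp$; this yields some $g_0\in\ideal{z}\cap S_0$, hence $g_0=\phi f$ and a left inverse $g_0^{-1}\phi$ of $f$ in $\iw{G}_{S_0}$. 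You instead take the matrix $M_f$ of right multiplication by $f$ and show $\det(M_f)\in S_0$ by reducing modulo $p$ and using that a square matrix over the discrete valuation ring $\Fp[[T]]$ has finite cokernel if and only if its determinant is nonzero. The two arguments are close cousins: by Cramer's rule $\det(M_f)\cdot\iw{G}\subseteq\iw{G}f$, so your determinant is exactly an explicit element of the paper's $\ideal{z}\cap S_0$. Your version buys two small things: it is constructive, and the invertibility of $M_f$ over $\iw{\Gamma}_{S_0}$ hands you a two-sided inverse of $f$ directly (via your $(fg-1)f=0$ observation), whereas the paper only exhibits a left inverse and leaves the upgrade to a genuine unit---which is what the universal property actually requires---implicit. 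The paper's route avoids choosing a basis and any determinant computation, which is marginally cleaner when one only needs existence.
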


\begin{proof}
We show this lemma only for the group $G$. 

First, we show that $\iw{G}_S=\iw{G}_{S_0}$ (by abuse of notation, 
we also denote the image of $S_0$ under the canonical map
 $\iw{\Gamma} \longrightarrow \iw{G}$ by the same symbol $S_0$). For an arbitrary $f_0\in S_0$, $\iw{G}/\iw{G}f_0 \cong \left( \iw{\Gamma}/\iw{\Gamma}f_0\right) [G^f]$ is $\Zp$-finitely generated, so $S_0\subseteq S$. Then by the universality of \O re localizations (Proposition-Definition \ref{pd:loc}), we obtain the canonical homomorphism 
\begin{equation} \label{eq:loc1}
\iw{G}_{S_0} \longrightarrow \iw{G}_S.
\end{equation}

Let $f$ be an element of $S$. By the definition of the canonical \O re
 set, $\iw{G}/\iw{G}f$ is a finitely generated left
 $\Zp$-module.\footnote{Since $H$ is finite, a left $\Zp[H]$-module
 is finitely generated if and only if it is finitely generated as a left
 $\Zp$-module.} Let $\ideal{z}$ be a kernel of the following composition of homomorphisms:
\begin{equation} \label{eq:modf}
\begin{CD}
\iw{\Gamma} @>>> \iw{G} @>>> \iw{G}/\iw{G}f,
\end{CD}
\end{equation}
where the first map is the canonical map 
\begin{equation*}
\iw{\Gamma} \longrightarrow \iw{G}\cong \iw{\Gamma} \otimes_{\Zp} \Zp[G^f] \, ; \, 
f_0 \mapsto f_0 \otimes 1.
\end{equation*}

Take an arbitrary non-zero element $g_0\in \ideal{z}$. Since $\iw{\Gamma}$ is a unique factorization 
domain, $g_0$ has a decomposition
\begin{equation*}
g_0=up^{n} \pi_1^{n_1} \cdots \pi_k^{n_k},
\end{equation*}
where $u\in \iw{\Gamma}^{\times}$ and $\pi_i$ is a prime element of $\iw{\Gamma}$.

Suppose that for every non-zero $g_0\in \ideal{z}$, the index of the
 prime $p$ ($n$ in the decomposition above) is not zero. Then we have
 $\ideal{z} \subseteq p\iw{\Gamma}$, and this induces a surjection
 $\iw{\Gamma}/\ideal{z} \rightarrow \iw{\Gamma}/p\iw{\Gamma}$. This is
 contradiction since $\iw{\Gamma}/\ideal{z} \subseteq \iw{G}/\iw{G}f$ is
 finitely generated over $\Zp$ by the definition of $S$, but
 $\iw{\Gamma}/p\iw{\Gamma} \cong \Fp[[\Gamma]]$ is not so. Hence 
there exists $g_0\in \ideal{z}$ such that $p\nmid g_0$, which implies $g_0\in S_0$.

Via the homomorphism (\ref{eq:modf}), $g_0$ is contained in $\iw{G}f$,
 hence there exists $\phi \in \iw{G}$ satisfying $g_0=\phi f$. This
 means $(g_0^{-1} \phi) \cdot f =1$ in $\iw{G}_{S_0}$, 
in other words, $f$ has its (left) inverse element $g_0^{-1}\phi$ in
 $\iw{G}_{S_0}$. Hence, by the universality of \O re localizations again, we obtain
\begin{equation} \label{eq:loc2}
\iw{G}_{S} \longrightarrow \iw{G}_{S_0}.
\end{equation}

It is easy to show that the homomorphisms (\ref{eq:loc1}) and
 (\ref{eq:loc2}) are the inverse maps of each other, therefore
 $\iw{G}_S\cong \iw{G}_{S_0}$.

On the other hand, the canonical homomorphisms 
\begin{equation*}
\iw{G}_{S_0} \longrightarrow \iw{\Gamma}_{S_0}[G^f]
\end{equation*}
obtained by the universality of \O re localizations and 
\begin{equation*}
\iw{\Gamma}_{S_0}[G^f]=\iw{\Gamma}_{S_0}\otimes_{\Zp} \Zp[G^f]  \longrightarrow \iw{G}_{S_0}
\end{equation*}
obtained by the universality of tensor products from the map 
\begin{equation*}
\iw{\Gamma}_{S_0}\times \Zp[G^f] \rightarrow \iw{G}_{S_0} \, ; \, (\phi_0, h) \mapsto \phi_0h
\end{equation*}
are inverse maps of each other. This implies $\iw{G}_S \cong \iw{\Gamma}_{S_0}[G^f]$.

The result for each $\iw{U_i/V_i}_S$ is obtained in the same way.

\end{proof}

By this lemma, we may replace the coefficient ring $\Zp$ by $\iw{\Gamma}_{S_0}$ and 
apply the same discussion in \S 4 and \S 5.

Since we need to use $p$-adic logarithms to translate the additive theta map, we take the $p$-adic completion $\comp{\iw{\Gamma}_{S_0}}$ of $\iw{\Gamma}_{S_0}$. Set $R=\comp{\iw{\Gamma}_{S_0}}$. Then the additive theta map 
\begin{equation*}
\comp{\theta}^+ \colon R[\conj{G^f}] \xrightarrow{\simeq} \comp{\Omega}
\end{equation*}
is obtained by the completely same argument as in \S 4, where $\comp{\Omega}$ is the $R$-submodule of $\displaystyle \prod_i R[U_i^f/V_i^f]$ defined by the 
same conditions as in Definition \ref{def:omega}. More precisely, $\comp{\Omega}$ is an $R$-submodule consisting of all elements $(\comp{y_i})_i$ satisfying the 
trace relations and $\comp{y_i} \in \comp{I_i}$ for each $i$ where $\comp{I_i} = I_i^f \otimes_{\Zp} R$.

We use the generalized integral $p$-adic logarithm (See \S 1.4)
\begin{equation*}
\Gamma_{R[G^f],J} \colon K_1(R[G^f], J) \longrightarrow R[\conj{G^f}],
\end{equation*}
where $J$ is the Jacobson radical of $R[G^f]$. In this case, $J$ is the
kernel of the natural surjection $R[G^f] \rightarrow R/pR$. By the same
argument as in \S 5, we have 
\begin{equation*}
\comp{\theta}^+ \circ \Gamma_{R[G^f],J} (\hat{\eta}') \in \comp{\Omega},
\end{equation*}
for an arbitrary $\hat{\eta}' \in K_1(R[G^f],J)$, and we may conclude that $\comp{\theta}(\hat{\eta}')$ is contained in $\comp{\Psi}$, where $\comp{\Psi}$ is the subgroup 
of $\displaystyle \prod_i R[U_i^f/V_i^f]$ consisting of all elements
$(\hat{\eta}'_i)_i$ which
satisfy the norm relations and the following congruences:
\begin{equation} \label{eq:conjcomp}
\begin{aligned}
\hat{\eta}'_1 & \equiv \frob{\hat{\eta}'_0} && \mod{\comp{I_1}}, & 
\hat{\tilder{\eta}}'_2 & \equiv \frob{\hat{\eta}'_0} && \mod{\comp{\tilder{I_2}}}, \\
\hat{\eta}'_2 & \equiv \frob{\hat{\eta}'_0}^p && \mod{\comp{I_2}}, &
\hat{\eta}'_3 & \equiv \frob{\hat{\eta}'_0}^{p^2} && \mod{\comp{I_3}}.
\end{aligned}
\end{equation}

Let $\hat{\eta}$ be an arbitrary element of $K_1(R[G^f])$. We use the
same notation $\hat{\eta}$ for its lift to $R[G^f]^{\times}$. Let
$F\subseteq R[G^f]$ be a subset of representatives of $\pi^{\times}
\colon R[G^f]^{\times} \rightarrow (R[G^f]/J)^{\times} \cong
\Fp((T))^{\times}$, that is, $F$ be a subset of $R[G^f]$ for which $\pi^{\times}$ induces a bijection of sets $F \xrightarrow{\simeq} (R[G^f]/J)^{\times}$. Note that we can take $F$ as a subset of $R^{\times}$.

Then there exists $\phi\in F$ and $\hat{\eta}' \in 1+J$ such that
$\tilder{\eta}=\phi \hat{\eta}'$. $\hat{\eta}'$ defines an element of
$K_1(R[G^f],J)$, and we have already known that
$\comp{\theta}(\hat{\eta}')$ is contained in $\comp{\Psi}$. Therefore, if we prove that
$\comp{\theta}(\phi)\in \comp{\Psi}$ for every $\phi \in F$, we may
conclude that $\comp{\theta}(K_1(R[G^f])) \subseteq \comp{\Psi}$. It suffices to show that $\comp{\theta}(r)\in \comp{\Psi}$ for every $r \in R^{\times}$.

For an arbitrary $r \in R^{\times}$, we have 
\begin{align*}
\comp{\theta_0}(r) &=r, & \comp{\theta_1}(r) &=\comp{\tilder{\theta_2}}(r)=r^p, \\
\comp{\theta_2}(r) &=r^{p^2}, &  \comp{\theta_3}(r) &=r^{p^3},
\end{align*}
by direct calculation.\footnote{Since $R$ is in the center of $R[G^f]$,
the images of norm maps of $r\in R$ are easily calculated by using the method in \S 1.3.} This implies that $(\comp{\theta_i}(r))_i$ satisfies the norm relations. 

The congruences for $(\theta_i (r))_i$ are derived from the following relation 
\begin{equation*}
r^p \equiv \frob{r} \qquad \mod{pR}
\end{equation*}
which follows from the definition of the Frobenius automorphism (See \S 1.4). 
Now $(\comp{\theta_i}(r))_i\in \comp{\Psi}$ holds.

\medskip
Set $\displaystyle \Psi_S = 
\comp{\Psi} \cap \prod_i \iw{\Gamma}_{S_0}[U_i^f/V_i^f]$.

\begin{prop} \label{prop:psis}
$\Psi_S$ is characterized as the subgroup of $\displaystyle \prod_i \iw{U_i/V_i}_S^{\times}$ consisting of all elements $(\eta_0, \eta_1, \tilder{\eta_2}, \eta_2, \eta_3)$ satisfying the following two conditions.
\begin{enumerate}[$(1)$]
 \item $($norm relations$)$
\begin{enumerate}[$(${\upshape rel-}$1)$]
  \item $\Nr_{\iw{U_0/V_0}_S/\iw{U_1/V_0}_S}\eta_0 \equiv \eta_1$,
  \item $\Nr_{\iw{U_0/V_0}_S/\iw{\tilder{U_2}/V_0}_S}\eta_0 \equiv \tilder{\eta_2}$,
  \item $\Nr_{\iw{\tilder{U_2}/\tilder{V_2}}_S/\iw{U_2/\tilder{V_2}}_S}\tilder{\eta_2} \equiv \eta_2$,

  \item $\Nr_{\iw{U_1/\tilder{V_2}}_S/\iw{U_1\cap \tilder{U_2}/\tilder{V_2}}_S}\eta_1 \equiv \Nr_{\iw{\tilder{U_2}/\tilder{V_2}}_S/\iw{U_1\cap \tilder{U_2}/\tilder{V_2}}_S}\tilder{\eta_2}$,
  \item $\Nr_{\iw{U_1/V_1}_S/\iw{U_3/V_1}_S}\eta_1 \equiv \eta_3$,
  \item $\Nr_{\iw{U_2/V_2}_S/\iw{U_3/V_2}_S}\eta_2 \equiv \eta_3$.
\end{enumerate}
$($These are the same as those of $\Psi$. See Figure $\ref{fg:relations})$

\item $($congruences$)$
\begin{align*}
\eta_1 & \equiv \frob{\eta_0} & \mod{I_{S,1}} \, ,  && \tilder{\eta_2} & \equiv \frob{\eta_0} & \mod{\tilder{I_{S,2}}} \, ,\\
\eta_2 & \equiv \frob{\eta_0}^p & \mod{I_{S,2}} \, , && \eta_3 & \equiv \frob{\eta_0}^{p^2} & \mod{I_{S,3}} \, ,
\end{align*}
where $I_{S,i}= I_i^f \otimes_{\Zp} \iw{\Gamma}_{S_0}$.
\end{enumerate}
\end{prop}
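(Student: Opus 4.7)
The plan is to verify that $\Psi_S$, defined as $\comp{\Psi} \cap \prod_i \iw{\Gamma}_{S_0}[U_i^f/V_i^f]$ (where we use the identification from Lemma \ref{lem:locs0}), coincides with the subgroup of $\prod_i \iw{U_i/V_i}_S^{\times}$ cut out by the norm relations and congruences stated in the proposition. The comparison will rest on three ingredients: the compatibility of the norm maps with base change from $\iw{\Gamma}_{S_0}$ to $R = \comp{\iw{\Gamma}_{S_0}}$, the compatibility of the Frobenius $\varphi$ with this base change, and the identity $\comp{I_i} \cap \iw{\Gamma}_{S_0}[U_i^f/V_i^f] = I_{S,i}$ for each $i$.

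I will first verify the two compatibility statements. By the determinant formula for norm maps recorded in \S 1.2, once a system of coset representatives of $G^f$ over $U_i^f$ is fixed, the entries of the matrix computing $\Nr_{\iw{U/V}_S/\iw{U'/V'}_S}(\eta)$ remain in $\iw{\Gamma}_{S_0}[{U'}^f/{V'}^f]$ whenever $\eta \in \iw{\Gamma}_{S_0}[U^f/V^f]$, and their determinant is unchanged by the flat embedding $\iw{\Gamma}_{S_0} \hookrightarrow R$. Similarly, the Frobenius $\varphi$ on $R$ is defined by $t \mapsto t^p$ and thus preserves the subring $\iw{\Gamma}_{S_0}$; combined with the fact that $\varphi$ on $G$ kills $G^f$ (whose exponent equals $p$), this shows that $\varphi(\eta_0)^{p^{k_i}} \in \iw{\Gamma}_{S_0}$ whenever $\eta_0 \in \iw{\Gamma}_{S_0}[U_0^f/V_0^f]$. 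Hence for a tuple $(\eta_i)_i \in \prod_i \iw{\Gamma}_{S_0}[U_i^f/V_i^f]^{\times}$, the norm-relation conditions in the two descriptions of $\Psi_S$ are literally the same, and the two sets of congruences compare identical pairs of elements in $\iw{\Gamma}_{S_0}[U_i^f/V_i^f]$. Thus the whole proof reduces to establishing the identity $\comp{I_i} \cap \iw{\Gamma}_{S_0}[U_i^f/V_i^f] = I_{S,i}$.

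This identity will be the main technical step, and the chief obstacle; it is a consequence of faithful flatness. The ring $\iw{\Gamma}_{S_0}$ is Noetherian, being a localization of the Noetherian regular ring $\iw{\Gamma} \cong \Zp[[T]]$, so its $p$-adic completion $R$ is faithfully flat over $\iw{\Gamma}_{S_0}$. Using $\Zp$-flatness of $\iw{\Gamma}_{S_0}$ and of $R$, I identify $\iw{\Gamma}_{S_0}[U_i^f/V_i^f]/I_{S,i}$ with $\iw{\Gamma}_{S_0} \otimes_{\Zp} (\Zp[U_i^f/V_i^f]/I_i^f)$, and $R[U_i^f/V_i^f]/\comp{I_i}$ with the analogous tensor product over $R$. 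Faithful flatness of $R$ over $\iw{\Gamma}_{S_0}$ then yields an injection
\begin{equation*}
\iw{\Gamma}_{S_0}[U_i^f/V_i^f]/I_{S,i} \hookrightarrow R[U_i^f/V_i^f]/\comp{I_i},
\end{equation*}
which is equivalent to $\comp{I_i} \cap \iw{\Gamma}_{S_0}[U_i^f/V_i^f] = I_{S,i}$ and completes the argument. Once this flatness reduction is in place, the equivalence of the two descriptions of $\Psi_S$ is a formal bookkeeping.
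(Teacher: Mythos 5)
Your proposal is correct and follows essentially the same route as the paper, whose entire proof consists of noting that the norm relations hold by definition of $\Psi_S$ and that the congruences reduce to the identity $\comp{I_i}\cap \iw{\Gamma}_{S_0}[U_i^f/V_i^f]=I_{S,i}$, which the paper asserts without justification. Your faithful-flatness argument (using that $R=\comp{\iw{\Gamma}_{S_0}}$ is the maximal-adic completion of the Noetherian local ring $\iw{\Gamma}_{S_0}$, together with $\Zp$-flatness to identify the quotients as base changes of $\Zp[U_i^f/V_i^f]/I_i^f$) correctly supplies the detail the paper omits.
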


\begin{proof}
An arbitrary element $(\eta_i)_i$ of $\Psi_S$ satisfies the norm
 relations by the definition of $\Psi_S$. Since $(\eta_i)_i$ is
 contained in $\comp{\Psi}$, it satisfies (\ref{eq:conjcomp}). Then the
 desired congruences hold (note that $\comp{I_i}\cap \iw{\Gamma}_{S_0}[U_i^f/V_i^f]=I_i^f \otimes_{\Zp} \iw{\Gamma}_{S_0} = I_{S,i}$).
\end{proof}

Now let $\theta_S=(\theta_{S,i})_i$ be the family of the homomorphisms $\theta_{S,i}$ where each $\theta_{S,i}$ is defined as the composite of the norm map 
\begin{equation*}
\Nr_{\iw{G}_S/\iw{U_i}_S} \colon K_1(\iw{G}_S) \rightarrow K_1(\iw{U_i}_S)
\end{equation*}
and the canonical homomorphism
\begin{equation*}
K_1(\iw{U_i}_S) \rightarrow K_1(\iw{U_i/V_i}_S)=\iw{U_i/V_i}_S^{\times}
\end{equation*}
induced by $\iw{U_i}_S \rightarrow \iw{U_i/V_i}_S$.

\begin{prop} \label{prop:imageaks}
$\Psi_S$ contains the image of $\theta_S$.
\end{prop}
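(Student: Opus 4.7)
The plan is to reduce the statement to the completed version already established in the body of Section~6. First I would invoke Lemma~\ref{lem:locs0} to identify $\iw{G}_S$ with $\iw{\Gamma}_{S_0}[G^f]$ and each $\iw{U_i/V_i}_S$ with $\iw{\Gamma}_{S_0}[U_i^f/V_i^f]$. The canonical ring homomorphism $\iw{\Gamma}_{S_0} \to R := \comp{\iw{\Gamma}_{S_0}}$ then induces ring homomorphisms $\iw{G}_S \to R[G^f]$ and $\iw{U_i/V_i}_S \to R[U_i^f/V_i^f]$, and hence homomorphisms on $K_1$-groups.

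Next, using naturality of the norm map in $K$-theory (Proposition~\ref{prop:commutativity} and its explicit description via determinants in the commutative setting, see \S 1.2) together with the naturality of the quotient $\iw{U_i} \to \iw{U_i/V_i}$ with respect to base change, I would verify that for each $i$ the diagram
\begin{equation*}
\xymatrix{
K_1(\iw{G}_S) \ar[r]^-{\theta_{S,i}} \ar[d] & \iw{U_i/V_i}_S^{\times} \ar[d] \\
K_1(R[G^f]) \ar[r]^-{\comp{\theta}_i} & R[U_i^f/V_i^f]^{\times}
}
\end{equation*}
commutes. Consequently, for $\eta \in K_1(\iw{G}_S)$ with image $\comp{\eta} \in K_1(R[G^f])$, the family $\theta_S(\eta) = (\theta_{S,i}(\eta))_i$ is carried to $\comp{\theta}(\comp{\eta})$ under the inclusion $\prod_i \iw{U_i/V_i}_S^{\times} \hookrightarrow \prod_i R[U_i^f/V_i^f]^{\times}$.

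The argument already carried out in Section~6 shows that $\comp{\theta}(K_1(R[G^f])) \subseteq \comp{\Psi}$: one decomposes a lift of $\comp{\eta}$ as a product $\phi \comp{\eta}'$ with $\phi \in F \subseteq R^{\times}$ and $\comp{\eta}' \in 1+J$, applies the generalized integral logarithm of Proposition-Definition~\ref{pd:intlog} together with the additive computation for $\comp{\eta}'$, and checks the norm relations and the congruences (\ref{eq:conjcomp}) for the scalar factor $\phi$ by direct evaluation using the identity $r^p \equiv \frob{r} \mod pR$. Therefore $\theta_S(\eta) \in \comp{\Psi}$. Since by construction $\theta_S(\eta)$ lies tautologically in $\prod_i \iw{\Gamma}_{S_0}[U_i^f/V_i^f]$, it belongs to the intersection
\begin{equation*}
\Psi_S = \comp{\Psi} \cap \prod_i \iw{\Gamma}_{S_0}[U_i^f/V_i^f],
\end{equation*}
which is exactly what we wanted. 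The characterization in Proposition~\ref{prop:psis} is then automatic, provided one notes the (easy) identity $\comp{I_i} \cap \iw{\Gamma}_{S_0}[U_i^f/V_i^f] = I_{S,i}$, valid because $I_i^f$ is a $\Zp$-direct summand of $\Zp[U_i^f/V_i^f]$.

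The only step that requires any real care is the commutativity of the square above; everything else is bookkeeping, and the serious integral-logarithm work has already been done in Sections~4--5 and recycled at the level of the $R$-order $R[G^f]$ in Section~6. I expect the diagram check to reduce to the explicit formula for norm maps of commutative semi-local rings (Proposition~\ref{prop:sloc}~(1)), whose matrix entries are manifestly natural with respect to the base change $\iw{\Gamma}_{S_0} \to R$, so no genuine obstacle should arise.
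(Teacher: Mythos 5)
Your proposal follows essentially the same route as the paper: identify the localized Iwasawa algebras via Lemma \ref{lem:locs0}, pass to the $p$-adic completion $R=\comp{\iw{\Gamma}_{S_0}}$, use the commutative square relating $\theta_S$ to $\comp{\theta}$ together with the already established inclusion $\comp{\theta}(K_1(R[G^f]))\subseteq\comp{\Psi}$, and conclude by intersecting with $\prod_i \iw{U_i/V_i}_S^{\times}$, which is exactly the definition of $\Psi_S$. The argument is correct; the paper's only additional remark is that the right vertical map of the square is injective because $\iw{\Gamma}_{S_0}\to R$ is (by $p$-adic separatedness), which your "tautologically lies in" phrasing covers.
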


\begin{proof}
By the commutative diagram 
\begin{equation*}
\begin{CD}
K_1(\iw{G}_S) @>\theta_S>> \prod_i \iw{U_i/V_i}^{\times}_S \\
@VVV @VVV \\
K_1(R[G^f]) @>>\comp{\theta}> \prod_i R[U_i^f/V_i^f]^{\times},
\end{CD}\end{equation*}
we have $\mathrm{Image}(\theta_S) \subseteq
 \mathrm{Image}(\comp{\theta}) \subseteq \comp{\Psi}$ (note that the
 right vertical map is injective since the canonical map
 $\iw{\Gamma}_{S_0} \rightarrow R$ is injective, using the fact that
 $\iw{\Gamma}_{S_0}$ is separated with respect to $p$-adic topology). Therefore,
\begin{equation*}
\mathrm{Image}(\theta_S) \subseteq \comp{\Psi} \cap \left( \prod_i
						  \iw{U_i/V_i}_S^{\times}
						 \right) 
= \Psi_S.
\end{equation*}
\end{proof}

\begin{prop} \label{prop:cap}
$\displaystyle \Psi_S \cap \prod_i \iw{U_i/V_i}^{\times} = \Psi$.
\end{prop}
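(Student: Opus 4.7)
The plan is to decompose the statement into two inclusions and then reduce the non-trivial one to a concrete intersection identity for the congruence submodules. One inclusion, $\Psi \subseteq \Psi_S \cap \prod_i \iw{U_i/V_i}^{\times}$, is essentially formal: given $(\eta_i)_i \in \Psi$, the norm relations (rel-1)--(rel-6) of Definition~\ref{def:psi} continue to hold after the flat extension $\iw{U_i/V_i} \hookrightarrow \iw{U_i/V_i}_S$, since the norm map of a finite-index subgroup inclusion is computed as a determinant on a chosen $\iw{U_i/V_i}$-basis and this determinant is unchanged after localization. Likewise the congruences in Definition~\ref{def:psi}~(2) imply those in Proposition~\ref{prop:psis}~(2) because $I_i \subseteq I_{S,i}$.

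For the reverse inclusion $\Psi_S \cap \prod_i \iw{U_i/V_i}^{\times} \subseteq \Psi$, let $(\eta_i)_i$ lie in the intersection. The norm relations in $\Psi_S$ are equalities in $\iw{U_i/V_i}_S$, and since both sides already lie in $\iw{U_i/V_i}$ by hypothesis while the canonical map $\iw{U_i/V_i} \hookrightarrow \iw{U_i/V_i}_S$ is injective, these equalities also hold in the unlocalized rings. It therefore suffices to verify the congruences of Definition~\ref{def:psi}~(2), and for this it is enough to establish the identity
\[ I_{S,i} \cap \iw{U_i/V_i} = I_i \]
inside $\iw{U_i/V_i}_S$ for each $i$: indeed $\eta_1 - \frob{\eta_0}$ lies in $\iw{U_1/V_1}$ (since $\frob{\eta_0} \in \iw{\Gamma} \subseteq \iw{U_1/V_1}$) and in $I_{S,1}$ by the hypothesis on $(\eta_i)_i$, hence in $I_1$; similarly for the other three congruences.

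To prove the intersection identity, write $A = \Zp[U_i^f/V_i^f]$ and $M = A/I_i^f$. Since $\iw{\Gamma}$ and $\iw{\Gamma}_{S_0}$ are $\Zp$-torsion-free, hence flat over $\Zp$, tensoring the exact sequence $0 \to I_i^f \to A \to M \to 0$ with them yields two exact sequences that realize $I_i = I_i^f \otimes_{\Zp} \iw{\Gamma}$ and $I_{S,i} = I_i^f \otimes_{\Zp} \iw{\Gamma}_{S_0}$ as the kernels of the projections $\iw{U_i/V_i} \twoheadrightarrow M \otimes_{\Zp} \iw{\Gamma}$ and $\iw{U_i/V_i}_S \twoheadrightarrow M \otimes_{\Zp} \iw{\Gamma}_{S_0}$ respectively. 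The required identity is therefore equivalent to the injectivity of the natural map $M \otimes_{\Zp} \iw{\Gamma} \hookrightarrow M \otimes_{\Zp} \iw{\Gamma}_{S_0}$. Using the structure theorem, decompose the finitely generated $\Zp$-module $M$ as $\Zp^r \oplus T$ with $T$ a finite $p$-primary group; the summand $\iw{\Gamma}^r \hookrightarrow \iw{\Gamma}_{S_0}^r$ is immediate, and the torsion part reduces to showing that $\iw{\Gamma}/p^k \hookrightarrow \iw{\Gamma}_{S_0}/p^k$ is injective for every $k \geq 1$.

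The main (mild) obstacle is precisely this last injectivity, which is equivalent to the assertion that each $s \in S_0$ acts as a non-zero-divisor on $\iw{\Gamma}/p^k$. This follows from $p$-adic Weierstrass preparation: any $s \in S_0 = \iw{\Gamma} \setminus p\iw{\Gamma}$ factors as $s = u f$ with $u \in \iw{\Gamma}^{\times}$ and $f$ a distinguished polynomial (with $\mu$-invariant zero), so the claim reduces to $f$. If $f g \in p^k \iw{\Gamma}$ and we write $g = p^{\mu} h$ with $h \notin p\iw{\Gamma}$ and $\mu < k$, then the $\mu$-invariant of $f g$ is $\mu(f) + \mu(h) = 0$, contradicting $f g \in p^k \iw{\Gamma}$; hence $g \in p^k \iw{\Gamma}$. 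This completes the verification of the intersection identity and thereby the proposition.
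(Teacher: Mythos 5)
Your proof is correct and follows the same route as the paper: both reduce the statement to the identity $I_{S,i} \cap \iw{U_i/V_i} = I_i$, which the paper merely asserts and you prove in full (via flatness of $\iw{\Gamma}$ and $\iw{\Gamma}_{S_0}$ over $\Zp$ and the fact that elements of $S_0$ act as non-zero-divisors on $\iw{\Gamma}/p^k\iw{\Gamma}$). The only blemish is a harmless slip in the last step: the $\mu$-invariant of $fg = p^{\mu}fh$ is $\mu$, not $\mu(f)+\mu(h)=0$, but since $\mu < k$ the contradiction with $fg \in p^k\iw{\Gamma}$ stands unchanged.
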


\begin{proof}
This follows easily from the fact $I_{S,i} \cap \iw{U_i/V_i}=I_i$.
\end{proof}

We can summarize the two propositions above as follows:

\begin{thm}
$\theta_S \colon K_1(\iw{G}_S) \longrightarrow \Psi_S$ is the localized theta map $($defined in Definition $\ref{def:theta})$ for $G$.
\end{thm}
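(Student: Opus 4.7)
The plan is simply to verify that the pair $(\theta, \theta_S)$ satisfies the two axioms ($\theta$-1) and ($\theta$-2) in Definition \ref{def:theta}, with the subgroup $\Psi$ appearing there identified with the one constructed in \S 4--5 via the relation $\Psi = \Psi_S \cap \prod_i \iw{U_i/V_i}^{\times}$. Once this identification is in place, the theorem is essentially a summary of Propositions \ref{prop:theta}, \ref{prop:imageaks} and \ref{prop:cap}.

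First, I would invoke Proposition \ref{prop:cap} to confirm that $\Psi_S \cap \prod_i \iw{U_i/V_i}^{\times}$ is exactly the group $\Psi$ constructed in \S 5 (via its characterization by norm relations and congruences modulo the $I_i$'s). This is the compatibility between the integral and the localized settings, and it hinges on the elementary observation $I_{S,i} \cap \iw{U_i/V_i} = I_i$, together with the obvious matching of the norm relations on both sides.

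Next, condition ($\theta$-1), namely $\mathrm{Image}(\theta_S) \subseteq \Psi_S$, is precisely the content of Proposition \ref{prop:imageaks}: it was established there via the commutative square
\begin{equation*}
\begin{CD}
K_1(\iw{G}_S) @>\theta_S>> \prod_i \iw{U_i/V_i}_S^{\times} \\
@VVV @VVV \\
K_1(R[G^f]) @>>\comp{\theta}> \prod_i R[U_i^f/V_i^f]^{\times},
\end{CD}
\end{equation*}
together with the inclusion $\mathrm{Image}(\comp{\theta}) \subseteq \comp{\Psi}$ proved in this section by reducing to the case of units $r \in R^{\times}$, where the norm relations and congruences are checked by direct calculation using $r^p \equiv \frob{r} \pmod{pR}$.

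Finally, condition ($\theta$-2), namely $\mathrm{Image}(\theta) = \Psi$, is precisely the surjectivity statement of Proposition \ref{prop:theta}, established in \S 5 by translating the additive isomorphism of Proposition-Definition \ref{pd:adak} via the integral logarithm, and then passing to the projective limit using Proposition \ref{prop:projlim}. There is nothing new to verify here; I would simply cite Proposition \ref{prop:theta} and record that the induced map $\theta \colon K_1(\iw{G}) \to \Psi$ is surjective. Since all the technical work has already been carried out, no step of this final theorem presents a real obstacle; it is a bookkeeping statement assembling the three propositions into the definition of the localized theta map.
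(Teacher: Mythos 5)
Your proposal is correct and follows exactly the paper's own route: the theorem is stated there as a summary (``We can summarize the two propositions above as follows''), with condition ($\theta$-1) supplied by Proposition \ref{prop:imageaks}, condition ($\theta$-2) by Proposition \ref{prop:theta}, and the identification $\Psi = \Psi_S \cap \prod_i \iw{U_i/V_i}^{\times}$ by Proposition \ref{prop:cap}. Nothing is missing.
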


%
%
\section{Congruences among abelian $p$-adic zeta pseudomeasures} \label{sect:cong}
%
%

%
\subsection{Norm relations}
%

Let $F_{U_i}$ (resp.\ $F_{V_i}$) be the maximal subfield of $F^{\infty}$ fixed by $U_i$ (resp.\ $V_i$). 

In the previous sections we have constructed the theta map $\theta$ 
and its localized version $\theta_S$. If the Iwasawa main conjecture is true 
in our case, the assumption of Theorem \ref{thm:ak-pri} should be true,
that is, the family of the abelian $p$-adic zeta pseudomeasures $(\xi_0,
\xi_1, \tilder{\xi_2}, \xi_2, \xi_3)$ (each $\xi_i \in
\mathrm{Frac}(\iw{U_i/V_i})$ is the $p$-adic zeta pseudomeasure for $F_{V_i}/F_{U_i}$) should be contained in $\Psi_S$. For $(\xi_i)_i$ to be an element of $\Psi_S$, it is necessary that $\xi_i$'s satisfy the norm 
relations and the congruences in Proposition \ref{prop:psis}. 

\begin{prop}
$\xi_i$'s satisfy the norm relations in Proposition $\ref{prop:psis}$.
\end{prop}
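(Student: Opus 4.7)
The plan is to verify each of the six norm relations (rel-1)--(rel-6) by comparing interpolation values at finite-order characters twisted by powers of the cyclotomic character. By the Deligne--Ribet--Serre theorem (see Remark \ref{rem:abel}), the abelian $p$-adic zeta pseudomeasure $\xi_j$ in $\mathrm{Frac}(\iw{U_j/V_j})$ is uniquely characterized by
\begin{equation*}
\xi_j(\chi \otimes \kappa^r) = L_{\Sigma}(1-r; F_{V_j}/F_{U_j}, \chi)
\end{equation*}
for every finite-order character $\chi$ of $U_j/V_j$ and every positive integer $r$ divisible by $p-1$. Hence it suffices to verify that the norm of $\xi_i$ on the left-hand side of each relation takes the same interpolation values as $\xi_j$ on the right.

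The key identity is that, whenever $U' \subseteq U$ are abelian groups with a common normal subgroup $V$, the commutative norm map satisfies
\begin{equation*}
\chi \circ \Nr_{\iw{U/V}_S/\iw{U'/V}_S} = \prod_{\tilde{\chi}|_{U'/V} = \chi} \tilde{\chi}
\end{equation*}
for any finite-order character $\chi$ of $U'/V$, where $\tilde{\chi}$ ranges over the extensions of $\chi$ to characters of $U/V$. Since induced representations of abelian groups decompose as $\mathrm{Ind}^{U/V}_{U'/V}(\chi) = \bigoplus_{\tilde{\chi}|_{U'/V}=\chi} \tilde{\chi}$, combining this identity with the interpolation property of $\xi_i$ and the inductivity of Artin $L$-functions yields
\begin{equation*}
(\chi \otimes \kappa^r)\bigl( \Nr(\xi_i) \bigr) = L_{\Sigma}(1-r; F_{V_i}/F_{U'}, \chi),
\end{equation*}
where $F_{U'}$ denotes the intermediate field fixed by $U'$.

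For (rel-1), (rel-2), (rel-3), (rel-5) and (rel-6), the right-hand side $\xi_j$ is first pushed forward to $\iw{U_j/V}$, where $V$ is the common normal subgroup appearing in the relation; this is legitimate since $V_j \subseteq V$ holds in each case (immediate from the matrix descriptions in \S 4.1). The pushforward is the Serre pseudomeasure for $F_V/F_{U_j}$, whose interpolation value at $\chi \otimes \kappa^r$ is $L_{\Sigma}(1-r; F_V/F_{U_j}, \chi)$, and this coincides with $L_{\Sigma}(1-r; F_{V_i}/F_{U_j}, \chi)$ since $\chi$ factors through the quotient $U_j/V$. For (rel-4), the analogous computation on both sides---applying inductivity of Artin $L$-functions through the two intermediate fields $F_{U_1}$ and $F_{\tilder{U_2}}$---gives the common value $L_{\Sigma}(1-r; F_{\tilder{V_2}}/F_{U_1 \cap \tilder{U_2}}, \chi)$ for every finite-order character $\chi$ of $(U_1 \cap \tilder{U_2})/\tilder{V_2}$. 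No serious obstacle is anticipated: the whole argument is the standard reduction of abelian norm compatibilities to inductivity of Artin $L$-functions, and the only care required is the routine bookkeeping of which inclusions $V_j \subseteq V$ among the groups in \S 4.1 make each pushforward well-defined.
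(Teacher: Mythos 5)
Your proposal is correct and follows essentially the same route as the paper: the paper's proof also reduces each norm relation to a comparison of interpolation values, using Lemma \ref{lem:akind} (whose abelian specialization is exactly your identity $\chi\circ\Nr=\prod_{\tilde\chi|_{U'/V}=\chi}\tilde\chi$), the inductivity of Artin $L$-functions, and the interpolation property of the Deligne--Ribet--Serre pseudomeasures, carrying out (rel-1) explicitly and leaving the remaining relations (including the pushforward bookkeeping you describe) to the same formal calculation.
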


\begin{proof}
We may show this proposition by formal calculation, using only the
 interpolation properties (See Definition \ref{def:zeta} (\ref{eq:interp})). Here we only show 
\begin{align*}
\Nr_{\iw{U_0/V_0}_S/\iw{U_1/V_0}_S}\xi_0 & \equiv \xi_1 & \text{in }\iw{U_1/V_0}_S.
\end{align*}

Let $\rho$ be an arbitrary Artin representation of $U_1/V_0$. Then by Lemma \ref{lem:akind}, we have 
\begin{align*}
\left( \Nr \,  \xi_0 \right) (\rho \otimes \kappa^r) 
&= \xi_0 (\ind{U_0}{U_1}{\rho \otimes \kappa^r})  \\
&= \xi_0 (\ind{U_0}{U_1}{\rho} \otimes \kappa^r) \\
&= L_{\Sigma}(1-r; F_{V_0}/F_{U_0} , \ind{U_0}{U_1}{\rho}) \\
&= L_{\Sigma}(1-r; F_{V_0}/F_{U_1} , \rho) \\
&= \xi_1 (\rho \otimes \kappa^r) 
\end{align*}
for every positive integer $r$ divisible by $p-1$.

We may also show the other norm relations by the same argument.

\end{proof}

%
\subsection{Kato's observation}
%

Now let us study the congruences among $\xi_i$'s.

Let $I_2''$ be the image of the composition 
\[
 \Zp[[\conj{\tilder{U_2}}]] \xrightarrow{\Tr} \Zp[[\conj{U_2}]]
 \rightarrow \Zp[[U_2/V_2]].
\]

We have the following explicit description of $I_2''$:
\[
 I_2''=\varprojlim_n \left( [\beta^b \gamma^c h_{\zeta}(c\neq 0),
 p\beta^b \zeta^f]_{\Zp[\Gamma/\Gamma^{p^n}]}\right) \subseteq \iw{U_2/V_2}.
\]

\begin{prop}[Congruences among abelian $p$-adic zeta functions] \label{prop:congzeta}
The $p$-adic zeta functions $\xi_i$ for $F_{V_i}/F_{U_i}$ satisfy the following congruences$\colon$
\begin{align*}
(1) & &\xi_1 &\equiv \frob{\xi_0} & \mod{I_{S,1}}, \\
\tilder{(2)} & &\tilder{\xi_2} &\equiv \frob{\xi_0} & \mod{\tilder{I_{S,2}}}, \\
(2) &&\xi_2 &\equiv c_2 & \mod{I_{S,2}''}, \\
(3) &&\xi_3 & \equiv c_3 & \mod{I_{S,3}}, 
\end{align*}
where $c_2$ and $c_3$ are certain elements of $\iw{\Gamma}_{S_0}$.
\end{prop}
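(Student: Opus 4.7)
The plan is to follow the strategy pioneered by Kato in \cite{Kato1} and by Ritter--Weiss in \cite{R-W3}: realise each $\xi_i$ as the constant term of a suitable $\Lambda$-adic Hilbert--Eisenstein series over the totally real field $F_{U_i}$ (in the sense of Deligne--Ribet \cite{De-Ri}), and reduce each congruence to a comparison of the (far more tractable) non-constant Fourier coefficients, using the $q$-expansion principle. Since each $U_i/V_i$ is abelian, $\xi_i$ is characterised by its interpolation property (\ref{eq:interp}) for characters $\chi$ of $U_i/V_i$ of finite order; the corresponding values $L_{\Sigma}(1-r;F_{V_i}/F_{U_i},\chi)$ appear as constant terms of explicit Eisenstein series whose higher Fourier coefficients $c(\ideal{a},-)$ are simple sums over ideals of $F_{U_i}$.

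Concretely, I would set up, for each $i$, a $\Lambda$-adic family of Hilbert modular forms over $F_{U_i}$ whose evaluation at $\chi\otimes\kappa^r$ gives the classical Eisenstein series producing $L_{\Sigma}(1-r;F_{V_i}/F_{U_i},\chi)$ as its constant term. Then, for each of (1), $\widetilde{(2)}$, (2), (3), I would compare two such families living over different base fields by pushing them to a common base via trace (respectively by the Frobenius twist $\varphi$); the relevant inclusions among the $U_i$ are: $U_0\supset U_1$, $U_0\supset \tilder{U_2}$, $\tilder{U_2}\supset U_2$, $U_1,U_2\supset U_3$. On the level of Fourier coefficients, the trace corresponds to summation over ideals lying above a given ideal of the smaller field, while $\varphi$ replaces a character $\chi$ by $\chi\circ\varphi$; and the classical congruence $N\ideal{a}^p\equiv \varphi(N\ideal{a})\ \mathrm{mod}\ p$ together with the orbit structure of the Galois action (producing precisely the elements $h_{\varepsilon^j\delta^k}$, $h_\delta$, $h_\zeta$ generating the $I_i$) should yield congruences of non-constant Fourier coefficients modulo $I_{S,1}$, $\tilder{I_{S,2}}$, $I_{S,2}''$, $I_{S,3}$ respectively.

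Once the non-constant coefficient congruences are in place, the Deligne--Ribet $q$-expansion principle forces the constant terms to be congruent modulo the same ideal, up to the ambiguity inherent in the pseudomeasure normalisation (the element $1-g$ obstruction in Remark \ref{rem:abel}). Care is needed because the pseudomeasures $\xi_i$ themselves are not in $\iw{U_i/V_i}$ but in $\mathrm{Frac}(\iw{U_i/V_i})$: I would first clear the pseudomeasure denominators by multiplying by elements of the form $1-g$ to reduce to a congruence of genuine Iwasawa-algebra elements, then remove the auxiliary factors at the end by noting that they become invertible modulo the ideals $I_{S,i}$ after passing to the $\O$re localisation.

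The main obstacle I expect to be (2) and (3). Unlike (1) and $\widetilde{(2)}$, where the right-hand side is the clean Frobenius twist $\varphi(\xi_0)$, here the comparison is only against a scalar $c_2,c_3\in\iw{\Gamma}_{S_0}$. Structurally this reflects the fact that $U_2$ is not normal in $G$ and that $U_3$ is the commutator-rich centre, so the direct trace-to-Frobenius matching of Fourier coefficients breaks down: many characters $\chi$ of $U_2/V_2$ or $U_3/V_3$ contribute trivially to the relevant Eisenstein series (their characters are killed by the inducing process), leaving only a residual scalar contribution whose interpolation property forces it to lie in $\iw{\Gamma}_{S_0}$. Identifying $c_2$ and $c_3$ explicitly, and verifying that the ``lost'' characters are exactly those whose induced series lie in the respective ideals $I_{S,2}''$ and $I_{S,3}$, will be the technical heart of the argument; it should require a careful double application of the $q$-expansion principle combined with the orbital computations already carried out in \S 4.2 for the additive theta map.
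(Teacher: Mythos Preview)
Your proposal is essentially correct and follows the same approach as the paper: both realise the $\xi_i$ via Deligne--Ribet Hilbert--Eisenstein series, compare non-constant Fourier coefficients using the orbit structure under the conjugation action (producing exactly the generators of the $I_i$), and invoke the $q$-expansion principle to deduce congruences of constant terms after clearing the pseudomeasure denominator by a factor $(1-w)$.

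One organisational difference is worth noting. Rather than working directly with $\Lambda$-adic families, the paper adopts the Ritter--Weiss approximation framework (\cite{R-W3}): it first shows that $(1-w)\xi_i$ is the inverse limit of the finite-level expressions $\sum_{x\in W_i/\mathcal{U}}\Delta_i^w(1-k,\delta^{(x)})\kappa(x)^{-k}x$, and then reduces each congruence to an explicit congruence among the integers $\Delta_i^w(1-k,\varepsilon)$ (the ``sufficient conditions'' $(1)',\tilder{(2)}',(2)',(3)'$). This makes the Eisenstein-series step entirely classical (weight $k$ and $pk$ forms, no $\Lambda$-adic machinery) and isolates exactly what must be checked. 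Your $\Lambda$-adic formulation would work too, but the finite-level reduction is cleaner.

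Your expectation about $(2)$ and $(3)$ is slightly off. These are not harder than $(1)$ and $\tilder{(2)}$; they are \emph{easier}. The sufficient conditions for them are pure vanishing statements: $\Delta_2^w(1-k,\delta^{(y)})\equiv 0\bmod p$ (resp.\ $\Delta_3^w(1-k,\delta^{(y)})\equiv 0\bmod p^{m_y}$) for cosets $y$ outside $\Gamma$, proved by a single application of the $q$-expansion principle to one Eisenstein series $G_{k,\delta^{(y)}}$. The point is that any coset fixed by a nontrivial isotropy subgroup comes from an ideal of a strictly larger field via the Verlagerung, which in our situation equals $\varphi$ and hence lands in $\Gamma$; so $\delta^{(y)}$ kills those terms outright, and the remaining free orbits contribute multiples of $p$ (resp.\ $p^{m_y}$). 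There is no ``double application'' and no need to identify $c_2,c_3$ explicitly: once all cosets outside $\Gamma$ contribute to $I_{S,2}''$ or $I_{S,3}$, what remains is automatically in $\iw{\Gamma}_{S_0}$. (Incidentally, $U_3$ is normal in $G$; the non-normality of $U_2$ plays no role in why $(2)$ is weaker --- the weakness is that the available congruence only controls cosets outside $\Gamma$, not the full comparison with $\varphi(\xi_0)^p$.)
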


\begin{rem}
These congruences are \textbf{not} sufficient to show that $(\xi_i)_i \in \Psi_S$. Hence we should modify Burns' technique (Proposition \ref{thm:ak-pri}) 
to prove the Iwasawa main conjecture in our case. This modification is discussed in the next section.
\end{rem}

Kato observed in \cite{Kato1} that these kinds of congruences among abelian $p$-adic zeta
pseudomeasures were derived from Deligne-Ribet's method of $p$-adic
Hilbert modular forms. The fundamental philosophy of $p$-adic ($\Lambda$-adic)
Hilbert modular forms
is as follows (quite rough):

\begin{quotation}
$(\sharp)$ Congruence between constant terms of two $p$-adic
 ($\Lambda$-adic) Hilbert modular forms is derived from congruences
 between coefficients of all positive degree terms of them.
\end{quotation}

Before the precise proof of Proposition \ref{prop:congzeta}, let us review how to derive the congruence between
abelian $p$-adic zeta pseudomeasures from Hilbert modular forms,
following \cite{Kato1} \S 4. We take
the congruence $\xi_1 \equiv \frob{\xi_0} \mod I_{S,1}$ as an
example. 

The $\iw{U_i/V_i}$-adic $F_{U_i}$-Hilbert Eisenstein series
\begin{equation*}
f_i = \frac{\xi_i}{2^{[F_{U_i}: \Q]}} + \sum_{(\ideal{a}, x)\in P_i} \left( 
\frac{F_{V_i}/F_{U_i}}{\ideal{a}} \right) q^x \qquad i=0,1
\end{equation*}
was essentially constructed by Deligne and Ribet, where 
\begin{align*}
P_i= \{ (\ideal{a}, x)  \mid \ideal{a} \subseteq \integer{F_{U_i}} \colon
 \text{non-zero } & \text{ideal prime to } \Sigma, \\
& x\in \ideal{a} \colon \text{totally positive} \}
\end{align*}
and $\left( \dfrac{L/K}{-}\right)$ denotes the Artin symbol in
$\gal{L/K}$ for an abelian extension $L/K$.

By restricting $f_1$ to the Hilbert modular variety of $F$ embedded as the diagonal in the Hilbert modular variety of $F_{U_1}$, we obtain a $\iw{U_1/V_1}$-adic $F$-Hilbert modular form 
\begin{equation} \label{eq:eisen}
g_1=\frac{\xi_1}{2^{rp}} +\sum_{(\ideal{a}, x)\in P_1}  \left( \frac{F_{V_1}/F_{U_1}}{\ideal{a}} \right) q^{\Tr_{F_{U_1}/F}(x)}
\end{equation}
where $r$ is $[F:\Q]$. Note that $[F_{U_1} : \Q ]$ is equal to $rp$. The Galois group 
\begin{equation*}
\gal{F_{U_1}/F}=\langle \beta \rangle
\end{equation*}
acts on $P_1$ from the left as
\begin{equation*}
\beta^j * (\ideal{a}, x) = (\beta^j \ideal{a}, \beta^j x)
\end{equation*}
for each $0\leq j\leq p-1$.

Then we may divide positive degree terms of $g_1$ by 
isotropic subgroups $\gal{F_{U_1}/F}_{(\ideal{a},x)}$ as follows:
\begin{equation*}
\begin{split}
g_1=\frac{\xi_1}{2^{rp}} +\sum_{(\ideal{a}, x)\in P_1'} & \left( \frac{F_{V_1}/F_{U_1}}{\ideal{a}} \right)  q^{\Tr_{F_{U_1}/F}(x)} \\
 &+\sum_{(\ideal{a}, x)\in P_1''} \left( \frac{F_{V_1}/F_{U_1}}{\ideal{a}} \right) q^{\Tr_{F_{U_1}/F}(x)}
\end{split}
\end{equation*}
where
\begin{align*}
P_1' &= \{ (\ideal{a}, x) \in P_1 \mid \gal{F_{U_1}/F}_{(\ideal{a},x)}=\gal{F_{U_1}/F}  \}, \\
P_1'' &= \{ (\ideal{a}, x) \in P_1 \mid \gal{F_{U_1}/F}_{(\ideal{a},x)}=\{ \id_{F_{U_1}} \}  \}.
\end{align*}

One observes that there exits a bijection $P_0 \xrightarrow{\sim} P_1'; (\ideal{b},y) \mapsto (\ideal{a}, x)$ where $\ideal{a}=\ideal{b}\integer{F_{U_1}}$ and $x=y$. 
By the functoriality of the Artin symbol, we have 
\begin{equation*}
\begin{split}
g_1=\frac{\xi_1}{2^{rp}} +\sum_{(\ideal{b}, y)\in P_0} & \Ver \left( \left( \frac{F_{V_0}/F_{U_0}}{\ideal{b}} \right) \right) q^{py} \\
 &+\sum_{(\ideal{a}, x)\in P_1''} \left( \frac{F_{V_1}/F_{U_1}}{\ideal{a}} \right) q^{\Tr_{F_{U_1}/F}(x)},
\end{split}
\end{equation*}
where $\Ver \colon \gal{F_{V_0}/F_{U_0}} \rightarrow \gal{F_{V_1}/F_{U_1}}$ is the Verlagerung (transfer) homomorphism.

\begin{lem} \label{lem:verfrob}
Let $H_1^f$ and $H_2^f$ be arbitrary subgroups of $G^f$. Set $H_i=H_i^f
 \times \Gamma$. 

If $H_1$ contains $H_2$, the Verlagerung homomorphism 
\[
 \Ver^{H_1}_{H_2} \colon H_1^{\mathrm{ab}} \rightarrow H_2^{\mathrm{ab}}
\]
coincides with the composition of the Frobenius homomorphism $\varphi
 \colon H_1^{\mathrm{ab}} \rightarrow \Gamma$ and the canonical
 injection $\Gamma \rightarrow H_2^{\mathrm{ab}}$.
\end{lem}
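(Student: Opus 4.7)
The plan is to apply the orbit--sum description of the transfer. Choose a right coset decomposition $H_1 = \bigsqcup_{j=1}^{[H_1:H_2]} H_2 y_j$; for $g \in H_1$, decompose $H_2 \backslash H_1$ into orbits of $\langle g \rangle$ acting by right multiplication, with representatives $y_j$ and orbit lengths $\ell_j$ summing to $[H_1 : H_2]$. Each $y_j\, g^{\ell_j}\, y_j^{-1}$ lies in $H_2$, and the standard formula gives
\[
\Ver^{H_1}_{H_2}(g) \;\equiv\; \prod_j y_j\, g^{\ell_j}\, y_j^{-1} \pmod{[H_2, H_2]}.
\]

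Write $g = (g^f, t) \in H_1^f \times \Gamma$ and $y_j = (y_j^f, s_j)$. Since $\Gamma$ is central in $G = G^f \times \Gamma$, each factor splits in $H_2^{\mathrm{ab}} = (H_2^f)^{\mathrm{ab}} \times \Gamma$ as $\bigl(\, y_j^f (g^f)^{\ell_j} (y_j^f)^{-1},\, t^{\ell_j}\, \bigr)$. Because $G^f$ has exponent $p$, every $\ell_j$ divides $p$. In each of the applications of the lemma the relevant index $[H_1 : H_2] = [H_1^f : H_2^f]$ equals $p$, so either $g^f$ acts on $H_2^f \backslash H_1^f$ with a single orbit of length $p$, or it acts trivially with $p$ orbits of length $1$. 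In the first regime the $(H_2^f)^{\mathrm{ab}}$-component vanishes via $(g^f)^p = 1$, while the $\Gamma$-component is $t^p$, matching $\varphi(g)$ precisely.

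In the second regime $g^f$ lies in the normal core of $H_2^f$ in $H_1^f$, each $h_j := y_j^f\, g^f\, (y_j^f)^{-1}$ lies in $H_2^f$, the $\Gamma$-component of $\Ver(g)$ is again $\prod_j t = t^p$, and it remains to show that $\prod_j [h_j]$ vanishes in $(H_2^f)^{\mathrm{ab}}$. The approach I would take is to invoke the classical transfer--corestriction identity, which says that the composite
\[
H_1^{\mathrm{ab}} \xrightarrow{\;\Ver\;} H_2^{\mathrm{ab}} \longrightarrow H_1^{\mathrm{ab}}
\]
is the $[H_1:H_2]$-th power map; since $(H_1^f)^{\mathrm{ab}}$ inherits exponent dividing $p$ from $G^f$, this composite is zero on the $G^f$-factor, so $\prod_j [h_j]$ lies in the kernel of $(H_2^f)^{\mathrm{ab}} \to (H_1^f)^{\mathrm{ab}}$.

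The main obstacle is upgrading this kernel condition to outright vanishing in $(H_2^f)^{\mathrm{ab}}$. I plan to handle it by exploiting the explicit unipotent structure of $G^f$: writing $g^f$ in terms of the generators $\alpha, \beta, \gamma, \delta, \varepsilon, \zeta$ and using the commutator relations $[\alpha, \beta] = \delta$, $[\beta, \gamma] = \varepsilon$, $[\alpha, \varepsilon] = [\delta, \gamma] = \zeta$ (together with triviality of all other commutators), one can re-order the transversal $\{y_j^f\}$ so that successive conjugates telescope modulo $[H_2^f, H_2^f]$, forcing $\prod_j [h_j] = 1$ in $(H_2^f)^{\mathrm{ab}}$ by direct computation. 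Combined with the $\Gamma$-component calculation, this yields $\Ver^{H_1}_{H_2}(g) = t^p = \varphi(g)$ as required.
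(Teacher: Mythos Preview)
Your argument has two weaknesses, one minor and one substantial.

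The minor one: the lemma is stated for arbitrary subgroups $H_2 \subseteq H_1$ of $G^f \times \Gamma$, not just those of index $p$. You sidestep this by saying the applications only need index $p$, but the clean fix is to invoke transitivity of the transfer map to reduce to the index-$p$ case; this is exactly what the paper does first.

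The substantial one: your ``second regime'' is left as a plan. You show only that $\prod_j [h_j]$ lies in $\ker\bigl((H_2^f)^{\mathrm{ab}} \to (H_1^f)^{\mathrm{ab}}\bigr)$, and then propose to telescope via an unspecified reordering of the transversal using the explicit commutator relations. This is the heart of the matter and it is not carried out. The paper's argument shows that this step, and indeed your entire case split, is unnecessary. Once one has reduced to $[H_1:H_2]=p$, the subgroup $H_2^f$ is normal in $H_1^f$ (index $p$ in a $p$-group) and $H_1^f/H_2^f$ is cyclic; so one may choose the transversal to be $\{\sigma^j\}_{j=0}^{p-1}$ for a single $\sigma \in H_1^f$. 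With this choice, for $h \in H_1^f$ with $h \equiv \sigma^{j(h)} \pmod{H_2^f}$, the transfer formula reads
\[
\Ver^{H_1}_{H_2}(h) \;=\; \prod_{j=0}^{p-1} \sigma^{-(j+j(h))}\, h\, \sigma^{j}
\]
in $(H_2^f)^{\mathrm{ab}}$, and the telescoping is now automatic: the product collapses to a $p$-th power of a single element of $G^f$, hence equals $1$ since $G^f$ has exponent $p$. No orbit dichotomy, no corestriction identity, no explicit commutator chase. The $\Gamma$-component is handled exactly as you do. So your instinct to telescope was right, but the device that makes it work uniformly is the cyclic transversal, not a case-by-case reordering.
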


\begin{proof}
By the transitivity of the Verlagerung homomorphism, it suffices to
 prove the proposition when $(H_1 : H_2)=p$. In this case it is well known that
 $H_2^f$ is normal in $H_1^f$. Let $\sigma$ be an element of $H_1^f$
 which generates $H_1^f/H_2^f$. Then $\{ \sigma^j \}_{j=0}^{p-1}$ is a
 set of representatives of $H_1/H_2$. Suppose that $h\in H_1^f$
 satisfies $h\equiv \sigma^{j(h)} \, \mod H_2^f$. By the definition of
 the Verlagerung homomorphisms, 
\[
 \Ver^{H_1}_{H_2} (h)=\prod_{j=0}^{p-1}
 \sigma^{-(j+j(h))}h\sigma^j=h^p=1.
\]
\end{proof}
 
Hence in our special case (the case when the exponent of $G^f$ is $p$), we may
identify every Verlagerung homomorphism with the Frobenius homomorphism
$\varphi$. 

If we set $\frob{q^y}=q^{py}$, we have 
\begin{equation*}
\begin{split}
g_1=\frac{\xi_1}{2^{rp}} +\sum_{(\ideal{b}, y)\in P_0} & \varphi \left( \left( \frac{F_{V_0}/F}{\ideal{b}} \right) q^{y} \right) \\
&+\sum_{(\ideal{a}, x)\in P_1''} \left( \frac{F_{V_1}/F_{U_1}}{\ideal{a}} \right) q^{\Tr_{F_{U_1}/F}(x)}
\end{split}
\end{equation*}
and the first summation is the same as positive degree terms of $\frob{f_0}$.

Next, let $\tilde{P_1}''$ be a system of representatives of the orbital decomposition 
$\gal{F_{U_1}/F_{U_0}} \backslash P_1''$. By the property of the Artin symbol, we have
\begin{equation*}
\sum_{j=0}^{p-1} \left( \frac{F_{V_1}/F_{U_1}}{\beta^j \ideal{c}} \right) = \sum_{j=0}^{p-1} \beta^j \left( \frac{F_{V_1}/F_{U_1}}{\ideal{c}} \right) \beta^{-j},
\end{equation*}
therefore the right hand side of the equation above is contained in
$I_1$ since $I_1$ is the image of $\theta_1^{+}$ (trace map).

Hence, we obtain
\begin{equation*}
\begin{split}
g_1-\frob{f_0}&=\left( \frac{\xi_1}{2^{rp}}-\frac{\frob{\xi_0}}{2^r} \right) \\  &+\sum_{(\ideal{c},z)\in \tilde{P_1}''} \left\{ \sum_{j=0}^{p-1} \beta^j \left( \frac{F_{V_1}/F_{U_1}}{\ideal{c}} \right) \beta^{-j} \right\}  q^{\Tr_{F_{U_1}/F}(z)} 
\end{split}
\end{equation*}
and all positive degree terms of $g_1-\varphi(f_0)$ are contained in $I_1$, which implies
\begin{equation*}
\frac{\xi_1}{2^{rp}} \equiv \frac{\frob{\xi_0}}{2^r} \qquad \mod{I_{S,1}}
\end{equation*}
by the philosophy $(\sharp)$.

Note that $2^p \equiv 2 \quad \mod{I_{S,1}}$ since $p\in I_{S,1}$. Hence we have
the desired congruence.

\bigskip
In the rest of this section, we make this argument precise and prove the
congruences in Proposition \ref{prop:congzeta}. We adopt the method of 
Ritter and Weiss (\cite{R-W3}).

%
\subsection{Approximation of abelian $p$-adic zeta pseudomeasures}
%

First, following \cite{R-W3}, we approximate the abelian $p$-adic
 zeta pseudomeasures and reduce the congruences among pseudomeasures to
 those among special values of partial zeta functions.

Let $W_i$ be $U_i/V_i$, and let $W_i^f$ be $U_i^f/V_i$. For an arbitrary open set $\cal{U}\subseteq W_i$, we define a non-negative
integer $m(\cal{U})$ by $\kappa^{p-1}(\cal{U})=1+p^{m(\cal{U})}\Zp$ where $\kappa$ is the
$p$-adic cyclotomic character. Let
$\omega=m(\Gamma)$. Then one can easily see that $m(\cal{U}^f \times
\Gamma^{p^j})=\omega+j$ for an arbitrary $\cal{U}^f \subseteq
W_i^f$ and $j \in \Z_{\geq 0}$.

\begin{lem} \label{lem:lim}
The canonical surjection 
\[
 \Zp[[W_i]] \rightarrow \Zp[W_i/\cal{U}]/p^{m(\cal{U})}\Zp[W_i/\cal{U}]
\]
induces the isomorphism 
\[
 \Zp[[W_i]] \xrightarrow{\sim} \varprojlim_{\cal{U} \subseteq W_i \colon \text{open}}\Zp[W_i/\cal{U}]/p^{m(\cal{U})}\Zp[W_i/\cal{U}].
\]
\end{lem}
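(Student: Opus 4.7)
The canonical surjection in the statement gives a map to the inverse limit by the universal property of projective limits, so the real content is that this map is an isomorphism. My plan is to reduce the problem to the case $W_i = \Gamma$ and then show that the ideals $I_{\cal{U}} + p^{m(\cal{U})}\iw{W_i}$ form a fundamental system of neighborhoods of $0$ for the natural (i.e. $(p,T)$-adic) topology on $\iw{\Gamma}$, so that the map is the completion map of an already complete ring.

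First I would observe that $W_i = W_i^f \times \Gamma$ with $W_i^f$ a finite $p$-group, so $\iw{W_i} \cong \iw{\Gamma}[W_i^f]$ is free of rank $|W_i^f|$ over $\iw{\Gamma}$. Any cofinal family of open subgroups of $W_i$ can be taken of the form $\{1\} \times \Gamma^{p^j}$, for which $m(\cal{U}_j) = \omega + j$ and $I_{\cal{U}_j}\iw{W_i} = ((1+T)^{p^j} - 1)\iw{W_i}$ under the identification $\iw{\Gamma} \cong \Zp[[T]]$, $t \leftrightarrow 1+T$. Since projective limits commute with finite direct sums of free modules and the statement to prove is compatible with base change from $\iw{\Gamma}$ to $\iw{\Gamma}[W_i^f]$, everything reduces to showing
\[
\iw{\Gamma} \xrightarrow{\sim} \varprojlim_j \iw{\Gamma}/\bigl((1+T)^{p^j} - 1,\ p^{\omega + j}\bigr).
\]

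The heart of the argument is to compare the topology defined by the ideals $\ideal{a}_j := \bigl((1+T)^{p^j}-1,\ p^{\omega+j}\bigr)$ with the standard $(p,T)$-adic topology on $\Zp[[T]]$. For the first inclusion, I use that $v_p\bigl(\binom{p^j}{k}\bigr) = j - v_p(k)$ for $1 \le k < p^j$, which gives $v_p\bigl(\binom{p^j}{k}\bigr) + k \geq j+1$, so each monomial $\binom{p^j}{k}T^k$ of $(1+T)^{p^j} - 1$ lies in $(p,T)^{j+1}$; combined with $p^{\omega+j} \in (p,T)^{\omega+j}$ this yields $\ideal{a}_j \subseteq (p,T)^{\min(j+1,\, \omega+j)}$, which tends to zero. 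For the reverse inclusion, I would note that by $p$-adic Weierstrass preparation $(1+T)^{p^j}-1$ is, up to a unit, a distinguished polynomial of degree $p^j$, so $\iw{\Gamma}/\ideal{a}_j \cong (\Zp/p^{\omega+j})^{p^j}$ is finite; hence $\ideal{a}_j$ is an open ideal in the $(p,T)$-adic topology, and for any $n$ one has $(p,T)^n \subseteq \ideal{a}_j$ for $j$ large enough.

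Once the two fundamental systems $\{(p,T)^n\}_n$ and $\{\ideal{a}_j\}_j$ are shown to define the same topology on $\iw{\Gamma}$, the required isomorphism follows because $\iw{\Gamma}$ is already $(p,T)$-adically complete (this is the standard identification $\iw{\Gamma} = \Zp[[T]]$). The main obstacle, and the only place requiring care, is step 1: keeping track of the binomial valuations to produce a uniform containment $\ideal{a}_j \subseteq (p,T)^{f(j)}$ with $f(j) \to \infty$. Everything else is formal: the surjectivity of the map to the limit is equivalent to completeness, and injectivity is the Hausdorff property $\bigcap_j \ideal{a}_j = 0$, which also follows from the inclusion $\ideal{a}_j \subseteq (p,T)^{\min(j+1, \omega+j)}$.
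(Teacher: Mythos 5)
Your argument is correct, and it supplies precisely the details that the paper leaves out: the paper's own proof is two sentences (injectivity ``since $m(\cal{U})$ is not bounded,'' surjectivity ``by constructing the lift,'' with a pointer to Kakde \S 4.2.1), whereas you reduce to $\Gamma$ using the freeness of $\iw{W_i}$ over $\iw{\Gamma}$ and the cofinality of $\{1\}\times\Gamma^{p^j}$ (a cofinality the paper also records), and then show that the ideals $\ideal{a}_j=\bigl((1+T)^{p^j}-1,\,p^{\omega+j}\bigr)$ are interleaved with the $(p,T)$-adic filtration, so the limit is the $(p,T)$-adic completion of the already complete ring $\Zp[[T]]$. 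This is the same underlying mechanism (Hausdorffness gives injectivity, completeness gives surjectivity), but made quantitative and self-contained; the binomial-valuation estimate $v_p\bigl(\binom{p^j}{k}\bigr)+k\ge j+1$ is exactly the right uniform bound. Two small repairs. First, the clause ``for any $n$ one has $(p,T)^n\subseteq\ideal{a}_j$ for $j$ large enough'' has its quantifiers reversed and is false as written (e.g.\ $p\notin\ideal{a}_j$ once $\omega+j\ge 2$); what you need, and what your preceding clause correctly asserts, is that for each \emph{fixed} $j$ there exists $n$ with $(p,T)^n\subseteq\ideal{a}_j$. Second, ``finite quotient, hence open'' deserves one more word: it is valid because $\Zp[[T]]$ is Noetherian local, so a finite quotient is Artinian local and its maximal ideal $(p,T)/\ideal{a}_j$ is nilpotent; alternatively, note directly that $T^{p^j}\in\ideal{a}_j+(p)$, whence $(p,T)^{(\omega+j)(p^j+1)}\subseteq\ideal{a}_j$. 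With those adjustments the proof is complete.
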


\begin{proof}
The injectivity is obvious since $m(\cal{U})$ is not bounded. The
 surjectivity is also not difficult, for we can construct the lift of
 $(x_{\cal{U}})_{\cal{U}}$ to $\Zp[[W_i]]$. See
 \cite{Kakde} \S 4.2.1.
\end{proof}

It is clear that $\{ \Gamma^{p_j} \}_{j \in \Z_{\geq 0}}$ is the cofinal
system of the inverse limit above.

\medskip
Let $\varepsilon=\varepsilon_i$ is a $\C$-valued locally constant
function on $W_i$. Then there exists an open subset $\cal{U} \subseteq
W_i$ such that $\varepsilon$ is constant on each coset of
$W_i/\cal{U}$. Therefore we can write 
\[
 \varepsilon=\sum_{x\in W_i/\cal{U}} \varepsilon(x) \delta^{(x)}
\]
where $\delta^{(x)}$ is the characteristic function with respect to a
coset $x$, that is, 
\[
 \delta^{(x)}(w) = \begin{cases} 1 & \text{if $w\in x$}, \\
		    0 & \text{otherwise}.
\end{cases}
\]

\begin{defn}[partial zeta function]
Let $x$ be an arbitrary coset in $W_i/\cal{U}$. Then we call
\[
 \zeta_{F_{V_i}/F_{U_i}}(s, \delta^{(x)}) =\sum_{0\neq \mathfrak{a}
 \subseteq \cal{O}_{F_{U_i}}} \dfrac{\delta^{(x)} \left(
 \left( \frac{F_{V_i}/F_{U_i}}{\mathfrak{a}}\right)\right)}{(N \mathfrak{a})^s}
\]
{\em the partial zeta function for $\delta^{(x)}$}. Here $\left(
 \dfrac{F_{V_i}/F_{U_i}}{-} \right)$ is the Artin symbol and $N
 \ideal{a}$ is the absolute norm of an ideal $\ideal{a}$. This function has analytic
 continuation to the whole complex plane except for a simple pole at
 $s=1$, and for every $k\in \mathbb{N}$, $\zeta_{F_{V_i}/F_{U_i}}(1-k, \delta^{(x)})$ is a rational number.

For an arbitrary local constant function $\varepsilon$ on $W_i$, we define the partial zeta
 function for $\varepsilon$ as 
\[
 \zeta_{F_{V_i}/F_{U_i}}(s,\varepsilon)=\sum_{x\in W_i/\cal{U}} \varepsilon(x) \zeta_{F_{V_i}/F_{U_i}}(s,\delta^{(x)})
\]
where $\varepsilon=\sum_{x\in W_i/\cal{U}}\varepsilon(x)\delta^{(x)}$ is the
 decomposition as above.
\end{defn}

For an arbitrary element $w\in W_i$, $\Qp$-valued locally constant function
$\varepsilon$ on $W_i$, and positive integer $k$ divisible by $p-1$, we define 
\[
 \Delta_i^w(1-k, \varepsilon)=\zeta_{F_{V_i}/F_{U_i}}(1-k, \varepsilon)-
 \kappa(w)^k\zeta_{F_{V_i}/F_{U_i}}(1-k, \varepsilon_w) \in \Qp
\]
where $\varepsilon_w(w')=\varepsilon(ww')$ for every $w \in
W_i$. Deligne and Ribet showed the integrality of $\Delta_i^w(1-k,
\delta^{(x)})$, that is, for an arbitrary $w \in W_i$ and 
an arbitrary coset $x\in W_i/\cal{U}$,
\[
 \Delta_i^w(1-k, \delta^{(x)}) \in \Zp
\]
(\cite{De-Ri} Theoreme (0.4), see also Hypothesis $(H_{n-1})$ in \cite{Coates}). 

\begin{prop}[Approximation lemma, Ritter-Weiss] \label{prop:approx}
Let $\cal{U} \subseteq W_i$ be an arbitrary open set. Then for every
 positive integer $k$ divisible by $p-1$ and $w\in W_i$, $(1-w)\xi_i$
 maps to 
\begin{equation} \label{eq:delta}
 \sum_{x\in W_i/\cal{U}} \Delta_i^w(1-k, \delta^{(x)}) \kappa(x)^{-k} x \qquad \mod{p^{m(\cal{U})}}
\end{equation}
under the canonical surjection $\Zp[[W_i]] \rightarrow
 \Zp[W_i/\cal{U}]/p^{m(\cal{U})}\Zp[W_i/\cal{U}]$. $($Note that $\kappa(x)^{-k}$ is well-defined by the definition of $m(\cal{U}).)$
\end{prop}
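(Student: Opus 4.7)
The plan is to reduce (\ref{eq:delta}) to the defining interpolation property of Serre's pseudomeasure $\xi_i$ combined with the standard decomposition of an Artin $L$-function $L_\Sigma(s,\chi)$ as the sum of partial zeta functions $\zeta_{F_{V_i}/F_{U_i}}(s,\delta^{(x)})$ weighted by $\chi(x)$. I would begin by recalling that $\xi_i\in\mathrm{Frac}(\iw{W_i})$ is the unique element satisfying $(1-w)\xi_i\in\iw{W_i}$ for every $w\in W_i$ together with the interpolation formula
\[
(\chi\kappa^k)((1-w)\xi_i)=(1-\chi(w)\kappa(w)^k)L_\Sigma(1-k,F_{V_i}/F_{U_i},\chi)
\]
for every positive integer $k$ divisible by $p-1$ and every finite-order character $\chi$ of $W_i$ (Remark \ref{rem:abel}). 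In particular the canonical image of $(1-w)\xi_i$ in $\Zp[W_i/\cal{U}]/p^{m(\cal{U})}\Zp[W_i/\cal{U}]$ is well-defined, and the plan is to match this image with the right-hand side of (\ref{eq:delta}) by pairing both sides against the family of twisted characters $\chi\kappa^k$ with $\chi\in\widehat{W_i/\cal{U}}$.

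The key computation runs as follows. Since $k$ is divisible by $p-1$ and $\kappa^{p-1}(\cal{U})=1+p^{m(\cal{U})}\Zp$ by the definition of $m(\cal{U})$, the twist $\kappa^k$ descends to a well-defined function on $W_i/\cal{U}$ modulo $p^{m(\cal{U})}$, so pairing with $\chi\kappa^k$ gives a linear functional on $\Zp[W_i/\cal{U}]/p^{m(\cal{U})}$. The left-hand side evaluates to $(1-\chi(w)\kappa(w)^k)L_\Sigma(1-k,\chi)$ by the interpolation formula. For the right-hand side, the factors $\kappa(x)^{-k}$ and $\kappa(x)^k$ cancel on representatives, leaving $\sum_x\Delta_i^w(1-k,\delta^{(x)})\chi(x)$; using the identity $(\delta^{(x)})_w=\delta^{(w^{-1}x)}$, expanding $\Delta_i^w$, and substituting $y=w^{-1}x$ in the second summand factors this expression as
\[
(1-\chi(w)\kappa(w)^k)\sum_{x\in W_i/\cal{U}}\zeta_{F_{V_i}/F_{U_i}}(1-k,\delta^{(x)})\chi(x),
\]
and the inner sum equals $L_\Sigma(1-k,\chi)$ by regrouping the Dirichlet series of $L_\Sigma(s,\chi)$ according to the Artin symbol. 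Thus the two sides of (\ref{eq:delta}) produce the same value under every pairing by $\chi\kappa^k$.

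The main obstacle is that, since $|W_i/\cal{U}|$ is generally divisible by $p$, agreement of character pairings does not by itself imply equality in $\Zp[W_i/\cal{U}]/p^{m(\cal{U})}$; the usual Fourier inversion on a finite abelian group introduces denominators of $|W_i/\cal{U}|$. I would overcome this by taking $\cal{U}$ of the cofinal form $\cal{U}^f\times\Gamma^{p^n}$, so that $m(\cal{U})=\omega+n$, and letting $k$ vary over positive multiples of $p-1$ in addition to varying $\chi$: the values $\kappa(\tilde x)^k$ sweep out enough residues in $(\Zp/p^{\omega+n}\Zp)^{\times}$ to distinguish cosets in the $\Gamma/\Gamma^{p^n}$-direction, while the characters $\chi$ separate cosets in the finite part $W_i^f/\cal{U}^f$. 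Alternatively, and more directly, one can observe that the right-hand side of (\ref{eq:delta}) is precisely the Deligne--Ribet approximation to $(1-w)\xi_i$ that arises in the construction of the pseudomeasure from $p$-adic Hilbert modular forms, so that (\ref{eq:delta}) becomes a restatement of that construction modulo $p^{m(\cal{U})}$; this is the route pursued by Ritter and Weiss in \cite{R-W3}, and it bypasses the character-separation issue entirely.
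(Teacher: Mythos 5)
Your pairing computation in the second paragraph is correct, and you have put your finger on exactly the right difficulty: since $p$ divides $|W_i/\cal{U}|$, agreement of the pairings against all $\chi\kappa^k$ does not force equality in $\Zp[W_i/\cal{U}]/p^{m(\cal{U})}\Zp[W_i/\cal{U}]$. The problem is that your first proposed repair does not close this gap. At a fixed level $\cal{U}$, Fourier inversion over the characters of $W_i/\cal{U}$ only yields that the coefficients of the difference lie in $p^{m(\cal{U})-v}\Zp$, where $p^{v}$ is the $p$-part of $|W_i/\cal{U}|$; letting $k$ vary cannot improve this, because $\kappa^k$ is a unit-valued multiplicative function on $W_i/\cal{U}$ (for $(p-1)\mid k$ it factors through the projection to $\Gamma/\Gamma^{p^j}$), so twisting by it merely rescales the unknown coefficients by units before the same inversion is applied. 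Nor does descending to a deeper level $\cal{U}'=\cal{U}^f\times\Gamma^{p^{j}}$ help: there $m(\cal{U}')=\omega+j$ while $v(\cal{U}')=v_0+j$, so the defect $m-v$ stays bounded and one never reaches $p^{m(\cal{U})}$. A finite-level character-separation argument therefore cannot prove the proposition.

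The correct completion is the one you relegate to an aside, and it is the route the paper (following Ritter--Weiss, \cite{R-W3} Proposition 2) actually takes; it is not an optional shortcut but the proof itself, and it needs two inputs your write-up does not supply. First, one must show that the right-hand sides of (\ref{eq:delta}) are independent of $k$ and compatible as $\cal{U}$ shrinks; these are consequences of the Deligne--Ribet congruences (\cite{De-Ri} Th\'eor\`eme (0.4), Hypothesis $(C_0)$ of \cite{Coates}), not of the interpolation formula alone. Together with the integrality of $\Delta_i^w(1-k,\delta^{(x)})$ and Lemma \ref{lem:lim}, this makes the family into an honest measure $\xi_i^w\in\Zp[[W_i]]$. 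Second, one verifies $\int_{W_i}\varepsilon\kappa^k\,d\xi_i^w=\Delta_i^w(1-k,\varepsilon)$ and invokes the fact that these integrals, for all locally constant $\varepsilon$ and all positive $k$ divisible by $p-1$, characterize the measure $(1-w)\xi_i$. Here the separation problem genuinely disappears, because one identifies two elements of $\Zp[[W_i]]$ rather than of a fixed torsion quotient: the multiples of $p-1$ are dense in $\Zp$, so one may pass to the limit $k\to 0$ and recover $\int\varepsilon\,d\mu$ for every $\varepsilon$, which determines $\mu$. With those two points made explicit, your argument becomes the paper's.
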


\begin{proof}[Sketch of the proof]
Let $\xi_i^w$ be the limit element of (\ref{eq:delta}). This limit element is independent of $k$ due to
 Deligne-Ribet's congruence (\cite{De-Ri} Theoreme (0.4) and
 \cite{Coates} Hypothesis $(C_0)$). One sees that $\xi_i^w$ is contained
 in $\Zp[[W_i]]$ (that is, $\xi_i^w$
 is a {\em $p$-adic measure on $W_i$}) by the integrality of
 $\Delta_i^w(1-k, \delta^{(x)})$ and Lemma \ref{lem:lim}.

Then one sees that 
\[
 \int_{W_i} \varepsilon \kappa^k d\xi_i^w=\Delta_i^w (1-k, \varepsilon)
\]
for each locally constant function $\varepsilon$ on $W_i$ and positive integer
 $k$ divisible by $p-1$, but this property characterizes the $p$-adic
 measure $(1-w)\xi_i$ where $\xi_i$ is Serre's $p$-adic zeta pseudomeasure
 for $F_{V_i}/F_{U_i}$ (\cite{Serre2}). Thus $\xi_i^w=(1-w)\xi_i$.

See \cite{R-W3} Proposition 2 for the more precise proof.
\end{proof}

Let $NU_i$ be the normalizer of $U_i$ in $G$. Then we have
$NU_1=N\tilder{U_2}=NU_3=G$ and $NU_2=\tilder{U_2}$. The quotient group
$NU_i/U_i$ acts on the set of locally constant functions on $W_i$ by 
\[
 \varepsilon^{\sigma}(w)=\varepsilon(\sigma^{-1} w\sigma)
\]
where $\varepsilon$ is a locally constant functions on $W_i$ and
$\sigma\in NU_i/U_i$.

\begin{prop}[Sufficient conditions] \label{prop:suff}
Sufficient conditions for the congruences $(1)$, $\tilder{(2)}$, $(2)$,
 $(3)$ in Proposition $\ref{prop:congzeta}$ to hold are the following $(1)'$, $\tilder{(2)}'$, $(2)'$, $(3)'$ respectively$:$

\begin{enumerate}[$(1)'$]
\item $\Delta_1^{\frob{g}} (1-k, \varepsilon) \equiv \Delta_0^g (1-pk,
      \varepsilon \circ \varphi) \quad \mod p\Zp$

\noindent for all $g\in W_0$ and locally constant $\Z_{(p)}$-valued function $\varepsilon$
      on $W_1$ fixed by $NU_1/U_1(=G/U_1)$.
\end{enumerate}
\begin{enumerate}[$\tilder{(2)}'$]
\item $\tilder{\Delta}_2^{\frob{g}} (1-k, \varepsilon) \equiv \Delta_0^g (1-pk,
      \varepsilon \circ \varphi) \quad \mod p\Zp$

\noindent for all $g\in W_0$ and locally constant $\Z_{(p)}$-valued function $\varepsilon$
      on $\tilder{W_2}$ fixed by $N\tilder{U_2}/\tilder{U_2}(=G/\tilder{U_2})$.
\end{enumerate}
\begin{enumerate}[$(1)'$]
\stepcounter{enumi}
\item $\Delta_2^{w} (1-k, \delta^{(y)}) \equiv 0 \qquad \mod p\Zp$

\noindent for all $w\in \Gamma$ and for every coset $y\in
      W_2/\Gamma^{p^j}\, (j\in \N)$  
which is not contained in $\Gamma$ and fixed by
      $NU_2\maketitle/U_2(=\tilder{U_2}/U_2)$.

\item $\Delta_3^{w} (1-k, \delta^{(y)}) \equiv 0 \qquad \mod p^{m_y}\Zp$

\noindent for all $w\in \Gamma$ and for every coset $y\in
      W_3/\Gamma^{p^j}\, (j\in \N)$ 
which is not contained in $\Gamma$. Here $p^{m_y}$ is the order of
      $(NU_3/U_3)_y=(G/U_3)_y$, the isotropic subgroup of $G/U_3$ at $y$.
\end{enumerate}
\end{prop}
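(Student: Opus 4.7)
The plan is to apply Ritter--Weiss's approximation lemma (Proposition~\ref{prop:approx}) to translate each pseudomeasure congruence in Proposition~\ref{prop:congzeta} into an explicit coset-by-coset condition on the values $\Delta_i^w(1-k,\delta^{(x)})$, and then identify this with the hypotheses $(1)'$--$(3)'$.

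First, by Lemma~\ref{lem:lim}, a congruence of the form $\xi_1\equiv\varphi(\xi_0)\pmod{I_{S,1}}$ is equivalent to its reduction in $\Zp[W_1/\Gamma^{p^j}]/p^{\omega+j}$ for every $j\ge 0$. Since the $\xi_i$ are only pseudomeasures, I multiply by a suitable factor $1-w$---namely $w=\varphi(g)$ in cases $(1)$ and $\widetilde{(2)}$, and $w\in\Gamma$ with $\kappa(w)$ a topological generator of $1+p\Zp$ in cases $(2)$ and $(3)$---and invoke Proposition~\ref{prop:approx} to write
\[
(1-w)\xi_i\equiv\sum_{x\in W_i/\Gamma^{p^j}}\Delta_i^w(1-k,\delta^{(x)})\,\kappa(x)^{-k}\,x\pmod{p^{\omega+j}}.
\]

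The key observation is a two-part characterization of membership in each ideal. The partial zeta values $\zeta_{F_{V_i}/F_{U_i}}(s,\delta^{(x)})$ are invariant under the conjugation action of $NU_i/U_i$ on $W_i$ (since this action corresponds to Galois automorphisms of $F_{V_i}/F$), so $\xi_i$ has equal coefficients on each $NU_i/U_i$-orbit in $W_i$; similarly $\varphi(\xi_0)$, being supported on the central subgroup $\Gamma$, has this property. Consequently, the differences $\xi_1-\varphi(\xi_0)$, $\tilder{\xi_2}-\varphi(\xi_0)$, $\xi_2-c_2$, $\xi_3-c_3$ automatically satisfy the ``orbit-equality'' condition built into each ideal via the generators $\alpha^a\varepsilon^e h_\delta$, $\gamma^c\delta^d h_\varepsilon$, $\beta^b\gamma^ch_\zeta$, etc.---these orbit sums account for the $NU_i/U_i$-coinvariant part. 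What remains is the condition at $NU_i/U_i$-\emph{fixed} cosets: there the generators carry a $p$-power factor equal to the order of the isotropic subgroup, so membership can be detected by pairing with $NU_i/U_i$-invariant $\Z_{(p)}$-valued locally constant test functions. For $I_{S,1}$ and $\widetilde{I_{S,2}}$ all fixed cosets contribute with weight $p$; for $I''_{S,2}$ and $I_{S,3}$, the auxiliary constants $c_2,c_3\in\iw{\Gamma}_{S_0}$ absorb the $\Gamma$-cosets, so we only need to check $NU_i/U_i$-fixed cosets outside $\Gamma$, with $p$-powers matching the generator structure ($p$ for $I''_{S,2}$; $p^{m_y}$ for $I_{S,3}$, where $m_y$ is the order of the isotropic subgroup $(G/U_3)_y$).

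Evaluating $\int\varepsilon\,d\bigl((1-\varphi(g))\xi_1\bigr)-\int\varepsilon\,d\,\varphi\bigl((1-g)\xi_0\bigr)$ via the approximation formula, and using the identity $\int\varepsilon\,d\varphi(\mu)=\int(\varepsilon\circ\varphi)\,d\mu$ together with the Frobenius effect $\kappa(\varphi(g))=\kappa(g)^p$ (which shifts the cyclotomic twist from $k$ to $pk$ on the $\xi_0$ side), the pairing becomes precisely $\Delta_1^{\varphi(g)}(1-k,\varepsilon)-\Delta_0^g(1-pk,\varepsilon\circ\varphi)$. Under hypothesis $(1)'$ this vanishes modulo $p$, which combined with the automatic orbit-equality yields $(1)$; the implications $(\widetilde{2})'\Rightarrow\widetilde{(2)}$, $(2)'\Rightarrow(2)$, $(3)'\Rightarrow(3)$ follow by the analogous pairing computation. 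The main obstacle is the duality step for $I_{S,3}$: verifying that its three generator types $p^3\zeta^f$, $p^2\varepsilon^e h_\zeta$ ($e\neq 0$), $p\gamma^ch_\varepsilon h_\zeta$ ($c\neq 0$) match precisely the three $G/U_3$-orbit types on $W_3\setminus\Gamma$ whose isotropic subgroups in the Heisenberg group $G/U_3$ have orders $p^3$, $p^2$, $p$ respectively. Once this orbit analysis of the conjugation action of $G/U_3$ on $W_3$ is carried out, the rest is formal.
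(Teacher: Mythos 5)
Your proposal is correct and follows essentially the same route as the paper's proof (which in turn adapts Ritter--Weiss): apply the approximation lemma to $(1-w)\xi_i$, observe that the $NU_i/U_i$-invariance of $\Delta_i^w(1-k,\delta^{(y)})$ makes the non-fixed orbits contribute orbit sums already lying in the relevant ideal, and then apply the hypothesis $(i)'$ to the fixed cosets (whose characteristic functions are exactly the invariant test functions), absorbing the $\Gamma$-cosets into $c_2,c_3$ and dividing by $1-w\in S_0$ at the end. Your identification of the generator $p$-powers of $I_2''$ and $I_3$ with the orders of the isotropy subgroups is also exactly how the paper's argument closes.
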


\begin{proof}
The proof of the sufficiency of $\tilder{(2)}'$ (resp.\ $(2)'$) for
 $\tilder{(2)}$ (resp.\ $(2)$) is completely the same as that of $(1)'$
 (resp. $(3)'$) for $(1)$ (resp.\ $(3)$). Therefore we only prove the
 latter two cases.

\begin{itemize}
\item The sufficiency of $(1)'$ for $(1)$.

We use similar method to \cite{R-W3}, Proposition 4.

The congruence $(1)$ is equivalent to 
\[
 (1-\frob{g}) \xi_1 \equiv \frob{(1-g)\xi_0} \qquad \mod I_1
\]
for an arbitrary $g\in W_0$. Applying the approximation lemma (Proposition
      \ref{prop:approx}) to them, we obtain
\begin{align}
(1-\frob{g})\xi_1 &\equiv \sum_{y\in W_1/\Gamma^{p^{j+1}}} \Delta_1^{\frob{g}}
 (1-k, \delta^{(y)}) \kappa(y)^{-k}y \label{eq:cong1}\\
& \qquad \qquad \qquad \quad \text{in
 }\Zp[W_1/\Gamma^{p^{j+1}}]/p^{\omega+j+1} \nonumber \\
\frob{(1-g)\xi_0} &\equiv \sum_{x\in W_0/(W_0^f \times\Gamma^{p^j})} \Delta_0^{g}
 (1-pk, \delta^{(x)}) \kappa(x)^{-pk}\frob{x} \label{eq:cong2}\\
& \qquad \qquad \qquad  \qquad \text{in
 }\Zp[W_0/(W_0^f \times \Gamma^{p^j})]/p^{\omega+j} \nonumber 
 \end{align}

Note that $\frob{(1-g)\xi_0}$ is fixed by the conjugate action of
      $G/U_1$ 
since it is contained in $\iw{\Gamma}$. $\frob{g}$ is also fixed by
      $G/U_1$, which implies that $(1-\frob{g})\xi_1$ is fixed by $G/U_1$ (see
      \cite{R-W3} Lemma 3.2). Therefore we see that
      $(1-\frob{g})\xi_1-\frob{(1-g)\xi_0}$ is contained in
      $(\Zp[W_1/\Gamma^{p^{j+1}}]/p^{\omega+j+1})^{G/U_1}$ (the
      $G/U_1$-fixed part).

\begin{enumerate}[({Case-}1)]
\item $y$ is fixed by $G/U_1$. Then $\delta^{(y)}$ is also fixed by
      $G/U_1$, therefore we may apply $(1)'$ for $\delta^{(y)}$:
\[
 \Delta_1^{\frob{g}} (1-k,\delta^{(y)}) \equiv \Delta_0^{g} (1-pk,
      \delta^{(y)}\circ \varphi) \qquad \mod \,p\Zp.
\]

If $y=\frob{x}(=\Ver(x))$, $x$ is determined uniquely and
      $\kappa(y)^{-k}=\kappa(\Ver(x))^{-k}=\kappa(x)^{-pk}$. Moreover,
      $\delta^{(x)}$ coincides with $\delta^{(y)}\circ \varphi$. This implies
      that 
\begin{align*}
 &\Delta_1^{\frob{g}} (1-k, \delta^{(y)}) \kappa(y)^{-k}y-\Delta_0^g
      (1-pk, \delta^{(x)}) \kappa(x)^{-pk} \frob{x} \\ 
& \qquad \in 
      p(\Z_p[W_1/\Gamma^{p^{j+1}}]/p^{\omega+j+1})^{G/U_1} \subseteq I_1^{(j+1)}/p^{\omega+j+1}.
\end{align*}

If $y\notin \mathrm{Image}(\varphi)$, we have $\delta^{(y)}\circ \varphi=0$, hence the
      $y$-summand vanishes modulo $I_1^{(j+1)}$.

\item $y$ is not fixed by $G/U_1$. Since     
\[
 \Delta_1^{\frob{g}}(1-k,\delta^{(y)})=\Delta_1^{\frob{g}}(1-k,
      \delta^{(y^{\sigma})})
\]
for every $\sigma \in G/U_1$ (see \cite{R-W3} Lemma 3.2), we have 
\begin{align*}
 & \sum_{\sigma\in G/U_1} (\Delta_1^{\frob{g}} (1-k,\delta^{(y)})
      \kappa(y)^{-k}y)^{\sigma}  \\
 &= \Delta_1^{\frob{g}}
 (1-k,\delta^{(y)}) \kappa(y)^{-k} \sum_{\sigma \in G/U_1} y^{\sigma} \in I_1^{(j+1)}/p^{\omega+j+1}.
\end{align*}
\end{enumerate}
Therefore (\ref{eq:cong1}) and (\ref{eq:cong2}) are congruent modulo
      $I_1^{(j+1)}/p^{\omega+j+1}$. Taking the projective limit, we
      obtain the congruence $(1)$ in Proposition \ref{prop:congzeta} (Lemma \ref{lem:lim}).

\item The sufficiency of $(3)'$ for $(3)$.

Apply the approximation lemma (Proposition \ref{prop:approx}) to
     $(1-w)\xi_3$. Then we have 

\begin{align}
(1-w)\xi_3 &\equiv \sum_{y\in W_3/\Gamma^{p^j}} \Delta_3^{w}
 (1-k, \delta^{(y)}) \kappa(y)^{-k}y \label{eq:cong3}\\
& \qquad \qquad \qquad \quad \text{in
 }\Zp[W_3/\Gamma^{p^j}]/p^{\omega+j}. \nonumber 
 \end{align}

Then by $(3)'$, 
\begin{align*}
 & \qquad \sum_{\sigma \in (G/U_3)/(G/U_3)_y} \Delta_3^w
 (1-k,\delta^{(y^{\sigma})}) \kappa(y^{\sigma})^{-k} y^{\sigma}  \\
 &= \Delta_3^w (1-k,\delta^{(y)}) \kappa(y)^{-k} \sum_{\sigma \in
 (G/U_3)/(G/U_3)_y} y^{\sigma} \\
 &\equiv p^{m_y} \sum_{\sigma \in (G/U_3)/(G/U_3)_y} \sigma^{-1} y
 \sigma \quad (\text{by } (3)') \\
 & \in I_3^{(j)}/p^{\omega+j}
\end{align*}
for every $y\in W_3/\Gamma^{p^j}$ which is not contained in
      $\Gamma$. Therefore it is obvious that the right hand side of the
      congruence 
      (\ref{eq:cong3}) is contained in $(\Zp[\Gamma/\Gamma^{p^j}]+I_3^{(j)})/p^{\omega+j}$.

By taking the projective limit (Lemma \ref{lem:lim}), we have
\[				       
  (1-w)\xi_3 \equiv c_w \qquad \mod I_3
\]
for a certain element $c_w\in \iw{\Gamma}$. Since $1-w\in S_0$, we can
      obtain the congruence $(3)$ in Proposition \ref{prop:congzeta}.

\end{itemize}
\end{proof}

%
\subsection{Hilbert modular forms and Hilbert-Eisenstein series}
%

In this subsection, we review Deligne-Ribet's theory of Hilbert modular
forms. See \cite{De-Ri} and \cite{R-W3} Section 3 for more information.

Let $K$ be a totally real number field of degree $r$, $K^{\infty}/K$ a
totally real $p$-adic Lie extension, and $\Sigma$ a
fixed finite set of primes of $K$ containing all primes which ramify in $K^{\infty}$. Let $\ideal{f}$ be an
integral ideal of $\mathcal{O}_K$ with all prime factors in $\Sigma$.

We denote $\ideal{h}=\{ \tau\in K \otimes \C \mid \mathrm{Im}(\tau) \gg0
\}$ the Hilbert upper-half plane.

For an even positive integer $k$, we define the action of 
\[
 GL(2,K)^+=\{ g\in GL(2,K) \mid \det(g)\gg 0 \}
\]
on functions $F\colon \ideal{h} \rightarrow \C$ by 
\[
 (F|_k\left(\begin{array}{cc} a & b \\ c & d\end{array}\right) ) (\tau)=
  \mathcal{N}(ad-bc)^{k/2}\mathcal{N}(c\tau+d)^{-k}
   F(\frac{a\tau+b}{c\tau +d})
\] 
where $\mathcal{N} \colon K\otimes \C \rightarrow \C$ is the norm map.

\begin{defn}[Hilbert modular forms]

Let
\[
 \Gamma_{00}(\ideal{f})=\{ \left( \begin{array}{cc} a & b \\ c &
			    d\end{array}
\right) \in SL(2,K)\mid a,d\in 1+\ideal{f},
 b\in \ideal{D}^{-1}, c\in \ideal{fD} \}
\]
where $\ideal{D}$ is the differential of $K$. {\em A Hilbert modular
form $F$ of weight $k$ on $\Gamma_{00}(\ideal{f})$} is a holomorphic
function\footnote{If $K=\Q$, we also
assume that $F$ is holomorphic at $\infty$.} $F\colon \ideal{h}\rightarrow \C$ satisfying $F|_kM=F$ for
all $M\in \Gamma_{00}(\ideal{f})$.  

We denote the space of Hilbert modular forms of weight $k$ on
$\Gamma_{00}(\ideal{f})$ by $M_k(\Gamma_{00}(\ideal{f}),\C)$.
\end{defn}

A Hilbert modular form $F$ has a Fourier series expansion (standard $q$-expansion)
\[
 c(0)+\sum_{\mu \in \mathcal{O}_K, \\ \mu \gg 0} c(\mu)q_K^{\mu}
\]
where $q_K^{\mu}=\exp(2\pi \sqrt{-1} \Tr_{K/\Q}(\mu\tau))$.

Deligne and Ribet constructed the Hilbert-Eisenstein series attached to
every locally constant function $\varepsilon$ on $G=\gal{K^{\infty}/K}$
(\cite{De-Ri} Theorem (6.1)).

\begin{thmdef}[Hilbert-Eisenstein series]  \label{thmdef:eisen}
Let $\varepsilon$ be a locally constant function on
 $G=\gal{K^{\infty}/K}$ and $k$ a positive even integer. Then there
 exists an integral ideal $\ideal{f}\subseteq \mathcal{O}_K$ with its
 prime factors in $\Sigma$ and a Hilbert modular form $G_{k,\varepsilon}
 \in M_k(\Gamma_{00}(\ideal{f}),\C)$ whose standard $q$-expansion is 
\[
 2^{-r} \zeta_{K^{\infty}/K}(1-k,\varepsilon) +\sum_{\mu \in
 \mathcal{O}_K, \mu \gg 0} \left( \sum_{\mu \in \ideal{a} \subseteq
 \mathcal{O}_K, \text{prime to }\Sigma} \varepsilon(\ideal{a}) \kappa(\ideal{a})^{k-1} \right)
\]

Here we denote $\varepsilon(\ideal{a})=\varepsilon \left( \left(
 \frac{K^{\infty}/K}{\ideal{a}}\right)\right)$ and $\kappa(\ideal{a})=
 \kappa \left( \left( \frac{K^{\infty}/K}{\ideal{a}}\right)\right)$ where $\left( \frac{K^{\infty}/K}{-}\right)$ is the Artin symbol.

We call $G_{k,\varepsilon}$ {\em the Hilbert-Eisenstein series of weight
 $k$ attached
 to $\varepsilon$}.
\end{thmdef}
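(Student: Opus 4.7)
The plan is to reduce to the case of a finite-order character, construct a classical Shimura-type Eisenstein series for each such character, and compute its $q$-expansion explicitly.

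First, since $\varepsilon$ is locally constant on the profinite group $G=\gal{K^\infty/K}$, it factors through some finite quotient of $G$, hence it can be written as a $\C$-linear combination of characters $\chi$ of finite order. By linearity of both sides (and stability of the space $M_k(\Gamma_{00}(\ideal{f}),\C)$ under enlarging $\ideal{f}$), it suffices to prove the theorem for each such $\chi$, taking $\ideal{f}$ to be a common multiple of the conductors of all characters appearing and of the primes in $\Sigma$.

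Next, for a fixed finite-order character $\chi$ of $G$, viewed as a ray class character of conductor dividing $\ideal{f}$ via the Artin map, I would form the lattice-sum Eisenstein series in the classical style of Hecke and Shimura:
\[
G_{k,\chi}(\tau) \;=\; \sum_{(c,d)}{}' \chi(d\bmod \ideal{f})\, \mathcal{N}(c\tau+d)^{-k},
\]
where the sum runs over pairs $(c,d)\in \ideal{f}\ideal{D}\times \mathcal{O}_K$ with $(c,d)\neq (0,0)$, taken modulo the action of $\mathcal{O}_K^\times$ acting diagonally, and the primed sum and normalization are chosen so that units are killed. Direct verification shows $G_{k,\chi}|_k M = G_{k,\chi}$ for every $M\in \Gamma_{00}(\ideal{f})$: the level-$\ideal{f}$ congruences on $M$ are exactly what preserves both the congruence class of $d$ modulo $\ideal{f}$ and the lattice condition $c\in \ideal{f}\ideal{D}$.

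For the $q$-expansion, I would compute the Fourier expansion at the cusp $\infty$ componentwise. Splitting the sum according to $c=0$ versus $c\neq 0$: the $c=0$ piece collapses (after unit-normalization) to $\sum_{d}\chi(d)\mathcal{N}(d)^{-k}$ which, by grouping $d$'s into principal ideals prime to $\Sigma$ and appealing to Hecke's functional equation/Hurwitz's formula for $L(s,\chi)$ at negative integers, yields $2^{-r}\zeta_{K^\infty/K}(1-k,\chi)$ — rationality being the Siegel–Klingen theorem. For $c\neq 0$, I would apply the Lipschitz/Hurwitz summation formula in each archimedean variable to rewrite $\sum_d \mathcal{N}(c\tau+d)^{-k}$ as a series in $q_K^{\mu}$ with $\mu\gg 0$, and then identify the coefficient of $q_K^{\mu}$, after summing over $c$ and unit-reduction, with $\sum_{\ideal{a}\ni\mu,\ (\ideal{a},\Sigma)=1}\chi(\ideal{a})\kappa(\ideal{a})^{k-1}$, using that $d/c \bmod \ideal{f}$ records the Artin symbol of the ideal $(d,c)\cdot \ideal{f}\ideal{D})^{-1}$ and $\kappa(\ideal{a})=N\ideal{a}$ on the finite part.

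The main obstacle is the analytic foundation at small weights and the bookkeeping of units in the lattice sum. The lattice sum converges absolutely only for $k>2$, so for $k=2$ one must introduce a complex variable $s$ into the summand ($\mathcal{N}(c\tau+d)^{-k}|\mathcal{N}(c\tau+d)|^{-s}$) and analytically continue in $s$ in Hecke's fashion, checking that the continuation remains a holomorphic (not merely real-analytic) modular form thanks to the totally real hypothesis. The unit-group quotient must be handled carefully so that the Artin symbol identification on the $q$-expansion side is correct; once done, linearity in $\varepsilon$ reassembles the character-by-character statement into the form claimed, giving both existence of $G_{k,\varepsilon}$ and the asserted standard $q$-expansion.
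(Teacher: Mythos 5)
Your proposal sets out to actually construct the Eisenstein series, whereas the paper does not prove this statement at all: it is quoted verbatim from Deligne--Ribet, Theorem (6.1), and the ``proof'' in the paper is a one-line citation. What you sketch --- decompose the locally constant $\varepsilon$ into finite-order ray class characters $\chi$, form the Hecke--Shimura lattice sum $\sum'\chi(d)\mathcal{N}(c\tau+d)^{-k}$ over $(c,d)\in\ideal{f}\ideal{D}\times\mathcal{O}_K$ modulo units, check $\Gamma_{00}(\ideal{f})$-invariance, and compute the Fourier expansion by splitting $c=0$ from $c\neq 0$ and applying the Lipschitz/Hurwitz formula --- is essentially the classical construction that underlies Deligne--Ribet's theorem, so the route is compatible with the source rather than contradicting it; what you lose by reproving it is only economy, and what you gain is an explicit handle on the coefficients. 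Two caveats on your sketch, both patchable but worth flagging. First, the $q$-expansion in the statement runs over ideals $\ideal{a}$ prime to \emph{all} of $\Sigma$, not merely prime to the conductor of $\chi$; the naive lattice sum for $\chi$ only removes the Euler factors at primes dividing $\ideal{f}_\chi$, so you must additionally deplete at the remaining primes of $\Sigma$ by the standard inclusion–exclusion of twisted Eisenstein series, and this is also what rescues the case $k=2$, $K=\Q$: there your appeal to ``the totally real hypothesis'' does not by itself give holomorphy (degree $1$ is totally real and $E_2$ is not modular), but the non-holomorphic term cancels in the $\Sigma$-depleted combination since $\Sigma\neq\emptyset$. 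Second, the very next result the paper needs (Proposition \ref{prop:eisencusp}) requires control of the $q$-expansion at \emph{every} cusp, not just at $\infty$, so if you intend your construction to feed into the Deligne--Ribet congruence principle you would have to carry out the cusp computation ad\`elically as well; for the statement as literally posed, the expansion at $\infty$ suffices and your outline is sound.
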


\begin{proof} 
See \cite{De-Ri} Theorem (6.1).
\end{proof}

Next let us discuss the $q$-expansion of Hilbert modular forms at cusps.
Let $\mathbb{A}_K^f$ be the finite ad\`ele ring of $K$. By the strong
approximation theorem
\[
 SL(2,\mathbb{A}_K^f)=\hat{\Gamma}_{00}(\ideal{f}) \cdot SL(2,K),
\]
every $M\in SL(2,\mathbb{A}_K^f)$ is decomposed as $M=M_1M_2$ where
$M_1 \in \hat{\Gamma}_{00}(\ideal{f})$ (the closure of
$\Gamma_{00}(\ideal{f})$ in $SL(2, \mathbb{A}_K^f)$) and $M_2\in
SL(2,K)$. We define $F|_kM$ to be $F|_kM_2$.

For every finite id\`ele $\alpha\in (\mathbb{A}_K^f)^{\times}$, set 
\[
 F_{\alpha}=F|_k\left( \begin{array}{cc} \alpha & 0 \\ 0 & \alpha^{-1} \end{array}\right).
\] 

$F_{\alpha}$ also has a Fourier series expansion
\[
 F_{\alpha}=c(0,\alpha)+\sum_{\mu\in \mathcal{O}_K,\mu \gg 0} c(\mu, \alpha)q_K^{\mu},
\]
and we call it {\em the $q$-expansion of $F$ at the cusp determined by
$\alpha$}.

\begin{prop} \label{prop:eisencusp}
Let $k$ be a positive even integer and $\varepsilon$ a locally constant
 function on $G$. Then the $q$-expansion of $G_{k,\varepsilon}$ at the
 cusp determined by $\alpha\in (\mathbb{A}_K^f)^{\times}$ is given by
\begin{align*}
 \mathcal{N}((\alpha))^k &\Biggl\{ 2^{-r} \zeta_{K^{\infty}/K}(1-k,
 \varepsilon_{a}) \Biggr. \\ 
& \Biggl. +\sum_{\mu\in \mathcal{O}_K, \mu \gg 0} \left(
 \sum_{\mu \in \ideal{a}\subseteq \mathcal{O}_K, \text{prime to }\Sigma}
 \varepsilon_a (\ideal{a})
 \kappa(\ideal{a})^{k-1} \right) \Biggr\}
\end{align*}
where $(\alpha)$ is the ideal generated by $\alpha$ and $a=\left(
 \frac{K^{\infty}/K}{(\alpha)\alpha^{-1}}\right)$. 
\end{prop}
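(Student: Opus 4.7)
The plan is to deduce Proposition~\ref{prop:eisencusp} from Theorem-Definition~\ref{thmdef:eisen} by tracking the effect of the diagonal id\`ele $\begin{pmatrix}\alpha & 0 \\ 0 & \alpha^{-1}\end{pmatrix}$ on Deligne--Ribet's adelic construction of the Eisenstein series. I would work with the adelic realization of $G_{k,\varepsilon}$ rather than manipulate the complex-analytic $q$-expansion directly, because the former makes the Hecke/Atkin--Lehner nature of the diagonal action transparent.

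First, I would lift $G_{k,\varepsilon}$ to an automorphic function $\widetilde G_{k,\varepsilon}$ on $GL_2(\mathbb{A}_K)$; by the construction recalled in \cite{De-Ri}, evaluation at the identity coset recovers the standard $q$-expansion of Theorem-Definition~\ref{thmdef:eisen}. Right-translation of $\widetilde G_{k,\varepsilon}$ by $\begin{pmatrix}\alpha & 0 \\ 0 & \alpha^{-1}\end{pmatrix}$ computes $F_\alpha$ at the archimedean variable, after reducing the finite-adelic part back to the identity coset via the strong approximation $SL_2(\mathbb{A}_K^f)=\widehat\Gamma_{00}(\ideal{f})\cdot SL_2(K)$ used to define $F|_kM$ in the first place; the ambiguity by $\widehat\Gamma_{00}(\ideal{f})$ is killed by the level structure $\Gamma_{00}(\ideal{f})$ of $G_{k,\varepsilon}$.

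Second, the action splits into two pieces. The weight-$k$ normalization $\mathcal{N}(ad-bc)^{k/2}\mathcal{N}(c\tau+d)^{-k}$ applied to the diagonal matrix gives a homothety on $\tau$, and after absorbing the rescaling of $\mu$ in the $q_K^\mu$ coefficient one obtains the overall factor $\mathcal{N}((\alpha))^k$. The twist of the ideal variable $\ideal{a}$ by $(\alpha)\alpha^{-1}$---understood as the id\`elic product of the fractional ideal $(\alpha)$ with the inverse id\`ele $\alpha^{-1}$, which lies in $\prod_v \cal{O}_v^{\times}$ and hence defines a well-defined element of $G=\gal{K^\infty/K}$ via the Artin map---shifts the Artin symbol by $a=\left(\frac{K^\infty/K}{(\alpha)\alpha^{-1}}\right)$. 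Hence $\varepsilon(\ideal{a})$ is replaced by $\varepsilon(a\cdot\ideal{a})=\varepsilon_a(\ideal{a})$, and the same shift transforms $\zeta_{K^\infty/K}(1-k,\varepsilon)$ into $\zeta_{K^\infty/K}(1-k,\varepsilon_a)$.

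Combining the two effects yields the expansion claimed in Proposition~\ref{prop:eisencusp}, which is exactly $\mathcal{N}((\alpha))^k$ times the standard $q$-expansion of $G_{k,\varepsilon_a}$ furnished by Theorem-Definition~\ref{thmdef:eisen}. The main obstacle is the bookkeeping of conventions: one must verify that $(\alpha)\alpha^{-1}$ indeed lies in $\prod_v \cal{O}_v^{\times}$ and maps under the Artin symbol to the correct element $a$ (sign conventions in \cite{De-Ri} differ from some classical sources), and that the weight-$k$ normalization produces precisely the factor $\mathcal{N}((\alpha))^k$ rather than $\mathcal{N}((\alpha))^{k/2}$ or $\mathcal{N}((\alpha))^{k-1}$. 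Once the normalizations are pinned down, the result drops out from Theorem-Definition~\ref{thmdef:eisen} applied to $\varepsilon_a$, without any further analytic input.
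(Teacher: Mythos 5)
The paper does not prove this proposition at all: its ``proof'' is the single line ``See \cite{De-Ri} Theorem (6.1),'' since the statement is quoted verbatim from Deligne--Ribet (their Theorem (6.1) gives the $q$-expansion of $G_{k,\varepsilon}$ at every cusp simultaneously with its construction). Your proposal is therefore, by necessity, a different route: a reconstruction of the Deligne--Ribet argument rather than a citation. The outline is sound and correctly organized -- lifting to the adelic picture so that the diagonal translation by $\left(\begin{smallmatrix}\alpha & 0\\ 0 & \alpha^{-1}\end{smallmatrix}\right)$ can be tracked without explicitly computing the $SL(2,K)$-factor $M_2$ in the strong-approximation decomposition, observing that $(\alpha)\alpha^{-1}$ is a unit id\`ele whose Artin symbol $a$ twists $\varepsilon$ into $\varepsilon_a$, and recognizing the right-hand side as $\mathcal{N}((\alpha))^k$ times the standard $q$-expansion of $G_{k,\varepsilon_a}$ (a useful reformulation, and consistent with the sanity check $\alpha=1$).

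That said, as a standalone proof your write-up is incomplete in exactly the place you flag yourself: the derivation of the factor $\mathcal{N}((\alpha))^k$ and the verification that the id\`elic twist lands on $\varepsilon_a$ with the stated sign convention are the entire analytic content of the statement, and you defer both (``once the normalizations are pinned down, the result drops out''). In Deligne--Ribet these normalizations come out of the explicit description of the Eisenstein series as a sum over lattice points in a pair of fractional ideals attached to the cusp, and there is no shortcut around that computation. Since the paper itself treats the proposition as an imported black box, deferring to \cite{De-Ri} for these details is acceptable here, but you should be aware that your argument establishes the shape of the formula rather than the formula itself.
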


\begin{proof}
See \cite{De-Ri} Theorem (6.1).
\end{proof}

Now we introduce the Deligne-Ribet's deep principle.

\begin{thm}[Deligne-Ribet] \label{thm:DRpri}
Let $F_k \quad (k\geq 0)$ be rational Hilbert modular forms of weight $k$ on
 $\Gamma_{00}(\ideal{f})$ $($that is, all coefficients of the
 $q$-expansion of $F_k$ at every cusp are rational numbers$)$, and $F_k=0$ for all but finitely many $k$.

Let $\alpha\in (\mathbb{A}_K^f)^{\times}$. We denote by $\alpha_p$ the
 $p$-th component of $\alpha$. Set 
\[
 S(\alpha)=\sum_{k\geq 0} \mathcal{N}\alpha_p^{-k} F_{k,\alpha}.
\]

If $S(\alpha)$ has all coefficients in $p^j\Z_p$ $(j\in \Z)$ for one
 $\alpha \in (\mathbb{A}^f_K)^{\times}$, 
 $S(\alpha)$ has all coefficients in $p^j\Z_p$ for {\em every} $\alpha
 \in (\mathbb{A}_K^f)^{\times}$.
\end{thm}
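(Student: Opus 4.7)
The plan is to interpret $S(\alpha)$ as the $q$-expansion at the cusp labelled by $\alpha$ of a single $p$-adic Hilbert modular form, and then invoke a $q$-expansion principle for such forms, which rests on an irreducibility (connectedness) result for the Hilbert modular variety in characteristic $p$. Accordingly, I would first realize each rational Hilbert modular form $F_k$ of weight $k$ on $\Gamma_{00}(\ideal{f})$ as a global section of the automorphic line bundle $\omega^{\otimes k}$ on a toroidal compactification $\overline{\mathfrak{M}}$ of the Hilbert--Blumenthal moduli stack parametrizing abelian schemes with real multiplication by $\mathcal{O}_K$ and suitable $\ideal{f}$-level structure. In this language, the finite id\`ele $\alpha \in (\mathbb{A}_K^f)^\times$ picks out a cusp of $\overline{\mathfrak{M}}$, and the $q$-expansion $F_{k,\alpha}$ arises by pulling back along the Tate semi-abelian scheme attached to that cusp and trivializing $\omega$ by the canonical invariant differential.

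Next I would explain why the twist $\mathcal{N}\alpha_p^{-k}$ is the correct one to permit summing over $k$. Over the ordinary locus of $\overline{\mathfrak{M}}\otimes\mathbb{Z}_p$ one may rigidify $\omega$ via the Hodge--Tate splitting of the connected part of the universal $p$-divisible group; the difference between this ``$p$-adic'' trivialization and the standard analytic one at the cusp $\alpha$ is precisely a factor of $\mathcal{N}\alpha_p$ on $\omega$, so $\mathcal{N}\alpha_p^{-k}F_{k,\alpha}$ is the $q$-expansion of $F_k$ with respect to the $p$-adic trivialization. Consequently the whole sum $S(\alpha) = \sum_k \mathcal{N}\alpha_p^{-k} F_{k,\alpha}$ represents a single $p$-adic Hilbert modular form $\mathcal{F} \in V_{\mathcal{O}_K}(\Gamma_{00}(\ideal{f}))$ in the sense of Deligne--Ribet, independent of $\alpha$, whose $q$-expansion at the cusp $\alpha$ is $S(\alpha)$.

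With this set up, the theorem reduces to the $q$-expansion principle: a $p$-adic Hilbert modular form whose $q$-expansion at one cusp has coefficients in $p^j\mathbb{Z}_p$ has the same property at every cusp. The core input is that the open Hilbert modular scheme over $\mathrm{Spec}\,\mathbb{Z}_p$, together with the chosen level structure, has geometrically connected fibers after passing to the Igusa tower $\mathfrak{M}(p^\infty)$ trivializing the connected part of the $p$-divisible group; then the difference of $p$-adic $q$-expansions at two cusps defines a function on the formal neighborhoods of the corresponding points of an irreducible formal scheme, and its divisibility by $p^j$ can be checked at a single point. Since $\alpha$ varies over $(\mathbb{A}_K^f)^\times$, one must additionally account for the $\pi_0$ of the Hilbert modular variety; the determinant map and the Galois action of $\mathrm{Gal}(K(\mu_{p^\infty})/K)$ on components permute the cusps transitively, so integrality at one $\alpha$ propagates to all of them.

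The main obstacle is precisely this irreducibility of the Igusa tower over the ordinary locus of the Hilbert modular variety. For $K=\mathbb Q$ this is classical (Deligne, Katz), but for a general totally real $K$ it requires a careful monodromy computation on the connected part of the universal $p$-divisible group, identifying the monodromy with an open subgroup of $(\mathcal{O}_K\otimes\mathbb{Z}_p)^\times$; this is the heart of \cite{De-Ri}. Granting that geometric input, the rest of the argument is a formal dictionary between modular forms and their $q$-expansions, and the proof goes through as outlined.
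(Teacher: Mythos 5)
The paper does not actually prove this statement: it is quoted as Theorem (0.2) of \cite{De-Ri} and the ``proof'' is the citation, so there is no argument in the paper against which to measure yours step by step. Your proposal is a reasonable outline of how Deligne and Ribet themselves establish the result --- realizing $\sum_k \mathcal{N}\alpha_p^{-k}F_{k,\alpha}$ as the $q$-expansion at the cusp $\alpha$ of a single $p$-adic Hilbert modular form (the twist by $\mathcal{N}\alpha_p^{-k}$ being exactly what makes the weight-$k$ pieces comparable after the unit-root/ordinary trivialization of $\omega$), and then appealing to the $q$-expansion principle, whose substance is the irreducibility of the Igusa tower over the ordinary locus together with a bookkeeping of $\pi_0$ and the transitive action on cusps. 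You have correctly identified that this last geometric irreducibility statement is the heart of the matter; but since you explicitly grant it, your text is an expanded citation rather than a proof, which is precisely the status the result has in the paper as well. If you wanted a genuinely self-contained treatment you would have to supply the monodromy computation identifying the image in $(\mathcal{O}_K\otimes\Zp)^{\times}$ and the comparison of algebraic and analytic $q$-expansions at all cusps; for the purposes of this paper, deferring to \cite{De-Ri} is the intended reading.
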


\begin{proof}
See \cite{De-Ri} Theorem (0.2).
\end{proof}

\begin{cor}[{\cite{De-Ri}} Corollary (0.3)] \label{cor:DRpri}
Let $S(\alpha)$ as in Theorem $\ref{thm:DRpri}$.
Suppose that there exists $\alpha\in (\mathbb{A}_K^f)^{\times}$ such that 
all non-constant coefficients of $S(\alpha)$ are contained in $p^j \Z_{(p)}$
 $(j\in \Z)$. Then for arbitrary two distinct elements $\beta, \beta'
 \in (\mathbb{A}_K^f)^{\times}$, the difference between 
the constant terms of the $q$-expansions of $S(\beta)$ and $S(\beta')$
 is an element of $p^j\Z_{(p)}$.
\end{cor}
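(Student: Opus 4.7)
The plan is to deduce Corollary \ref{cor:DRpri} from Theorem \ref{thm:DRpri} by absorbing the ``bad'' constant term of $S(\alpha)$ into $F_0$ via a suitable rational modification, exploiting the fact that a constant function is itself a weight-zero rational Hilbert modular form that is invariant under the slash action $|_0$. In particular, for any $c_0 \in \Q$, the family obtained by replacing $F_0$ with $F_0 - c_0$ (leaving all other $F_k$ unchanged) gives a modified $S'(\gamma) = S(\gamma) - c_0$ simultaneously at every cusp $\gamma \in (\mathbb{A}_K^f)^\times$.

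First I would choose a rational number $c_0 \in \Q$ such that $c(\alpha) - c_0 \in p^j\Z_p$, where $c(\gamma)$ denotes the constant Fourier coefficient of $S(\gamma)$; this is possible since $\Q$ is $p$-adically dense in $\Q_p$. Passing to the modified family, every Fourier coefficient of $S'(\alpha) = S(\alpha) - c_0$ then lies in $p^j\Z_p$: the non-constant ones by the hypothesis $p^j\Z_{(p)} \subseteq p^j\Z_p$, and the constant one by the choice of $c_0$. Theorem \ref{thm:DRpri} therefore applies and forces every Fourier coefficient of $S'(\gamma) = S(\gamma) - c_0$ to lie in $p^j\Z_p$ for \emph{every} $\gamma \in (\mathbb{A}_K^f)^\times$.

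Specializing the conclusion at $\gamma = \beta$ and $\gamma = \beta'$ to the constant Fourier coefficient, I would get $c(\beta) - c_0 \in p^j\Z_p$ and $c(\beta') - c_0 \in p^j\Z_p$; subtracting eliminates the (possibly irrational) $c_0$ and gives
\[
  c(\beta) - c(\beta') \in p^j\Z_p.
\]
To upgrade from $p^j\Z_p$ to $p^j\Z_{(p)} = p^j\Z_p \cap \Q$, it remains to argue that $c(\beta) - c(\beta')$ is a rational number; this uses the rationality of each $F_k$ as a Hilbert modular form, which by Proposition \ref{prop:eisencusp} (applied in the Eisenstein situation to which the corollary is meant to be applied) forces the constant Fourier coefficients of $F_{k,\beta}$ and $F_{k,\beta'}$ together with the scaling factors $\mathcal{N}\beta_p^{-k}$ and $\mathcal{N}\beta'_p{}^{-k}$ to combine into rational numbers.

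The technical heart of the argument is of course encapsulated in Theorem \ref{thm:DRpri}, so the only issue specific to the corollary is the manoeuvre of modifying $F_0$; the main obstacle is the rationality upgrade in the final step, since the factors $\mathcal{N}\gamma_p^{-k}$ are a priori only $p$-adic. This is handled by restricting attention to the setting in which the corollary is actually invoked (Hilbert--Eisenstein series attached to locally constant functions on $W_i$, whose constant Fourier coefficients at every cusp are explicit rational multiples of special values of partial zeta functions via Proposition \ref{prop:eisencusp}), where rationality of $c(\beta) - c(\beta')$ is transparent.
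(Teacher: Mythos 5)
Your proof is correct and follows essentially the same route as the paper: interpret a constant as a rational weight-zero Hilbert modular form, modify $F_0$ so that the constant term at the cusp $\alpha$ becomes $p$-integral, apply Theorem \ref{thm:DRpri}, and read off the constant coefficients at $\beta$ and $\beta'$. Your one deviation --- subtracting a rational approximation $c_0$ of $c(0,\alpha)$ rather than $c(0,\alpha)$ itself --- is a small improvement, since it avoids the paper's unjustified assertion that $c(0,\alpha)$ is rational, and you rightly flag that the final upgrade from $p^j\Zp$ to $p^j\Z_{(p)}$ rests on the rationality of $c(0,\beta)-c(0,\beta')$, which is only transparent in the Eisenstein applications.
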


\begin{proof}
Let $c(0,\alpha)$ be the constant term of $S(\alpha)$. We may interpret
 $c(0,\alpha)$ as an element of $M_0(\Gamma_{00}(\ideal{f}),\Q)$.

Set $S(\beta)_{c(0,\alpha)}=S(\beta)-c(0,\alpha)$. Then
 $S(\alpha)_{c(0,\alpha)}$ has all coefficients in
 $p^j\Z_{(p)}$, so does $S(\beta)_{c(0,\alpha)}$ by Theorem \ref{thm:DRpri}. Especially 
the constant term of $S(\beta)_{c(0,\alpha)}$ is also an element of
 $p^j\Z_{(p)}$, but it is no other than $c(0,\beta)-c(0,\alpha)$.
\end{proof}

%
\subsection{Proof of the sufficient conditions}
%

Now we prove the sufficient conditions in Proposition
\ref{prop:suff}. We only prove the conditions $(1)'$ and $(3)'$. 
One can prove the conditions $\tilder{(2)}'$ (resp. $(2)'$) in the same
manner as $(1)'$ (resp. $(3)'$).

\medskip

\noindent \textbf{Condition} $(1)'$. Let $k$ be a positive even integer and
      $\varepsilon$ a locally constant $\Z_{(p)}$-valued function on
      $W_1$ fixed by the action of $G/U_1$. Let
      $G_{k,\varepsilon}$ (resp.\ $G_{pk, \varepsilon \circ \varphi}$) be
      the Hilbert-Eisenstein series of weight $k$ (resp.\ weight $pk$)
      attached to $\varepsilon$ (resp.\ $\varepsilon \circ \varphi$) (Theorem-Definition \ref{thmdef:eisen}).

      The natural inclusion $F\rightarrow F_{U_1}$ induces the
      restriction of the Hilbert-Eisenstein series $\mathrm{res
      }\, G_{k,\varepsilon}$ on $\ideal{h}_F$. It is easy to see that 
      $\mathrm{res}\, G_{k,\varepsilon}$ is a Hilbert modular form of weight $pk$ and 
      its standard $q$-expansion is given by (\cite{R-W3} Lemma 7)
      \begin{align*}
       \mathrm{res}\, G_{k,\varepsilon} &=2^{-[F_{U_1}:\Q]}
        \zeta_{F_{V_1}/F_{U_1}}(1-k, \varepsilon) \\ &+ \sum_{\mu \in
      \mathcal{O}_F, \mu \gg 0} 
      \left( \sum_{(\ideal{b}, \nu) \in P_1^{\mu}}\varepsilon
       (\ideal{b})
       \kappa(\ideal{b})^{k-1}  \right)  q_F^{\mu}
      \end{align*}
      where $q_F^{\mu}=\exp(2\pi \sqrt{-1}\Tr_{F/\Q}(\mu \tau))$ and 
\begin{align*}
 P_1^{\mu}=\{ (\ideal{b}, \nu)  \mid \nu \in \ideal{b} \subseteq 
      \mathcal{O}_{F_{U_1}}, &\ideal{b} \text{ is prime to $\Sigma$},  \\
 &  \nu \gg 0, \Tr_{F_{U_1}/F}(\nu)=\mu \}.
\end{align*}
      
      For $\lambda \in \mathcal{O}_F$, we may construct the Hecke
      operator $U_{\lambda}$ associated to $\lambda$ (See \cite{R-W3}
      Lemma 6), which ``shifts the coefficients of $q$-expansion by
      $\lambda$.'' Therefore we have 
       \begin{align*}
       (\mathrm{res}\, G_{k,\varepsilon})|_{pk}U_p &=2^{-[F_{U_1}:\Q]}
        \zeta_{F_{V_1}/F_{U_1}}(1-k, \varepsilon) \\ &+ \sum_{\mu \in
      \mathcal{O}_F, \mu \gg 0} 
      \left( \sum_{(\ideal{b}, \nu) \in P_1^{p\mu}}\varepsilon 
	(\ideal{b})
      \kappa(\ideal{b})^{k-1}  \right)  q_F^{\mu}
      \end{align*}

      On the other hand, the standard $q$-expansion of $G_{pk,
      \varepsilon \circ \varphi}$ is 
      \begin{align*}
       & G_{pk, \varepsilon \circ \varphi} = 2^{-[F:\Q]}
      \zeta_{F_{V_0}/F_{U_0}} (1-pk, \varepsilon \circ \varphi) \\
       &+
      \sum_{\mu \in \mathcal{O}_F, \mu \gg 0} \left( \sum_{\mu \in
      \ideal{a} \subseteq \mathcal{O}_F, \text{prime to }\Sigma}
      \varepsilon \circ \varphi \left( \left(
      \frac{F_{V_0}/F_{U_0}}{\ideal{a}}\right)\right)
      \kappa(\ideal{a})^{pk-1} \right) q_F^{\mu}.
      \end{align*}
      and note that 
      \[
	\varphi \left( \left( \frac{F_{V_1}/F_{U_1}}{\ideal{a}}
      \right)\right) =\Ver \left( \left(
      \frac{F_{V_1}/F_{U_1}}{\ideal{a}} \right)\right)	=\left( \frac{F_{V_0}/F_{U_0}}{\ideal{a}\mathcal{O}_{F_{U_1}}}\right) 
		\]
by the commutativity of the Artin symbol and the Verlagerung
homomorphism (see Lemma \ref{lem:verfrob}).

      Set $S=(\mathrm{res}\,
      G_{k,\varepsilon})|_{pk}U_p-G_{pk,\varepsilon \circ \varphi}$,
      then the $\mu$-th coefficient of $S$ is 
\[
 \sum_{(\ideal{b},\nu)\in P_1^{p\mu}} \varepsilon
      (\ideal{b})\kappa(\ideal{b})^{k-1}  -\sum_{\mu \in \ideal{a}
      \subseteq \mathcal{O}_F} \varepsilon
      (\ideal{a}\mathcal{O}_{F_{U_1}}) \kappa(\ideal{a})^{pk-1}
\]

Let $(G/U_1)_{(\ideal{b}, \nu)}$ be the isotropic subgroup of $G/U_1$ at
      $(\ideal{b}, \nu) \in P_1^{p\mu}$.

\begin{enumerate}[({Case}-1)]
\item $(G/U_1)_{(\ideal{b}, \nu)}=\{ \id \}$. Note that
\begin{align*}
\varepsilon \left( \left(
 \frac{F_{V_1}/F_{U_1}}{\ideal{b}^{\sigma}}\right)\right) &= 
\varepsilon \left( \sigma^{-1} \left( \frac{F_{V_1}/F_{U_1}}{\ideal{b}}
 \right)\sigma \right) \\ 
 &= \varepsilon^{\sigma} \left( \left(
 \frac{F_{V_1}/F_{U_1}}{\ideal{b}}\right)\right) \\
 &= \varepsilon \left( \left(
 \frac{F_{V_1}/F_{U_1}}{\ideal{b}}\right)\right)
\end{align*}
for every $\sigma \in G/U_1$. Similarly we have
      $\kappa(\ideal{b}^{\sigma})^{k-1}=\kappa(\ideal{b})^{k-1}$. Therefore the sum of $(\ideal{b},
      \nu)$-orbit is given by 
\[
 \sum_{\sigma \in G/U_1} \varepsilon  (\ideal{b}^{\sigma})
      \kappa(\ideal{b}^{\sigma})^{k-1} =p\varepsilon (\ideal{b})
      \kappa(\ideal{b})^{k-1} \in p\Z_{(p)}.
\]

\item $(G/U_1)_{(\ideal{b},\nu)}=G/U_1$. In this case $G/U_1$ fixes
      $(\ideal{b}, \nu)$, therefore $\nu \in F$ and
      $\ideal{b}=\ideal{a}\mathcal{O}_{F_{U_1}}$ for unique integral
      ideal $\ideal{a}$ of $F$ prime to $\Sigma$. Since
      $\Tr_{F_{U_1}/F}(\nu)=p\mu$, we have $\nu=\mu$. Therefore
\begin{align*}
 \varepsilon (\ideal{b}) \kappa(\ideal{b})^{k-1} &=\varepsilon
      (\ideal{a}\mathcal{O}_{F_{U_1}})
      \kappa(\ideal{a}\mathcal{O}_{F_{U_1}})^{k-1} =\varepsilon
      (\ideal{a} \mathcal{O}_{F_{U_1}}) \kappa(\ideal{a})^{p(k-1)} \\
      & \equiv 
      \varepsilon
      (\ideal{a} \mathcal{O}_{F_{U_1}}) \kappa(\ideal{a})^{pk-1} \quad
      \mod p
\end{align*}
and so the $(\ideal{a}\mathcal{O}_F, \mu)$-term vanishes modulo $p$.
\end{enumerate}

\medskip
Therefore $S$ has all the non-constant coefficients in $p\Z_{(p)}$, and we
      can apply Deligne-Ribet's principle (Corollary \ref{cor:DRpri}) to
      $S=S(1)$. There exists a finite id\`ele $\gamma$ such that $\left(
      \frac{F_{V_0}/F_{U_0}}{((\gamma))\gamma^{-1}}\right)=g$ (see \cite{De-Ri} (2.23)).
      Then by Corollary \ref{cor:DRpri} $S(1)-S(\gamma)$ has its
      constant term in $p\Z_{(p)}$. By easy calculation, the constant
      term of $E-E(\gamma)$ is
\begin{align*}
2^{-pr} &\zeta_{F_{V_1}/F_{U_1}}(1-k,\varepsilon)-2^{-r}\zeta_{F_{V_0}/F_{U_0}}
 (1-pk, \varepsilon \circ \varphi)-\mathcal{N}(\gamma_p)^{-pk}
 \mathcal{N}((\gamma))^{pk}   \\
& \qquad \times \{ 2^{-pr}
 \zeta_{F_{V_1}/F_{U_1}} (1-k,\varepsilon_{\frob{g}})-2^{-r}
 \zeta_{F_{V_0}/F_{U_0}} (1-pk,(\varepsilon\circ \varphi)_g) \} \\
= &2^{-pr} \{ \zeta_{F_{V_1}/F_{U_1}} (1-k,\varepsilon)-\kappa(\varphi(g))^{k}
 \zeta_{F_{V_1}/F_{U_1}} (1-k, \varepsilon_{\frob{g}})\} \\
&- 2^{-r} \{ \zeta_{F_{V_0}/F_{U_0}}(1-pk, \varepsilon\circ \varphi)
 -\kappa(g)^{pk} \zeta_{F_{V_0}/F_{U_0}}(1-pk, (\varepsilon \circ
 \varphi)_g)\} \\
= & 2^{-pr} \Delta_1^{\frob{g}} (1-k, \varepsilon)-2^{-r} \Delta_0^g
 (1-pk, \varepsilon \circ \varphi) \\
\equiv & 2^{-r} \{ \Delta_1^{\frob{g}} (1-k, \varepsilon) -\Delta_0^g
 (1-pk, \varepsilon \circ \varphi) \} \quad \mod p.
\end{align*}
Here we set $r=[F:\Q]$ and use
      $\mathcal{N}((\gamma))=\mathcal{N}(\gamma_p) \kappa(g)$ (\cite{R-W3}
      Lemma 9), $\kappa(\varphi(g))=\kappa(g)^p$. For precise calculation, see \cite{R-W3}, the
      proof of the Theorem.

\bigskip
\noindent \textbf{Condition} $(3)'$. Let $j$ be a sufficiently large integer and
      $y\in W_3/\Gamma^{p^j}$ is a coset which is not contained in
      $\Gamma$. 

      Let $S=G_{k,\delta^{(y)}}$ be the Hilbert-Eisenstein series of
      weight $k$ attached to $\delta^{(y)}$, then the standard
      $q$-expansion of $S$ is given by 
\begin{align*}
S &=2^{-[F_{U_3}:\Q]}  \zeta_{F_{V_3}/F_{U_3}} (1-k, \delta^{(y)}) \\
 & + \sum_{\nu \in \mathcal{O}_{F_{U_3}}, \mu \gg 0} \left( \sum_{\nu
 \in \ideal{b} \subseteq \mathcal{O}_{F_{U_3}}, \text{prime to }\Sigma}
 \delta^{(y)}( \ideal{b}) \kappa(\ideal{b})^{k-1} \right) q_{F_{U_3}}^{\nu}
\end{align*}
where $q_{F_{U_3}}^{\nu}=\exp(2\pi {\sqrt{-1}} \Tr_{F_{U_3}/\Q}(\nu \tau))$.

\begin{enumerate}[({Case}-1)]
\item $(G/U_3)_{(\ideal{b}, \nu)}=\{ \id \}$. In this case, we can
      easily calculate the sum of $(\ideal{b}, \nu)$-orbit :
\[
 \sum_{\sigma\in G/U_3} \delta^{(y)}(\ideal{b}^{\sigma})
      \kappa(\ideal{b}^{\sigma})^{k-1} =p^{m_y} \sum_{\sigma\in
      (G/U_3)/(G/U_3)_y}\delta^{(y^{\sigma^{-1}})}(\ideal{b})
      \kappa(\ideal{b})^{k-1}.
\] 

\item $(G/U_3)_{(\ideal{b}, \nu)}\neq \{ \id\}$. Let $F_{(\ideal{b},
      \nu)}$ be the fixed field of $(G/U_3)_{(\ideal{b}, \nu)}$. Then by
      the same argument as the proof of the condition $(1)'$ (Case-2), there exists $\mu \in F_{(\ideal{b}, \nu)}$ and
      $\ideal{a}\subseteq \mathcal{O}_{F_{(\ideal{b},\nu)}}$ uniquely
      such that $(\ideal{b}, \nu)=(\ideal{a}\mathcal{O}_{F_{U_3}},
      \mu)$. For such $(\ideal{a}, \mu)$, 
\begin{align*}
 \delta^{(y)} (\ideal{b}) &=\delta^{(y)} \left( \left(
 \frac{F_{U_3}/F_{V_3}}{\ideal{a}\mathcal{O}_{F_{U_3}}}\right) \right) \\
&= \delta^{(y)} \circ \Ver \left(\left( \frac{F_{V_3}/F_{(\ideal{b},
 \nu)}}{\ideal{a}} \right)\right) =0
\end{align*}
because $\mathrm{Im}(\Ver)=\mathrm{Im}(\varphi) \subseteq \Gamma$.
\end{enumerate}

\medskip
Therefore $S$ has all the non-constant coefficients in
      $p^{m_y}\Z_{(p)}$. Take a finite id\`ele $\gamma'$ such that $\left(
      \frac{F_{V_3}/F_{U_3}}{(\gamma'){\gamma'}^{-1}}\right)=w$. Then by
      Corollary \ref{cor:DRpri}, the constant term of $S-S(\gamma')$ is
      also in $p^{m_y}\Z_{(p)}$, and it is equal to 
\[
  2^{-p^3r} \Delta_3^{w} (1-k, \delta^{(y)}).
\] 	

Thus we have finished the proof of Proposition \ref{prop:suff}.

%
%
\section{Proof of the main theorem}
%
%

Unfortunately, we cannot conclude that $(\xi_i)_i$ is contained in
$\Psi_S$ by the congruences obtained in the previous section, so we may
not apply Burns' technique (Theorem \ref{thm:ak-pri}) directly to
$(\xi_i)_i$. 

In this section, we modify the proof of Theorem \ref{thm:ak-pri} and prove our main theorem (Theorem \ref{thm:maintheorem}) using
an inductive technique.

%
\subsection{Kato's $p$-adic zeta function for $F_N/F$}
%

Let 
\begin{equation*}
N=\begin{pmatrix} 1 & 0 & 0 & \Fp \\ 0 & 1 & 0 & \Fp \\ 0 & 0 & 1 & \Fp \\
 0 & 0 & 0 & 1 \end{pmatrix} \times \{ 1 \}
\end{equation*}
be a closed normal subgroup of $G$ and set $\line{G}=G/N, \line{U}_i=U_i/N$ and $\line{V}_i=V_i\cdot N/N$.

Then we have the splitting exact sequence
\begin{equation*}
\begin{CD}
1 @>>> N @>>> G @>\pi>> \line{G} @>>> 1.
\end{CD}
\end{equation*}
Let 
\begin{equation*}
s \colon \line{G} \rightarrow G ; \left( 
\begin{pmatrix} 1 & a & d\\  0 & 1 & b \\ 0 & 0 & 1 \end{pmatrix}, t^z
				      \right) \mapsto
\left( \begin{pmatrix} 1 & a & d & 0 \\ 0 & 1 & b & 0 \\ 
0 & 0 & 1 & 0 \\ 0 & 0 & 0 & 1 \end{pmatrix}, t^z \right)
\end{equation*}
be a section of $\pi$.

The $p$-adic Lie group
\begin{equation*}
\line{G}=\begin{pmatrix} 1 & \Fp & \Fp \\ 0 & 1 & \Fp \\ 0 & 0 & 1 \end{pmatrix} \times \Gamma
\end{equation*}
is a group ``of Heisenberg type,'' for which Kazuya Kato has already
proven the existence of the $p$-adic zeta function in \cite{Kato1}.

\begin{thm}[Kato] \label{thm:kato}
The $p$-adic zeta function $\line{\xi}$ for $F_{N}/F$ exists uniquely and it
 satisfies the Iwasawa main conjecture $($Conjecture $\ref{conj:iwasawa}$ $(2))$.
\end{thm}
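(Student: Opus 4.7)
The plan is to mirror the strategy already outlined in \S 3--\S 8 of the present paper, but for the Heisenberg-type quotient $\line{G} = \line{G}^f \times \Gamma$ where $\line{G}^f = \begin{pmatrix} 1 & \Fp & \Fp \\ 0 & 1 & \Fp \\ 0 & 0 & 1 \end{pmatrix}$. First I would choose a family $\line{\ideal{F}}$ of pairs $(\line{U}, \line{V})$ with $\line{U}$ open in $\line{G}$, $\line{V}$ normal in $\line{U}$, and $\line{U}/\line{V}$ abelian, satisfying hypothesis $(\flat)$ of \S 3.2. Writing $\line{G}^f \cong \begin{pmatrix} 1 & \Fp \\ 0 & 1 \end{pmatrix} \ltimes \begin{pmatrix} \Fp & \Fp \end{pmatrix}$ as a semi-direct product and applying the Mackey--Serre theory of representations of semidirect products exactly as in \S 4.1, every irreducible representation of $\line{G}^f$ is an induced representation from a character of one of three abelian sections. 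This yields $\line{\ideal{F}}$ with three (or four, after adding technical subgroups as in \S 4.1) members.

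Second, I would construct the additive theta map $\line{\theta}^+\colon \Zp[[\conj{\line{G}}]] \to \prod_i \Zp[[\line{U}_i/\line{V}_i]]$ as the product of trace maps, characterize its image $\line{\Omega}$ by suitable trace relations (the analogue of Definition~\ref{def:omega}, but with only two or three congruence conditions), and show by the exact argument of Proposition-Definition~\ref{pd:adak} that $\line{\theta}^+$ is an isomorphism onto $\line{\Omega}$. Then, using the integral logarithmic homomorphism $\Gamma_{\line{G}^{(n)}}$ and the logarithmic isomorphism lemmas of \S 5.1, I would translate this into the (multiplicative) theta map $\line{\theta}\colon K_1(\iw{\line{G}}) \to \line{\Psi}$ and its localised version $\line{\theta}_S\colon K_1(\iw{\line{G}}_S) \to \line{\Psi}_S$, where $\line{\Psi}$ and $\line{\Psi}_S$ are the subgroups cut out by the appropriate norm relations together with congruences of the form $\line{\eta}_i \equiv \varphi(\line{\eta}_0)^{p^{?}} \mod \line{I}_i$. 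Here the crucial extra input is Remark~\ref{rem:kato}: for the Heisenberg group one has $SK_1(\Zp[\line{G}^{(n)}]) = 1$, so the theta map is in fact \emph{injective}, and by the last assertion of Theorem~\ref{thm:ak-pri} the $p$-adic zeta function for $F_N/F$, once shown to exist, will be unique.

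The main obstacle, as always, is verifying that the tuple of abelian pseudomeasures $(\line{\xi}_i)_i$ lies in $\line{\Psi}_S$. I would reduce this to congruences among special values of partial zeta functions via the approximation lemma (Proposition~\ref{prop:approx}), and then derive those congruences from the theory of $\Lambda$-adic Hilbert modular forms of Deligne--Ribet, following the principle $(\sharp)$ and the cusp-by-cusp calculation of \S 7: restrict a Hilbert-Eisenstein series on a larger totally real field to the diagonal Hilbert modular variety of $F$, apply a Hecke operator $U_p$, take the difference with a Hilbert--Eisenstein series at weight $pk$, and use that non-trivial orbits under the relevant Galois action contribute in $p\Z_{(p)}$ (via the identification $\mathrm{Ver} = \varphi$ from Lemma~\ref{lem:verfrob}). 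Deligne--Ribet's $q$-expansion principle (Corollary~\ref{cor:DRpri}) then transfers the congruence from non-constant terms to the constant term, yielding the required congruence between abelian pseudomeasures.

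Finally, with $(\line{\xi}_i)_i \in \line{\Psi}_S$ established, Burns' technique (Theorem~\ref{thm:ak-pri}) produces $\line{\xi} = \xi_{F_N/F} \in K_1(\iw{\line{G}}_S)$ which interpolates Artin $L$-values and satisfies $\del(\line{\xi}) = -[C_{F_N/F}]$, i.e.\ the main conjecture. Injectivity of $\line{\theta}$ (via vanishing of $SK_1$) gives uniqueness. Since the Heisenberg case is precisely what Kato treats in \cite{Kato1}, the entire plan is the specialisation of his argument, and we simply invoke his theorem.
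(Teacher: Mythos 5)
Your proposal follows essentially the same route as the paper, which itself only sketches the argument by citing Kato's theorem and recalling his construction: a theta map for $\line{G}$ built from trace/norm maps and the integral logarithm, the single congruence $\line{\xi}_1 \equiv \varphi(\line{\xi}_0) \bmod \line{I}_{S,1}$ proved directly via Deligne--Ribet, Burns' technique for existence, and vanishing of $SK_1$ for uniqueness. The only slip is the count of pairs in $\line{\ideal{F}}$: for the two-step Heisenberg group the Mackey--Serre decomposition yields just two pairs $(\line{U}_0,\line{V}_0)$, $(\line{U}_1,\line{V}_1)$ (as in the treatment of $H$ in \S 4.1), not three or four, but this does not affect the validity of the strategy.
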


\begin{proof}[Sketch of the proof]
This theorem is the special case of \cite{Kato1}, Theorem 4.1.

First, he constructed the theta map (and its localized version)
\begin{align*}
\line{\theta} &\colon K_1(\iw{\line{G}}) \rightarrow \line{\Psi} \subseteq
 \iw{\line{U_0}/\line{V_0}}^{\times} \times
 \iw{\line{U_1}/\line{V_1}}^{\times} \\
\line{\theta}_S &\colon K_1(\iw{\line{G}}_S) \rightarrow \line{\Psi}_S \subseteq
 \iw{\line{U_0}/\line{V_0}}_S^{\times} \times
 \iw{\line{U_1}/\line{V_1}}_S^{\times} 
\end{align*}
where $\line{\Psi}$ (resp.\ $\line{\Psi}_S$) was defined to be the subgroup consisting of
 all elements
 $(\line{\eta}_0, \line{\eta}_1)$ which
 satisfied the norm relation
\begin{equation*}
\Nr_{\iw{\line{U}_0/\line{V}_0}/\iw{\line{U}_1/\line{V}_1}}
 \line{\eta}_0 \equiv \line{\eta}_1 \quad (\text{resp.\ } \Nr_{\iw{\line{U}_0/\line{V}_0}_S/\iw{\line{U}_1/\line{V}_1}_S}
 \line{\eta}_0 \equiv \line{\eta}_1)
\end{equation*}
and the congruence 
\begin{equation*}
\line{\eta}_1 \equiv \frob{\line{\eta}_0} \quad \mod{\line{I_1}} \qquad
 (\text{resp.\ } \mod{\line{I_{S,1}}}).
\end{equation*}  

In this case, we could show the congruence 
\begin{equation*}
\line{\xi}_1 \equiv \frob{\line{\xi}_0} \quad \mod{\line{I_{S,1}}} 
\end{equation*}
directly in the same manner as in \S 7. Hence, by using
 Burns' technique (Theorem \ref{thm:ak-pri}), we might show the
 existence (and uniqueness) of the $p$-adic zeta function
 $\line{\xi}$ for $F_N/F$. 
\end{proof}

Let $C=C_{F^{\infty}/F}$ be the complex defined in Definition
\ref{def:selmer}. Since we always assume the condition $(\ast)$ in
Proposition \ref{prop:selmer}, $C$ is contained in $K_0(\iw{G},
\iw{G}_S)$. Let $[\line{C}]$ be its image in $K_0(\iw{\line{G}},
\iw{\line{G}}_S)$. Then Kato's $p$-adic zeta function $\line{\xi}$
satisfies the main conjecture $\del (\line{\xi}) =-[\line{C}]$.

\begin{prop} \label{prop:red}
There exists a characteristic element $f\in K_1(\iw{G}_S)$ for
 $F^{\infty}/F$ whose image in 
$K_1(\iw{\line{G}}_S)$ coincides with $\line{\xi}$ .
\end{prop}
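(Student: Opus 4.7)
The plan is to produce $f$ by correcting an arbitrary characteristic element for $F^\infty/F$ by a unit that reduces modulo $N$ to the discrepancy between Kato's $\line{\xi}$ and the reduced characteristic element. Everything will flow from the functoriality of the localization exact sequence with respect to the quotient map $\pi\colon G \to \line{G}$, together with the fact that $G$ (hence $\line{G}$) is a pro-$p$ group, so that $\iw{G}$ and $\iw{\line{G}}$ are local rings.

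First I would pick any characteristic element $f_0 \in K_1(\iw{G}_S)$ for $F^\infty/F$; such an element exists by the surjectivity of $\del$ (Proposition~\ref{prop:delsurj}). Because the closed normal subgroup $N$ is contained in $H=G^f$, the quotient $\pi$ sends the canonical \O re set of $\iw{G}$ into the canonical \O re set of $\iw{\line{G}}$, giving a ring homomorphism $\iw{G}_S \to \iw{\line{G}}_S$ which is compatible with the localization sequences. Consequently, if $\line{f_0}$ denotes the image of $f_0$ in $K_1(\iw{\line{G}}_S)$, then $\del(\line{f_0})=-[\line{C}]=\del(\line{\xi})$ by Theorem~\ref{thm:kato}. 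Hence $\line{\xi}\cdot \line{f_0}^{-1} \in \Ker(\del)$ on $K_1(\iw{\line{G}}_S)$, and by exactness of the localization sequence (Theorem~\ref{thm:locseq}) there exists $u \in K_1(\iw{\line{G}})$ whose image in $K_1(\iw{\line{G}}_S)$ equals $\line{\xi}\cdot \line{f_0}^{-1}$.

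Next I would lift $u$ to an element $\tilder{u} \in K_1(\iw{G})$. This is the only nontrivial step: the key observation is that since $G=G^f\times\Gamma$ is a pro-$p$ group, the Iwasawa algebra $\iw{G}$ is local, and the kernel of the surjection $\iw{G}\twoheadrightarrow \iw{\line{G}}$—being generated by the augmentation ideal of $\iw{N}$—is contained in the maximal ideal, hence in the Jacobson radical. Therefore Proposition~\ref{prop:surjonk1} yields the surjectivity of $K_1(\iw{G}) \to K_1(\iw{\line{G}})$, and such a lift $\tilder{u}$ exists.

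Finally I would set $f = \tilder{u}\cdot f_0 \in K_1(\iw{G}_S)$. Since $\tilder{u}$ comes from $K_1(\iw{G})$ we have $\del(\tilder{u})=0$, so
\[
 \del(f)=\del(\tilder{u})+\del(f_0)=-[C],
\]
showing that $f$ is a characteristic element for $F^\infty/F$. On the other hand, the image of $f$ in $K_1(\iw{\line{G}}_S)$ is $u\cdot \line{f_0} = (\line{\xi}\cdot \line{f_0}^{-1})\cdot \line{f_0} = \line{\xi}$, as required. The main conceptual point to verify carefully is the pro-$p$ local-ring argument underlying the surjectivity $K_1(\iw{G}) \twoheadrightarrow K_1(\iw{\line{G}})$; once this is in place, the construction of $f$ is purely formal.
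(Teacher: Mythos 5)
Your argument is correct and follows the same overall strategy as the paper: start from an arbitrary characteristic element $f_0$, use functoriality of $\del$ together with Theorem \ref{thm:kato} to see that the discrepancy $\line{\xi}\cdot\line{f_0}^{-1}$ is killed by $\del$, hence by Theorem \ref{thm:locseq} comes from $K_1(\iw{\line{G}})$, then lift that unit to $K_1(\iw{G})$ and correct $f_0$ by the lift. The one point where you genuinely diverge is the lifting step. The paper exploits the fact that the extension $1\to N\to G\to \line{G}\to 1$ splits: it applies the section $s\colon \line{G}\to G$ (already fixed in \S 8.1) to $\line{u}$ and sets $f=f_0\cdot s(\line{u})$, so no surjectivity of $K_1(\iw{G})\to K_1(\iw{\line{G}})$ is ever invoked. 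You instead appeal to Proposition \ref{prop:surjonk1}, observing that $G=G^f\times\Gamma$ is pro-$p$, so $\iw{G}$ is local and the kernel of $\iw{G}\to\iw{\line{G}}$ (generated by $n-1$, $n\in N$) lies in the Jacobson radical. Both routes are valid: yours is more general, since it needs no splitting and would apply to any closed normal subgroup of a pro-$p$ group, at the price of justifying the local-ring claim; the paper's use of $s$ is essentially free here and has the minor cosmetic advantage that $\pi(s(\line{u}))=\line{u}$ on the nose rather than only in $K_1$.
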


\begin{proof}
In the following, we denote by $\line{x}$
 the image of $x$ in $K_1(\iw{\line{G}}_S)$ for an element $x\in K_1(\iw{G}_S)$. 

Let $f'\in K_1(\iw{G}_S)$ be a characteristic element of $[C]$. Then by using the functoriality of the connecting homomorphism $\del$, 
we have $\del(\line{\xi} \cdot \line{f'}^{-1})=-[\line{C}]+[\line{C}]=0$. By the localization exact sequence (Theorem \ref{thm:locseq}), 
$\line{u}=\line{\xi} \cdot \line{f'}^{-1}$ is the image of an element of $K_1(\iw{\line{G}})$, which we also denote $\line{u}$ by abuse of notation. 
Then the element $f=f's(\line{u})$ satisfies the assertion of the 
proposition where $s$ denotes the homomorphism $K_1(\iw{\line{G}})\rightarrow K_1(\iw{G})$ induced by $s \colon \line{G} \rightarrow G$.
\end{proof}

%
\subsection{Completion of the proof}
%

Let $f\in K_1(\iw{G}_S)$ be a characteristic element for $F^{\infty}/F$ satisfying
Proposition \ref{prop:red}, that is, $\pi(f)=\line{\xi}$. 

Let $f_i=\theta_{S,i}(f) \in \iw{U_i/V_i}_S^{\times}$ and let $u_i=\xi_if_i^{-1}$. Then $\del(u_i)=0$, so we have $u_i \in \iw{U_i/V_i}^{\times}$ by the localization sequence (Theorem \ref{thm:locseq}).

Then it is sufficient to show that $(u_i)_i$ is contained  in $\Psi$: if
$(u_i)_i$ is contained in $\Psi$, there
exists $u\in K_1(\iw{G})$ such that $\theta_i(u)=u_i$ by the
surjectivity of the theta map (Proposition \ref{prop:theta}). One can
easily check that 
$\xi=uf$ satisfies the interpolating properties of the $p$-adic zeta function
for $F^{\infty}/F$ (Definition \ref{def:zeta}) and also satisfies
$\del(\xi)=-[C]$. Namely, $\xi$ is the desired $p$-adic zeta function.

Note that it was easy to show that $(u_i)_i \in \Psi$ in the proof of
Theorem \ref{thm:ak-pri} because of the assumption $(\xi_i)_i \in \Psi_S$. 

\begin{prop} \label{prop:ucong}
$u_i$'s satisfy the norm relations in Definition $\ref{def:psi}$, and
 satisfy following congruences$\colon$
\begin{align*}
u_1 & \equiv \frob{u_0} & \mod{I_1}, \\
\tilder{u_2} & \equiv \frob{u_0} & \mod{\tilder{I_2}}, \\
u_2 & \equiv d_2 & \mod{I_2''}, \\
u_3 & \equiv d_3 & \mod{I_3},
\end{align*}
where $d_2$ and $d_3$ are certain elements of $\iw{\Gamma}$.
\end{prop}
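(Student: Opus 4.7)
The proof will combine three ingredients: the zeta-pseudomeasure congruences of Proposition \ref{prop:congzeta}; the fact that $(f_i)_i = (\theta_{S,i}(f))_i$ lies in $\Psi_S$ (Proposition \ref{prop:imageaks}), so it satisfies both the norm relations and the defining congruences $f_1 \equiv \frob{f_0} \mod{I_{S,1}}$, $\tilder{f_2} \equiv \frob{f_0} \mod{\tilder{I_{S,2}}}$, $f_2 \equiv \frob{f_0}^p \mod{I_{S,2}}$, and $f_3 \equiv \frob{f_0}^{p^2} \mod{I_{S,3}}$; and the integrality $u_i \in \iw{U_i/V_i}^{\times}$ established just before the proposition. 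The norm relations for $(u_i)_i$ then follow immediately by multiplicativity of the norm maps: both $(\xi_i)_i$ and $(f_i)_i$ satisfy them, hence so does the quotient.

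For the congruence on $u_1$, the plan is to observe that $\xi_1/\frob{\xi_0}$ and $f_1/\frob{f_0}$ both lie in $1+I_{S,1}$, which is a multiplicative group by the localised analogue of Lemma \ref{lem:logi} (obtained by running the arguments of \S 5.1 with coefficients in $\iw{\Gamma}_{S_0}$ in place of $\Zp$). Dividing gives $u_1/\frob{u_0} \in 1+I_{S,1}$, whence $u_1 - \frob{u_0} \in I_{S,1}$. Since both $u_1 \in \iw{U_1/V_1}$ and $\frob{u_0} \in \iw{\Gamma}$ are integral, the difference actually lies in $I_{S,1} \cap \iw{U_1/V_1} = I_1$ (Proposition \ref{prop:cap}). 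An identical argument with $\tilder{I_{S,2}}$ in place of $I_{S,1}$ handles $\tilder{u_2}$.

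The congruences for $u_3$ and $u_2$ are the harder cases, because the congruences on numerator and denominator no longer match: Proposition \ref{prop:congzeta} only yields $\xi_3 \equiv c_3$ for an unknown constant $c_3 \in \iw{\Gamma}_{S_0}$. I would write $\xi_3 = c_3 + x_3$ with $x_3 \in I_{S,3}$ and $f_3 = \frob{f_0}^{p^2}(1+z_3)$ with $z_3 \in I_{S,3}$, using that $\frob{f_0} \in \iw{\Gamma}_{S_0}^{\times}$ and that $I_{S,3}$ is a $\iw{\Gamma}_{S_0}$-module satisfying $I_{S,3}^2 \subseteq I_{S,3}$. Expanding $u_3 = \xi_3 f_3^{-1}$ and using $(1+I_{S,3})^{-1} \subseteq 1+I_{S,3}$ yields $u_3 \equiv c_3 \frob{f_0}^{-p^2} =: d_3' \mod{I_{S,3}}$ with $d_3' \in \iw{\Gamma}_{S_0}$, and, mutatis mutandis, $u_2 \equiv c_2 \frob{f_0}^{-p} =: d_2' \mod{I_{S,2}''}$.

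The final step will be to promote $d_3'$ (resp.\ $d_2'$) from $\iw{\Gamma}_{S_0}$ to an integral constant in $\iw{\Gamma}$. Expanding $u_3 \in \iw{U_3/V_3}$ in the $\iw{\Gamma}$-basis $\{\gamma^c\varepsilon^e\zeta^f\}$ and setting $d_3$ to be the coefficient of the identity produces an integral candidate; a direct inspection of the generators of $I_{S,3}$ in this basis shows that the $\iw{\Gamma}_{S_0}$-linear projection $\pi_0$ onto the identity coefficient satisfies $\pi_0(I_{S,3}) = p^3 \iw{\Gamma}_{S_0} \subseteq I_{S,3}$ (via the generator $p^3\zeta^0$). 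Applying $\pi_0$ to $u_3 \equiv d_3' \mod{I_{S,3}}$ then gives $d_3 - d_3' \in p^3 \iw{\Gamma}_{S_0} \subseteq I_{S,3}$, so $u_3 - d_3 \in I_{S,3}$, and integrality forces $u_3 - d_3 \in I_{S,3} \cap \iw{U_3/V_3} = I_3$. The $u_2$ case proceeds identically with basis $\{\beta^b\gamma^c\zeta^f\}$ and $p$ in place of $p^3$. The main technical bookkeeping, and the only place where any care beyond \S\S 5--6 is required, lies in checking that $I_{S,i}$ and $I_{S,2}''$ inherit multiplicative closure, $\iw{\Gamma}_{S_0}$-module structure, and the intersection identity $I_{S,i} \cap \iw{U_i/V_i} = I_i$ from their unlocalised analogues; these are routine variants of Lemma \ref{lem:logi} and Proposition \ref{prop:cap} and present no conceptual obstacle.
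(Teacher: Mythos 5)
Your proposal is correct and follows essentially the same route as the paper's own (two-sentence) proof, which simply combines the $\Psi_S$-congruences satisfied by $(f_i)_i$ with the Deligne--Ribet congruences of Proposition \ref{prop:congzeta} for $(\xi_i)_i$ and divides. You supply the details the paper omits --- notably the multiplicative-group structure of $1+I_{S,i}$, the intersection identity $I_{S,i}\cap \iw{U_i/V_i}=I_i$, and the identity-coefficient projection showing that $d_2,d_3$ may be taken in $\iw{\Gamma}$ rather than $\iw{\Gamma}_{S_0}$ --- all of which check out against the explicit generators of $I_2''$ and $I_3$.
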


\begin{proof}
$f_i$'s and $\xi_i$'s satisfy the norm relations, so do $u_i$'s.

Since $(f_i)_i$ is contained in $\Psi_S$, $f_i$'s satisfy the
 congruences in Proposition \ref{prop:psis}. On the other hand, 
$\xi_i$'s satisfy the congruences in Proposition \ref{prop:congzeta}. Hence,
 we can easily show that $u_i$'s satisfy the desired congruences.\footnote{Note that
 we may replace $c_2$ and $c_3$ in Proposition \ref{prop:congzeta} by invertible elements.}
\end{proof}

\begin{lem} \label{lem:ker}
$\displaystyle (u_i)_i \in \Ker \left( \pi^{\times}\colon \prod_i \iw{U_i/V_i}^{\times} 
\longrightarrow \prod_i \iw{\line{U_i}/\line{V_i}}^{\times} \right)$.
\end{lem}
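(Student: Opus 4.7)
The plan is to exploit the commutative diagram coming from the functoriality of the theta construction (Proposition \ref{prop:commutativity}) together with the interpolation property of the $p$-adic zeta functions for the abelian subextensions. First I will verify that $N \subseteq U_i$ for each of $i=0,1,\tilder{2},2,3$ (a direct inspection of the matrix subgroups), so that $\line{U_i}=U_i/N$, $\line{V_i}=V_iN/N$ and the natural surjection $\iw{U_i/V_i}\twoheadrightarrow \iw{\line{U_i}/\line{V_i}}$ coincides with the quotient $\iw{U_i/V_i}\twoheadrightarrow\iw{U_i/V_iN}$. Under the Galois correspondence the fixed field of $V_iN$ is the intermediate field $F_{V_iN}$ with $F_{U_i}\subseteq F_{V_iN}\subseteq F_{V_i}$, and $F^{\line{G}}_{\line{U_i}}=F_{U_i}$, $F^{\line{G}}_{\line{V_i}}=F_{V_iN}$.

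Next, by Proposition \ref{prop:commutativity} applied to the subgroups $U_i\subseteq G$ and the normal subgroup $N$, the square
\begin{equation*}
\begin{CD}
K_1(\iw{G}_S) @>\theta_{S,i}>> \iw{U_i/V_i}_S^{\times} \\
@V\pi VV @VV\pi^\times_i V \\
K_1(\iw{\line{G}}_S) @>>\theta^{\line{G}}_{S,\line{i}}> \iw{\line{U_i}/\line{V_i}}_S^{\times}
\end{CD}
\end{equation*}
commutes, where $\theta^{\line{G}}_{S,\line{i}}$ denotes the localized partial theta map for $\line{G}$ attached to the pair $(\line{U_i},\line{V_i})$. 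Using Proposition \ref{prop:red} I obtain
\begin{equation*}
\pi^\times_i(f_i)=\pi^\times_i(\theta_{S,i}(f))=\theta^{\line{G}}_{S,\line{i}}(\pi(f))=\theta^{\line{G}}_{S,\line{i}}(\line{\xi}).
\end{equation*}

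Now I will compute both $\pi^\times_i(\xi_i)$ and $\theta^{\line{G}}_{S,\line{i}}(\line{\xi})$ via their interpolation properties. For any finite-order character $\chi$ of $\line{U_i}/\line{V_i}$, I can regard $\chi$ as a character of $U_i/V_i$ that is trivial on $V_iN/V_i$, and for every positive integer $r$ divisible by $p-1$ the definition of $\pi^\times_i$ together with the interpolation property of $\xi_i$ yields
\begin{equation*}
\pi^\times_i(\xi_i)(\chi\otimes\kappa^r)=\xi_i(\chi\otimes\kappa^r)=L_\Sigma(1-r;F_{V_i}/F_{U_i},\chi)=L_\Sigma(1-r;F_{V_iN}/F_{U_i},\chi),
\end{equation*}
the last equality because $\chi$ factors through $\gal{F_{V_iN}/F_{U_i}}$. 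On the other hand, Lemma \ref{lem:akind} applied to $\line{\xi}$ (whose existence as the $p$-adic zeta function for $F_N/F$ is Theorem \ref{thm:kato}) gives
\begin{equation*}
\theta^{\line{G}}_{S,\line{i}}(\line{\xi})(\chi\otimes\kappa^r)=\line{\xi}(\ind{\line{G}}{\line{U_i}}{\chi}\otimes\kappa^r)=L_\Sigma(1-r;F_N/F,\ind{\line{G}}{\line{U_i}}{\chi})=L_\Sigma(1-r;F_{V_iN}/F_{U_i},\chi)
\end{equation*}
by the inductive property of Artin $L$-functions. Hence both $\pi^\times_i(\xi_i)$ and $\pi^\times_i(f_i)=\theta^{\line{G}}_{S,\line{i}}(\line{\xi})$ are elements of the abelian group $\iw{\line{U_i}/\line{V_i}}_S^{\times}$ satisfying the interpolation property of the $p$-adic zeta pseudomeasure $\xi_{F_{V_iN}/F_{U_i}}$, and are therefore equal by the uniqueness of the latter (Remark \ref{rem:abel}). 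Consequently $\pi^\times_i(u_i)=\pi^\times_i(\xi_i)\,\pi^\times_i(f_i)^{-1}=1$ for every $i$.

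The main obstacle to watch is the third step: making the identification $\pi^\times_i(\xi_i)=\xi_{F_{V_iN}/F_{U_i}}$ rigorous as an identity in $\iw{\line{U_i}/\line{V_i}}_S^{\times}$ rather than merely in $\mathrm{Frac}(\iw{\line{U_i}/\line{V_i}})$, and confirming that the pseudomeasure $\xi_i$ (a priori an element of $\mathrm{Frac}(\iw{U_i/V_i})$) can be treated as an element of $K_1(\iw{U_i/V_i}_S)=\iw{U_i/V_i}_S^{\times}$ compatibly with the quotient map. This is essentially bookkeeping using Proposition \ref{prop:sloc} (1) and the description of $S$ as $\iw{\Gamma}\setminus p\iw{\Gamma}$ from Example \ref{ex:iwasawa} (so that $\xi_i$, which has the form $(\text{unit})/(1-w)$ for suitable $w$, lies in the $S$-localization), and it follows from the construction of the Deligne--Ribet pseudomeasures together with the fact that $1-w\in S$. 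Everything else reduces to the formalism already developed.
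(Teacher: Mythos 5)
Your argument is correct and is essentially the same as the paper's: the paper's one-line proof cites exactly the two ingredients you use, namely $\pi(f)=\line{\xi}$ from Proposition \ref{prop:red} and the compatibility of $\pi$ with norm maps (Proposition \ref{prop:commutativity}), and your interpolation/uniqueness computation identifying both $\pi^{\times}_i(\xi_i)$ and $\theta^{\line{G}}_{S,\line{i}}(\line{\xi})$ with the abelian pseudomeasure of $F_{V_iN}/F_{U_i}$ is the natural (and correct) way to fill in what the paper leaves implicit. No gaps.
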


\begin{proof}
This lemma follows from the construction of $f$ and the commutativity of $\pi$ and
 norm maps of $K$-groups (Proposition \ref{prop:commutativity}).
\end{proof}

\begin{lem} \label{lem:sp}
$s\circ \pi (I_i) \subseteq I_i$.
\end{lem}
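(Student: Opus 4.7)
The plan is to reduce the assertion to a finite check on the explicit generators of the submodules $I_i^f$ listed at the beginning of \S 5, and then verify each case in turn.

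First I would note that the section $s$ is in fact a group homomorphism: a direct matrix calculation shows that its image $\langle \alpha,\beta,\delta\rangle\times\Gamma$ is a subgroup of $G$ and that $s$ identifies $\overline{G}$ with it. Hence $s\circ\pi$ is a group endomorphism of $G$ which is the identity on $\Gamma$, and a case-by-case inspection of the definitions shows $s(\overline{V_i})\subseteq V_i$ for every $i$, so $s\circ\pi$ descends to a ring endomorphism of $\iw{U_i/V_i}$ that is $\iw{\Gamma}$-linear. Since by construction $I_i = I_i^f\otimes_{\Zp}\iw{\Gamma}$, the inclusion $s\circ\pi(I_i)\subseteq I_i$ will follow once it is established on the finitely many $\Zp$-module generators of $I_i^f$.

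Because $N$ is generated by $\gamma,\varepsilon,\zeta$, the induced map on $G^f$ simply erases these three letters: the element $\alpha^a\beta^b\gamma^c\delta^d\varepsilon^e\zeta^f$ is sent to $\alpha^a\beta^b\delta^d$. Consequently each of $h_\varepsilon,h_\zeta,h_\gamma$ maps to $p$, while $h_\delta$ and every monomial in $\alpha,\beta,\delta$ are preserved. With this dictionary the generators of $\tilder{I_2}^f$, $I_2^f$ and $I_3^f$ are sent respectively to elements of the form $p\delta^d$, $\beta^b h_\delta$, $p\beta^b$, $p^2$, $p^3$, each of which is visibly among the listed generators of the target submodule; so those three cases are mechanical. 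The first three generating families of $I_1^f$ are equally transparent, giving $p\delta^d$, $\alpha^a h_\delta$ and $p\delta^d$ respectively.

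The only case with any substance is the fourth family of generators of $I_1^f$, namely $\alpha^a\gamma^c\delta^d\varepsilon^e h_{\varepsilon^c\delta^{-a}}$ with $a\neq 0$ and $c\neq 0$. Applied term-by-term to the sum defining $h_{\varepsilon^c\delta^{-a}}$, the map $s\circ\pi$ produces $\sum_{i=0}^{p-1}\delta^{-ia}=h_{\delta^{-a}}$; since $a$ is invertible modulo $p$, multiplication by $-a$ permutes $\mathbb{F}_p$, and hence $h_{\delta^{-a}}=h_\delta$. Combining this with the identity $\delta^d h_\delta = h_\delta$ (valid because $\delta$ has order $p$ in $U_1/V_1$), the image simplifies to $\alpha^a h_\delta$, which is one of the listed generators of $I_1^f$. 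This single family, together with the two identities $h_{\delta^{-a}}=h_\delta$ and $\delta^d h_\delta=h_\delta$ needed to collapse its image, constitutes the only technical point of the proof; all remaining verifications are routine.
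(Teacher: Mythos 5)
Your proposal is correct and is exactly the "simple calculation" the paper leaves to the reader: $s\circ\pi$ erases $\gamma,\varepsilon,\zeta$, so one checks the finitely many generators of each $I_i^f$, the only mildly nontrivial case being $\alpha^a\gamma^c\delta^d\varepsilon^e h_{\varepsilon^c\delta^{-a}}\mapsto \alpha^a h_\delta$ via $h_{\delta^{-a}}=h_\delta$ and $\delta^d h_\delta=h_\delta$, which you handle correctly.
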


\begin{proof}
Just simple calculation.
\end{proof}

\begin{lem} \label{lem:fker}
For each $i$, $\frob{u_i}=1$.
\end{lem}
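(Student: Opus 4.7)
The plan is to observe that the Frobenius homomorphism $\varphi$ factors through the quotient $\pi \colon G \twoheadrightarrow \line{G}$, so the statement follows immediately from $(u_i)_i \in \Ker(\pi^\times)$ (Lemma~\ref{lem:ker}).

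More precisely, first I would verify that for every subgroup $U_i/V_i$ under consideration the diagram
\begin{equation*}
\begin{CD}
\iw{U_i/V_i} @>\varphi>> \iw{\Gamma} \\
@V\pi VV @| \\
\iw{\line{U_i}/\line{V_i}} @>>\line{\varphi}> \iw{\Gamma}
\end{CD}
\end{equation*}
commutes, where $\line{\varphi}$ is the Frobenius homomorphism for $\line{G}$. Indeed, an element of $U_i/V_i$ has the shape $g_0 t^n$ with $g_0 \in U_i^f/V_i^f$ and $t^n \in \Gamma$; since the exponent of $G^f$ (and hence of $U_i^f/V_i^f$) is exactly $p$, one has
\begin{equation*}
\varphi(g_0 t^n) = (g_0 t^n)^p = g_0^p\, t^{pn} = t^{pn},
\end{equation*}
and the same computation carried out in $\line{U_i}/\line{V_i}$ gives $\line{\varphi}(\pi(g_0) t^n) = t^{pn}$. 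Extending by $\Zp$-linearity and continuity yields the commutativity at the level of Iwasawa algebras.

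Once this compatibility is in place, the conclusion is immediate: Lemma~\ref{lem:ker} asserts that $\pi(u_i) = 1$ in $\iw{\line{U_i}/\line{V_i}}^\times$, whence
\begin{equation*}
\varphi(u_i) = \line{\varphi}(\pi(u_i)) = \line{\varphi}(1) = 1.
\end{equation*}
There is no serious obstacle; the only point requiring care is the verification that $\varphi$ genuinely factors through $\pi$, which in turn rests entirely on the hypothesis that the exponent of the finite part $G^f$ equals $p$ (so that the $G^f$-component is killed by raising to the $p$-th power). All other ingredients are purely formal.
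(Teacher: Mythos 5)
Your proof is correct and follows exactly the paper's argument: factor $\varphi$ through the quotient map $\pi$ (using that the exponent of $G^f$ is $p$) and then invoke Lemma \ref{lem:ker}. The extra verification of the commutative square is a welcome detail but not a different route.
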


\begin{proof}
The Frobenius homomorphism $\varphi\colon \iw{U_i/V_i} \rightarrow \iw{\Gamma}$ factors
as 
\begin{equation*}
\begin{CD}
\iw{U_i/V_i} @>\varphi>> \iw{\Gamma} \\
@V{\pi}VV                @A{\|}AsA \\
\iw{\line{U_i}/\line{V_i}} @>>\line{\varphi}> \iw{\Gamma}
\end{CD}
\end{equation*}
where $\line{\varphi}\colon \iw{\line{U_i}/\line{V_i}}\longrightarrow \iw{\Gamma}$ is
 the Frobenius homomorphism ($\mod{N}$). 

Then this lemma holds since $(u_i)_i \in \Ker(\pi^{\times})$ by Lemma \ref{lem:ker}.
\end{proof}

\begin{proof}[Proof of Theorem $\ref{thm:maintheorem}$]
By operating $s \circ \pi$ to the congruences of Proposition \ref{prop:ucong}, we have 
\begin{align*}
s\circ \pi(u_2) & \equiv d_2 & \mod s\circ \pi(I_2''), \\
s\circ \pi(u_3) & \equiv d_3 & \mod s\circ \pi(I_3).
\end{align*} 
By Lemma \ref{lem:ker}, both of $s\circ \pi(u_2)$ and $s \circ \pi(u_3)$ are $1$. Therefore we obtain
\begin{align}
u_2 & \equiv d_2 \equiv 1 = \frob{u_2} & \mod{I_2''}  \label{eq:congu2}, \\
u_3 & \equiv d_3 \equiv 1 = \frob{u_3} & \mod{I_3} \label{eq:congu3},
\end{align}
by Lemma \ref{lem:sp} and \ref{lem:fker}. Hence if we show that 
\begin{align} \label{eq:congI2}
u_2 &\equiv 1 (= \frob{u_2}) & \mod{I_2},
\end{align}
we have $(u_i)_i \in \Psi$, which is the desired result.

Now we show the congruence (\ref{eq:congI2}). We know that $u_2 \in 1+I_2''$
 and $u_3 \in 1+I_3$ by (\ref{eq:congu2}) and (\ref{eq:congu3}). Note
 that the $p$-adic logarithmic homomorphism induces an isomorphism of
 abelian groups
 $1+I_2'' \xrightarrow{\simeq} I_2''$ by the same argument as the proof
 of Lemma \ref{lem:logi}. By the logarithmic isomorphisms (Lemma
 \ref{lem:logi}), we have $\log u_2 \in I_2''$ and $\log u_3\in I_3$.

We may describe $\log u_2$ and $\log u_3$ as $\iw{\Gamma}$-linear
 combinations of generators of $I_2''$ and $I_3$:
\begin{align*}
\log u_2 &= \sum_{b,c\neq 0}  \tilder{\nu}_{bc}^{(3)} \beta^b \gamma^c h_{\zeta}+ \sum_{b,f} p \tilder{\nu}^{(4)}_{bf} \beta^b \zeta^f,  \\
\log u_3 &= \sum_f p^3 \sigma_f^{(1)} \zeta^f + \sum_{e\neq 0} p^2 \sigma_e^{(2)} \varepsilon^e h_{\zeta}+\sum_{c\neq 0} p \sigma_c^{(3)} \gamma^c h_{\varepsilon} h_{\zeta}.
\end{align*}

Then we have 
\begin{equation*}
\Tr_{\iw{U_2/V_2}/\iw{U_3/V_2}} \log u_2 = \sum_{c\neq 0} p \tilder{\nu}_{0c}^{(3)} \gamma^c h_{\zeta} +\sum_{f} p^2 \tilder{\nu}^{(4)}_{0f}h_{\zeta}
\end{equation*}
and 
\begin{equation*}
\log u_3 \equiv \sum_f p^2 \left( p\sigma_f^{(1)}+\sum_{e\neq 0} \sigma^{(2)}_e \right) \zeta^f + \sum_{c\neq 0} p^2 \sigma_c^{(3)} \gamma^c h_{\zeta} 
\qquad (\mod {V_2}).
\end{equation*}

By comparing the coefficients, we have
\begin{align*}
\tilder{\nu}^{(4)}_{0f} &=p\sigma^{(1)}_f+ \sum_{e\neq 0} \sigma^{(2)}_e, \\
\tilder{\nu}^{(3)}_{0c} &= p\sigma^{(3)}_c \qquad (c\neq 0).
\end{align*}

Therefore if we set 
\begin{align*}
\nu^{(1)}_f & = \sigma^{(1)}_f , & \nu^{(2)}_c & =
\begin{cases}  \sum_{e\neq 0} \sigma^{(2)}_e  & \text{if $c=0$}, \\
\sigma^{(3)}_c& \text{if $c\neq 0$}, \end{cases}  \\
\nu^{(3)}_{bc} & = \tilder{\nu}^{(3)}_{bc} \quad (b\neq 0, c\neq 0) , &  \nu^{(4)}_{bf} & = \tilder{\nu}^{(4)}_{bf} \quad (b\neq 0), 
\end{align*}
we have 
\begin{equation*}
\begin{split}
\log u_2 =&  \sum_f p^2 \nu^{(1)}_f \zeta^f + \sum_c p\nu^{(2)}_c \gamma^c h_{\zeta} \\
      &+\sum_{b\neq 0, c\neq 0} \nu^{(3)}_{bc} \beta^b \gamma^c h_{\zeta} +\sum_{b\neq 0,f} p\nu^{(4)}_{bf} \beta^b \zeta^f 
\end{split}
\end{equation*}
which implies $\log u_2 \in I_2$. Hence by the logarithmic isomorphism
 (Proposition \ref{lem:logi}), $u_2$ is contained in $1+I_2$, which implies the congruence (\ref{eq:congI2}).

\end{proof}

\begin{rem}
By the construction of $\xi=uf$, we have $\theta_{S}(\xi)=(\xi_i)_i$. Since $\Psi_S$ contains 
the image of $\theta_S$, we especially obtain the following non-trivial
 congruences among abelian $p$-adic pseudomeasures:
\begin{align*}
\xi_2 & \equiv \frob{\xi_0}^p & \mod{I_2}, \\
\xi_3 & \equiv \frob{\xi_0}^{p^2} & \mod{I_3}. 
\end{align*}
 
It seems to be impossible to show these congruences directly by using
 only the theory of Deligne-Ribet.
\end{rem}


\end{document}